\newtheorem{theorem}{Theorem}[section]
\newtheorem{proposition}[theorem]{Proposition}
\newtheorem{lemma}[theorem]{Lemma}
\theoremstyle{definition}
\newtheorem{definition}[theorem]{Definition}
\newtheorem{example}[theorem]{Example}
\theoremstyle{definition}
\newtheorem{remark}[theorem]{Remark}
\newtheorem{assumption}[theorem]{Assumption}
\numberwithin{equation}{section}
\def\Inf{\operatornamewithlimits{inf\vphantom{p}}}
\newcommand{\bfone}{{\mathbf{1}}}
\newcommand{\eps}{{\varepsilon}}
\newcommand{\abs}[1]{\left|{#1}\right|}
\newcommand{\norm}[1]{\lVert{#1}\rVert}
\newcommand{\N}{{\mathbb{N}}}
\newcommand{\R}{{\mathbb{R}}}
\newcommand{\wx}{{\mathbf{x}}}
\newcommand{\wy}{{\mathbf{y}}}
\newcommand{\wz}{{\mathbf{z}}}
\newcommand{\bfp}{{\mathbf{p}}}
\newcommand{\bfq}{{\mathbf{q}}}
\begin{document}
\title[Path-dependent Hamilton-Jacobi equations]
{Path-dependent Hamilton-Jacobi equations  in infinite dimensions}
\author{Erhan Bayraktar}
\address
{Department of Mathematics, 
University of  Michigan,
Ann Arbor, MI 48109, United States}
\thanks{The research of the first author was
 supported in part by the National Science Foundation under grant DMS-1613170}
\email{erhan@umich.edu}
\author{ Christian Keller}
\address
{Department of Mathematics, 
University of  Central Florida,
Orlando, FL 32816, United States}
\email{christian.keller@ucf.edu}

\date{July 19, 2018}

\subjclass[2010]{Primary 35R20; Secondary 47H05, 49L25, 47N10, 91A23}
\keywords{Path-dependent PDEs; Monotone operators; Minimax solutions; Viscosity solutions;
Nonlinear evolution equations; 
Variational approach;
Optimal Control;
Dynamic programming; Differential games}

\begin{abstract}
We propose notions of minimax and viscosity solutions
 for a class of
fully nonlinear path-dependent PDEs with nonlinear,
monotone, and coercive operators on Hilbert space.
Our main result is well-posedness (existence, uniqueness, and stability)
for minimax solutions. 
{\color{black} A particular novelty 
is a suitable combination of minimax and
viscosity solution techniques in the proof of the
%Thereby, we  establish a
comparison principle.} % for path-dependent PDEs
%under conditions that are weaker even in the finite-dimensional
%case. In contrast to most of the related works on PDEs in infinite dimensions, 
  %perturbed optimization is entirely avoided.
%The path-dependent setting itself enables us to circumvent
 {\color{black} One of the main difficulties, the lack of 
compactness in infinite-dimensional Hilbert spaces,
is circumvented by working with suitable compact subsets of our path space.}
As an application, our theory  makes it possible to employ the dynamic programming approach 
to study 
optimal control problems for a fairly general class of (delay) evolution equations 
 in the variational
framework.
Furthermore, differential games  associated to such evolution equations
can be  investigated following the Krasovski\u\i-Subbotin approach similarly
as in finite dimensions. 
\end{abstract}
\maketitle
\pagestyle{plain}
\tableofcontents
\section{Introduction}
Let $V\subseteq H\subseteq V^\ast$ be a Gelfand triple, i.e., 
$V$ is a separable reflexive Banach space with a continuous and dense
embedding into a Hilbert space $H$. Moreover, this embedding is assumed to
be compact. We study
fully nonlinear so-called path-dependent PDEs (PPDEs) of the form
\begin{equation}\label{E:PPDE_intro}
\begin{split}
&\partial_t u(t,x)-\langle A(t,x(t)),\partial_x u(t,x)\rangle +F(t,x,\partial_x u(t,x))=0,\\
&\qquad\qquad\qquad\qquad\qquad\qquad\qquad\qquad(t,x)\in [0,T)\times  C([0,T],H).
\end{split}
\end{equation}
Here, % $\Omega$ is a subset of the path space $C([0,T]\times H)$.
$A(t,\cdot): V\to V^\ast$, $t\in [0,T]$, are nonlinear, monotone, coercive operators.
The derivatives $\partial_t u$ and $\partial_x u$ are certain path derivatives on 
the path space
$C([0,T],H)$. A definition will be provided later. Note that they are not Fr\'echet derivatives.
 Let us also mention that any kind of 
solution $u$ of \eqref{E:PPDE_intro} should at least be \emph{non-anticipating}, i.e.,
for every $x$, $y\in C([0,T],H)$,
whenever $x=y$ on $[0,t]$, we have $u(t,x)=u(t,y)$.
%and the derivatives $v_t$ and $v_x$ are the usual
%partial (Fr\'echet) derivatives.

A particular example  of \eqref{E:PPDE_intro}
is the Bellman equation associated to  distributed
control problems for  divergence-form quasi-linear parabolic PDEs of order $2m$ on some bounded
domain $G\subset\R^n$ with smooth boundary $\partial G$, e.g., 
the problem of minimizing a
cost functional
\begin{align*}
\int_{t_0}^T \ell(t, \{x(s,\cdot)\}_{s\le t},a(t,\cdot))\,dt+ h(\{x(t,\cdot)\}_{t\le T}) 
\end{align*}
over some class of admissible controls $a=a(t,\xi)$ 
subject to $x=x^{t_0,x_0,a}$ solving
\begin{align*}
\frac{\partial x}{\partial t}(t,\xi)&+\sum_{\abs{\alpha}\le m} 
(-1)^{\abs{\alpha}} D^\alpha A_\alpha(t,\xi,D x(t,\xi))\\
&= f(t,\xi,\{x(s,\xi)\}_{s\le t},a(t,\xi)),
%&\qquad\qquad\qquad\qquad\qquad\qquad\qquad\qquad\qquad\qquad\qquad
&& (t,\xi)\in (t_0,T)\times G,\\
D^{\beta} x(t,\xi)&=0, &&(t,\xi)\in [t_0,T]\times\partial G,\quad \abs{\beta}\le m-1,\\
x(t,\xi)&=x_0(t,\xi), && (t,\xi)\in [0,t_0]\times G,
\end{align*}
where $\alpha$ and $\beta$ are multi-indices, $D x= (D^{\beta} x)_{\abs{\beta}\le m}$,
$D^\alpha$ and $D^\beta$ are partial derivatives with respect to $\xi$, and $\ell$, $h$, $f$, and
$A_\alpha$ are appropriate functions (see \cite[Section~30.4]{ZeidlerIIB} for details regarding $A_\alpha$).
Note that we allow delays in the data of our problem with the exception of the differential operator.

The objective of this paper is to establish wellposedness for the terminal-value problem
related to \eqref{E:PPDE_intro}
under an appropriate notion of solution. Our problem is set up in path-dependent framework,
i.e., a solution $u$ depends at any fixed time $t$ on  a``path segment" $\{x(s)\}_{s\le t}$,
which can be understood as history until time~$t$. Formally, the usual state space
$[0,T]\times H$ is replaced by $[0,T]\times C([0,T],H)$ (or a subset) and a solution is required
to be non-anticipating. The original motivation to work in such a framework
comes from problems involving delays in areas such
as optimal control, differential games, and mathematical finance. 
In this work however, the path-dependent approach provides a new methodology for
infinite-dimensional PDEs, which is of relevance even if our data are not path-dependent, 
i.e., when the function $F$ and a terminal datum  $h$ are of the form
\begin{align*}
F(t,x,z)=\tilde{F}(t,x(t),z),\quad h(x)=\tilde{h}(x(T)),\quad (t,x,z)\in [0,T]\times C([0,T],H)\times H,
\end{align*}
for some functions $\tilde{F}:[0,T]\times H\times H\to\R$ and $\tilde{h}: H\to\R$.
The reason is that, thanks to our additional structural assumptions for $A$ in contrast to 
the related works
\cite{Tataru92JMAA,Tataru94JDE_simplified}
and \cite{CrandallLionsVI}, we can identify suitable compact subspaces of $C([0,T],H)$ and this
allows us to avoid the use of  perturbed optimization
based on
Ekeland's variational principle. %  {\color{black} or to avoid  weak continuity assumptions for our data.}
% whence offers a new simplified approach to 
%analyze PDEs in infinite dimensions. 
%As a caveat, 
We want to remark that our approach
crucially depends on the coercivity of $A$ and thus for different or more general situations
perturbed optimization is an indispensable tool. Nevertheless, we  want to advocate
our approach when it is applicable as it often simplifies the analysis and many methods
from the finite-dimensional theory can be easily carried over to the infinite-dimensional case. 
%\textit{PDEs in infinite dimensions.} 
\subsection{Related research}
{\color{black} Pioneering work on Hamilton-Jacobi equations on Hilbert space
 has been done by Barbu and Da Prato (see, e.g., their monograph \cite{BarbuDaPrato83book}).}

Viscosity solutions for PDEs in infinite dimensions of the form
\begin{align*}
 v_t(t,x)-\langle Ax,v_x(t,x)\rangle+G(t,x,v_x(t,x))=0,\quad (t,x)\in (0,T)\times H,
\end{align*}
(and their stationary counterparts), where $A$ is an unbounded operator,
 have been investigated by
Crandall and Lions in \cite{CrandallLionsI, CrandallLionsII, CrandallLionsIII} for the case $A=0$,
in \cite{CrandallLionsIV, CrandallLionsV, CrandallLionsVII} for linear $A$,
and in \cite{CrandallLionsVI} for nonlinear $A$. The research on such equations with 
nonlinear $A$ 
has been started earlier by Tataru \cite{Tataru92JMAA,Tataru92AMO}
(see also \cite{Tataru94JDE_simplified}
for subsequent work). For the case of $A$ being the subdifferential of a convex function,
we refer to Ishii~\cite{Ishii92JFA_HJ_Hilbert} and Shimano~\cite{Shimano02AMO}.
{\color{black} 
Cannarsa, Gozzi, and Soner \cite{CannarsaGozziSoner91AMO} studied
Hamilton-Jacobi equations on Hilbert space  (here with $A=0$) related to optimal
control problems with exit time and state constraints.
Motivated by boundary control problems for parabolic PDEs,
 Cannarsa, Gozzi, and Soner \cite{CannarsaGozziSoner93JFA} and  Cannarsa and Tessitore
\cite{CannarsaTessitore96SICON} studied Hamilton-Jacobi equations, where the Hamiltonian $G$
involves fractional powers of $A$.}

We also want to mention the works of Lions \cite{Lions89_Zakai} and Gozzi and {\'Swi\polhk ech}
\cite{GSJFA00} as well as the works  of Gozzi, Sritharan, and {\'Swi\polhk ech}
 \cite{GSS_2D_NS,GSS_2D_SNS}, respectively,
where Bellman equations associated to control problems for variational solutions of
the Zakai equation
and the Navier-Stokes equation, respectively, are studied.
Furthermore, we want to single out Soner's paper \cite{Soner88}, which influenced the literature
on viscosity solutions for fully nonlinear 
1st and 2nd order PPDEs via \cite{Lukoyanov07_viscosity}.

{\color{black}
For the  viscosity theory of  second-order PDEs in infinite dimensions, we refer to the works of Lions
\cite{Lions88Acta,Lions89JFA},
of Ishii \cite{Ishii93CommPDE},   of  {\'Swi\polhk ech}, \cite{Swiech94CommPDE}  
and of Gozzi, Rouy, and {\'Swi\polhk ech}  
\cite{GRS00SICON}
besides the already mentioned papers  \cite{Lions89_Zakai}, \cite{GSJFA00}, \cite{GSS_2D_SNS}.
}

For further works about viscosity solutions for PDEs in infinite dimensions, 
one can consult the bibliographies of \cite{FabbriThesis} and \cite{FabbriGozziSwiech}.

Results on minimax solutions in infinite dimensions can be found in the work
\cite{ClarkeLedyaev94TAMS} by Clarke and Ledyaev on mean value inequalities.
We refer also to Carja \cite{Carja00JOTA, Carja12SICON}
regarding the related notion of contingent solution.

%\textit{PPDEs in finite dimensions.}
Fully nonlinear 1st order PPDEs have been studied by Lukoyanov in the minimax
\cite{Lukoyanov01minimax,Lukoyanov03a, Lukoyanov03b}
as well as in the viscosity solution framework
\cite{Lukoyanov07_viscosity}. In those works, so-called
coinvariant derivatives by Kim \cite{KimBook} were used.
The investigation of 2nd order PPDEs has been motivated much 
by Peng's ICM plenary talk \cite{PengICM}
and by Dupire's functional It\^o calculus \cite{dupirefunctional}
(see also subsequent work by Cont and Fourni\'e
\cite{ContFournie10_JFA}). A  viscosity solution approach
has been successfully initiated by
 Ekren, Keller, Touzi, and Zhang \cite{EKTZ}. %Their work deals with semilinear equations.
 In a similar spirit,
Isaacs equations are studied by Pham and Zhang \cite{PhamZhang14},
  fully nonlinear equations by Ekren, Touzi, and Zhang \cite{ETZ_I, ETZ_II},
 non-local equations by Keller \cite{Keller16},  elliptic equations by Ren \cite{Ren16},
  obstacle problems by Ekren \cite{Ekren2017}, {\color{black} and degenerate second-order
  equations by Ren, Touzi, and Zhang \cite{RTZ17}. In contrast to the papers mentioned first,
  the latter work covers also the first-order case.}
 
 %\textit{PPDEs in infinite dimensions.} 
 A class of semilinear 2nd order PPDEs 
 with a linear unbounded operator on Hilbert space has been studied by
 Cosso, Federico, Gozzi, Rosestolato, and Touzi \cite{cosso2015path}.
 Their work is closely related to the semigroup framework, in particular to
 mild solutions of stochastic evolution equations involving linear operators, whereas the present work
 is closely related to variational solutions of (deterministic)
 evolution equations involving nonlinear operators.
 
 For optimal control problems in the variational framework,
 we refer to  Hu and Papageorgiou \cite[Chapter~IV]{HPHandbookII}
 and the references therein. 
 
 Differential games in infinite dimensions have been investigated by 
 Kocan, Soravia, and  {\'Swi\polhk ech}
 \cite{KocanSoraviaSwiech97JMAA},
 by Kocan and Soravia
 \cite{KocanSoravia00SICON},
 by Shaiju \cite{Shaiju03},
 by Ghosh and Shaiju
 \cite{GhoshShaiju04},
 by Ramaswamy and Shaiju
 \cite{RamaswamyShaiju09},
 by Nowakowska and Nowakowski
 \cite{NowakowskaNowakowski11},
 by  {\'Swi\polhk ech} \cite{Swiech11games},
 by Banks and Shuhua
\cite{BanksShuhua12}, and
 by Vlasenko, Rutkas, and {Chikri\u\i}
 \cite{VlasenkoRutkasChikrii15}.
 {\color{black} Stochastic differential games in infinite dimensions and the related
 Isaacs equations have been studied
 by Fleming and Nisio \cite{FlemingNisio93}, by Nisio \cite{Nisio93, Nisio94,Nisio96,Nisio98,
 Nisio99a,Nisio99b},
 and by   {\'Swi\polhk ech} \cite{Swiech02NA}.}
 
%Finally, 
 Let us remark that the literature on optimal control problems for PDEs is immense
(see, e.g, the recent survey article \cite{ControlPDE_survey} 
or the monographs \cite{AhmedTeo},
\cite{BensoussanEtAl07}, \cite{FabbriGozziSwiech}, 
\cite{Fattorini}, \cite{Fursikov}, \cite{LasieckaTriggiani_I}, \cite{LasieckaTriggiani_II}
\cite{LiYong}, 
\cite{LionsJL_control}, 
\cite{Tiba}, and \cite{Troltzsch}; note that this list
is not exhaustive).

{\color{black}
Finally, let us conclude this literature review with mentioning two approaches
how to treat  optimal control problems with delays via the dynamic programming approach.
%involving delays are dealt within the PPDE approach.
%as well as in the more usual approach such as, e.g.,  the.
A very common approach in the literature
is to rewrite a controlled delay differential equation as an ordinary differential
equation without delays but with values in a Hilbert space $H$ such as, e.g., $H=L^2(-\tau,0;\R^d)$.
This would then naturally lead to an investigation of Bellman equations on domains
of the form $[0,T]\times H$ or $H$; see, e.g, the 
 works 
of Federico \cite{Federico11FinStoch},  
of Federico, Goldys, and Gozzi 
\cite{FedericoGoldysGozzi10SICON,FedericoGoldysGozzi11SICON}, 
 of Federico and Tacconi \cite{FedericoTacconi14SICON}, 
of Fuhrman, Masiero, and Tessitore \cite{FuhrmanFedericaTessitore10SICON}, 
of Zhou and Liu \cite{ZhouLiu12JDE}, and
of Zhou and Zhang \cite{ZhouZhang11IJC}.
In the PPDE approach however, the controlled delay differential equations is not rewritten as
an (infinite-dimensional) evolution equation but is considered as a ``path-dependent"
differential equation, i.e., a differential equation of the form
%\begin{align*}
$x^\prime(t)=f(t, \{x(s)\}_{s\le t}))$
%\end{align*}
in the deterministic case. This view would lead  to a path-dependent Bellman equation.
 Furthermore, solutions of those PPDE should be non-anticipating.
A particular feature of this work is that it also allows the study of optimal control problems
associated to PDEs with delays.
}

\subsection{Our approach and the main difficulties} 
We propose
 an appropriate
adaptation of the notion of minimax
solutions in the sense of Subbotin \cite{Subbotin_book} 
and Lukoyanov \cite{Lukoyanov03a} as generalized solution
for \eqref{E:PPDE_intro}. Under this notion, we establish well-posedness.
Moreover, minimax solutions admit  similarly as viscosity solutions
an infinitesimal characterization, which is of  importance in
control theory, in particular, to establish verification theorems 
(cf.~\cite{FabbriGozziSwiech10JConvex}) and for the
 synthesis of optimal feedback controls
(see, e.g., \cite{Frankowska89AMO}, \cite[Chapter~III]{BardiCapuzzoDolcetta}, or
\cite[Chapter~II]{Subbotina2006}).

The main difficulties are the lack of compactness in infinite-dimensional
spaces and the interpretation of the duality pairing $\langle A(t,x(t)),\partial_x u(t,x)\rangle$
in \eqref{E:PPDE_intro} due to incompatibilities regarding the involved spaces:
We have
$A(t,x(t))\in V^\ast$ and thus  $\partial_x u(t,x)$ should belong to $V$ but, in general,
we only have
$\partial_x u(t,x)\in H$ and $H$ is in most
relevant situations  a strict superset of $V$.
The first difficulty is overcome by the path-dependent
approach itself  and the use of minimax solutions
as we are able to identify
suitable compact subsets of the path space $C([0,T],H)$.
The second difficulty actually disappears in our formulation of minimax 
solutions. In the formulation of viscosity solutions the problem is resolved by
replacing $\langle A(t,x(t)),\partial_x u(t,x)\rangle$ with a term of the form
\begin{align*}
\varliminf_{\delta\downarrow 0}\text{ (or $\varlimsup_{\delta\downarrow 0}$) } \frac{1}{\delta}\int_t^{t+\delta}
\langle A(s,x(s)),\partial_x u(s,x)\rangle\,ds,
\end{align*}
in which case the space incompatibilities disappear as,
for a.e.~$s\in (t,T)$, we have  $x(s)\in V$ 
and thus $\partial_x u(s,x)\in V$  (cf.~Definition~\ref{D:C111}~(i) and
Definition~\ref{PDC11}~(i))
for suitable trajectories $x$.

Without rendering our problem irrelevant or trivial,
we also consider smaller domains than $[0,T]\times C([0,T],H)$ such as
 (compact!) sets  of the form $[0,T]\times\Omega^L$, $L\ge 0$, and the 
(locally compact!) set $[0,T]\times\Omega$, where $\Omega:=\cup_{L\ge 0} \Omega^L$.
The sets $\Omega^L$ can be thought of as spaces of \emph{realizable trajectories}
(or possible histories) corresponding to a suitable family of control problems.
From this point of view, it strikes us at least as 
debatable why a larger class of trajectories should
be considered.

In the proof of the comparison principle for viscosity solutions
of 1st order PPDEs in finite dimensions by Lukoyanov \cite{Lukoyanov07_viscosity}
(see also \cite[Theorem~8.4]{ETZ_I})  time and space variables are doubled, i.e.,
functions of the form $u(t,x)-v(s,y)+\text{ ``penalty term"}$ are used.
However,  this approach  seems to be problematic in our setting as it is
not clear how one can then exploit the monotonicity of the operator $A$
(even if $A$ is time-independent!). 
To avoid this issue, we double only the space variable 
following  Ishii \cite{IshiiRemark} 
and Crandall and Lions
\cite{CrandallLionsVI}.  To obtain a comparison
principle for viscosity solutions we  need then to establish
a so-called doubling theorem, i.e., a statement of the following form:
\begin{quotation}
If $u$ is a  viscosity subsolution and $v$ is a viscosity supersolution,
then $w$ defined by $w(t,x,y):=u(t,x)-v(t,y)$ is a viscosity subsolution
of some ``doubled equation."
\end{quotation}
Due to the particular nature of our path derivatives we have been
unable to prove such a statement. (The problem is that in the path-dependent
case the time
derivative  does not depend on time alone, which can also be seen  in alternative,
explicit definitions of path derivatives such as in \cite{dupirefunctional}.) 
To get around this obstacle, we utilize the stronger notion of minimax solutions.
By doing so,
we can prove a doubling theorem but now with $u$  being a \emph{minimax} subsolution
and $v$ being a \emph{minimax} supersolution. However,
$w$ will still be a {\emph{viscosity}} subsolution.  Our doubling
theorem will then be used to prove a comparison principle for \emph{minimax} solutions.

\subsection{Notions, main results, and methodology in a simplified setting}
Suppose that there exists a constant $L\ge 0$ such that, for every $z$, $\tilde{z}\in H$,
\begin{align}\label{E:F_lip_intro}
\abs{F(t,x,z)-F(t,x,\tilde{z})}\le L\norm{z-\tilde{z}}_H,\quad (t,x)\in [0,T]\times C([0,T],H).
\end{align}
Also note that hypothesis \textbf{H}($A$) from Section~\ref{S:A}
is tacitly assumed to be satisfied.

In the finite-dimensional minimax solution theory, 
spaces of Lipschitz continuous functions with common Lipschitz
constant play an important role. Their counterpart in our work are the trajectory spaces
\begin{align*}
\tilde{\mathcal{X}}^L(t_0,x_0)&=\{x \in C([0,T],H): x=x_0\text{ on $[0,t_0]$ and
 $\exists f^x\in L^2(t_0,T;H):$}\\
&\qquad x^\prime(t)+A(t,x(t))=f^x(t),\,\abs{f^x(t)}\le L\text{ a.e.~on $(t_0,T)$}\},
\end{align*}
where $(t_0,x_0)\in [0,T)\times C([0,T],H)$. 
(For a precise definition, see Remark~\ref{R:Compact}; cf.~also
Definition~\ref{D:trajectorySpace}.)
These spaces are compact, connected,
and satisfy a certain continuous-dependence type 
property (cf.~Proposition~\ref{P:CompactII}).  The last property is needed
to prove existence for solutions of \eqref{E:PPDE_intro}.

Fix $x_\ast\in H$. Here, our state space is $[0,T]\times\tilde{\Omega}$, where
$\tilde{\Omega}=\tilde{\mathcal{X}}^L(0,x_\ast)$.
In the main part of this paper, we use a larger set as state space.
In particular, this set does not depend on
the Lipschitz constant $L$. However,  our notions will  then become slightly more
complicated.

Assume for a moment that $u$ is smooth in some sense and satisfies \eqref{E:PPDE_intro}.
Then, for every $x\in\tilde{\mathcal{X}}^L(t_0,x_0)$, 
%which satisfies
%$x^\prime(t)+A(t,x(t))=f^x(t)$, 
we should have 
\begin{equation}\label{E:classical_intro}
\begin{split}
u(t,x)-u(t_0,x_0)&=\int_{t_0}^t \partial_t u(s,x)+\langle x^\prime(s),\partial_x u(s,x)\rangle\,ds\\
&=\int_{t_0}^t \partial_t u(s,x) +\langle -A(s,x(s))+f^x(s),\partial_x u(s,x)\rangle\,ds\\
&=\int_{t_0}^t  -F(s,x,\partial_x u(s,x))+\langle f^x(s),\partial_x u(s,x)\rangle\,ds.
\end{split}
\end{equation}
(A precise definition of the path derivatives can be found in Section~\ref{S:path_derivatives}.)
Because of \eqref{E:F_lip_intro}, one can show that
\eqref{E:classical_intro} also holds if
 the terms $\partial_x u(s,x)$, $s\in [t_0,t]$, are replaced by an arbitrary $z\in H$.
 This motivates the following
definition.

\begin{definition}
A function $u\in C([0,T]\times\tilde{\Omega})$ is a \emph{minimax solution} of
\eqref{E:PPDE_intro} if, for every  $(t_0,x_0,z)\in [0,T)\times\tilde{\Omega}\times H$,
there exists an $x\in\tilde{\mathcal{X}}^L(t_0,x_0)$ such that
\begin{align*}
u(t,x)-u(t_0,x_0)=\int_{t_0}^t -F(s,x,z)+(f^x(s),z)\,ds,\quad t\in [t_0,T].
\end{align*}
\end{definition}
Similarly, minimax semisolutions (i.e., sub- and supersolutions) can be defined.

The next statement is a weaker version of our main result.
\begin{theorem}
Let $h:C([0,T],H)\to\R$ be continuous.
Let $F$ be continuous and suppose that besides \eqref{E:F_lip_intro},
for every $t\in [0,T]$, $x$, $y\in \tilde{\Omega}$, and $z\in H$,
\begin{align*}
\abs{F(t,x,z)-F(t,\tilde{x},z)}\le L(1+\norm{z}_H)\sup_{s\le t} \norm{x(s)-y(s)}_H.
\end{align*}
 Then there exists a unique minimax solution $u$ of \eqref{E:PPDE_intro}
that satisfies $u(T,\cdot)=h$.
\end{theorem}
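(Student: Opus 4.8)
The plan is to treat existence and uniqueness separately: existence by exhibiting a value function of an associated differential game, and uniqueness by a comparison principle for minimax semisolutions obtained through a doubling of the \emph{space} variable only.

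\textbf{Existence.} I would take $u$ to be the value of the differential game whose state evolves along the inclusion defining $\tilde{\mathcal{X}}^L$, in which a nonanticipating player chooses the perturbation $f^x$ (with $\norm{f^x}_H\le L$) against a player choosing the test direction $z$ --- the representation $F(t,x,z)=\inf_{w\in H}\sup_{\norm{q}_H\le L}\bigl[F(t,x,w)+(q,z-w)\bigr]$, valid by \eqref{E:F_lip_intro}, exhibits this game structure --- and with terminal payoff $h$. Following the Krasovski\u\i-Subbotin formalism (nonanticipating strategies, stepwise control constructions) one first checks that $u$ is well defined and continuous on $[0,T]\times\tilde\Omega$; here the compactness of the trajectory spaces $\tilde{\mathcal{X}}^L(t_0,x_0)$, their continuous-dependence property (Proposition~\ref{P:CompactII}), and the two Lipschitz estimates on $F$ together with continuity of $h$ play the role that compactness of $[0,T]\times H$ plays in finite dimensions. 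The dynamic programming principle for $u$, unwound for a fixed $z\in H$, then yields the trajectory $x\in\tilde{\mathcal{X}}^L(t_0,x_0)$ with
\[
u(t,x)-u(t_0,x_0)=\int_{t_0}^t\bigl(-F(s,x,z)+(f^x(s),z)\bigr)\,ds,\qquad t\in[t_0,T];
\]
since the value of $f^x$ that must be used at each instant depends on $\partial_x$-type data unavailable in advance, one passes to the limit along partitions of $[t_0,t]$ and absorbs the errors with the Lipschitz bound \eqref{E:F_lip_intro}. As $u(T,\cdot)=h$ by construction, $u$ is a minimax solution with the desired terminal value. (Alternatively, build a minimax subsolution and a minimax supersolution with terminal value $h$ --- for instance the upper and lower game values --- and identify them via the comparison principle below.)

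\textbf{Uniqueness.} It suffices to establish a comparison principle: if $u$ is a minimax subsolution and $v$ a minimax supersolution with $u(T,\cdot)\le v(T,\cdot)$, then $u\le v$ on $[0,T]\times\tilde\Omega$; applying this twice to two minimax solutions with terminal value $h$ forces them to coincide. The first and decisive step is the \emph{doubling theorem}: $w(t,x,y):=u(t,x)-v(t,y)$, defined on $[0,T]\times\tilde\Omega\times\tilde\Omega$, is a \emph{viscosity} subsolution of the doubled PPDE. Given a smooth test function touching $w$ from above at $(\bar t,\bar x,\bar y)$, one combines the minimax-subsolution property of $u$ along a trajectory issued from $\bar x$ with the minimax-supersolution property of $v$ along the trajectory issued from $\bar y$ driven by the same perturbation $f$; adding the two integral relations produces the viscosity inequality for $w$. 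The pairings $\langle A(\cdot,x(\cdot)),\cdot\rangle$ and $\langle A(\cdot,y(\cdot)),\cdot\rangle$ are only meaningful after the $\varliminf_{\delta\downarrow0}\tfrac1\delta\int_{\bar t}^{\bar t+\delta}(\cdots)\,ds$ regularization of Definition~\ref{PDC11}, and monotonicity of $A$ keeps $\norm{x(s)-y(s)}_H$ under control (indeed non-increasing) along the paired trajectories. With the doubling theorem available, I would run the standard viscosity contradiction argument: assuming $\max_{[0,T]\times\tilde\Omega}(u-v)>0$, maximize $w(t,x,y)-\tfrac{1}{2\eps}\norm{x(t)-y(t)}_H^2$ (plus, if needed, a term pushing the maximizer off $\{t=T\}$) over the compact set $[0,T]\times\tilde\Omega\times\tilde\Omega$, apply the viscosity subsolution inequality for $w$ at the maximizer --- where the term $-\tfrac1\eps\langle A(\cdot,x(\cdot))-A(\cdot,y(\cdot)),x(\cdot)-y(\cdot)\rangle$ has a favorable sign by monotonicity --- and let $\eps\downarrow0$; the Lipschitz bound on $F$ in the path variable, the $H$-control on $\norm{x-y}_H$, and $u(T,\cdot)\le v(T,\cdot)$ close the argument.

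\textbf{Main obstacle.} The crux is the doubling theorem, and it is precisely here that the minimax notion is indispensable. Because the path-dependent time derivative $\partial_t$ is not a function of $t$ alone, one cannot double the time variable, and a direct viscosity-to-viscosity doubling seems out of reach; strengthening the hypotheses on $u$ and $v$ to \emph{minimax} semisolutions is what makes the paired characteristic-type trajectories available, while the monotonicity and coercivity of $A$ are what keep those trajectories $H$-close and make the regularized duality pairings tractable. The absence of compactness in $C([0,T],H)$ is a secondary obstacle, already circumvented by working throughout on the compact trajectory spaces $\tilde{\mathcal{X}}^L$.
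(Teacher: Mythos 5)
Your uniqueness half follows the paper's own route: double only the space variable, prove a doubling theorem stating that $w(t,x,y)=u(t,x)-v(t,y)$ built from a minimax sub- and a minimax supersolution is a \emph{viscosity} subsolution of the doubled equation (Theorem~\ref{T:MinimaxDoubling}), and then run a penalized maximization over the compact sets $\mathcal{X}^L(t_0,x_0)$, using monotonicity of $A$ for the sign of the $\langle A(\cdot,\wx)-A(\cdot,\wy),x-y\rangle$ term (Theorem~\ref{T:Comparison1}). Two details of your sketch need repair, though. First, penalizing only with $\tfrac{1}{2\eps}|x(t)-y(t)|^2$ is not enough in the path-dependent setting: when the penalty vanishes in the limit you only get $\hat{x}(\hat{t})=\hat{y}(\hat{t})$, not $\hat{x}=\hat{y}$ on $[0,\hat{t}]$, and the latter is what lets one restrict the limiting maximum to the diagonal (so that $w(T,x,x)\le 0$ forces $t_\eps<T$) and what makes the $F$-difference term $L(1+|z|)\sup_{s\le t}|x(s)-y(s)|$ with $z\sim\eps^{-1}[x(t)-y(t)]$ vanish; this is why the paper's $\Psi$ carries the additional memory term $\int_{t_0}^t|x(s)-y(s)|^2\,ds$. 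Second, in the doubling theorem the two trajectories are produced \emph{separately} by the sub- and supersolution properties (with gradients $\partial_x\varphi$ and $-\partial_y\varphi$); they are not driven by a common perturbation, and $|x(s)-y(s)|$ need not be non-increasing along them — monotonicity of $A$ enters only through the sign of the doubled duality pairing, exactly as in the paper.

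The existence half is where you genuinely diverge, and where the gap lies. The paper does not obtain existence from a differential game; it uses Perron's method: improper minimax supersolutions such as $u^z_+$ (Lemma~\ref{L:ImUpper}), the pointwise infimum $u_0^L$, its semicontinuous envelopes shown to be minimax semisolutions via the continuous-dependence property of Proposition~\ref{P:CompactII} (Lemma~\ref{L:u0-+SemiSol}), and finally comparison to identify the envelopes (Theorem~\ref{T:ExistenceMinimax1}). Your game construction is asserted precisely at the hard points: the representation $F(t,x,z)=\inf_{w\in H}\sup_{|q|\le L}[F(t,x,w)+(q,z-w)]$ has the \emph{non-compact} control set $H$ for the $w$-player, so the paper's game framework (compact $P$, $Q$, the Isaacs condition, and the integral-type path-Lipschitz hypotheses \textbf{H}$(f)_2$, \textbf{H}$(\ell)_2$) does not apply as stated; well-posedness, continuity, and a dynamic programming principle for such a value in the infinite-dimensional path-dependent setting are not free; and ``unwinding the DPP for a fixed $z$'' to produce the exact equality defining a minimax solution is itself the nontrivial step — in the paper even the control-problem analogue (showing the value function is a minimax solution) requires combining the comparison theorem with viscosity arguments, and the game section presupposes the existence theorem rather than proving it. As written, therefore, the existence argument is a program rather than a proof; either the Perron construction or a full Krasovski\u\i--Subbotin development adapted to a non-compact control set would have to be supplied.
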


To prove this result, we employ viscosity subsolutions
(see  Definition~\ref{D:Viscsub_doubled})
of  the so-called ``doubled equation"
\begin{equation}\label{E:PPDEdoubled_intro}
\begin{split}
&\partial_t w(t,x,y)-\langle A(t,x(t)),\partial_x w(t,x,y)\rangle 
-\langle A(t,y(t)),\partial_y w(t,x,y)\rangle\\ &\qquad+F(t,x,\partial_x w(t,x,y))
-F(t,y,-\partial_y w(t,x,y))=0,\\
&\qquad\qquad\qquad\qquad\qquad\qquad (t,x,y)\in [0,T)\times  C([0,T],H)\times C([0,T],H).
\end{split}
\end{equation}
Our methodology can then be summarized in the following five-step scheme:\\
\noindent\fbox{%
    \parbox{\textwidth}{%
(i) (``Doubled comparison"). Let $w$ be a viscosity subsolution of \eqref{E:PPDEdoubled_intro}.
Then $w(T,x,x)\le 0$ for all $x\in\tilde{\Omega}$ implies $w(t,x,x)\le 0$ 
for all $(t,x)\in [0,T]\times\tilde{\Omega}$.

(ii) (Doubling theorem). If $u$ is a minimax sub-, and $v$ a minimax supersolution, then $w$ defined by
$w(t,x,y):=u(t,x)-v(t,y)$ is a viscosity subsolution of \eqref{E:PPDEdoubled_intro}.

(iii) (Comparison).  ``(i)+(ii)" yields  comparison  for minimax semisolutions.

(iv) (``Perron"). Define a function $u_0$ by
\begin{align*}
u_0(t,x):=\inf\{u(t,x):\text{ $u$ ``improper" minimax supersolution}\}
\end{align*}
and consider its semi-continuous envelopes $u_0^-$ and $u_0^+$. By definition
$u_0^-\le u_0\le u_0^+$. Moreover,  $u_0^-$ is an l.s.c.~minimax super- and
$u_0^+$ is a u.s.c.~minimax subsolution.

(v) (Existence). ``Comparison + Perron" yields $u_0^+\le u_0^-$, i.e., $u_0^+=u_0=u_0^-$.
}}\\

Next, we present a special case of the distributed control problem
mentioned at the beginning of this introduction
 to illustrate how those problems fit into our abstract setting.

\begin{example}
This example has been borrowed from \cite[Chapter~23]{ZeidlerIIA}.
For the sake of simplicity, we consider the problem of distributed control
for the heat equation, i.e., $x=x^{t_0,x_0,a}$ is supposed to be a
weak solution of 
\begin{align*}
\frac{\partial x}{\partial t} (t,\xi)-\Delta_\xi x(t,\xi)&=a(t,\xi), && (t,\xi)\in (t_0,T)\times G,\\
x(t,\xi)&=0, && (t,\xi)\in [t_0,T]\times\partial G,\\
x(0,\xi)&=x_0(\xi), && (t,\xi)\in [0,t_0]\times G.
\end{align*}
In other words, for every $\varphi\in C_0^\infty(G)$ (the space of smooth functions with bounded support),
\begin{align*}
\frac{d}{dt} \int_G x(t,\xi)\,\varphi(\xi)\,d\xi&+
\int_G \sum_{i=1}^n D_i x(t,\xi)\,D_i \varphi(\xi)\,d\xi\\&=\int_G a(t,\xi)\,\varphi(\xi)\,d\xi
&&\text{a.e.~on $(t_0,T)$,}\\
x(t,\xi)&=0, && (t,\xi)\in [t_0,T]\times\partial G,\\
x(0,\xi)&=x_0(\xi), && (t,\xi)\in [0,t_0]\times G.
\end{align*}
An abstract formulation of this Cauchy-Dirichlet problem is
\begin{align*}
x^\prime(t)+Ax(t)&=a(t)&&\text{a.e.~on $(t_0,T)$,}\\
x(t)&=x_0(t)&&\text{on $[0,t_0]$,}
\end{align*}
and we require that  a solution $x$ together with its generalized derivative $x^\prime$ satisfies
\begin{align*}
x\in C([0,T],H)\cap L^2(t_0,T; V)\text{ and }
x^\prime\in  L^2(t_0,T;  V^\ast).
\end{align*}
Here, $H=L^2(G)$, $V=W_0^{1,2} (G)$ is the closure of $C_0^\infty(G)$ under $\norm{\cdot}_{W^{1,2}(G)}$,
$ V^\ast$ is the dual of  $V$,  and the right-hand side $a:[t_0,T]\to V^\ast$  and the operator 
$A:V\to V^\ast $, $x\mapsto Ax$, are defined by
\begin{align*}
\langle a(t),v \rangle &:=\int_G a(t,\xi) v(\xi)\,d\xi, \quad v\in V,\\
\langle Ax,v\rangle&:=\int_G \sum_{i=1}^n D_i x(\xi) D_i v(\xi)\,d\xi,\quad v \in V.
\end{align*}
\end{example}

\subsection{Organization of the rest of the paper}
In Section~2, we introduce the setting for this work
including most of the notation and the standing assumptions.
Furthermore, some of the preliminary results concerning evolution equations
will be proven. Moreover, a definition for the  already mentioned path derivatives will 
be provided. In Section~3, we state our main object of study,
namely the terminal-value problem related to \eqref{E:PPDE_intro},
together with the standing hypotheses for the data.
Then we present our notion of minimax solutions for this problem and prove
basic results regarding this notion.
In Section~4, we prove a comparison principle 
for minimax solutions using viscosity solution techniques.
In Section~5, Perron's method is used to establish existence for minimax solutions.
In Section~6, a stability result for minimax solutions is proven.
In Section~7, we apply the dynamic programming approach to optimal control problems associated 
with nonlinear evolution equations, establish (local) Lipschitz continuity for
the corresponding value function in space as well as in time, and
show that the value function is a minimax solution of the corresponding
Bellman equation. The proof of the last statement also uses a combination
of minimax and viscosity solution techniques, which is new to our best knowledge.
In Section~8, we adapt the Krasovski\u\i-Subbotin approach for differential
games to our setting. Under the Isaacs condition, 
it is shown that our differential games have value
and the value function is a minimax solution of the corresponding Isaacs equation.
Appendix~A contains the proofs for some of the crucial results about solution sets of 
evolution equations.
In Appendix~B, we present first a notion of classical solution and discuss
their relation to minimax solutions. Then we present a notion of viscosity solutions
for which we have existence but cannot prove uniqueness.

\section{Setting and preliminary results}
Let $V\subseteq H\subseteq V^\ast$ be a Gelfand triple,
 i.e., $V$ is a separable reflexive Banach space with a continuous and dense
embedding into a Hilbert space $H$. %\footnote{$V^\ast$ is the dual of $V$.} 
Denote by $\norm{\cdot}$, $\abs{\cdot}$,  and $\norm{\cdot}_\ast$ the norms on $V$, $H$, and $V^\ast$.
(With slight abuse of notation, we also denote by $\abs{\cdot}$ the Euclidean norm on $\R^n$.)
Denote by $\langle \cdot,\cdot\rangle$ the duality pairing between $V$ and its dual $V^\ast$, 
i.e., $\langle x^\ast,x\rangle=x^\ast(x)$, $x^\ast\in V^\ast$, $x\in V$.
Denote by $(\cdot,\cdot)$ the inner product on $H$. Moreover, using appropriate identifications, 
we have $\langle x,y\rangle=(x,y)$ for every $x\in H$ and $y\in V$.

\begin{assumption}\label{A:compact}
The embedding from $V$ into $H$ is compact.
\end{assumption}

Fix $p\ge 2$ and $q>1$ with $p^{-1}+q^{-1}=1$.
Given $t_0\in [0,T)$,
let $W_{pq}(t_0,T)$ be the space of all $x\in L^p(t_0,T;V)$ possessing a generalized derivative
$x^\prime\in L^q(t_0,T;V^\ast)$. Equipped with  the norm $\norm{\cdot}_{W_{pq}(t_0,T)}$ defined by
\begin{align*}
\norm{x}_{W_{pq}(t_0,T)}:=\norm{x}_{L^p(t_0,T;V)}+\norm{x^\prime}_{L^q(t_0,T;V^\ast)},%\, x\in W_{pq}(0,T),
\end{align*}
the space $W_{pq}(t_0,T)$ becomes a Banach space.

\begin{remark}\label{R:Wpq:continuous}
The space $W_{pq}(t_0,T)$ is continuously embedded into $C([t_0,T],H)$
(see, e.g., \cite[Proposition~23.23~(ii)]{ZeidlerIIA}).
\end{remark}

\begin{remark}\label{R:compact}
 As a consequence of Assumption~\ref{A:compact}, the embedding from
 $W_{pq}(t_0,T)$ into $L^p (t_0,T;H)$
is compact (see \cite[Th\'eor\`eme~I.12.1]{JLLions1969}). This is crucial
for the proof of Proposition~\ref{P:compact} below.
However, also note that, according to \cite{Migorski95PAMS}, it is, in general, not true that 
the embedding from $W_{pq}(0,T)$ into $C([0,T],H)$ is compact.
\end{remark}

We shall frequently use the following result
(see, e.g., \cite[Proposition~23.23~(iv)]{ZeidlerIIA}).

\begin{proposition}[Integration-by-parts] \label{P:parts}
Let $t_0\in [0,T)$. If $x$, $y\in W_{pq}(t_0,T)$ and $t_0\le t\le s\le T$, then
\begin{align*}
(x(s),y(s))-(x(t),y(t))=\int_{t}^{s} \langle x^\prime(r),y(r)\rangle+\langle y^\prime(r),x(r)\rangle\,dr. 
\end{align*}
On the left-hand side, $x$ and $y$ are to be understood as elements of $C([t_0,T],H)$.
\end{proposition}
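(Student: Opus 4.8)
The plan is the classical two-step smoothing argument: first establish the identity for smooth $V$-valued functions via the ordinary product rule, and then extend it to all of $W_{pq}(t_0,T)$ by density together with the continuous embedding $W_{pq}(t_0,T)\hookrightarrow C([t_0,T],H)$ recorded in Remark~\ref{R:Wpq:continuous}.

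For the first step I would take $x,y\in C^1([t_0,T],V)$ and regard them, via the embedding $V\hookrightarrow H$, as $C^1$ maps into $H$. Since $(\cdot,\cdot)\colon H\times H\to\R$ is continuous and bilinear, $r\mapsto(x(r),y(r))$ is then $C^1$ on $[t_0,T]$ with derivative $(x'(r),y(r))+(x(r),y'(r))$; because $x'(r),y'(r)\in V\subseteq H$ and $x(r),y(r)\in V$, the identification $\langle z,v\rangle=(z,v)$ (valid for $z\in H$, $v\in V$) turns these two terms into $\langle x'(r),y(r)\rangle$ and $\langle y'(r),x(r)\rangle$, and the fundamental theorem of calculus gives the claimed formula for all $t_0\le t\le s\le T$.

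For the second step I would approximate: by a standard time-mollification (extend $x$ and $y$ slightly past the endpoints, then convolve with a smooth kernel) one obtains $x_n,y_n\in C^1([t_0,T],V)$ with $x_n\to x$ and $y_n\to y$ in $W_{pq}(t_0,T)$, hence $x_n'\to x'$, $y_n'\to y'$ in $L^q(t_0,T;V^\ast)$ and $x_n\to x$, $y_n\to y$ in $L^p(t_0,T;V)$. Remark~\ref{R:Wpq:continuous} then gives uniform convergence in $C([t_0,T],H)$, so the left-hand side for $(x_n,y_n)$ converges to that for $(x,y)$. For the right-hand side I would split
\[
\langle x_n'(r),y_n(r)\rangle-\langle x'(r),y(r)\rangle=\langle x_n'(r)-x'(r),y_n(r)\rangle+\langle x'(r),y_n(r)-y(r)\rangle,
\]
integrate over $(t,s)$, and use the Hölder estimate for the $L^q(t_0,T;V^\ast)$--$L^p(t_0,T;V)$ pairing: the first term is $\le\norm{x_n'-x'}_{L^q(t_0,T;V^\ast)}\norm{y_n}_{L^p(t_0,T;V)}\to0$ (the norms $\norm{y_n}_{L^p(t_0,T;V)}$ being bounded) and the second $\le\norm{x'}_{L^q(t_0,T;V^\ast)}\norm{y_n-y}_{L^p(t_0,T;V)}\to0$; the term $\langle y_n'(r),x_n(r)\rangle$ is treated identically. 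Letting $n\to\infty$ in the Step 1 identity for $(x_n,y_n)$ finishes the proof.

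I expect the only genuinely delicate point to be the density of $C^1([t_0,T],V)$ in $W_{pq}(t_0,T)$, i.e., the mollification itself: one has to extend $x$ and $y$ past $t_0$ and $T$ while remaining in the appropriate Bochner spaces, convolve, and check that the regularizations still have their time derivatives in $L^q(t_0,T;V^\ast)$ and converge in the $W_{pq}$-norm. Everything else reduces to the product rule and Hölder's inequality, and in fact this density statement may simply be quoted from the literature.
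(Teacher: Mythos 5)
Your argument is correct, and in fact the paper does not prove this proposition at all: it simply quotes it from the literature (Zeidler, Proposition~23.23~(iv)), where the proof is exactly the standard smoothing argument you outline — product rule for $C^1([t_0,T],V)$ functions, density of such functions in $W_{pq}(t_0,T)$, and passage to the limit using the continuous embedding into $C([t_0,T],H)$ and the $L^q$--$L^p$ H\"older pairing. The density step you flag is indeed the only technical point, and it is the same density fact the paper itself invokes without proof in the proof of Proposition~\ref{P:ChainRule}, so quoting it is consistent with the paper's level of detail.
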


\subsection{Path space and related topologies}
The domain for  solutions to our path-dependent PDE will of the form $[0,T]\times\tilde{\Omega}$, 
where $T>0$ and  $\tilde{\Omega}$ is a subset of 
the  path space $C([0,T], H)$, which we equip 
with the uniform norm
$\norm{\cdot}_\infty$ defined by
\begin{align*}
\norm{x}_\infty:=\sup_{t\in [0,T]} \abs{x(t)}. %,\, x\in\Omega.
\end{align*}
If not mentioned otherwise, any subset of $C([0,T],H)$ is to
be understood as a metric space with respect to $\norm{\cdot}_\infty$.

\begin{definition}
A function $u$ defined on some subset $S$ of $[0,T]\times C([0,T],H)$ is 
\emph{non-anticipating} if 
$u(t,x)=u(t,y)$ whenever $x=y$ on $[0,t]$, $(t,x)$, $(t,y)\in S$.
\end{definition}
To ensure that (semi-) continuous functions are non-anticipating,
we equip $[0,T]\times\Omega$ with the pseudo-metric~$\mathbf{d}_\infty$ 
defined by
\begin{align*}
\mathbf{d}_\infty((t,x),(s,y))&:=
\abs{t-s}+\sup_{r\in [0,T]} 
\abs{ x(r\wedge t)-y(r\wedge s)}.
\end{align*}
Here, $a\wedge b:=\min\{a,b\}$.
Given a non-empty subset $S$ of $[0,T]\times C([0,T],H)$, we shall employ the following 
function spaces:
\begin{align*}
C(S)&:=\{u:S\to \R\text{ continuous with respect to $\mathbf{d}_\infty$}\},\\
\mathrm{USC}(S)&:=\{u:S\to \R\text{ upper semi-continuous with respect to $\mathbf{d}_\infty$}\},\\
\mathrm{LSC}(S)&:=\{u:S\to \R\text{ lower semi-continuous with respect to $\mathbf{d}_\infty$}\}.
\end{align*}
Clearly, the elements of those function spaces are non-anticipating.

\subsection{The operator $A$, related trajectory spaces, and evolution equations}
\label{S:A}

Throughout this work,  we fix an operator $A:[0,T]\times V\to V^\ast$ and assume that
 the following hypotheses hold.

\textbf{H}($A$):  %The operator $A:[0,T]\times V\to V^\ast$ satisfies the following:
%\begin{enumerate}
%\renewcommand{\labelenumi}{(\roman{enumi})}
%\item 
(i) The mappings
%\begin{align*}
$t\mapsto \langle A(t,x),v\rangle$, %\quad 
$v\in V$,
%\end{align*}
are measurable.

%\item
(ii) Monotonicity: %$x\mapsto A(t,x)$ is monotone 
For a.e.~$t\in (0,T)$ and every $x$, $y\in V$, % i.e.,
\begin{align*}
\langle A(t,x)-A(t,y),x-y\rangle \ge 0.
\end{align*}

%\item 
(iii) Hemicontinuity: %$x\mapsto A(t,x)$ is hemicontinuous 
For a.e.~$t\in (0,T)$ and every $x$, $y$, $v\in V$, %i.e.,
the mappings
%\begin{align*}
%[0,1]\to\R: 
$s\mapsto \langle A(t,x+sy), v\rangle$, $[0,1]\to\R$, 
%\end{align*}
are continuous.

%\item
(iv) Boundedness: There exist a function $a_1\in L^q(0,T)$ and a number $c_1\ge 0$ such that,
for every $x\in V$ and a.e.~$t\in (0,T)$,
\begin{align*}
\norm{A(t,x)}_\ast \le a_1(t)+c_1\norm{x}^{p-1}.
\end{align*}

%\item
(v) Coercivity: There exists a number $c_2> 0$  such that, for every  $x\in V$ and a.e.~$t\in (0,T)$,
\begin{align*}
\langle A(t,x),x\rangle \ge c_2 \norm{x}^p.
\end{align*}
%\end{enumerate}

The following trajectory spaces  are ubiquitous  in the present work.
They are the natural analogues to the spaces of Lipschitz continuous functions
with common Lipschitz constant 
in the finite-dimensional theory.

\begin{definition}\label{D:trajectorySpace}
Given  $L\ge 0$ and $(t_0,x_0)\in [0,T)\times C([0,T],H)$, put
\begin{equation*}
\boxed{
\begin{split}
\mathcal{X}^L(t_0,x_0):=\{x\in C([0,T],H):\,  %x=x_0\text{ on $[0,t_0]$ and
\text{$\exists \wx\in W_{pq}(t_0,T):\exists f^x\in L^2(t_0,T;H)$}:\\
 \mathbf{x}^\prime(t)+A(t,\mathbf{x}(t))=f^x(t) \text{ a.e.~on $(t_0,T)$, }
x=x_0\text{ on $[0,t_0]$, }\\
 \qquad
 x=\wx\text{ a.e.~on $(t_0,T)$, and }
 \abs{f^x(t)}\le L(1+\sup_{0\le s\le t} \abs{x(s)})
\text{ a.e.~on $(t_0,T)$}\}.
\end{split}}
\end{equation*}
\end{definition}
\begin{remark}
Given $x\in\mathcal{X}^L(t_0,x_0)$, the function $\wx$ is 
actually the unique element of $W_{pq}(t_0,T)$ 
that  coincides with $x$ a.e.~on $(t_0,T)$ 
(in particular, uniqueness of $\wx^\prime$ in this context
 follows
from \cite[Proposition~23.20~(a)]{ZeidlerIIA})
and also
$f^x$ is uniquely determined in  $L^2(t_0,T;H)$
(actually, even in $L^q(t_0,T;V^\ast)$), which can be seen  by setting
%\begin{align*}
%\boxed{
$f^x(t)=\wx^\prime(t)+A(t,\wx(t))$.%}
%\end{align*}
\end{remark}

%In particular, whenever $\wx=y$ a.e.~on $(t_0,T)$
%for some $y\in C([t_0,T],H)$, then $x=y$ on $[t_0,T]$
%(see Remark~\ref{R:Wpq:continuous} or \cite[Theorem~23.23~(ii)]{ZeidlerIIA} for more details).

Fix $x_\ast\in H$.  Considering $x_\ast$ as a constant function on $[0,T]$,
we set 
\begin{align*}
\text{$\Omega^L:=\mathcal{X}^L(0,x_\ast)$, $L\ge 0$, and $\Omega:=\cup_{L\ge 0} \Omega^L$.}
\end{align*}
Those path spaces will serve as our primary domains for solutions
of our path-dependent PDE. In particular, $\Omega$ can be used
as  \emph{universal path space} for all Hamiltonians $F$
satisfying the hypothesis \textbf{H}($F$) below, 
whereas the spaces $\Omega^L$ can only be used for Hamiltonians
that are $L$-Lipschitz in the gradient (in the sense of condition
\textbf{H}($F$)(ii) with $L_0$ replaced by $L$).

\begin{remark}
The sets $\mathcal{X}^L(t_0,x_0)$ are not stable under stopping, i.e.,
from $x\in\mathcal{X}^L(t_0,x_0)$ one cannot deduce
that $x(\cdot\wedge t)\in\mathcal{X}^L(t_0,x_0)$ if $t<T$.
(Recall that $s\wedge t=\min\{s,t\}$.)
\end{remark}

\begin{remark}\label{R:equivPseuoMetrics}
On $\mathcal{X}^L(t_0,x_0)$, the pseudo-metrics
\begin{align*}
(x,y)\mapsto \sup_{s\le t} \abs{x(s)-y(s)}\text{ and } 
(x,y)\mapsto \left(\int_0^t \abs{x(s)-y(s)}^2\,ds\right)^{1/2},\, t\in (t_0,T],
\end{align*}
are topologically (but not pseudo-metrically) equivalent. The non-trivial direction follows from  
 \eqref{E:CompactEndProof}.
\end{remark}

The next proposition is crucial. Note that
compactness
of the sets $\mathcal{X}^L(t_0,x_0)$ (see, e.g., \cite{TT1999NoDEA} for a more general
result in the context of evolution inclusions) 
ultimately relies on $V$ being compactly 
embedded in $H$ (Assumption~\ref{A:compact}) and its ramifications
(see, in particular, Remark~\ref{R:compact}). 
Due to its importance and for the sake of completeness,
a proof will be provided in the appendix. 
Let us also mention that compactness of solution sets of monotone
evolution inclusions is  addressed in 
\cite{Migorski96Edinburgh_compactness}, 
\cite{PS97AMO}, and
\cite{HPHandbookII}.

\begin{proposition}\label{P:compact}
The sets $\mathcal{X}^L(t_0,x_0)$ are non-empty, compact, and path-connected
in $C([0,T],H)$.
\end{proposition}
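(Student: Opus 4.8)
The plan is to prove the three assertions in turn, with a distinguished ``free'' trajectory serving both to witness non-emptiness and as a hub for path-connectedness. For \emph{non-emptiness}, I would invoke the classical existence theorem for evolution equations driven by a monotone, hemicontinuous, bounded, coercive operator---applicable thanks to \textbf{H}($A$)(i)--(v)---to solve $\wx^\prime + A(\cdot,\wx) = 0$ on $(t_0,T)$ with $\wx(t_0) = x_0(t_0)$; extending by $x_0$ on $[0,t_0]$ produces an element $x^\ast \in \mathcal{X}^L(t_0,x_0)$ with control $f^{x^\ast} = 0$, and this $x^\ast$ depends only on $x_0(t_0)$ and $A$, not on $L$.

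For \emph{compactness}, let $(x_n) \subset \mathcal{X}^L(t_0,x_0)$, with associated $\wx_n \in W_{pq}(t_0,T)$ and controls $f_n \in L^2(t_0,T;H)$. First, from coercivity \textbf{H}($A$)(v), the integration-by-parts formula (Proposition~\ref{P:parts}) and Gronwall's inequality, I would obtain a uniform bound on $\sup_t \abs{x_n(t)}$; the control bound $\abs{f_n(t)} \le L(1 + \sup_{s\le t}\abs{x_n(s)})$ then bounds $(f_n)$ in $L^2(t_0,T;H)$, and boundedness \textbf{H}($A$)(iv) together with the equation bounds $(\wx_n)$ in $W_{pq}(t_0,T)$. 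Passing to a subsequence, $\wx_n \rightharpoonup \wx$ in $W_{pq}(t_0,T)$, $f_n \rightharpoonup f$ in $L^2(t_0,T;H)$, and, by the compact embedding $W_{pq}(t_0,T) \hookrightarrow L^p(t_0,T;H)$ from Remark~\ref{R:compact}, $\wx_n \to \wx$ strongly in $L^p(t_0,T;H)$. The crucial point is to upgrade this to uniform convergence---which is not automatic, since $W_{pq}(t_0,T) \hookrightarrow C([t_0,T],H)$ is known to fail compactness: applying Proposition~\ref{P:parts} to $\wx_n - \wx_m$, discarding the monotone part via \textbf{H}($A$)(ii), and using $\wx_n(t_0) = \wx_m(t_0) = x_0(t_0)$, one gets $\sup_{t\in[t_0,T]} \abs{\wx_n(t) - \wx_m(t)}^2 \le C\,\norm{\wx_n - \wx_m}_{L^2(t_0,T;H)}$, so $(x_n)$ is Cauchy in $(C([0,T],H),\norm{\cdot}_\infty)$ and converges uniformly. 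It remains to identify the limit: writing $A(\cdot,\wx_n) = f_n - \wx_n^\prime \rightharpoonup f - \wx^\prime =: \chi$ in $L^q(t_0,T;V^\ast)$, I would run the Minty--Browder argument---starting from $\int_{t_0}^T \langle A(\cdot,\wx_n) - A(\cdot,y), \wx_n - y\rangle\,ds \ge 0$ for $y \in L^p(t_0,T;V)$, substituting $\int_{t_0}^T \langle A(\cdot,\wx_n), \wx_n\rangle\,ds = \int_{t_0}^T (f_n, \wx_n)\,ds - \tfrac12(\abs{\wx_n(T)}^2 - \abs{x_0(t_0)}^2)$, passing to the limit (here $\abs{\wx_n(T)} \to \abs{\wx(T)}$ by the uniform convergence just obtained and $(f_n,\wx_n) \to (f,\wx)$ by weak--strong convergence), then inserting $y = \wx - \lambda v$, dividing by $\lambda$, and letting $\lambda \downarrow 0$ using hemicontinuity \textbf{H}($A$)(iii) and boundedness \textbf{H}($A$)(iv)---to conclude $\chi = A(\cdot,\wx)$, i.e.\ $\wx^\prime + A(\cdot,\wx) = f$ a.e.\ on $(t_0,T)$. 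Finally, since $\sup_{s\le t}\abs{x_n(s)} \to \sup_{s\le t}\abs{x(s)}$ uniformly in $t$ and each set $\{ g \in L^2(t_0,T;H) : \abs{g(t)} \le L(1 + \sup_{s\le t}\abs{x(s)}) + \varepsilon \ \text{a.e.} \}$ is convex and strongly, hence weakly, closed, the weak convergence $f_n \rightharpoonup f$ forces $\abs{f(t)} \le L(1 + \sup_{s\le t}\abs{x(s)})$ a.e.; thus the limit (extended by $x_0$) belongs to $\mathcal{X}^L(t_0,x_0)$, and sequential compactness in the metric space $(C([0,T],H),\norm{\cdot}_\infty)$ is compactness.

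For \emph{path-connectedness}, given $x \in \mathcal{X}^L(t_0,x_0)$ with control $f^x$ and $r \in [t_0,T]$, let $x^{(r)}$ coincide with $x$ on $[0,r]$ and solve $z^\prime + A(\cdot,z) = 0$ with $z(r) = x(r)$ on $[r,T]$. Its control equals $f^x$ on $[t_0,r)$ and $0$ on $[r,T]$; since $x^{(r)} = x$ on $[t_0,r)$, the control bound is inherited there and trivial thereafter, so $x^{(r)} \in \mathcal{X}^L(t_0,x_0)$, with $x^{(T)} = x$ and $x^{(t_0)} = x^\ast$, the \emph{same} free trajectory for every $x$. I would then check that $r \mapsto x^{(r)}$ is continuous into $(C([0,T],H),\norm{\cdot}_\infty)$: for $r_n \to r$ one compares the two trajectories on the short interval between $r_n$ and $r$ by the same monotonicity energy estimate (there they carry controls $f^x$ and $0$ and agree at the left endpoint), which yields a bound of order $\abs{r - r_n}$ on the sup-distance on that interval, in particular at its right endpoint; on the remaining interval both solve the free equation, whose flow obeys $\abs{z_1(t) - z_2(t)} \le \abs{z_1(s) - z_2(s)}$ for $s \le t$ by \textbf{H}($A$)(ii), so the endpoint gap does not grow. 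Hence every $x$ is joined to $x^\ast$ by the path $r \mapsto x^{(r)}$, and concatenating two such paths joins any $x, y \in \mathcal{X}^L(t_0,x_0)$; path-connectedness follows.

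The main obstacle is the compactness step, and within it the two points where the non-compactness of $W_{pq}(t_0,T) \hookrightarrow C([t_0,T],H)$ must be circumvented: first, obtaining uniform convergence of $(x_n)$ via the monotonicity-based Cauchy estimate, which hinges on all trajectories sharing the value $x_0(t_0)$ at $t_0$; and second, feeding that uniform convergence back into the passage to the limit of the energy term $\int_{t_0}^T \langle A(\cdot,\wx_n), \wx_n\rangle\,ds$ in the Minty--Browder argument.
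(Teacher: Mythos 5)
Your proposal is correct, and for non-emptiness and compactness it follows essentially the paper's route (a-priori bounds from coercivity and Gronwall as in Lemma~\ref{L:Apriori}, weak compactness in $W_{pq}(t_0,T)$, the compact embedding into $L^p(t_0,T;H)$, a monotonicity estimate in the sup-norm, and the Minty--Browder trick), but with some genuinely different choices worth recording. First, you establish uniform convergence through a Cauchy estimate between $x_n$ and $x_m$ \emph{before} identifying the limit, which lets you pass to the limit in the energy term with $\abs{x_n(T)}\to\abs{x(T)}$; the paper (Lemma~\ref{L:CompactW}) identifies the limit equation first, using only weak lower semicontinuity of $\abs{x_n(T)}$, and proves uniform convergence against the limit afterwards --- both work. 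Second, for the state-dependent bound on the limiting control you observe that, for each $\eps>0$ and all large $n$, $f_n$ lies in the convex, strongly closed (hence weakly closed) set $\{g\in L^2(t_0,T;H):\abs{g(t)}\le L(1+\sup_{s\le t}\abs{x(s)})+\eps \text{ a.e.}\}$, exploiting the uniform convergence of the bounds; the paper's Lemma~\ref{L:Compact} instead invokes a theorem on $\overline{\mathrm{conv}}\ w$-$\varlimsup$ of weakly convergent sequences plus Banach--Steinhaus, and Remark~\ref{R:Compact} records the Mazur-type shortcut only for constant bounds --- your argument shows the shortcut also covers the state-dependent bound, which is a genuine simplification. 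The largest divergence is path-connectedness: the paper (Lemma~\ref{L:Connected}) interpolates directly between the controls $f^x$ and $f^y$ and proves continuity of $s\mapsto z_s$ using compactness, whereas you switch the control of $x$ off at time $r$ and join every trajectory to the common zero-control trajectory $x^\ast$, concatenating two such paths to join $x$ and $y$. Your construction buys that membership of the interpolating path in $\mathcal{X}^L(t_0,x_0)$ is immediate (before $r$ the path coincides with $x$, after $r$ the zero control trivially satisfies the bound), a point that for the paper's mixed-control path is not verified in its shortened proof and is delegated to the cited general result; you also get continuity of $r\mapsto x^{(r)}$ from a direct Gronwall/monotonicity estimate (only note that the estimate bounds the \emph{squared} sup-distance by a multiple of $\abs{r-r_n}$, so the modulus is of order $\abs{r-r_n}^{1/2}$, which still suffices).
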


\begin{proof}
Non-emptiness follows from \cite[Theorem~30.A, p.~771]{ZeidlerIIB},
compactness from Lemma~\ref{L:CompactW} and
Lemma~\ref{L:Compact},
and path-connectedness from Lemma~\ref{L:Connected}.
\end{proof}

As the next example shows, coercivity of $A$ is necessary for
$\mathcal{X}^L(t_0,x_0)$ to be compact in $C([0,T],H)$.

\begin{example} [$A=0$]
Let $H=\{ v\in L^2(0,2\pi): v(0)=v(2\pi)=0\}$ and $V=H^1_0(0,2\pi)$, which
is the completion of $C^\infty_0(0,2\pi)$ under $\norm{\cdot}_{H^1(0,2\pi)}$.
Consider the sequence $(x_n)_n$  in $W_{2,2}(0,1)$ defined by
\begin{align*}
x_n(t)\,(\xi):=t f_n(t)\,(\xi):= \frac{t}{\sqrt{\pi}} \sin(n\xi),\quad t\in [0,1],\, \xi\in (0,2\pi).
\end{align*}
Then $x^\prime_n(t)=f_n(t)$, $x_n(0)=0$. Clearly, $x_n\in\mathcal{X}^1(0,0)$
for every $n\in\N$. Since $x_n(1)\xrightarrow{w} 0$ in $H$
but $\abs{x_n(1)}=1$ for every $n\in\N$, the sequence
$(x_n)_n$  cannot have a subsequence that converges in $C([0,T],H)$.
Also note that $\sup_n \norm{x_n}_{C([0,1],H)}\le 1$
but $\sup_n \norm{x_n}_{L^2(0,1;V)}=\infty$, i.e., 
$(x_n)_n$ is not bounded in $W_{2,2}(0,1)$.
\end{example}

The following continuous dependence type result 
will play a central role in carrying out Perron's method
to prove our main existence result 
(see the proof of Lemma~\ref{L:u0-+SemiSol}).
It is also needed to prove stability.

\begin{proposition}\label{P:CompactII}
Let $L\ge \tilde{L}\ge 0$. %Fix $(t_0,x_0)\in [0,T)\times\Omega^L$. 
Let $(t_n,x_n)\to (t_0,x_0)$ in $[0,T)\times\Omega^L$.
Consider a sequence $(\tilde{x}_n)_n$ in $\Omega^L$ with
 $\tilde{x}_n\in\mathcal{X}^{\tilde{L}}(t_n,x_n)$,
$n\in\N$. Then $(\tilde{x}_n)_n$ has a converging subsequence 
with a limit in $\mathcal{X}^{\tilde{L}}(t_0,x_0)$.
%$(\tilde{x}_{n_k})_k$  of $(\tilde{x}_n)_n$  such that 
%$\tilde{x}_{n_k}\to \tilde{x}_0$ in $\Omega^L$ for
%some $\tilde{x}_0\in\mathcal{X}^L(t_0,x_0)$.
\end{proposition}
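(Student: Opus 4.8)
The plan is to extract a subsequence along which all the relevant objects converge and then identify the limit as an element of $\mathcal{X}^{\tilde L}(t_0,x_0)$. Write $\tilde x_n = \tilde{\mathbf{x}}_n$ a.e.\ on $(t_n,T)$ with $\tilde{\mathbf{x}}_n \in W_{pq}(t_n,T)$, $\tilde{\mathbf{x}}_n' + A(t,\tilde{\mathbf{x}}_n(t)) = f^{\tilde x_n}(t)$ a.e., $\tilde x_n = x_n$ on $[0,t_n]$, and $|f^{\tilde x_n}(t)| \le \tilde L(1+\sup_{s\le t}|\tilde x_n(s)|)$ a.e.\ on $(t_n,T)$. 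First I would establish a uniform bound: since $x_n \to x_0$ in $\Omega^L$, the $|x_n|$ are uniformly bounded on $[0,T]$, so using coercivity (\textbf{H}($A$)(v)) together with the integration-by-parts formula (Proposition~\ref{P:parts}) applied to $\tfrac12 |\tilde{\mathbf{x}}_n(t)|^2$ and the growth bound on $f^{\tilde x_n}$, a Gronwall argument gives a uniform bound on $\sup_{t\le T}|\tilde x_n(t)|$, hence on $\|f^{\tilde x_n}\|_{L^2(t_n,T;H)}$ (extending $f^{\tilde x_n}$ by $0$ on $[0,t_n]$), hence on $\|\tilde{\mathbf{x}}_n\|_{L^p(t_n,T;V)}$ via coercivity again, and finally on $\|\tilde{\mathbf{x}}_n'\|_{L^q(t_n,T;V^\ast)}$ via the boundedness condition (\textbf{H}($A$)(iv)). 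This is essentially the same a priori estimate used to prove Proposition~\ref{P:compact}, so I would cite the appendix lemmas (Lemma~\ref{L:CompactW}, Lemma~\ref{L:Compact}) wherever possible rather than redo it.

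Next I would pass to the limit. After extracting a subsequence (not relabeled), I would arrange: $f^{\tilde x_n} \rightharpoonup g$ weakly in $L^2(0,T;H)$; $\tilde{\mathbf{x}}_n \rightharpoonup \mathbf{z}$ weakly in $L^p(0,T;V)$ (say, with all functions considered on the common interval $(\max_n t_n - \text{something}, T)$ — more cleanly, work on $(\tau,T)$ for $\tau < \inf_n t_n$ using that $\tilde x_n = x_n \to x_0$ there, or simply extend all functions to $(0,T)$ using the $x_n$ on the initial segment); and, crucially, $\tilde x_n \to x_\infty$ in $C([0,T],H)$ along a further subsequence. The last convergence is exactly the point where Assumption~\ref{A:compact} enters: by Remark~\ref{R:compact} the embedding $W_{pq} \hookrightarrow L^p(0,T;H)$ is compact, giving strong $L^p(0,T;H)$ convergence; combined with the uniform bound and equicontinuity coming from the $W_{pq}$-bound on the tails plus the initial-segment convergence, one upgrades to uniform convergence (this is the content of the compactness lemmas; cf.\ also Remark~\ref{R:equivPseuoMetrics}). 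Then $x_\infty = \mathbf{z}$ a.e.\ on the relevant interval, $x_\infty = x_0$ on $[0,t_0]$ (since $\tilde x_n = x_n$ on $[0,t_n]$ and $t_n \to t_0$, $x_n \to x_0$), and $\mathbf{z} \in W_{pq}(t_0,T)$.

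Then I would identify the limit equation. Strong convergence $\tilde x_n(t) \to x_\infty(t)$ for a.e.\ $t$, together with the Lipschitz-type growth bound $|f^{\tilde x_n}(t)| \le \tilde L(1+\sup_{s\le t}|\tilde x_n(s)|)$, passes to the limit pointwise a.e.\ to give $|g(t)| \le \tilde L(1+\sup_{s\le t}|x_\infty(s)|)$ a.e.\ on $(t_0,T)$ (using uniform convergence to handle the $\sup_{s\le t}$ and, say, Fatou / a.e.\ convergence for the left side; weak lower semicontinuity of the $L^2$ norm also suffices for the needed bound). To see $\mathbf{z}' + A(\cdot,\mathbf{z}(\cdot)) = g$, I would use the standard monotonicity (Minty) trick: from $\tilde{\mathbf{x}}_n' = f^{\tilde x_n} - A(\cdot,\tilde{\mathbf{x}}_n)$ and the bounds, $A(\cdot,\tilde{\mathbf{x}}_n) \rightharpoonup \chi$ weakly in $L^q(0,T;V^\ast)$ for some $\chi$, so $\mathbf{z}' = g - \chi$; then $\limsup_n \int \langle A(t,\tilde{\mathbf{x}}_n),\tilde{\mathbf{x}}_n\rangle \le \int \langle g - \mathbf{z}', \mathbf{z}\rangle$ via integration-by-parts (Proposition~\ref{P:parts}) and the strong $L^p(0,T;H) = $ (hence the $\langle f^{\tilde x_n},\tilde{\mathbf{x}}_n\rangle$ term converges) convergence plus weak convergence of $\tilde{\mathbf{x}}_n(T)$ handled by the $C([0,T],H)$ convergence; combined with monotonicity and hemicontinuity (\textbf{H}($A$)(ii),(iii)) this yields $\chi = A(\cdot,\mathbf{z})$. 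Hence $x_\infty \in \mathcal{X}^{\tilde L}(t_0,x_0)$ with data $\mathbf{x} = \mathbf{z}$ and $f^{x_\infty} = g$.

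The main obstacle is the upgrade from weak/$L^p$ convergence to uniform convergence in $C([0,T],H)$ together with the fact that the interval $(t_n,T)$ varies with $n$; these two issues interact, since the Aubin--Lions-type compactness and the equicontinuity estimate must be made uniform in the starting time $t_n$. I expect this to be largely absorbed by the appendix lemmas cited in Proposition~\ref{P:compact}, applied on a fixed interval $(\tau,T)$ with $\tau < \inf_n t_n$ after extending all trajectories by the convergent data $x_n$; the remaining work is the Minty argument, which is routine given \textbf{H}($A$). A minor point to handle carefully is that the limit's growth bound must hold with the \emph{same} constant $\tilde L$, which is immediate from pointwise passage to the limit since $\tilde L$ does not change along the sequence.
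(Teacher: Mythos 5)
Your setup is mostly sound, and in fact much of it can be short-circuited: since by hypothesis every $\tilde{x}_n$ lies in $\Omega^L=\mathcal{X}^L(0,x_\ast)$, compactness of this set (Proposition~\ref{P:compact}, via Lemmas~\ref{L:CompactW} and \ref{L:Compact}) already gives a subsequence converging in $C([0,T],H)$ to some $\tilde{x}_0\in\Omega^L$, together with $f^{\tilde{x}_{n}}\xrightarrow{w} f^{\tilde{x}_0}$ in $L^2(0,T;H)$; there is no need to redo the a priori estimates, the Aubin--Lions step, or the Minty argument on varying intervals, and the only remaining work is to show $\tilde{x}_0=x_0$ on $[0,t_0]$ and the bound with the sharper constant $\tilde{L}$ on $(t_0,T)$. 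Your identification of the initial condition is fine (for fixed $t<t_0$ one eventually has $t_n>t$, hence $\tilde{x}_n(t)=x_n(t)$, and one passes to the limit using the uniform convergence already obtained), and is if anything simpler than the paper's Gronwall-type estimate.

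The genuine gap is in the step transferring the pointwise bound with constant $\tilde{L}$ to the limit forcing term. You assert that $\abs{g(t)}\le \tilde{L}(1+\sup_{s\le t}\abs{x_\infty(s)})$ a.e.\ on $(t_0,T)$ follows by ``Fatou / a.e.\ convergence for the left side'' or by ``weak lower semicontinuity of the $L^2$ norm.'' Neither works as stated: the convergence $f^{\tilde{x}_n}\xrightarrow{w} g$ is only weak in $L^2$, so there is no a.e.\ convergence of $f^{\tilde{x}_n}(t)$ to $g(t)$ for Fatou to act on, and weak lower semicontinuity of the norm yields only the integral bound $\norm{g}_{L^2}\le\varliminf_n\norm{f^{\tilde{x}_n}}_{L^2}$, which does not imply the pointwise a.e.\ bound required by the definition of $\mathcal{X}^{\tilde{L}}(t_0,x_0)$. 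This pointwise transfer is precisely the crux of the proposition and needs an additional tool: either Mazur's lemma applied to the tails (strongly, hence a.e.\ along a subsequence, convergent convex combinations, as in Remark~\ref{R:Compact}), or a localization at Lebesgue points of $g$, or, as in the paper, the inclusion $g(t)\in\overline{\mathrm{conv}}\;w\text{-}\varlimsup\{f^{\tilde{x}_n}(t)\}$ a.e.\ (from \cite{P90JMAAOnAPaper}) combined with Banach--Steinhaus, which gives $\abs{h}\le\varliminf_k\abs{f^{\tilde{x}_{n_k}}(t)}$ for every weak cluster point $h$. Moreover, when $t_n\downarrow t_0$ the $\tilde{L}$-bound is available at a given $t\in(t_0,T)$ only for those indices $n$ with $t_n<t$ (for the others only the weaker $L$-bound holds near $t$), so the limiting argument must be carried out along these tails only; this is exactly what the paper's case distinction and the sets $E_1$, $E_2$ handle, and your device of extending $f^{\tilde{x}_n}$ by zero on $[0,t_n]$ does not by itself resolve it.
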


\begin{proof}
By Proposition~\ref{P:compact}, $(\tilde{x}_n)_n$ has a subsequence, which, with slight abuse of notation
is still denoted by
$(\tilde{x}_n)_n$, that converges to some $\tilde{x}_0\in\Omega^L$. 
To show that $\tilde{x}_0\in\mathcal{X}^{\tilde{L}}(t_0,x_0)$ we need to verify that
\begin{align}\label{E:CompactII:1}
\tilde{x}_0&=x_0&&\text{on $[0,t_0]$ and that}\\ \label{E:CompactII:2}
\abs{f^{\tilde{x}}(t)}&\le \tilde{L}(1+\sup_{s\le t}\abs{\tilde{x}(s)})&&\text{a.e.~on $(t_0,T)$.} 
\end{align}

First, we show \eqref{E:CompactII:1}.
To this end, let $t\in [0,t_0]$. 
Since, for every $n\in\N$,
\begin{align*}
\abs{\tilde{x}_0(t)-x_0(t)}\le\abs{\tilde{x}_0(t)-\tilde{x}_n(t)}+
\abs{\tilde{x}_n(t)-x_n(t)}+\abs{x_n(t)-x(t)}
\end{align*}
and $\abs{\tilde{x}_0(t)-\tilde{x}_n(t)}+\abs{x_n(t)-x(t)}\to 0$ as $n\to\infty$, we only need to deal
with the term $\abs{\tilde{x}_n(t)-x_n(t)}$. Note that in the case $t_n\ge t$, this term equals $0$
because $\tilde{x}_n\in\mathcal{X}^{\tilde{L}}(t_n,x_n)$. Thus, we shall assume  from now on that $t_n<t$.
By the a-priori estimates from Lemma~\ref{L:Apriori}
applied to $\Omega^L=\mathcal{X}^L(0,x_\ast)$, 
there exists a $C\ge 0$ independent from $n$
such that
\begin{align*}
\abs{\tilde{x}_n(t)-x_n(t)}^2&\le \abs{x_n(t_n)-x_n(t_n)}^2+
\int_{t_n}^t \abs{f^{\tilde{x}_n}(s)-f^{x_n}(s)} \cdot
\abs{\tilde{x}_n(s)-x_n(s)}\,ds\\
&\le 4\tilde{L}(1+C)C(t_0-t_n).
\end{align*}
%(The first inequality here can be shown easily; see, e.g.,
%\cite[Section~2.2]{Liu11NA_locMon}.)
 Letting $n\to\infty$  yields \eqref{E:CompactII:1}.
 
 Next, we show  \eqref{E:CompactII:2}. Keep in mind that
 $f^{\tilde{x}_n}\xrightarrow{w} f^{\tilde{x}}$ in $L^2(0,T;H)$ and that
 \begin{align*}
 \abs{f^{\tilde{x}}(t)}&\le L(1+\sup_{s\le t}\abs{\tilde{x}(s)}) &&\text{a.e.~on $(0,T)$,}\\
 \abs{f^{\tilde{x}_n}(t)}&\le L(1+\sup_{s\le t}\abs{\tilde{x}_n(s)}) &&\text{a.e.~on $(0,t_n)$,}\\
  \abs{f^{\tilde{x}_n}(t)}&\le \tilde{L}(1+\sup_{s\le t}\abs{\tilde{x}_n(s)}) &&\text{a.e.~on $(t_n,T)$.}
 \end{align*}
 Without loss of generality, we distinguish between the following two cases.
 
 \textit{Case~1:} Let $t_n\uparrow t_0$. In this case, one can proceed as in the proof
 of Lemma~\ref{L:Compact}. 
 
 \textit{Case~2:} Let $t_n\downarrow t_0$. Following the lines of the proof of 
  of Lemma~\ref{L:Compact}, we can infer that
  \begin{align*}
  f^{\tilde{x}}(t)\in\overline{\mathrm{conv}}
  \text{ $w$-$\varlimsup$} \{f^{\tilde{x}_n}(t)\}_{n\in\N}=:\tilde{E_1}\qquad\text{a.e.~on $(t_0,T)$.}
\end{align*}
Let $t\in (f^{\tilde{x}})^{-1}(\tilde{E}_1)\cap (t_0,T)=:E_1$. Then there exists an $n_0\in\N$ such that,
for all $n\ge n_0$, we have $t_0\le t_n< t$. Moreover, suppose that, in addition, we have
\begin{align}\label{E:CompactII:3}
t\in \bigcap_{n\ge n_0} \left\{s\in (t_0,T):\,\abs{f^{\tilde{x}_n}(s)}\le \tilde{L} (1+\sup_{r\le s} 
\abs{\tilde{x}_n(r)})\right\}=:E_2.
\end{align}
Next, let $h\in\text{ $w$-$\varlimsup$} \{f^{\tilde{x}_n}(t)\}_{n\in\N}$.
Then there exists a subsequence $(f^{\tilde{x}_{n_k}}(t))_k$ of $(f^{\tilde{x}_{n}}(t))_n$
that converges weakly to $h$ in $H$.  Thus, by \cite[Proposition~21.23 (c), p.~258]{ZeidlerIIA} (Banach-Steinhaus), by \eqref{E:CompactII:3},
   and since $\tilde{x}_n\to \tilde{x}$ in $C([0,T],H)$, %and by \eqref{E:WC4.1},
 \begin{align*}
 \abs{h}\le \varliminf\limits_k\abs{f^{\tilde{x}_{n_k}}(t)} 
 \le \varliminf\limits_k  \tilde{L}(1+\sup_{s\le t} \abs{\tilde{x}_{n_{k}}(s)})
 =\tilde{L}(1+\sup_{s\le t} \abs{\tilde{x}(s)}).
 \end{align*}
Since  $(E_1\cap E_2)^c$ is a null set, we can deduce that \eqref{E:CompactII:2} holds.
%This concludes the proof.
\end{proof}

We shall also need the following result concerning evolution equations 
with more general right-hand side than in \cite{ZeidlerIIB}.

\begin{proposition}\label{P:EU_ODE}
Let $(t_0,x_0)\in [0,T)\times C([0,T],H)$ and $L\ge 0$. 
Consider the initial-value problem
\begin{equation}\label{E:IVP_ODE}
\begin{split}
\wx^\prime(t)+A(t,\wx(t))=f(t,x)\quad\text{a.e.~on $(t_0,T)$,}\quad
{x}=x_0\quad\text{on $[0,t_0]$,}
\end{split}
\end{equation}
where $f:[t_0,T]\times C([0,T],H)\to H$ is non-anticipating
and measurable.

(i) If, for a.e.~$t\in (t_0,T)$, the mapping $x\mapsto f(t,x)$, $C([0,T],H)\to H$, is
continuous, % and  every
%$x\in C([0,T],H)$, $f(t,\cdot)\in C( C([0,T],H),H)$
and if, for a.e~$t\in (0,T)$ and every $x\in C([0,T],H)$,
\begin{align*}
 \abs{f(t,x)}\le L(1+\sup_{s\le t} \abs{x(s)}),
 \end{align*}
 then
\eqref{E:IVP_ODE} has a solution in $\mathcal{X}^L(t_0,x_0)$.

(ii) If, in addition to the assumptions in (i), there exists an $l\ge 0$ such that,
for a.e.~$t\in (t_0,T)$ and every $x$, $y\in\mathcal{X}^L(t_0,x_0)$,
\begin{align*}
\abs{f(t,x)-f(t,y)}\le l \sup_{s\le t}\abs{x(s)-y(s)},
\end{align*}
then \eqref{E:IVP_ODE} has a unique solution in $\mathcal{X}^L(t_0,x_0)$.
\end{proposition}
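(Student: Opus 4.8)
The plan is to treat \eqref{E:IVP_ODE} as a fixed-point problem for the solution operator of the linear (non-delay) evolution equation. Given $y \in \mathcal{X}^L(t_0,x_0)$, set $g_y(t) := f(t,y)$; then $g_y \in L^2(t_0,T;H) \subseteq L^q(t_0,T;V^\ast)$ with $\abs{g_y(t)} \le L(1+\sup_{s\le t}\abs{y(s)})$, and by \cite[Theorem~30.A]{ZeidlerIIB} there is a unique $\wx \in W_{pq}(t_0,T)$ solving $\wx' + A(t,\wx) = g_y$ a.e.\ on $(t_0,T)$ with $\wx(t_0)=x_0(t_0)$; extending by $x_0$ on $[0,t_0]$ gives an element $\Phi(y)$ of $C([0,T],H)$. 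The first task is to check that $\Phi$ maps $\mathcal{X}^L(t_0,x_0)$ into itself: since $f$ is non-anticipating, $\sup_{s\le t}\abs{g_y(s)}$ depends only on $y$ up to time $t$, so using the a-priori estimates (Lemma~\ref{L:Apriori}) together with the bound on $\abs{g_y}$ one gets $\abs{f^{\Phi(y)}(t)} = \abs{g_y(t)} \le L(1+\sup_{s\le t}\abs{y(s)})$, but to land in $\mathcal{X}^L(t_0,x_0)$ we need this bounded by $L(1+\sup_{s\le t}\abs{\Phi(y)(s)})$. This is the point that needs care, and I expect it to be the main obstacle.

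For part (i), rather than forcing the self-map property directly, I would instead prove existence via a compactness/Schauder-type argument on a suitable larger compact convex set, or — cleaner in this variational setting — via a Galerkin approximation combined with the a-priori estimates and the monotonicity/hemicontinuity machinery already invoked for Proposition~\ref{P:compact}: construct approximate solutions $\wx_n$ solving the equation with $f(t, \cdot)$ evaluated along a fixed-point iteration or along $\wx_{n-1}$, extract via Proposition~\ref{P:compact} (or its constituent lemmas, Lemma~\ref{L:CompactW} and Lemma~\ref{L:Compact}) a subsequence $\wx_n \to \wx_0$ in $C([0,T],H)$ with $f^{\wx_n} \xrightarrow{w} f^{\wx_0}$ in $L^2(t_0,T;H)$, then pass to the limit: continuity of $x \mapsto f(t,x)$ and $\wx_n \to \wx_0$ uniformly give $f(t,\wx_n) \to f(t,\wx_0)$ pointwise, and dominated convergence (the bound $L(1+\sup_s\abs{\wx_n(s)})$ is uniform by the a-priori estimates) upgrades this to $L^2(t_0,T;H)$-convergence, identifying $f^{\wx_0}(t) = f(t,\wx_0)$. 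Demiclosedness of the maximal monotone part (the standard argument: Minty's trick via hemicontinuity (iii) and monotonicity (ii)) then shows $\wx_0' + A(t,\wx_0(t)) = f(t,\wx_0)$ a.e., and the pointwise bound on $\abs{f^{\wx_0}(t)}$ — now with $\sup_{s\le t}\abs{\wx_0(s)}$ on the right, since $\wx_n \to \wx_0$ uniformly — certifies $\wx_0 \in \mathcal{X}^L(t_0,x_0)$.

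For part (ii), under the additional Lipschitz hypothesis on $f$ I would run a Picard iteration on $\mathcal{X}^L(t_0,x_0)$, or directly compare two solutions $x, y \in \mathcal{X}^L(t_0,x_0)$: subtracting the equations, pairing with $\wx - \wy$, using monotonicity (ii) to discard the $A$-terms, and applying Proposition~\ref{P:parts} gives
\begin{align*}
\abs{\wx(t)-\wy(t)}^2 \le 2\int_{t_0}^t \abs{f(s,x)-f(s,y)}\,\abs{\wx(s)-\wy(s)}\,ds \le 2l\int_{t_0}^t \Big(\sup_{r\le s}\abs{x(r)-y(r)}\Big)\abs{\wx(s)-\wy(s)}\,ds,
\end{align*}
whence, taking suprema and using $\sup_{r\le s}\abs{\wx(r)-\wy(r)}$ in place of $\abs{\wx(s)-\wy(s)}$, a Gronwall argument (after squaring out, via $2ab \le a^2 + b^2$, or directly on $\phi(t):=\sup_{s\le t}\abs{\wx(s)-\wy(s)}^2$) forces $x=y$. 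Existence of the unique solution then follows by combining this contraction-type estimate with part (i), or by checking that the Picard iterates form a Cauchy sequence in $C([t_0,T],H)$ with the weighted sup-norm and that their limit lies in $\mathcal{X}^L(t_0,x_0)$ by the closedness argument from part (i).
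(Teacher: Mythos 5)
Your part (ii) (uniqueness by subtracting the two equations, discarding the $A$-terms by monotonicity, integration by parts and Gronwall) is correct and is exactly the paper's argument. The gap is in part (i). You correctly identify the obstacle — the solution operator $\Phi$ need not map $\mathcal{X}^L(t_0,x_0)$ into itself because the bound on $f(\cdot,y)$ involves $\sup_{s\le t}\abs{y(s)}$ rather than $\sup_{s\le t}\abs{\Phi(y)(s)}$ — but the concrete scheme you then propose does not resolve it. If the approximations solve $\wx_n^\prime+A(t,\wx_n)=f(t,x_{n-1})$ (successive approximation), then two things break. First, you cannot invoke Proposition~\ref{P:compact}: the iterates satisfy $\abs{f^{x_n}(t)}\le L(1+\sup_{s\le t}\abs{x_{n-1}(s)})$, not the defining inequality of $\mathcal{X}^L(t_0,x_0)$, which is precisely the self-map problem again (a uniform constant bound can be salvaged by a Gronwall-type induction, but that is not what you wrote). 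Second, and more seriously, the limit identification fails: along a subsequence $x_{n_k}\to x_0$ the right-hand sides are $f(\cdot,x_{n_k-1})$, and the shifted subsequence $(x_{n_k-1})_k$ need not converge to $x_0$ (or converge at all), so the limit equation reads $\wx_0^\prime+A(t,\wx_0)=f(t,\tilde x)$ with possibly $\tilde x\neq x_0$. Your step ``continuity of $x\mapsto f(t,x)$ and $\wx_n\to\wx_0$ give $f(t,\wx_n)\to f(t,\wx_0)$, identifying $f^{\wx_0}=f(\cdot,\wx_0)$'' tacitly assumes the $n$-th equation already has right-hand side $f(\cdot,x_n)$, i.e.\ that $x_n$ is a fixed point — which is circular. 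Under the mere continuity hypothesis of (i) there is no contraction to force consecutive iterates to the same limit, so successive approximation plus compactness cannot close the argument; likewise, a genuine Galerkin scheme would require redoing the Aubin--Lions/Minty machinery for the projected equations (Proposition~\ref{P:compact} and Lemma~\ref{L:CompactW} apply to exact solutions of $\wx^\prime+A\wx=f^x$, not to Galerkin approximations) and an existence argument for the finite-dimensional path-dependent systems, none of which is sketched.

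Your first, undeveloped alternative (``Schauder on a suitable larger compact set'') is the right idea and is how the paper proceeds, but the construction that makes it work is the missing content: truncate $f$ at level $M$ (replace $x$ by $\tfrac{M}{\sup_{s\le t}\abs{x(s)}}\,x$ when $\sup_{s\le t}\abs{x(s)}>M$), so the right-hand side is bounded by the \emph{constant} $L(1+M)$ and the solution operator $\Gamma^M$ maps all of $C([0,T],H)$ into the compact set of trajectories with constant bound $L(1+M)$; prove continuity of $\Gamma^M$ via monotonicity of $A$, integration by parts and dominated convergence; apply Schauder to get a fixed point $x^M$, which automatically lies in $\mathcal{X}^L(t_0,x_0)$ since $\abs{f^M(t,x^M)}\le L(1+\sup_{s\le t}\abs{x^M(s)})$; finally let $M\to\infty$, using compactness of $\mathcal{X}^L(t_0,x_0)$ and the fact that for large $M$ the truncation is inactive, to obtain a solution of \eqref{E:IVP_ODE}. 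Without this (or an equivalent device), part (i) of your plan does not go through.
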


\begin{proof}
(i) First, let $M\in\N$ and define $f^M:[t_0,T]\times C([0,T],H)\to H$ by
\begin{align*}
f^M(t,x):=\begin{cases}
f(t,x) &\text{ if $\sup_{s\le t}\abs{x(s)}\le M$,}\\
f\left(t,\frac{M}{\sup_{s\le t}\abs{x(s)}}\, x\right)
&\text{ if $\sup_{s\le t}\abs{x(s)}> M$.}
\end{cases}
\end{align*}
{\color{black}
Next, consider the set
\begin{equation*}
\begin{split}
\tilde{\mathcal{X}}^{L(1+M)}(t_0,x_0):=\{x\in C([0,T],H):\,  
\text{$\exists \wx\in W_{pq}(t_0,T):\exists f^x\in L^2(t_0,T;H)$}:\\
 \mathbf{x}^\prime(t)+A(t,\mathbf{x}(t))=f^x(t) \text{ a.e.~on $(t_0,T)$, }
x=x_0\text{ on $[0,t_0]$, }\\
 \qquad
 x=\wx\text{ a.e.~on $(t_0,T)$, and }
 \abs{f^x(t)}\le L(1+M)
\text{ a.e.~on $(t_0,T)$}\}.
\end{split}
\end{equation*}
and define a  mapping  
\begin{align*}
\Gamma^M: C([0,T],H) \to C([0,T],H),\quad x\mapsto \Gamma^M x=y\in
\mathcal{X}^{L(1+M)}(t_0,x_0),
\end{align*}
 by
\begin{align*}
\wy^\prime(t)+A(t,\wy(t))=f^M(t,x)\quad\text{a.e.~on $(t_0,T)$},\quad
y=x_0\quad\text{ on $[0,t_0]$}.
\end{align*}
This is possible according to  \cite[Theorem~30.A, p.~771]{ZeidlerIIB}.
Note that $\tilde{\mathcal{X}}^{L(1+M)}(t_0,x_0)$ is compact (the proof is similar 
and actually slightly easier than the proof of Proposition~\ref{P:compact}).
Moreover, $\Gamma^M$ is continuous as $x_n\to x$ in $C([0,T],H)$ implies,
for  $y_n:=\Gamma x_n$, $y:=\Gamma x$, and a.e.~$t\in (t_0,T)$, 
\begin{align*}
&\langle (\wy-\wy_n)^\prime(t),(y-y_n)(t)\rangle\\
&\qquad =-\langle A(t,\wy(t))-A(t,\wy_n(t)),
(\wy-\wy_n)(t)\rangle+ (f(t,x)-f(t,x_n),(y-y_n)(t))\\
&\qquad \le C\abs{f(t,x)-f(t,x_n)}
\end{align*}
thanks to the monotonicity of $A$
and thus (cf.~part (ii) of this proof)
\begin{align*}
\norm{\Gamma^M x-\Gamma^M x_n}^2_\infty \le \int_{t_0}^T 2C \abs{f^M(t,x)-f^M(t,x_n)}\,dt
\to 0
\end{align*}
by Proposition~\ref{P:parts} and the dominated convergence theorem. Here, $C$  is the constant from 
standard a-priori estimates similarly to the ones of Lemma~\ref{L:Apriori}.
Thus, by Schauder's fixed point theorem, there exists a   fixed point 
$x^M$
%\in \mathcal{X}^{L(1+M)}(t_0,x_0)$
 of $\Gamma^M$.
Moreover, note that  $x^M\in\mathcal{X}^L(t_0,x_0)$
because, for  a.e.~$t\in (t_0,T)$, 
\begin{align*}
\abs{f^M(t,x^M)}\le L(1+\sup_{s\le t} \abs{x^M(s)}).
\end{align*}}
Hence, we can apply Proposition~\ref{P:compact} to deduce that
$(x^M)_{M}$ has a subsequence, which we still denote by $(x^M)_{M}$, that
converges to some  $x\in\mathcal{X}^{L}(t_0,x_0)$. % as $M\to\infty$.
Furthermore, by Lemma~\ref{L:CompactW},
  $f^M(\cdot,x^M)\xrightarrow{w} f^x$ in $L^2(t_0,T;H)$.
%Put $M_x:=\norm{x}_\infty+1$.
 Since, for sufficiently large $M$,
$\norm{x^M}_\infty\le \norm{x}_\infty+1\le M$ and   %and thus, if also $M\ge M_x$,
%\begin{align*}
$f^{M}(\cdot,x^M)%=f^{M_x}(\cdot,x^M)
=f(\cdot,x^M)$,
%\end{align*}
 it follows
that $f^M(t,x^M)\to f(t,x)$ for a.e.~$t\in (t_0,T)$. Therefore,
we can conclude that
$f^x=f(\cdot,x)$, i.e., we have proven existence.

(ii) Let $x$ and $y$ be solutions of \eqref{E:IVP_ODE} in $\mathcal{X}^L(t_0,x_0)$. Put $z:=x-y$.
Then, for a.e.~$t\in (t_0,T)$, by  monotonicity of $A$,
\begin{align*}
{\color{black}\langle\wz^\prime(t),z(t)\rangle}&=- \langle A(t,\wx(t))-A(t,\wy(t)),\wz(t)\rangle
+(f(t,x)-f(t,y),z(t))
\le l\,m(t)^2,
\end{align*}
where $m(t):=\sup_{s\le t} \abs{z(s)}$. %, $t\in [t_0,T]$. 
Using integration-by-parts (Proposition~\ref{P:parts}) yields
%\begin{align*}
$\frac{1}{2}\frac{d}{dt}\abs{z(t)}^2\le l\,m(t)^2$,
%\end{align*}
which we integrate  to obtain
%\begin{align*}
${\color{black}
\abs{z(t)}^2\le \int_{t_0}^t 2 l\,m(s)^2\,ds}$. %\quad t\in [t_0,T].
%\end{align*}
By Gronwall's inequality, $z=0$. This concludes the proof.
\end{proof}

\subsection{Chain rule and standard derivatives}
This section
serves as motivation for the path derivatives in the following section.
The results here  will also be used in the treatment of differential games.

 A corresponding result in the second-order case in a setting close to ours can be found
in  \cite[Section~5.4.3]{Nisio_book}. In particular, slight variants of
the conditions ($F_0$) and ($F_1$)
from \cite[pp.~186 and 190]{Nisio_book} 
found their way into the following  definition.

\begin{definition}\label{D:C111}
Denote by $\mathcal{C}_V^{1,1,1}([0,T]\times\R\times H)$ the set of all functions 
$\psi=\psi(t,\xi,x)\in C^{1,1,1}([0,T]\times\R\times H)$ such that
the following holds:

(i)  For a.e.~$t\in (0,T)$ and every $\xi\in\R$,
\begin{align*}
x\in V\text{ implies } \psi_x(t,\xi,x)\in V.
\end{align*}

(ii) The functions $\abs{\psi_t}$ and $\abs{\psi_\xi}$ %, and $\abs{\psi_x}$ 
are bounded
on bounded subsets of $[0,T]\times\R\times H$.

(iii) There exists a constant $L_\psi\ge 0$ such that, 
for every $x_1$, $x_2\in V$,
\begin{align*}
\norm{\psi_x(t,\xi,x_1)-\psi_x(t,\xi,x_2)}\le L_{\psi} \norm{x_1-x_2}
\text{ and } \norm{\psi_x(t,\xi,x_1)}\le L_\psi(1+\norm{x_1})
\end{align*}
%\{v\in V:\abs{v}\le R\}$.
\end{definition}

\begin{proposition}\label{P:ChainRule}
If $\psi\in\mathcal{C}_V^{1,1,1}([0,T]\times\R\times H)$, then
\begin{align*}
&\psi(t_2,\xi(t_2),x(t_2))-\psi(t_1,\xi(t_1),x(t_1))\\&\qquad =
\int_{t_1}^{t_2} \psi_t(t,\xi(t),x(t))+
\psi_\xi(t,\xi(t),x(t))\,\xi^\prime(t)+
\langle\mathbf{x}^\prime(t),\psi_x(t,\xi(t),\mathbf{x}(t))\rangle\,dt 
\end{align*}
for every $\xi\in C^1([0,T])$, $\mathbf{x}\in W_{pq}(0,T)$,
$x\in C([0,T],H)$ with $\wx=x$ a.e.~on $(0,T)$,  and $t_1$, $t_2\in [0,T]$ with $t_1\le t_2$. 
\end{proposition}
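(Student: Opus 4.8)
The plan is to reduce the claim to the classical integration-by-parts formula (Proposition~\ref{P:parts}) via a double regularization: first smooth out the $H$-valued path $\mathbf{x}$, and then pass to the limit. Concretely, I would first establish the formula when $\mathbf{x}$ is replaced by a function $\mathbf{x}_\varepsilon$ that lies in $C^1([0,T],V)$ (e.g., a Steklov/mollification average of $\mathbf{x}$, possibly composed with a cutoff so the approximants stay in a bounded set of $W_{pq}(0,T)$), and then let $\varepsilon\downarrow 0$. For smooth $V$-valued paths, the chain rule is a genuine finite-dimensional-type computation: the map $t\mapsto\psi(t,\xi(t),\mathbf{x}_\varepsilon(t))$ is absolutely continuous, and since $\psi\in C^{1,1,1}$ with $\psi_x(t,\xi,x)\in V$ when $x\in V$ (Definition~\ref{D:C111}(i)), one has $\frac{d}{dt}\psi(t,\xi(t),\mathbf{x}_\varepsilon(t)) = \psi_t + \psi_\xi\,\xi' + \langle \mathbf{x}_\varepsilon'(t),\psi_x(t,\xi(t),\mathbf{x}_\varepsilon(t))\rangle$, where the last pairing is the $V$--$V^\ast$ duality pairing evaluated on the pair $(\mathbf{x}_\varepsilon'(t),\psi_x(\cdots))\in V^\ast\times V$ (using $\langle x,y\rangle=(x,y)$ for $x\in H$, $y\in V$ to be consistent with the $H$ inner product where both sides make sense). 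Integrating this identity over $[t_1,t_2]$ gives the formula for $\mathbf{x}_\varepsilon$.

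Next I would pass to the limit $\varepsilon\downarrow 0$. The left-hand side converges because $\mathbf{x}_\varepsilon(t)\to \mathbf{x}(t)$ in $H$ for every (or a.e.) $t$ — in particular at $t_1,t_2$ — and $\psi$ is continuous on $[0,T]\times\R\times H$. For the right-hand side, the first two terms converge by dominated convergence: $\psi_t$ and $\psi_\xi$ are bounded on bounded sets (Definition~\ref{D:C111}(ii)), $\xi'$ is continuous hence bounded, and $\mathbf{x}_\varepsilon(t)\to\mathbf{x}(t)$ in $H$ with $\sup_\varepsilon\|\mathbf{x}_\varepsilon\|_{C([0,T],H)}<\infty$. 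The delicate term is $\int_{t_1}^{t_2}\langle\mathbf{x}_\varepsilon'(t),\psi_x(t,\xi(t),\mathbf{x}_\varepsilon(t))\rangle\,dt$. Here I would arrange the mollification so that $\mathbf{x}_\varepsilon'\to\mathbf{x}'$ strongly in $L^q(0,T;V^\ast)$ (standard for Steklov averages of $L^q$ functions) while $\psi_x(\cdot,\xi(\cdot),\mathbf{x}_\varepsilon(\cdot))\to\psi_x(\cdot,\xi(\cdot),\mathbf{x}(\cdot))$ in $L^p(0,T;V)$ — this last convergence uses the Lipschitz bound $\|\psi_x(t,\xi,x_1)-\psi_x(t,\xi,x_2)\|\le L_\psi\|x_1-x_2\|$ together with $\mathbf{x}_\varepsilon\to\mathbf{x}$ in $L^p(0,T;V)$ (which holds for mollifications of the $L^p(0,T;V)$-function $\mathbf{x}$), plus the linear growth bound $\|\psi_x(t,\xi,x_1)\|\le L_\psi(1+\|x_1\|)$ to get an $L^p$-dominating function and justify dominated convergence at the Bochner-integral level. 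A pairing of a strongly-$L^q(V^\ast)$-convergent sequence with a strongly-$L^p(V)$-convergent sequence converges, which is what is needed.

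The main obstacle is the compatibility of function spaces in that last integral — exactly the second difficulty flagged in the introduction. The quantity $\langle\mathbf{x}'(t),\psi_x(t,\xi(t),\mathbf{x}(t))\rangle$ only makes sense because, for the paths at hand, $\mathbf{x}(t)\in V$ for a.e.~$t$ (so $\psi_x\in V$ by Definition~\ref{D:C111}(i)) and $\mathbf{x}'(t)\in V^\ast$; the regularization must respect this, i.e., the approximants $\mathbf{x}_\varepsilon$ must be $V$-valued and their derivatives $V^\ast$-valued with the right convergences, and one must check that the pointwise identity for $\mathbf{x}_\varepsilon$ is legitimately an identity of the $V$--$V^\ast$ pairing rather than of an $H$-inner product (they agree on the overlap, so it is enough to track which space each factor lives in). An alternative, cleaner route that avoids explicit mollification: first prove the formula when $\psi_x$ is additionally assumed to be, say, affine or polynomial in $x$ (where $t\mapsto\psi(t,\xi(t),\mathbf{x}(t))$ can be differentiated directly using Proposition~\ref{P:parts} applied to $\mathbf{x}$ paired with fixed elements of $V$), and then approximate a general $\psi\in\mathcal{C}_V^{1,1,1}$ by such $\psi$'s in the $C^{1,1,1}$-sense on bounded sets while controlling $L_\psi$; this shifts the limiting argument from the path to the nonlinearity but relies on the same growth/Lipschitz bounds in Definition~\ref{D:C111}(ii)--(iii). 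Either way, once the smooth case is in hand, the passage to the limit is routine given those two structural conditions.
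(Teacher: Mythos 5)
Your proposal is correct and follows essentially the same route as the paper's proof: approximate $\mathbf{x}$ by smooth $V$-valued paths (the paper simply invokes density of $C^1([0,T],V)$ in $W_{pq}(0,T)$ rather than an explicit mollification), apply the classical chain rule to the approximants, and pass to the limit using the bounds in Definition~\ref{D:C111}~(ii)--(iii), pairing strong convergence of the derivatives in $L^q(0,T;V^\ast)$ with strong convergence of $\psi_x(\cdot,\xi(\cdot),\cdot)$ in $L^p(0,T;V)$. The only cosmetic difference is that the paper gets uniform convergence of the approximants in $C([0,T],H)$ directly from the continuous embedding of $W_{pq}(0,T)$ (Remark~\ref{R:Wpq:continuous}), which settles the convergence of the left-hand side and of the $\psi_t$, $\psi_\xi$ terms exactly as you indicate.
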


\begin{proof}
Fix $\xi\in C^1([0,T])$, $\mathbf{x}\in W_{pq}(0,T)$, and $x\in C([0,T],H)$
with $\wx=x$  a.e.~on $(0,T)$. 
We use a standard approximation argument (cf.~\cite[Proof of Theorem~1.12~(d)]{HPHandbookII}).
Since $C^1([0,T],V)$ is dense in $W_{pq}(0,T)$,
we can choose a sequence $(x_n)_n$ in $C^1([0,T],V)$ that converges to $\mathbf{x}$ in $W_{pq}(0,T)$
and thus also to $x$ in $C([0,T],H)$. 
%Without loss of generality, let 
%$\norm{x_n}_{C([0,T],H)}$ be bounded by $R:=\norm{x}_{C([0,T],H)}+1$.
Let $0\le t_1\le t_2\le T$. Note that, for every $n\in\N$,
\begin{align*}
&\psi(t_2,\xi(t_2),x_n(t_2))-\psi(t_1,\xi(t_1),x_n(t_1))\\&\qquad=
\int_{t_1}^{t_2} \psi_t(t,\xi(t),x_n(t))+ 
\psi_\xi(t,\xi(t),x_n(t))\,\xi^\prime(t)+
( \psi_x(t,\xi(t),x_n(t)), x_n^\prime(t))\,dt\\
&\qquad=
\int_{t_1}^{t_2} \psi_t(t,\xi(t),x_n(t))+ 
\psi_\xi(t,\xi(t),x_n(t))\,\xi^\prime(t)+
\langle x_n^\prime(t), \psi_x(t,\xi(t),x_n(t))\rangle\,dt.
\end{align*}
%and $x_n\to x$ in $C([0,T],H)$ 
Without loss of generality, $\norm{x_n}_\infty\le \norm{x}_\infty+1$ for every $n\in\N$.
Thus, by condition~(ii) of Definition~\ref{D:C111},
%\newpage
\begin{align*}
&\lim_{n\to\infty} \int_{t_1}^{t_2} \psi_t(t,\xi(t),x_n(t))+ 
\psi_\xi(t,\xi(t),x(t))\,\xi^\prime(t)\,dt\\&\qquad=
\int_{t_1}^{t_2} \psi_t(t,\xi(t),x(t))+ 
\psi_\xi(t,\xi(t),x(t))\,\xi^\prime(t)\,dt.
\end{align*}
Also, without loss of generality, $\norm{x_n}_{L^p(0,T;V)}\le \norm{x}_{L^p(0,T;V)}+1$ for every $n\in\N$.
Thus, by condition~(ii) of Definition~\ref{D:C111}, there exists a constant $C>0$ such that,
%for all $n\in\N$,
\begin{align*}
&\abs{\int_{t_1}^{t_2} \langle x_n^\prime(t), \psi_x(t,\xi(t),x_n(t))\rangle
-\langle\mathbf{x}^\prime(t),\psi_x(t,\xi(t),\mathbf{x}(t))\rangle\,dt}\\
 &\qquad \le
\norm{x_n^\prime-\mathbf{x}^\prime}_{L^q(0,T;V^\ast)} \cdot
\norm{\psi_x(\cdot,\xi(\cdot),x_n(\cdot))}_{L^p(0,T;V)}
\\ &\qquad\qquad +
\norm{\mathbf{x}^\prime}_{L^q(0,T;V^\ast)}\cdot
\norm{\psi_x(\cdot,\xi(\cdot),x_n(\cdot))-\psi_x(\cdot,\xi(\cdot),\mathbf{x}(\cdot))}_{L^p(0,T;V)}\\
&\qquad \le
L_\psi C \norm{x_n-\mathbf{x}}_{W_{pq}} \cdot 
\left(1+\norm{\mathbf{x}}_{L^p(0,T;V)}
\right)\\
&\qquad\qquad +
L_\psi\norm{\mathbf{x}^\prime}_{L^q(0,T;V^\ast)} \cdot
\norm{x_n-\mathbf{x}}_{W_{pq}}\\
&\qquad \to 0\quad(n\to\infty). 
\end{align*}
It is now straight-forward to complete the proof.
\end{proof}

\subsection{Functional chain rule and path derivatives}\label{S:path_derivatives}
Fix $n\in\N$. We shall use the spaces
\begin{align*}
C([0,T],H)^{n}&=\underbrace{ C([0,T],H)\times \cdots \times C([0,T],H)}_{\text{$n$ times}},\\
W_{pq}(t_0,T)^{ n}& =
\underbrace{ W_{pq}(t_0,T)\times \cdots \times W_{pq}(t_0,T)}_{\text{$n$ times}},\quad t_0\in [0,T).
\end{align*}

\begin{definition}\label{PDC11}
Let $t_\ast\in [0,T)$.
Denote by $\mathcal{C}_V^{1,1}([t_\ast,T]\times C([0,T],H)^{ n})$  the set of all 
 functions $\varphi\in C([t_\ast,T]\times C([0,T],H)^{ n})$ for which there exist
 functions, called \emph{path derivatives},  
 \begin{align*}
 &\partial_t\varphi\in C([t_\ast,T]\times C([0,T],H)^{ n}),\quad\text{ and}\\
&\partial_{x^j}\varphi\in C([t_\ast,T]\times C([0,T],H)^{ n},H),\quad j=1, \ldots, n,
\end{align*}
  such that the following holds:
 
\begin{enumerate}
\renewcommand{\labelenumi}{(\roman{enumi})}
\renewcommand{\theenumi}{(\roman{enumi})}
\item \label{C11cond1} 
For every  $x=(x^j)_{j=1}^n\in C([0,T],H)^{ n}$,
and $i\in \{1,\ldots,n\}$, 
\begin{align*}
 x^i(t)\in V\quad\text{implies}\quad\partial_{x^i}\varphi (t,x)\in V\text{ a.e.~on $(t_\ast,T)$}.
 \end{align*}
\item For every $t_0\in [t_\ast,T)$, every $x_0\in C([0,T],H)^{ n}$,
every  $\mathbf{x}=(\mathbf{x}^j)_{j=1}^n\in W_{pq}(t_0,T)^{ n}$, 
every $x=(x^j)_{j=1}^n\in C([0,T],H)^{ n}$ with $x=x_0$ on $[0,t_0]$ and
$x=\wx$ a.e.~on $(t_0,T]$,
 and every $t\in [t_0,T]$, the \emph{functional chain rule} holds, i.e., 
\begin{equation}\label{E:ChainRule}
\begin{split}
\varphi(t,x)-\varphi(t_0,x_0)&=\int_{t_0}^t \partial_t\varphi(s,x)+
\sum_{j=1}^n
\langle (\wx^j)^\prime(s),\partial_{x^j} \varphi(s,x)\rangle\,ds.
\end{split}
\end{equation}
\end{enumerate} 
\end{definition}

\begin{remark}
 The path derivatives $\partial_t \varphi$ and $\partial_{x^j} \varphi$ are uniquely determined on $[t_\ast,T)$.
 Indeed, for every $t_0\in [t_\ast, T)$, $x_0\in C([0,T],H)^n$ and $j\in\{1,\ldots,n\}$,
\begin{align}\label{partialt_rep}
\partial_t \varphi(t_0,x_0)&=\lim_{t\downarrow t_0} \frac{1}{t-t_0} 
\left[\varphi(t,x_0(\cdot\wedge t))-\varphi(t_0,x_0)\right], 
\end{align}
and, if additionally $x_0^m(t_0)\in V$ for every $m\in\{1,\ldots,n\}$, then
\begin{align}
 \label{partialx_rep}
(e_i,\partial_{x^j} \varphi(t_0,x_0))&=\lim_{t\downarrow t_0} \frac{1}{t-t_0}
\left[ \varphi(t,x_i)-\varphi(t_0,x_0)\right]-\partial_t \varphi(t_0,x_0).
\end{align}
Here, $\{e_i\}_i$ an orthonormal basis of $H$  with  elements in $V$
and the paths $x_i=(x_i^m)_{m=1}^n \in C([0,T],H)^{ n}$, $i\in\N$, are defined by 
\begin{align*}
x_i^m(t)=\begin{cases}
x_0^j(t) &\text{ if $t\le t_0$ and $m=j$,}\\
(t-t_0)e_i+x_0^j(t_0) &\text{ if $t>t_0$ and $m=j$,}\\
x_0^m(t\wedge t_0) &\text{ if $m\neq j$.}
\end{cases}
\end{align*}
Note that the restrictions $x_i^m\vert_{[t_0,T]}$ are $V$-valued and belong to
$W_{pq}(t_0,T)$.
Since $V$ is dense in $H$ and $\partial_{x^j}\varphi$ is continuous, we can deduce
uniqueness for $\partial_{x^j}\varphi(t_0,x_0)$ also for the general case
(i.e., without requiring $x_0^m(t_0)\in V$ for all $m$).
\end{remark}

\begin{example}
Let $n=1$ and $\varphi(t,x)=\abs{x(t)}^2+\int_0^t \psi(s,x)\,ds$, $t\in [0,T]$, 
$x\in C([0,T],H)$. If $\psi\in C([0,T]\times C([0,T],H))$, then,
by the integration-by-parts formula (Proposition~\ref{P:parts}),
 $\varphi\in\mathcal{C}_V^{1,1}
([0,T]\times C([0,T],H))$ with $\partial_t\varphi(t,x)=\psi(t,x)$
and $\partial_x\varphi(t,x)=2 x(t)$.
\end{example}

\section{Path-dependent Hamilton-Jacobi equations}
\renewcommand{\theequation}
%{\thesection.\arabic{equation}n}
{\textrm{TVP}}
We consider the terminal-value problem
\begin{equation}\label{E:PPDE2}
%\begin{cases}
\begin{split}
&\partial_t u-\langle A(t,x(t)),\partial_x u\rangle+F(t,x,\partial_x u)=0,\,
 (t,x)\in [0,T)\times C([0,T],H),\\
&u(T,x)=h(x),\quad x\in C([0,T],H).
%\end{cases}
\end{split}
\end{equation}
\setcounter{equation}{0}
\renewcommand{\theequation}{\thesection.\arabic{equation}}
For the functions $h$ and $F$, we impose the following standing hypotheses.

\textbf{H}($h$): The function $h: C([0,T],H)\to\R$ is continuous.

\textbf{H}($F$): The function $F:[0,T]\times C([0,T],H)\times H\to\R$ satisfies the following:

(i) $F$ is continuous.

(ii) There exists an $L_0\ge 0$  such that,
for every $t\in [0,T]$, every $x\in C([0,T],H)$, and every $z$, $\tilde{z}\in H$,
\begin{align*}
\abs{F(t,x,z)-F(t,x,\tilde{z})}&\le L_0(1+\sup_{0\le s\le t} \abs{x(s)})
\abs{z-\tilde{z}},
\end{align*}

(iii) For every $L\ge 0$ and $(t_0,x_0)\in [0,T)\times C([0,T],H)$,
there exists a modulus of continuity $m_{L,t_0,x_0}$ such that,
for every $\eps>0$, every $t\in [t_0,T]$, and every $x$, $y\in\mathcal{X}^L(t_0,x_0)$,
\begin{equation}\label{E:HF3}
\begin{split}
&F(t,x,\eps^{-1} [x(t)-y(t)])-F(t,y,\eps^{-1}[x(t)-y(t)])\\ &\qquad\le
m_{L,t_0,x_0}\left(
\eps^{-1} \abs{x(t)-y(t)}^2+\sup_{s\le t}\abs{x(s)-y(s)}
%+\int_0^t \abs{x(s)-y(s)}^2\,ds
\right).
\end{split}
\end{equation}

\begin{remark}
(i) It is more common to impose %instead of \eqref{E:HF3}
the stronger condition
\begin{equation}\label{E:HF3stronger}
\begin{split}
&\abs{F(t,x,z)-F(t,y,z)}
%\\&\qquad
\le \tilde{m}_{L,t_0,x_0}
\left(\sup_{s\le t}\abs{x(s)-y(s)}
\cdot (1+\abs{z})\right),%\\
%&\qquad\qquad \text{uniformly in $(t,x,z)$,}
\end{split}
\end{equation}
where the modulus $\tilde{m}_{L,x_0,t_0}$ is independent from $z\in H$.
For further discussion regarding corresponding conditions in the non-path-dependent case,
 we refer to \cite[Remark~1]{CrandallLions86NA}.

(ii) In contrast to the conditions in \cite{Lukoyanov03a} {\color{black} or \cite{RTZ17}}, which only
allow distributed delays,
we can immediately cover concentrated delays, i.e., $F$ can be of the form
$F(t,x,z)=\tilde{F}(t,x(t/2),z)$ or $F(t,x,z)=\tilde{F}(t,\bfone_{[1,T]}(t)\, x(t-1),z)$
for some function $\tilde{F}$. (See also Remark~\ref{R:equivPseuoMetrics}.)
{\color{black} However, note that \cite{Lukoyanov03a} has been
extended in \cite{Lukoyanov10a} and \cite{Lukoyanov10b} to cover
besides distributed delays also a finite number
of concentrated delays.}
Let us also remark that our treatment of differential games in Section~\ref{S:DiffGames}
 will still require distributed delays.
\end{remark}

\subsection{Minimax solutions: Global version of the notion}
The approach in this and the next subsection is strongly motivated by \cite{Lukoyanov03a},
where the finite-dimensional case is treated.

\begin{definition} Let $u:[0,T]\times\tilde{\Omega}\to\R$ for some
subset $\tilde{\Omega}$ of $C([0,T],H)$.

(i) Let $L\ge 0$. We call $u$  a \emph{minimax $L$-subsolution}
(resp.~\emph{$L$-supersolution}, resp.~\emph{$L$-solution}) of \eqref{E:PPDE2} 
on $[0,T]\times\tilde{\Omega}$ if
\begin{itemize}
 \item $u\in\mathrm{USC}([0,T]\times\tilde{\Omega})$ (resp.~$\mathrm{LSC}([0,T]\times\tilde{\Omega})$,
 resp.~$C([0,T]\times\tilde{\Omega})$), 
 \item $u(T,\cdot)\le\text{ (resp.~$\ge$, resp.~$=$) } h$ on $\tilde{\Omega}$, 
 \item for every
$(t_0,x_0,z)\in [0,T)\times\tilde{\Omega}\times H$, there exists an
$x\in\mathcal{X}^L(t_0,x_0)\cap\tilde{\Omega}$  such that,
for every $t\in [t_0,T]$,
\begin{align*}%\label{E:upperSol}
u(t_0,x_0)\le \text{ (resp.~$\ge$, resp.~$=$) }\int_{t_0}^t
[(-f^x(s),z)+F(s,x,z)]\,ds+u(t,x).
\end{align*}
\end{itemize}

%(ii) Let $L\ge 0$. We call $u$  a \emph{minimax $L$-subsolution} of \eqref{E:PPDE2} 
%on $[0,T]\times\tilde{\Omega}$ if
 %$u\in\mathrm{LSC}([0,T]\times\tilde{\Omega})$, if $u(T,\cdot)\le h$ on $\tilde{\Omega}$, and if, for every
%$(t_0,x_0,z)\in [0,T)\times\tilde{\Omega}\times H$, there exists an
%$x\in\mathcal{X}^L(t_0,x_0)$  such that,
%for every $t\in [t_0,T]$,
%\begin{align}\label{E:lowerSol}
%u(t_0,x_0)\le \int_{t_0}^t
%[(-f^x(s),z)+F(s,x,z)]\,ds+u(t,x).
%\end{align}
%
%(iii) Let $L\ge 0$. We call $u$  a \emph{minimax $L$-solution} of \eqref{E:PPDE2} 
%on $[0,T]\times\tilde{\Omega}$ if
 %$u\in C([0,T]\times\tilde{\Omega})$, if $u(T,\cdot)= h$ on $\tilde{\Omega}$, and if, for every
%$(t_0,x_0,z)\in [0,T)\times\tilde{\Omega}\times H$, there exists an
%$x\in\mathcal{X}^L(t_0,x_0)$  such that,
%for every $t\in [t_0,T]$,
%\begin{align}\label{E:minimaxSol}
%u(t_0,x_0)= \int_{t_0}^t
%[(-f^x(s),z)+F(s,x,z)]\,ds+u(t,x).
%\end{align}

(ii) We call $u$ a minimax subsolution (resp.~supersolution, resp.~solution) 
of \eqref{E:PPDE2} on 
 $[0,T]\times\tilde{\Omega}$ if $u$ is a minimax $L$-subsolution
 (resp.~$L$-supersolution, resp.~$L$-solution) of \eqref{E:PPDE2} on 
 $[0,T]\times\tilde{\Omega}$  for some $L\ge 0$.
\end{definition}

\begin{remark}\label{R:MinimaxSolution}
Let $\tilde{\Omega}\subseteq C([0,T],H)$.
If $u$ is a minimax $L_1$-supersolution of \eqref{E:PPDE2} on $[0,T]\times\tilde{\Omega}$
 for some $L_1\ge 0$, 
then $u$ is a minimax $L$-supersolution of \eqref{E:PPDE2} on $[0,T]\times\tilde{\Omega}$
 for all $L\ge L_1$.
\end{remark}

\begin{remark}\label{R:MinimaxRestriction}
Fix $L\ge 0$. Let $\Omega^L\subseteq\Omega_1\subseteq\Omega_2\subseteq
C([0,T],H)$. If $u$ is a minimax $L$-solution of \eqref{E:PPDE2} on $[0,T]\times\Omega_2$,
then its restriction $u\vert_{[0,T]\times\Omega_1}$ is a minimax $L$-solution 
of \eqref{E:PPDE2} on $[0,T]\times\Omega_1$.
However, a minimax solution of \eqref{E:PPDE2} on $[0,T]\times\Omega_2$ is not necessarily a
minimax solution  of \eqref{E:PPDE2} on $[0,T]\times\Omega_1$.
\end{remark}

\begin{remark}\label{R:Lminimax}
Let $L\ge 0$.
A minimax solution of \eqref{E:PPDE2}
 on $[0,T]\times\Omega^L$ is a minimax $L$-solution of \eqref{E:PPDE2}
 on $[0,T]\times\Omega^L$.
\end{remark}

\begin{lemma}\label{L:EquivUpperSol}
Fix $L\ge 0$ and a set $\tilde{\Omega}$ with
$\Omega^L\subseteq\tilde{\Omega}\subseteq C([0,T],H)$.
Consider a function $u\in\mathrm{USC}([0,T]\times\tilde{\Omega})$ 
(resp.~$\mathrm{LSC}([0,T]\times\tilde{\Omega})$, resp.~$C([0,T]\times\tilde{\Omega})$).
%for some $\tilde{\Omega}\subseteq C([0,T],H)$.
Let $(t_0,x_0,z)\in [0,T)\times\tilde{\Omega}\times H$
Then the following statements are equivalent:

(i) There exists an $x\in\mathcal{X}^L(t_0,x_0)$ %\cap\tilde{\Omega}$ 
such that,
for every $t\in [t_0,T]$, 
\begin{align}\label{E:lowerSol}
u(t_0,x_0)\le\text{ (resp.~$\ge$, resp.~$=$) }\int_{t_0}^t
[(-f^x(s),z)+F(s,x,z)]\,ds+u(t,x).
\end{align}

(ii) For every $t\in [t_0,T]$, there exists 
 an $x\in\mathcal{X}^L(t_0,x_0)$ %\cap\tilde{\Omega}$ 
 for which  \eqref{E:lowerSol} holds.
 \end{lemma}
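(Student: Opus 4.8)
The implication (i)$\Rightarrow$(ii) is trivial: if a single $x\in\mathcal{X}^L(t_0,x_0)$ witnesses \eqref{E:lowerSol} for all $t\in[t_0,T]$, then for any fixed $t$ that same $x$ works. The content is the converse (ii)$\Rightarrow$(i). I will prove it only for the subsolution (resp.) case; the supersolution case is symmetric (flip the inequality / replace $u$ by $-u$), and the solution case follows by combining the two. The strategy is a \emph{concatenation / compactness} argument: build witnesses on a dense set of times, glue them together, and pass to a limit using the compactness of $\mathcal{X}^L(t_0,x_0)$ from Proposition~\ref{P:compact} together with the continuous-dependence property of Proposition~\ref{P:CompactII}.

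\textbf{Step 1 (concatenation of trajectories).} First I would record that $\mathcal{X}^L$ is stable under concatenation: if $t_0\le t_1\le t_2$, $y\in\mathcal{X}^L(t_0,x_0)$ and $y'\in\mathcal{X}^L(t_1,y)$ with $y'=y$ on $[0,t_1]$, then the path equal to $y$ on $[0,t_1]$ and $y'$ on $[t_1,T]$ lies in $\mathcal{X}^L(t_0,x_0)$, with $f$-term the concatenation of $f^y|_{(t_0,t_1)}$ and $f^{y'}|_{(t_1,T)}$; the bound $|f(t)|\le L(1+\sup_{s\le t}|x(s)|)$ is preserved since it only involves the path up to time $t$, and the $W_{pq}$-regularity across $t_1$ follows because both pieces agree with an element of $C([t_0,T],H)$ at $t_1$ (Remark~\ref{R:Wpq:continuous}). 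Consequently, if \eqref{E:lowerSol} holds for the first leg up to $t_1$ with some $z$, and then again (with the \emph{same} $z$) for the second leg from $t_1$, the integral identity/inequality adds up along the concatenated path and holds on all of $[t_0,t_2]$. This is the key algebraic fact that lets me ``extend'' a witness in time.

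\textbf{Step 2 (build a witness on a dense grid, then exhaust $[t_0,T]$).} Fix a countable dense set of times, or more simply proceed by an exhaustion. Using (ii) at time $t_1$ close to $T$ gives $x^{(1)}\in\mathcal{X}^L(t_0,x_0)$ satisfying \eqref{E:lowerSol} at $t_1$. Actually the cleanest route: show directly that the set $\mathcal{T}$ of $t\in[t_0,T]$ for which \emph{some} $x$ satisfies \eqref{E:lowerSol} simultaneously on $[t_0,t]$ is nonempty, closed, and ``right-extendable'', hence equals $[t_0,T]$. Nonempty: $t_0\in\mathcal{T}$ trivially. Right-extendable: given $\bar t\in\mathcal{T}$ with witness $x$ valid on $[t_0,\bar t]$, apply (ii) with the shifted initial datum $(\bar t, x)$ — but (ii) as stated is an hypothesis about $(t_0,x_0)$, so instead I apply the dense-times version at a slightly larger time and use Step~1 to splice. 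Closedness of $\mathcal{T}$: if $t_n\uparrow\bar t$ with witnesses $x_n$, extract via Proposition~\ref{P:compact} a subsequence $x_n\to x$ in $C([0,T],H)$; Proposition~\ref{P:CompactII} (or rather Lemma~\ref{L:CompactW}) gives $f^{x_n}\xrightarrow{w}f^x$ in $L^2$, so the integral term passes to the limit, $u$ is upper semicontinuous with respect to $\mathbf{d}_\infty$ so $\varlimsup u(t,x_n)\le u(t,x)$, and \eqref{E:lowerSol} survives on $[t_0,\bar t)$, hence on $[t_0,\bar t]$ by continuity of the right side in $t$ and usc of $u(\cdot,x)$. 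The subtlety that the witnesses $x_n$ are defined on overlapping time intervals but need not be \emph{nested} is handled either by a diagonal argument over a fixed dense sequence of times together with Step~1 (so that the witnesses are automatically nested on the grid), or by noting that the limit $x$ inherits validity at every grid point and then extends to all $t$ by density and semicontinuity.

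\textbf{Main obstacle.} The crux is the limiting argument in Step~2: passing from ``for each $t$ a (possibly different) witness'' to ``one witness for all $t$'' requires the witnesses to be comparable across times. I expect the honest proof to first use (ii) along a fixed countable dense sequence $t_0<\tau_1<\tau_2<\cdots$, use Step~1 repeatedly to produce a single nested sequence of approximate witnesses $x^{(k)}$ valid on $[t_0,\tau_k]$, extract a $C([0,T],H)$-convergent subsequence by compactness, show the limit $x$ satisfies \eqref{E:lowerSol} at every $\tau_k$ via weak convergence of the $f$-terms and semicontinuity of $u$, and finally upgrade from the dense set $\{\tau_k\}$ to all of $[t_0,T]$ using that $t\mapsto\int_{t_0}^t[\,\cdot\,]\,ds$ is continuous and that $u(\cdot,x)$ is usc (for the subsolution case) — for a genuine two-sided equality in the solution case one needs both usc and lsc, i.e. continuity of $u$, which is exactly what is assumed there. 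The one technical point to be careful about is that \textbf{H}($F$) only guarantees $F(\cdot,x,z)$ is continuous and bounded on the compact set $\mathcal{X}^L(t_0,x_0)$, so dominated convergence applies to $\int_{t_0}^t F(s,x_n,z)\,ds$ once $x_n\to x$ uniformly; combined with the weak $L^2$-convergence of $f^{x_n}$ this closes the estimate.
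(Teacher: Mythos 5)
Your overall strategy is the paper's: build piecewise witnesses along partitions, concatenate, use compactness of $\mathcal{X}^L(t_0,x_0)$ together with upper semicontinuity of $u$, weak $L^2$-convergence of the $f$-terms and dominated convergence for the $F$-integral to pass to the limit, and finally upgrade from a dense set of times. The gap sits exactly at the one step you tried to do differently, the ``right-extension''. The paper extends a witness past an intermediate node $t_i^m$ by invoking the one-time-witness property \emph{at the shifted datum} $(t_i^m,x_i^m,z)$, obtaining $x_{i+1}^m\in\mathcal{X}^L(t_i^m,x_i^m)$ and telescoping the inequalities; this is the intended reading of the lemma, which is only ever applied in situations (the minimax and improper-semisolution properties) where the per-time statement holds at \emph{every} initial point. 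You correctly notice that (ii) as literally written is a hypothesis at $(t_0,x_0)$ only, but neither of your substitutes works. Applying (ii) ``at a slightly larger time'' still issues a fresh trajectory emanating from $(t_0,x_0)$: it need not agree with your current witness at $\bar t$, so Step~1 has nothing to splice (the concatenation is not even a single admissible path), and even if it did agree, its inequality runs from $t_0$ to $t'$ along itself and yields no inequality from $\bar t$ to $t'$ along a spliced path. Likewise, a subsequential limit of per-time witnesses $x_n$, each valid only at its own time $\tau_n$, does not ``inherit validity at every grid point'': nothing whatsoever is known about $x_n$ at the other times $\tau_m$.

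This is not a presentational wrinkle that a cleverer limit argument can patch: with (ii) assumed only at the single base point, the implication (ii)$\Rightarrow$(i) is false. Schematically, take $F\equiv 0$ and $z=0$, so \eqref{E:lowerSol} reads $u(t_0,x_0)\le u(t,x)$, and let $u=-1+\mathbf{1}_G$ where $G$ is the closed, non-anticipating set $\{t\le\tfrac12\}\cup\{\tfrac12<t\le\tfrac34,\ x(\tfrac12)\ge\delta\}\cup\{t\ge\tfrac34,\ x(\tfrac12)\le-\delta\}$ with $\delta$ small enough that both constraints are reachable within $\mathcal{X}^L(t_0,x_0)$; then every $t$ admits a witness issued from $(t_0,x_0)$, but no single trajectory can satisfy both $x(\tfrac12)\ge\delta$ and $x(\tfrac12)\le-\delta$, so no common witness exists. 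Hence the shifted applications of the hypothesis, as in the paper's recursion, are genuinely needed, and your proof must use them (which is legitimate in every context where the lemma is invoked). Apart from this, your Step~1 and the limiting argument in Step~2 are sound, with the minor remark—also glossed over by the paper—that the telescoping uses that $u$, $F$ and $s\mapsto f^x(s)$ are non-anticipating, so that values along each piece coincide with values along the concatenated path on the earlier interval.
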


%\begin{remark}\label{R:EquivUpperSol}
%Corresponding statements hold in the cases  $u\in\mathrm{LSC}([0,T]\times\tilde{\Omega})$
%and  $u\in C([0,T]\times\tilde{\Omega})$.
%\end{remark}
\begin{proof} [Proof of  Lemma~\ref{L:EquivUpperSol}] 
We show the statement only in the case  $u\in\mathrm{USC}([0,T]\times\tilde{\Omega})$.

Clearly, (i) implies (ii). Next, suppose that (ii) holds.
To  show  (i), let
 $(\pi^m)_{m\in\N}=((t_i^m)_{i=1,\ldots,n(m)})_{m\in\N}$ be
dyadic partitions of $[t_0,T]$
with $t_0=t_0^m<t_1^m<\cdots<t_{n(m)}^m=T$. For each $m\in\N$, we define an
$x^m\in\mathcal{X}^L(t_0,x_0)$ %\cap\tilde{\Omega}$ 
as follows: 
First, note that, by (ii) there exists an $x_1^m\in\mathcal{X}^L(t_0,x_0)$ %\cap\tilde{\Omega}$ 
such
that \eqref{E:lowerSol} holds with $t=t_1^m$ and $x=x^m$.
Next, assuming that $x^m_i$ has already been chosen for some $i\in\{1,\ldots,n(m)-1\}$,
note that, by (ii), there exists an $x_{i+1}^m\in\mathcal{X}^L(t_i^m,x_i^m)$ %\cap\tilde{\Omega}$
such that \eqref{E:lowerSol} holds with $(t_0,x_0)$ replaced by $(t_i^m,x_i^m)$
and with $t=t_{i+1}^m$ and $x=x_{i+1}^m$. Put
\begin{align*}
x^m:=x_0.\bfone_{[0,t_0]}+\sum\nolimits_{i=1}^{n(m)} x_i^m.\bfone_{(t_{i-1}^m,t_i^m]}.
\end{align*}
By construction, $x^m\in\mathcal{X}^L(t_0,x_0)$ %\cap\tilde{\Omega}$ 
and $x^m$ satisfies \eqref{E:lowerSol}
for all $t\in\{t_i^m\}_{i=0}^{m(n)}$. Since $\mathcal{X}^L(t_0,x_0)$ is compact
and since $\cup_m \{t_i^m\}_{i=0}^{m(n)}$ is dense in $[t_0,T]$,
the sequence $(x^m)_m$ has a subsequence $(x^{m_k})_k$ that converges to some
$x^0$ in $\mathcal{X}^L(t_0,x_0)$. %\cap\tilde{\Omega}$. 
Since $u$ is upper semi-continuous,
\begin{align*}
u(t_0,x_0)&\le\varlimsup_k\left[ \int_{t_0}^t
(-f^{x^{m_k}}(s),z)+F(s,x^{m_k},z)\,ds+u(t,x^{m_k})\right]\\
&\le \int_{t_0}^t
(-f^{x_{0}}(s),z)+F(s,x^{0},z)\,ds+u(t,x^{0})
\end{align*}
for every $t\in [t_0,T]$.
\end{proof}

\begin{proposition} \label{P:MinimaxUpperLower}
Fix $L\ge 0$ and a set $\tilde{\Omega}$ with
$\Omega^L\subseteq\tilde{\Omega}\subseteq C([0,T],H)$.
%Let $u:[0,T]\times\tilde{\Omega}\to\R$ be non-anticipating.
Then a mapping $u:[0,T]\times\tilde{\Omega}\to\R$ 
is a minimax $L$-solution of \eqref{E:PPDE2} on $[0,T]\times\tilde{\Omega}$
 if and only if
$u$ is a minimax $L$-super- as well as a minimax $L$-subsolution of \eqref{E:PPDE2}
on $[0,T]\times\tilde{\Omega}$.
\end{proposition}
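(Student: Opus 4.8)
The plan is as follows. The ``only if'' direction is immediate from the definitions: if $u$ is a minimax $L$-solution of \eqref{E:PPDE2} on $[0,T]\times\tilde\Omega$, then $u\in C([0,T]\times\tilde\Omega)\subseteq\mathrm{USC}([0,T]\times\tilde\Omega)\cap\mathrm{LSC}([0,T]\times\tilde\Omega)$, the identity $u(T,\cdot)=h$ yields both $u(T,\cdot)\le h$ and $u(T,\cdot)\ge h$ on $\tilde\Omega$, and any trajectory $x\in\mathcal{X}^L(t_0,x_0)\cap\tilde\Omega$ realizing the defining identity with equality realizes at the same time its ``$\le$'' and its ``$\ge$'' version; hence $u$ is simultaneously a minimax $L$-sub- and a minimax $L$-supersolution. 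For the ``if'' direction, from $u\in\mathrm{USC}([0,T]\times\tilde\Omega)\cap\mathrm{LSC}([0,T]\times\tilde\Omega)$ I get $u\in C([0,T]\times\tilde\Omega)$, and from $u(T,\cdot)\le h\le u(T,\cdot)$ I get $u(T,\cdot)=h$; so what remains is to show that for every $(t_0,x_0,z)\in[0,T)\times\tilde\Omega\times H$ there is one $x\in\mathcal{X}^L(t_0,x_0)\cap\tilde\Omega$ along which the defining identity holds with equality for \emph{all} $t\in[t_0,T]$.

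To produce such a trajectory, fix $(t_0,x_0,z)$ and, for $x\in\mathcal{X}^L(t_0,x_0)$ and $t\in[t_0,T]$, write $\Phi_t(x):=\int_{t_0}^t[(-f^x(s),z)+F(s,x,z)]\,ds+u(t,x)-u(t_0,x_0)$ (here I use that $\mathcal{X}^L(t_0,x_0)\subseteq\tilde\Omega$, which holds in the cases of interest, e.g.\ for $x_0\in\Omega^L$, by concatenating the dynamics). First I would check that, for each fixed $t$, the map $x\mapsto\Phi_t(x)$ is continuous on the compact set $\mathcal{X}^L(t_0,x_0)$: if $x_n\to x$ uniformly then $f^{x_n}\rightharpoonup f^x$ weakly in $L^2(t_0,T;H)$ (as in the proof of Proposition~\ref{P:CompactII}), so $\int_{t_0}^t(-f^{x_n}(s),z)\,ds\to\int_{t_0}^t(-f^x(s),z)\,ds$; the integrands $F(\cdot,x_n,z)$ converge pointwise to $F(\cdot,x,z)$ and, by continuity of $F$ on the compact set $[0,T]\times\mathcal{X}^L(t_0,x_0)\times\{z\}$, are uniformly bounded, so the $F$-integral converges by dominated convergence; and $u(t,x_n)\to u(t,x)$. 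Next, the minimax $L$-subsolution property furnishes a fixed $x^-\in\mathcal{X}^L(t_0,x_0)$ with $\Phi_t(x^-)\ge0$ for all $t\in[t_0,T]$, and the minimax $L$-supersolution property a fixed $x^+\in\mathcal{X}^L(t_0,x_0)$ with $\Phi_t(x^+)\le0$ for all $t\in[t_0,T]$. Since $\mathcal{X}^L(t_0,x_0)$ is path-connected (Proposition~\ref{P:compact}), I would fix a continuous arc $\gamma\colon[0,1]\to\mathcal{X}^L(t_0,x_0)$ with $\gamma(0)=x^+$, $\gamma(1)=x^-$; then for each $t$ the function $\theta\mapsto\Phi_t(\gamma(\theta))$ is continuous on $[0,1]$ with $\Phi_t(\gamma(0))\le0\le\Phi_t(\gamma(1))$, so the intermediate value theorem gives a $\theta_t$ with $\Phi_t(\gamma(\theta_t))=0$, i.e.\ $x_t:=\gamma(\theta_t)\in\mathcal{X}^L(t_0,x_0)$ satisfies $u(t_0,x_0)=\int_{t_0}^t[(-f^{x_t}(s),z)+F(s,x_t,z)]\,ds+u(t,x_t)$. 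Finally, having established this for every $t\in[t_0,T]$ and knowing $u\in C([0,T]\times\tilde\Omega)$, I would invoke Lemma~\ref{L:EquivUpperSol} in its ``$=$'' version to pass from ``for each $t$ some trajectory works'' to ``one trajectory works for all $t$''. Since $(t_0,x_0,z)$ was arbitrary, this shows $u$ is a minimax $L$-solution.

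The step I expect to be the main obstacle is the continuity of $\Phi_t$ on $\mathcal{X}^L(t_0,x_0)$, and within it the implication ``$x_n\to x$ uniformly $\Rightarrow$ $f^{x_n}\rightharpoonup f^x$ in $L^2$'', which rests on the very same compactness mechanism (ultimately Assumption~\ref{A:compact} together with Remark~\ref{R:compact}) that makes the trajectory spaces compact. The other point worth stressing is that one application of the intermediate value theorem only yields equality at a single prescribed time; assembling a whole family of such ``one-time'' equalities into a single trajectory valid for all times is precisely what Lemma~\ref{L:EquivUpperSol} does, and this is why that lemma — rather than a naive direct argument — is needed here.
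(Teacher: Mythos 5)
Your proof is correct and follows the same skeleton as the paper's: the ``only if'' direction is immediate, and the ``if'' direction reduces, via the ``$=$'' case of Lemma~\ref{L:EquivUpperSol}, to producing, for each fixed $(t_0,x_0,z)$ and each fixed $t$, a single trajectory along which equality holds at that $t$, which both you and the paper obtain from an intermediate-value argument between a subsolution trajectory and a supersolution trajectory. The difference is where the topological load is carried. The paper works in $\mathcal{Y}^L(t_0,x_0,z)$, whose connectedness is supplied by Lemma~\ref{L:Connected}; since the integral term is stored as the explicit second component $y$, the map $(x,y)\mapsto u(t,x)-u(t_0,x_0)-y(t)$ is trivially continuous and no further analysis is needed. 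You instead use path-connectedness of $\mathcal{X}^L(t_0,x_0)$ (Proposition~\ref{P:compact}) and must prove continuity of $\Phi_t$ directly, which forces the step ``$x_n\to x$ uniformly $\Rightarrow f^{x_n}\rightharpoonup f^x$ in $L^2(t_0,T;H)$''; this is true, but strictly speaking it needs the subsequence--uniqueness argument (every subsequence has, by Lemma~\ref{L:CompactW}, a further subsequence whose forcing terms converge weakly to the forcing term of the limit path, and $f^x$ is uniquely determined by $x$), which you gesture at via Proposition~\ref{P:CompactII} but do not spell out --- so your route trades the paper's Lemma~\ref{L:Connected} for a small extra compactness argument. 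One further remark: your parenthetical about needing $\mathcal{X}^L(t_0,x_0)\subseteq\tilde{\Omega}$ (via concatenation when $x_0\in\Omega^L$) points at a genuine looseness, but it is present in the paper's own proof as well (Lemma~\ref{L:EquivUpperSol} and the evaluation map there are also written without the intersection with $\tilde{\Omega}$), so it does not count against you; flagging it is in fact a point in your favor.
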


\begin{proof}
If $u$ is a minimax $L$-solution of \eqref{E:PPDE2} on $[0,T]\times\tilde{\Omega}$, then it follows
immediately from the definition that $u$ is a also a minimax $L$-super- and a 
minimax $L$-subsolution
 of \eqref{E:PPDE2} on $[0,T]\times\tilde{\Omega}$.

Let now $u$ be a minimax $L$-super-
as well as a minimax $L$-subsolution of \eqref{E:PPDE2} on $[0,T]\times\tilde{\Omega}$.
Clearly, $u$ is continuous and $u(T,\cdot)=h$. Thus it suffices, by Lemma~\ref{L:EquivUpperSol},
to show that, for every $(t_0,x_0,z)\in [0,T)\times\tilde{\Omega}\times H$ and
every $t\in (t_0,T]$, there exists an $(x,y)\in\mathcal{Y}^L(t_0,x_0,z)$ (see \eqref{E:Y(t,x,z)}
for the definition of  $\mathcal{Y}^L(t_0,x_0,z)$) such
that $u(t,x)-y(t)=u(t_0,x_0)$.
To this end, fix $(t_0,x_0,z)\in [0,T)\times\tilde{\Omega}\times H$ and let $t\in (t_0,T]$.
Since $u$ is 
a minimax $L$-super- and a 
minimax $L$-subsolution
 of \eqref{E:PPDE2} on $[0,T]\times\tilde{\Omega}$,
 there exist
$(x_1,y_1)$, $(x_2,y_2)\in\mathcal{Y}^L(t_0,x_0,z)$ such that
\begin{align*}
u(t,x_1)-u(t_0,x_0)-y_1(t)\ge 0\ge u(t,x_2)-u(t_0,x_0)-y_2(t).
\end{align*}
Since $\mathcal{Y}^L(t_0,x_0,z)$ is connected in $C([0,T],H)\times C([t_0,T])$, 
which follows from
Lemma~\ref{L:Connected}, and the mapping
\begin{align*}
(x,y)\mapsto u(t,x)-u(t_0,x_0)-y(t),\quad \tilde{\Omega}\times C([t_0,T])\to\R,
\end{align*}
is continuous, there exists an $(x,y)\in \mathcal{Y}^L(t_0,x_0,z)$ such that
\begin{align*}
u(t,x)-u(t_0,x_0)-y(t)=0.
\end{align*}
This concludes the proof.
\end{proof}

\subsection{Minimax solutions: Infinitesimal version of the notion}

\begin{proposition}\label{P:Dini}
Fix $L\ge 0$ and some set $\tilde{\Omega}$ with
$\Omega^L\subseteq\tilde{\Omega}\subseteq C([0,T],H)$.

(i) Let  $u\in\mathrm{LSC}([0,T]\times\tilde{\Omega})$. Then $u$ is a
minimax $L$-supersolution of  \eqref{E:PPDE2} on $[0,T]\times\tilde{\Omega}$
 if and only if $u(T,\cdot)\ge h$  on $\tilde{\Omega}$ and,
for every $(t_0,x_0,z)\in [0,T)\times\tilde{\Omega}\times H$,
% there exists
%an $x\in\mathcal{X}^L(t_0,x_0)$ such that
\begin{equation}\label{E:Dini}
\begin{split}
&\inf_{x\in\mathcal{X}^L(t_0,x_0)} 
\varliminf_{\delta\downarrow 0} 
\Bigl[
u(t_0+\delta,x)-u(t_0,x_0)\\&\qquad\qquad\qquad\qquad
+\int_{t_0}^{t_0+\delta} (-f^x(s),z)+F(s,x,z)\,ds
\Bigr]\delta^{-1}\le 0.
\end{split}
\end{equation}

(ii)  Let $u\in\mathrm{USC}([0,T]\times\tilde{\Omega})$ Then $u$ is a
minimax $L$-subsolution of  \eqref{E:PPDE2} on $[0,T]\times\tilde{\Omega}$
 if and only if $u(T,\cdot)\le h$ on $\tilde{\Omega}$ and,
for every $(t_0,x_0,z)\in [0,T)\times\tilde{\Omega}\times H$,
%there exists
%an $x\in\mathcal{X}^L(t_0,x_0)$ such that
\begin{equation*}
\begin{split}
&\sup_{x\in\mathcal{X}^L(t_0,x_0)} 
\varlimsup_{\delta\downarrow 0} 
\Bigl[
u(t_0+\delta,x)-u(t_0,x_0)\\&\qquad\qquad\qquad\qquad
+\int_{t_0}^{t_0+\delta} (-f^x(s),z)+F(s,x,z)\,ds
\Bigr]\delta^{-1}\ge 0.
\end{split}
\end{equation*}
\end{proposition}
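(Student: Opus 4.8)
The plan is to prove both equivalences by the same argument, so I will focus on part~(i); part~(ii) follows by replacing $u$ with $-u$ together with the sign convention for $F$ (or, more honestly, by an entirely parallel argument with inequalities reversed). Fix $(t_0,x_0,z)\in [0,T)\times\tilde\Omega\times H$ and, for $x\in\mathcal X^L(t_0,x_0)$ and $t\in[t_0,T]$, abbreviate
\begin{align*}
\Phi_z(t,x):=u(t,x)-u(t_0,x_0)+\int_{t_0}^t (-f^x(s),z)+F(s,x,z)\,ds.
\end{align*}
The minimax $L$-supersolution property at $(t_0,x_0,z)$ says: there exists $x\in\mathcal X^L(t_0,x_0)$ with $\Phi_z(t,x)\le 0$ for all $t\in[t_0,T]$; by Lemma~\ref{L:EquivUpperSol} this is equivalent to the statement that for each $t\in(t_0,T]$ there exists such an $x$ (depending on $t$) with $\Phi_z(t,x)\le 0$. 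Condition \eqref{E:Dini} says $\inf_{x}\varliminf_{\delta\downarrow 0}\delta^{-1}\Phi_z(t_0+\delta,x)\le 0$.

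\emph{From the supersolution property to \eqref{E:Dini}:} this is the easy direction. Pick the single $x\in\mathcal X^L(t_0,x_0)$ furnished by the definition, so $\Phi_z(t_0+\delta,x)\le 0$ for every small $\delta>0$; then $\varliminf_{\delta\downarrow0}\delta^{-1}\Phi_z(t_0+\delta,x)\le 0$, and a fortiori the infimum over $\mathcal X^L(t_0,x_0)$ is $\le 0$.

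\emph{From \eqref{E:Dini} to the supersolution property:} this is the substantive direction. By Lemma~\ref{L:EquivUpperSol} it suffices, given $t^\ast\in(t_0,T]$, to produce one $x\in\mathcal X^L(t_0,x_0)$ with $\Phi_z(t^\ast,x)\le 0$. I would run a standard ``chaining along a subdivision, then compactness'' scheme, exactly as in the proof of Lemma~\ref{L:EquivUpperSol}. Fix $\eta>0$. Starting from $(t_0,x_0)$, condition \eqref{E:Dini} (applied at $(t_0,x_0,z)$) gives some $x^{(1)}\in\mathcal X^L(t_0,x_0)$ and some $\delta_1>0$ with $\delta_1^{-1}\Phi_z(t_0+\delta_1,x^{(1)})<\eta$, i.e.\ the increment of $u$ plus the running cost over $[t_0,t_0+\delta_1]$ is at most $\eta\delta_1$; one also arranges $\delta_1\le$ a prescribed mesh bound. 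Set $t_1:=t_0+\delta_1$, $x_1:=x^{(1)}$. Iterating — at step $k$ apply \eqref{E:Dini} at $(t_{k-1},x_{k-1},z)$ — and concatenating the pieces $x_k\in\mathcal X^L(t_{k-1},x_{k-1})$ as in Lemma~\ref{L:EquivUpperSol} (using that concatenation stays in $\mathcal X^L(t_0,x_0)$, which is exactly how the trajectory spaces are used there), one reaches $t^\ast$ in finitely many steps and obtains $\bar x^\eta\in\mathcal X^L(t_0,x_0)$ with $\Phi_z(t^\ast,\bar x^\eta)\le \eta\,(t^\ast-t_0)$ — here the telescoping of the $u$-increments and additivity of the integral over the subdivision are used, plus the fact that one must control $u$ only at the subdivision points (the intermediate values of $\Phi_z$ are irrelevant for this direction since we only need $\Phi_z(t^\ast,\cdot)\le 0$). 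Finally, let $\eta=1/m\downarrow 0$; by compactness of $\mathcal X^L(t_0,x_0)$ (Proposition~\ref{P:compact}) the sequence $\bar x^{1/m}$ has a subsequence converging in $C([0,T],H)$ to some $x^0\in\mathcal X^L(t_0,x_0)$, and since $\Phi_z(t^\ast,\cdot)$ is lower semi-continuous along this convergence — $u$ is l.s.c.\ in $\mathbf d_\infty$, $F$ is continuous, and $f^{\bar x^{1/m}}\xrightarrow{w}f^{x^0}$ in $L^2$ so the integral term passes to the limit (as in the proofs of Lemma~\ref{L:EquivUpperSol} and Proposition~\ref{P:CompactII}) — we get $\Phi_z(t^\ast,x^0)\le 0$, as required.

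\emph{Main obstacle.} The delicate point is the finite-step termination of the chaining together with the limit passage: one must choose the $\delta_k$ not too small (so that finitely many steps exhaust $[t_0,t^\ast]$) yet compatibly with \eqref{E:Dini}, which only guarantees \emph{some} small $\delta$ works at each stage; the clean way is to fix a target mesh $2^{-m}$, at each stage take a $\delta_k\in(0,2^{-m}]$ from \eqref{E:Dini}, stop once $\sum\delta_k$ would overshoot $t^\ast$ and truncate the last step — but truncating the last trajectory piece requires that one can still realize a sub-increment estimate on a shorter interval, which follows since \eqref{E:Dini} is a statement about $\varliminf_{\delta\downarrow0}$ and hence survives restriction to arbitrarily small $\delta$. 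Equivalently, and more simply, one avoids the truncation issue by invoking \eqref{E:Dini} to get, for each fixed $t^\ast$, a value-decrease rate at every point, and then arguing as in \cite{Subbotin_book}. The other routine-but-necessary check is the lower semi-continuity of $\Phi_z(t^\ast,\cdot)$ along the extracted subsequence, which is where weak $L^2$-convergence of $f^{x_n}$ and continuity of $F$ in all variables (hypothesis \textbf{H}($F$)(i)) enter; this is identical to the corresponding step already carried out in Lemma~\ref{L:EquivUpperSol}.
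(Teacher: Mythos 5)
Your easy direction is fine, and your reduction via Lemma~\ref{L:EquivUpperSol} is the right starting point, but the substantive direction has a genuine gap exactly at the place you flag as the ``main obstacle,'' and your proposed fix does not close it. Condition \eqref{E:Dini} is a $\varliminf_{\delta\downarrow 0}$ statement: at each stage it only guarantees that \emph{some} arbitrarily small $\delta_k$ works, with no positive lower bound on $\delta_k$. Consequently your chain of times $t_k=t_0+\sum_{j\le k}\delta_j$ need not reach $t^\ast$ at all: the steps can shrink so fast that $\sum_k\delta_k$ converges to some $t_\infty<t^\ast$, i.e.\ the construction stalls. Imposing a mesh bound $\delta_k\le 2^{-m}$ and truncating the last step only handles \emph{overshoot}, not stalling, and the alternative you offer (``\eqref{E:Dini} gives a value-decrease rate at every point, then argue as in Subbotin'') is not available as stated: \eqref{E:Dini} gives no rate valid on an interval of prescribed length, and the Subbotin-style argument you appeal to is precisely the missing piece, not a routine citation. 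To continue past a stalling time one must pass to the limit along the chain (compactness of $\mathcal{X}^L(t_0,x_0)$, lower semicontinuity of $u$, continuity of $F$, weak $L^2$-convergence of the $f^x$) to get the estimate at $t_\infty$ for a limit trajectory, restart from there, and then ensure this transfinite-looking process terminates.

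The paper packages exactly this as a continuous-induction (maximal time) contradiction argument, which is genuinely different from your chaining scheme. Assuming the supersolution property fails, Lemma~\ref{L:EquivUpperSol} gives $(t_0,x_0,z)$ and $t_1$ at which the relevant functional is strictly positive for \emph{every} $x\in\mathcal{X}^L(t_0,x_0)$; by compactness of $\mathcal{X}^L(t_0,x_0)$ (Proposition~\ref{P:compact}), lower semicontinuity of $u$, and continuity of $F$ this positivity is uniform, say $>\tilde\delta$. One then defines $\tilde t$ as the supremum of times $t\in[t_0,t_1]$ at which some $x\in\mathcal{X}^L(t_0,x_0)$ achieves the estimate with the affine slack $\frac{t-t_0}{t_1-t_0}\tilde\delta$, shows by the same compactness/semicontinuity reasoning that this supremum is attained at some pair $(\tilde t,\tilde x)$ with $\tilde t<t_1$, and finally applies \eqref{E:Dini} at $(\tilde t,\tilde x,z)$, concatenating the resulting short trajectory with $\tilde x$, to achieve the slack estimate at a time $\tilde t+\delta>\tilde t$ --- contradicting maximality. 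The slack proportional to $t-t_0$ is what absorbs the $O(\eta\delta)$-type errors you were tracking, and the attainment of the supremum is what replaces your missing limit-passage-and-restart step. Your outline would become a proof only after incorporating this mechanism; as written, the finite-step termination claim is unjustified. (Your remark that part (ii) follows ``by replacing $u$ with $-u$'' is also loose, since $F$ is not odd in $z$, but you correctly hedge that a parallel argument works, which is what the paper intends.)
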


\begin{remark} See \cite[Theorem~8.1]{Lukoyanov03a} for
a corresponding result in the finite-di\-men\-sio\-nal case,
where the outer {\color{black}infimum resp.~supremum} is taken over a
slightly different  class of paths $x$. Our choice however simplifies 
the proof and does not weaken our result because
the compactness of $\mathcal{X}^L(t_0,x_0)$ implies that, for every $\eps>0$,
there exists a $\delta>0$ such that, for every $x\in\mathcal{X}^L(t_0,x_0)$,
\begin{align*}
\abs{f^x(t)}\le L(1+\sup_{s\le t_0} \abs{x_0(s)})+\eps\text{ for a.e.~$t\in (0,t_0+\delta)$.}
\end{align*}
\end{remark}

\begin{proof}[Proof of Proposition~\ref{P:Dini}]
We prove only the non-obvious direction of part~(i), i.e., we assume that,
for every $(t_0,x_0,z)\in [0,T)\times\tilde{\Omega}\times H$, we have \eqref{E:Dini}.
For the sake of a contradiction, we assume that $u$ is not a minimax $L$-supersolution
of \eqref{E:PPDE2} on $[0,T]$, i.e., by Lemma~\ref{L:EquivUpperSol}, there exist
 $(t_0,x_0,z)\in [0,T)\times\tilde{\Omega}\times H$, $t_1\in (t_0,T]$ %and $\tilde{\delta}>0$
such that, for every $x\in\mathcal{X}^L(t_0,x_0)$, 
\begin{align*}
u(t_1,x)-u(t_0,x_0)+\int_{t_0}^{t_1} (-f^x(s),z)+F(s,x,z)\,ds> 0.% \tilde{\delta}.
\end{align*}
Furthermore, there exists a $\tilde{\delta}>0$ such that, for every 
$x\in\mathcal{X}^L(t_0,x_0)$, 
\begin{align*}
u(t_1,x)-u(t_0,x_0)+\int_{t_0}^{t_1} (-f^x(s),z)+F(s,x,z)\,ds>  \tilde{\delta}
\end{align*}
because otherwise lower semi-continuity of $u$, compactness of $\mathcal{X}(t_0,x_0)$,
and continuity of $F$ would lead to a contradiction.

Denote by $\tilde{t}$ the supremum of all $t\in [t_0,t_1]$ such that
\begin{equation}\label{E:Dini2}
\begin{split}
&\min_{x\in\mathcal{X}^L(t_0,x_0)}\left\{
u(t,x)-u(t_0,x_0)+\int_{t_0}^t (-f^x(s),z)+F(s,x,z)\,ds
\right\} \\&\qquad\qquad\qquad \le \frac{t-t_0}{t_1-t_0}\cdot\tilde{\delta}.
\end{split}
\end{equation}
Clearly $\tilde{t}\in [t_0,t_1)$ and, due to the compactness of $\mathcal{X}^L(t_0,x_0)$,
semicontinuity of $u$, and continuity of $F$, the supremum $\tilde{t}$ is
actually a maximum, i.e.,
there exists an $\tilde{x}\in\mathcal{X}^L(t_0,x_0)$ such that
\begin{equation}\label{E:Dini3}
u(\tilde{t},\tilde{x})-u(t_0,x_0)+\int_{t_0}^{\tilde{t}} (-f^{\tilde{x}}(s),z)+F(s,\tilde{x},z)]\,ds
\le \frac{\tilde{t}-t_0}{t_1-t_0}\cdot\tilde{\delta}.
\end{equation}
We shall use \eqref{E:Dini} to derive 
the existence of a time $t\in (\tilde{t},t_1)$ satisfying \eqref{E:Dini2}, which
would be a contradiction.  By \eqref{E:Dini}, there exist an $x\in\mathcal{X}^L(\tilde{t},\tilde{x})$
and a $\delta\in (0,t_1-\tilde{t})$ such that
\begin{equation}\label{E:Dini4}
u(\tilde{t}+\delta,x)-u(\tilde{t},\tilde{x})+\int_{\tilde{t}}^{\tilde{t}+\delta} 
(-f^x(s),z)+F(s,x,z)\,ds\le \frac{\delta}{t_1-t_0}\cdot\tilde{\delta}.
\end{equation}
Adding \eqref{E:Dini3} and \eqref{E:Dini4}, we see that
\begin{align*}
u(\tilde{t}+\delta,x)-u(t_0,x_0)+\int_{t_0}^{\tilde{t}+\delta} 
(-f^x(s),z)+F(s,x,z)\,ds\le \frac{(\tilde{t}+\delta)-t_0}{t_1-t_0} \cdot\tilde{\delta},
\end{align*}
which concludes the proof.
\end{proof}

\section{Comparison}
Given $L\ge 0$,  a set $\tilde{\Omega}$ with $\Omega^{L}
\subseteq\tilde{\Omega}\subseteq C([0,T],H)$, a non-anticipating function
$w:[0,T]\times\tilde{\Omega}\times\tilde{\Omega}\to\R$, and
 $(t_0,x_0,y_0)\in [0,T)\times\tilde{\Omega}\times\tilde{\Omega}$, put
 \begin{align*}
&\underline{\mathcal{A}}^L w(t_0,x_0,y_0):=\{\varphi\in\mathcal{C}_V^{1,1}
([t_0,T]\times%\tilde{\Omega}\times\tilde{\Omega}
C([0,T],H)\times C([0,T],H)):\exists T_0\in (t_0,T]:\\
&\qquad 0=(\varphi-w)(t_0,x_0,y_0)=\inf_{(t,x,y)\in [t_0,T_0]\times
\mathcal{X}^{L}(t_0,x_0)
\times \mathcal{X}^{L}(t_0,y_0)} [(\varphi-w)(t,x,y)]\}.
\end{align*}

\begin{definition}\label{D:Viscsub_doubled}
Fix $L\ge 0$ and a set $\tilde{\Omega}$ with   
$\Omega^{L}\subseteq\tilde{\Omega}\subseteq C([0,T],H)$.
A function $w\in\mathrm{USC}([0,T]\times\tilde{\Omega}\times\tilde{\Omega})$ 
 is a \emph{viscosity  $L$-subsolution} of 
\begin{equation}\label{E:DPPDE} 
\begin{split}
&\partial_t w(t,x,y)-\langle A(t,x(t)),\partial_x w(t,x,y)\rangle
-\langle A(t,y(t)),\partial_y w(t,x,y)\rangle\\ 
&\qquad +F(t,x,\partial_x w(t,x,y)) - F(t,y,-\partial_y w(t,x,y))=0
\end{split}
\end{equation}
on $[0,T]\times\tilde{\Omega}\times\tilde{\Omega}$
 if, for every  $(t_0,x_0,y_0)\in [0,T)\times\tilde{\Omega}\times\tilde{\Omega}$ and
  for every test function
 $\varphi\in\underline{\mathcal{A}}^L w(t_0,x_0,y_0)$, 
there exists a pair $(x,y)\in \mathcal{X}^{L}(t_0,x_0)
\times \mathcal{X}^{L}(t_0,y_0)$  such that
\begin{equation}\label{E:DViscSub}
\begin{split}
&\partial_t\varphi(t_0,x_0,y_0)\\ &\qquad
+\varlimsup_{\delta\downarrow 0} \frac{1}{\delta} \Biggl[\int_{t_0}^{t_0+\delta}
-\langle A(t,\wx(t)),\partial_x  \varphi(t,x,y)\rangle
-\langle A(t,\wy(t)),\partial_y  \varphi(t,x,y)\rangle\,dt \Biggr]\\
&\qquad
+F(t_0,x_0,\partial_x\varphi(t_0,x_0,y_0))-F(t_0,y_0,-\partial_y\varphi(t_0,x_0,y_0))\ge 0.
\end{split}
\end{equation}
\end{definition}

\begin{theorem} [Comparison~I] \label{T:Comparison1}
Fix $L\ge 0$ and a set $\tilde{\Omega}$ with   
$\Omega^{L}\subseteq\tilde{\Omega}\subseteq C([0,T],H)$.
Let $w$ be a viscosity $L$-subsolution of \eqref{E:DPPDE} on 
$[0,T]\times\tilde{\Omega}\times\tilde{\Omega}$
with $w(T,x,x)\le 0$ for all $x\in\tilde{\Omega}$.
Then $w(t,x,x)\le 0$ for all $(t,x)\in [0,T]\times\tilde{\Omega}$.
\end{theorem}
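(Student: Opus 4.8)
\textbf{Proof strategy for Theorem~\ref{T:Comparison1}.}
The plan is to argue by contradiction: suppose $w(t_0,x_0,x_0)>0$ for some $(t_0,x_0)\in[0,T)\times\tilde\Omega$, and then construct, by running backwards from the terminal condition $w(T,x,x)\le0$, a test function that violates the subsolution inequality \eqref{E:DViscSub}. Since the viscosity subsolution property is stated via test functions from $\underline{\mathcal A}^L w$ touching $w$ from above over a set of the form $[t_0,T_0]\times\mathcal X^L(t_0,x_0)\times\mathcal X^L(t_0,y_0)$, the natural move is to produce such a test function as a small perturbation of a linear-in-time function. First I would fix $\lambda>0$ small (to be chosen against the size of $w(t_0,x_0,x_0)$) and consider the auxiliary function $(t,x,y)\mapsto w(t,x,y)-\lambda(T-t)$; one wants to locate a point where this is maximized over the compact diagonal-type region and show the maximizer lies at an interior time, so that a genuine test function can be built there.

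The key steps, in order: (1) Use compactness of $\mathcal X^L(t_0,x_0)$ (Proposition~\ref{P:compact}) together with upper semicontinuity of $w$ with respect to $\mathbf d_\infty$ to guarantee that relevant suprema are attained; this is where the ``lack of compactness'' difficulty is circumvented, exactly as advertised in the introduction. (2) Build a candidate test function $\varphi$ of the form $\varphi(t,x,y)=c+\mu(t-t_0)$ plus possibly a term penalizing $x,y$ away from a distinguished trajectory, chosen so that $\varphi\in\mathcal C_V^{1,1}$ (using the chain-rule apparatus of Definition~\ref{PDC11} and the examples following it) and $\varphi-w$ attains its infimum over $[t_0,T_0]\times\mathcal X^L(t_0,x_0)\times\mathcal X^L(t_0,y_0)$ with value $0$ at $(t_0,x_0,y_0)$, i.e.\ $\varphi\in\underline{\mathcal A}^L w(t_0,x_0,y_0)$. (3) Apply Definition~\ref{D:Viscsub_doubled} to get a pair $(x,y)\in\mathcal X^L(t_0,x_0)\times\mathcal X^L(t_0,y_0)$ satisfying \eqref{E:DViscSub}, and here crucially exploit the structure of the doubled equation on the diagonal: with the test function flat in the spatial variables, $\partial_x\varphi=\partial_y\varphi=0$, so the $A$-terms and $\langle A(t,\cdot),\cdot\rangle$ integrals vanish, and the two $F$-terms cancel, leaving only $\partial_t\varphi(t_0,x_0,y_0)=\mu$; choosing $\mu<0$ contradicts $\ge0$. (4) The backward-iteration / propagation argument: starting from the terminal time, show that the set of times $t$ for which $w(t,x,x)\le0$ for all $x\in$ the appropriate realizable family is relatively open and closed from below, using \eqref{E:DViscSub} at any putative first crossing time to push the contradiction downward; this is analogous to the ``$\tilde t$'' supremum argument in the proof of Proposition~\ref{P:Dini}.

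Concretely, I expect the cleanest route is: define $\theta:=\sup\{t\in[t_0,T]:w(s,x,x)\le \lambda(T-s)\ \forall s\in[t,T],\ \forall x\in\mathcal X^L(t_0,x_0)\}$ after normalizing so that a violation at $t_0$ forces $\theta<T$; by USC and compactness $\theta$ is attained with equality at some $\bar x$; then apply the subsolution property with a test function flat in space and slope $\lambda$ in time at $(\theta,\bar x,\bar x)$ to obtain, from \eqref{E:DViscSub} with the cancellations above, $-\lambda+0+0\ge0$, a contradiction. Letting $\lambda\downarrow0$ and using that $\Omega^L\subseteq\tilde\Omega$ (so diagonal trajectories in $\mathcal X^L(t_0,x_0)$ lie in $\tilde\Omega$) then yields $w(t,x,x)\le0$ for all $(t,x)$.

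\textbf{Main obstacle.} The delicate point is step~(2): constructing a legitimate element of $\underline{\mathcal C}_V^{1,1}$, i.e.\ a test function that is smooth in the path-derivative sense, that touches $w$ from above over the prescribed compact set, and whose path derivatives in $x$ and $y$ genuinely vanish at the contact point on the diagonal. A purely time-linear $\varphi$ is smooth and has $\partial_x\varphi\equiv\partial_y\varphi\equiv0$, which makes the cancellation trivial, but one must ensure the infimum in the definition of $\underline{\mathcal A}^L w$ is actually attained at $(t_0,x_0,y_0)$ and not merely approached — this is why the backward maximization over the compact family, rather than a local perturbation, is the right framing, and why USC with respect to $\mathbf d_\infty$ (hence non-anticipativity) is essential. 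A secondary subtlety is checking that $\varlimsup_{\delta\downarrow0}\frac1\delta\int_{t_0}^{t_0+\delta}(\cdots)\,dt$ is well-behaved; with $\partial_x\varphi=\partial_y\varphi=0$ the integrand is identically zero, so this term causes no trouble, but one should remark on it explicitly to keep the argument honest.
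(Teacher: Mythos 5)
There is a genuine gap at the heart of your argument, in steps (2)--(3). Your test function $\varphi(t,x,y)=c+\mu(t-t_0)$, built from a maximization of $w(t,x,x)-\lambda(T-t)$ \emph{along the diagonal}, is in general not an element of $\underline{\mathcal{A}}^L w(\theta,\bar x,\bar x)$: Definition~\ref{D:Viscsub_doubled} requires $0=(\varphi-w)(\theta,\bar x,\bar x)=\inf\,(\varphi-w)$ over the full product set $[\theta,T_0]\times\mathcal{X}^{L}(\theta,\bar x)\times\mathcal{X}^{L}(\theta,\bar x)$, i.e.\ $\varphi\ge w$ must hold at \emph{off-diagonal} pairs $(t,x,y)$ with $x\neq y$ as well. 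A function that is flat in $(x,y)$ and merely dominates the diagonal values $w(t,x,x)$ will typically fail this (think of $w(t,x,y)=u(t,x)-v(t,y)$, which can be much larger off the diagonal), so the viscosity subsolution property cannot be invoked at your contact point, and the inequality $-\lambda\ge 0$ is never legitimately produced. If you try to repair this by maximizing $w(t,x,y)-\lambda(T-t)$ over the whole product set (which does make the flat test function admissible), the maximizer is generally off-diagonal; then the two $F$-terms become $F(\theta,\bar x,0)-F(\theta,\bar y,0)$ with $\bar x\neq\bar y$, which need not be small, and the terminal hypothesis $w(T,x,x)\le 0$ — which only controls the diagonal — gives you nothing at such points. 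A telltale sign of the problem is that your argument never uses the monotonicity of $A$ nor hypothesis \textbf{H}($F$)(iii), which the statement genuinely needs.

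The paper resolves exactly this tension by the Ishii/Crandall--Lions device you tried to avoid: it penalizes with $\Theta_\eps(t,x,y)=\frac{T-t}{T-t_0}\cdot\frac{M_0}{2}+\frac1\eps\bigl[\abs{x(t)-y(t)}^2+\int_{t_0}^t\abs{x(s)-y(s)}^2\,ds\bigr]$ and maximizes $w-\Theta_\eps$ over the compact product set, so that the resulting $\varphi_\eps=\Theta_\eps-M_\eps$ \emph{is} admissible, at the price of nonzero path derivatives $\partial_x\varphi_\eps=-\partial_y\varphi_\eps=\frac2\eps[x(t)-y(t)]$. The $A$-terms in \eqref{E:DViscSub} then do not vanish but have a favorable sign by monotonicity of $A$, the $F$-difference is controlled by \textbf{H}($F$)(iii) together with $\frac1\eps\Psi(k_\eps)\to 0$, and letting $\eps\downarrow 0$ produces the contradiction with the strictly negative time-slope $-\frac{M_0}{2(T-t_0)}$. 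Your "backward crossing time" step (4) is not needed once this penalization is in place; more importantly, it cannot substitute for it, because the obstruction is the admissibility of the test function, not the location of the contact time.
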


\begin{proof}
Assume that there exists a $(t_0,x_0)\in [0,T)\times\tilde{\Omega}$ such that
\begin{align*}
M_0:=w(t_0,x_0,x_0)>0.
\end{align*}

For every $\eps>0$, define a mapping
$\Phi_\eps:[t_0,T]\times\tilde{\Omega}\times\tilde{\Omega}\to\R$ by
\begin{align*}
\Phi_\eps(t,x,y):=w(t,x,y)-\Theta_\eps(t,x,y),
\end{align*}
where
\begin{align*}
\Theta_\eps(t,x,y)&:=\frac{T-t}{T-t_0}\cdot\frac{M_0}{2} +\frac{1}{\eps} \Psi(t,x,y),\\
%\,t\in [0,T], x,y\in C([0,T],H),\\
\Psi(t,x,y)&:= \abs{x(t)-y(t)}^2+\int_{t_0}^t \abs{x(s)-y(s)}^2\,ds,\\
% \,t\in [0,T], x,y\in C([0,T],H).
&\qquad\qquad\qquad\qquad\qquad\qquad\qquad t\in [t_0,T],\quad x,y\in C([0,T],H).
\end{align*}
Since $\Phi_\eps\in\mathrm{USC}([t_0,T]\times\tilde{\Omega}\times\tilde{\Omega})$
 and  $\mathcal{X}^{L}(t_0,x_0)$ is compact,
there exists a point 
\begin{align*}%\label{E:CompK}
k_\eps:=(t_\eps,x_\eps,y_\eps)\in [t_0,T]\times
 \mathcal{X}^{L}(t_0,x_0)\times \mathcal{X}^{L}(t_0,x_0)=:K
\end{align*}
such that
\begin{align*}
M_\eps:=\Phi_\eps(k_\eps)=\max_{k\in K}\Phi_\eps(k)\ge 
\Phi_\eps(t_0,x_0,x_0)=\frac{M_0}{2}>0.
\end{align*}
Following the proof of \cite[Proposition~3.7]{UserGuide}, we can deduce that
%\begin{align}
%\label{E:VC1}
$\lim_{\eps\to 0} \frac{1}{\eps}\Psi(k_\eps)=0$ and that %\qquad\text{and}\\
\begin{equation}
\label{E:VC2}
\begin{split}
\Psi(\hat{k})=0,\quad
\lim_{\eps\to 0} M_\eps&=w(\hat{t},\hat{x},\hat{y})-\frac{T-\hat{t}}{T-t_0}\cdot\frac{M_0}{2}\\ 
&=\max_{(t,x,y)\in  K\cap\Psi^{-1}\{0\}}\left[w(t,x,y)-\frac{T-t}{T-t_0}\cdot\frac{M_0}{2}\right]\\
&=\max_{(t,x)\in [t_0,T]\times\mathcal{X}^L(t_0,x_0)}
\left[w(t,x,x)-\frac{T-t}{T-t_0}\cdot\frac{M_0}{2}\right]
\end{split}
\end{equation}
for every limit point $\hat{k}=(\hat{t},\hat{x},\hat{y})$ of $\{k_\eps\}$  in $K$ as $\eps\to 0$. Moreover,
$\hat{x}=\hat{y}$ on $[0,\hat{t}]$.
Note that the last equality in \eqref{E:VC2} follows from $w$ being non-anticipating because
$\Psi(t,x,y)=0$ together with $x$, $y\in\mathcal{X}^L(t_0,x_0)$ yields $x=y$ on $[0,t]$.
Now, from \eqref{E:VC2} we can infer that  the maxima $M_\eps$ are attained at  interior points,
i.e., we have $t_\eps <T$ for sufficiently small $\eps$; in particular, 
$w(T,x,x)-(T-t_0)^{-1}(T-T)M_0/2\le 0<M_0/2$ yields 
$\hat{t}<T$.

Next, we construct test functions. 
For every $\eps>0$, define a function $\varphi_\eps: [t_\eps,T]\times
%\tilde{\Omega}\times\tilde{\Omega}
C([0,T],H)\times C([0,T],H)
\to\R$ by
%\begin{align*}
$\varphi_\eps(t,x,y):=\Theta_\eps(t,x,y)-M_\eps$.
%\end{align*} 
Clearly, $\varphi_\eps\in\underline{\mathcal{A}}^L w(k_\eps)$ with
\begin{align*}
\partial_t \varphi_\eps(t,x,y)&=-\frac{M_0}{2(T-t_0)}+\frac{1}{\eps}\abs{x(t)-y(t)}^2,\\
\partial_x \varphi_\eps(t,x,y)&=-\partial_y \varphi_\eps(t,x,y)=\frac{2}{\eps} [x(t)-y(t)].
\end{align*}
Hence, there exists an $(x,y)\in\mathcal{X}^L(t_\eps,x_\eps)\times\mathcal{X}^L(t_\eps,y_\eps)$
such that, for small $\eps$,
\begin{equation}\label{E:CompFinal}
\begin{split}
0&\le -\frac{M_0}{2(T-t_0)}+
\frac{1}{\eps}\abs{x_\eps(t_\eps)-y_\eps(t_\eps)}^2\\
&\qquad +\varlimsup_{\delta\downarrow 0}\frac{1}{\delta}
%\left[
\int_{t_\eps}^{t_\eps+\delta} -\frac{2}{\eps}\underbrace{\langle
A(t,\wx(t))-A(t,\wy(t)), x(t)-y(t)
\rangle}_{\ge 0}\,dt\\
%\right]\\
&\qquad +F(t_\eps,x_\eps,\eps^{-1} 2[x_\eps(t_\eps)-y_\eps(t_\eps))
-F(t_\eps,y_\eps,\eps^{-1} 2[x_\eps(t_\eps)-y_\eps(t_\eps)])\\
&{\color{black}\le  -\frac{M_0}{2(T-t_0)}+
\frac{1}{\eps}\abs{x_\eps(t_\eps)-y_\eps(t_\eps)}^2}
\\&\qquad {\color{black}+m_{L,t_0,x_0}\left(
\eps^{-1} \abs{x_\eps(t_\eps)-y_\eps(t_\eps)}^2+\sup_{s\le t_\eps}\abs{x_\eps(s)-y_\eps(s)}\right).}
\end{split}
\end{equation}
{\color{black} Here, we obtained the last inequality from condition \textbf{H}($F$)(iii).
Now, fix some limit point $\hat{k}=(\hat{t},\hat{x},\hat{y})$ of $\{k_\eps\}$. 
Then $\hat{k}\in K$ and, with slight abuse of notation, we can assume that 
$(t_\eps,x_\eps,y_\eps)\to \hat{k}$ as $\eps\downarrow 0$.
Recall  also that $\lim_{\eps\to 0} \frac{1}{\eps}\Psi(k_\eps)=0$
% and
%thanks
% that $(t_\eps,x_\eps,y_\eps)\to (\hat{t},\hat{x},\hat{y})$ in $K$, 
and
 that $\hat{x}=\hat{y}$ on $[0,\hat{t}]$.
Thus,
letting $\eps\downarrow 0$ in \eqref{E:CompFinal} %and passing to a subsequence if necessary,
yields a contradiction.
(Note that $\sup_{s\le t_\eps}\abs{x_\eps(s)-y_\eps(s)} \to 0$ as $\eps\downarrow 0$ can also be deduced 
from Remark~\ref{R:equivPseuoMetrics} together with $\lim_{\eps\to 0} \frac{1}{\eps}\Psi(k_\eps)=0$.
For more details, we refer the reader to  \eqref{E:CompactEndProof} 
in the proof of Lemma~\ref{L:CompactW}.)}
% we get,
%thanks to $\lim_{\eps\to 0} \frac{1}{\eps}\Psi(k_\eps)=0$,
% and
%thanks
 %to $(t_\eps,x_\eps,y_\eps)\to (\hat{t},\hat{x},\hat{y})$ in $K$, and
 %to $\hat{x}=\hat{y}$ on $[0,\hat{t}]$,
%which implies
%\begin{align*}
 %\sup_{s\le \hat{t}}\abs{x_\eps(s)-y_\eps(s)}\le
  %\sup_{s\le \hat{t}}\abs{x_\eps(s)-\hat{x}(s)}+
   %\sup_{s\le \hat{t}}\abs{\hat{x}(s)-y_\eps(s)}\to 0,
%\end{align*}
%a contradiction.}
\end{proof}

\begin{theorem}[Doubling theorem]\label{T:MinimaxDoubling}
Fix $L\ge 0$ and a set $\tilde{\Omega}$ with   $\Omega^L\subseteq\tilde{\Omega}\subseteq C([0,T],H)$.
Let $u$ be a minimax $L$-sub- and let $v$ be a minimax $L$-supersolution of \eqref{E:PPDE2}
on $[0,T]\times\tilde{\Omega}$.
Then  $w$ defined by $w(t,x,y):=u(t,x)-v(t,y)$ is a viscosity $L$-subsolution of \eqref{E:DPPDE}
on $[0,T]\times\tilde{\Omega}\times\tilde{\Omega}$.
\end{theorem}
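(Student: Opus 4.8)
The plan is to verify the viscosity subsolution inequality \eqref{E:DViscSub} directly at an arbitrary point $(t_0,x_0,y_0)\in[0,T)\times\tilde\Omega\times\tilde\Omega$ with an arbitrary test function $\varphi\in\underline{\mathcal A}^L w(t_0,x_0,y_0)$, using the infinitesimal characterization of minimax semisolutions from Proposition~\ref{P:Dini} together with the functional chain rule of Definition~\ref{PDC11}. First I would exploit the definition of minimax sub/supersolution: since $\varphi\in\underline{\mathcal A}^Lw(t_0,x_0,y_0)$, set $z_x:=\partial_x\varphi(t_0,x_0,y_0)\in H$ and $z_y:=-\partial_y\varphi(t_0,x_0,y_0)\in H$. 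Apply the subsolution property of $u$ at $(t_0,x_0,z_x)$ to obtain a path $x\in\mathcal X^L(t_0,x_0)$ satisfying the integral inequality with gradient $z_x$, and the supersolution property of $v$ at $(t_0,y_0,z_y)$ to obtain $y\in\mathcal X^L(t_0,y_0)$ with gradient $z_y$. This is the pair $(x,y)$ that will witness \eqref{E:DViscSub}.

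Next I would write out, for small $\delta>0$, the quantity $(\varphi-w)(t_0+\delta,x,y)-(\varphi-w)(t_0,x_0,y_0)$. By the minimality property in $\underline{\mathcal A}^Lw$ this is $\ge 0$ for $\delta$ small (since $(t_0+\delta,x,y)$ lies in the relevant product set), so $\liminf_{\delta\downarrow0}\frac1\delta\big[(\varphi-w)(t_0+\delta,x,y)-(\varphi-w)(t_0,x_0,y_0)\big]\ge0$. Now expand the $\varphi$-increment by the functional chain rule \eqref{E:ChainRule} applied to the two-variable function $\varphi$ along $(x,y)$, using $\wx'(s)=-A(s,\wx(s))+f^x(s)$ and $\wy'(s)=-A(s,\wy(s))+f^y(s)$; and expand the $w$-increment using $w(t,x,y)=u(t,x)-v(t,y)$ together with the two integral inequalities for $u$ and $v$. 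Combining these, the $\frac1\delta\int_{t_0}^{t_0+\delta}(\pm f^{x}(s),z_x)$ and $(\pm f^{y}(s),z_y)$ terms must be arranged to cancel: the chain rule produces $+\langle f^x(s),\partial_x\varphi\rangle=+(f^x(s),\partial_x\varphi)$ while the subsolution inequality for $u$ contributes $-(f^x(s),z_x)$, and since $\partial_x\varphi(t_0,x_0,y_0)=z_x$ these match in the limit by continuity of $\partial_x\varphi$ and of $f^x$ in $L^2$; similarly for the $y$-terms with $z_y=-\partial_y\varphi$. After cancellation one is left with $\partial_t\varphi(t_0,x_0,y_0)+\varlimsup_{\delta\downarrow0}\frac1\delta\int_{t_0}^{t_0+\delta}\big[-\langle A(t,\wx(t)),\partial_x\varphi(t,x,y)\rangle-\langle A(t,\wy(t)),\partial_y\varphi(t,x,y)\rangle\big]\,dt+F(t_0,x_0,z_x)-F(t_0,y_0,-z_y)\ge0$, which is exactly \eqref{E:DViscSub} (recalling $-z_y=\partial_y\varphi$, and that $F(t_0,y_0,-\partial_y\varphi)=F(t_0,y_0,z_y)$ gives the sign in the definition). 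One also checks the terminal condition: $w(T,x,x)=u(T,x)-v(T,x)\le h(x)-h(x)=0$, though this is not needed for the subsolution property itself.

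The main obstacle I anticipate is the careful handling of the $A$-terms and the interchange of $\liminf/\limsup$. The chain rule gives $\int_{t_0}^{t_0+\delta}\langle\wx'(s),\partial_x\varphi(s,x,y)\rangle\,ds$, and substituting the evolution equation splits this into an $f^x$-part (which is continuous, hence its $\frac1\delta\int$ average converges) and an $A$-part $-\int_{t_0}^{t_0+\delta}\langle A(s,\wx(s)),\partial_x\varphi(s,x,y)\rangle\,ds$ whose integrand need not be continuous at $t_0$ (indeed $A(s,\wx(s))$ is only $L^q$), so only $\varlimsup$ of the average survives — this is precisely why the definition \eqref{E:DViscSub} uses $\varlimsup_{\delta\downarrow0}$ rather than a limit. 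I would need to be careful that the $\liminf$ coming from the minimality of $\varphi-w$ distributes correctly: writing $\liminf(a_\delta+b_\delta)\ge0$ with $a_\delta\to a$ (the continuous parts: $\partial_t\varphi$, the $f$-terms, the $F$-terms via continuity of $F$ and $\sup_{s\le t_0+\delta}$-type estimates) and $b_\delta$ the $A$-average gives $a+\varlimsup_{\delta}b_\delta\ge\liminf_\delta(a_\delta+b_\delta)\ge0$, which is the direction needed. A secondary technical point is justifying that $\mathcal X^L(t_0,x_0)\ni x$ means $(t_0+\delta,x,y)$ lies in $[t_0,T_0]\times\mathcal X^L(t_0,x_0)\times\mathcal X^L(t_0,y_0)$ for $\delta$ small — immediate from the definitions — so the infimum-attainment in $\underline{\mathcal A}^Lw$ does apply.
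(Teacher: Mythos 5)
Your proposal follows essentially the same route as the paper's proof: take $z=\partial_x\varphi(t_0,x_0,y_0)$ and $-\partial_y\varphi(t_0,x_0,y_0)$ as the gradient arguments, obtain the pair $(x,y)$ from the minimax sub- and supersolution properties, combine the resulting integral inequalities with the test-function inequality via the functional chain rule, and pass to the limit with the $A$-terms kept under the $\varlimsup$ — the paper simply states the combined integral inequality on $[t_0,T_0]$ and concludes, while you spell out the division by $\delta$ and the $\liminf$/$\varlimsup$ bookkeeping. One small correction: the cancellation of the $f^x$- and $f^y$-averages uses the essential boundedness $\abs{f^x(t)}\le L(1+\sup_{s\le t}\abs{x(s)})$ built into the definition of $\mathcal{X}^L(t_0,x_0)$ (so the averaged error is controlled by $\sup_{s\le t_0+\delta}\abs{\partial_x\varphi(s,x,y)-z}$), not merely by $f^x\in L^2$, for which the averaged integral need not be bounded.
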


\begin{proof}
Fix $(t_0,x_0,y_0)\in [0,T)\times\tilde{\Omega}\times\tilde{\Omega}$.
Let $\varphi\in\underline{\mathcal{A}}^L(t_0,x_0,y_0)$, i.e., there exists
a $T_0\in (t_0,T]$ such that,  for all $(t,x,y)\in [t_0,T_0]\times\mathcal{X}^L(t_0,x_0)
\times\mathcal{X}^L(t_0,y_0)$,
\begin{equation}\label{E:DMinimaxIsVisc1}
\begin{split}
\varphi(t,x,y)-\varphi(t_0,x_0,y_0)&\ge w(t,x,y)-w(t_0,x_0,y_0)\\ &=
[u(t,x)-u(t_0,x_0)]-[v(t,y)-v(t_0,y_0)].
\end{split}
\end{equation}
Put $z:=\partial_x\varphi(t_0,x_0,y_0)$ and $\zeta:=\partial_y\varphi(t_0,x_0,y_0)$.
Since  $u$ is a  minimax $L$-sub- 
and $v$ is a minimax $L$-supersolution of \eqref{E:PPDE2}
on $[0,T]\times\tilde{\Omega}$, there exists
%, for
%$z:=\partial_x\varphi(t_0,x_0,y_0)$ and $\zeta:=\partial_y\varphi(t_0,x_0,y_0)$,
an $(x,y)\in \mathcal{X}^L(t_0,x_0)\times \mathcal{X}^L(t_0,y_0)$ such that, for all $t\in [t_0,T_0]$,
\begin{equation}\label{E:DMinimaxIsVisc2}
\begin{split}
 u(t,x)-u(t_0,x_0)&\ge \int_{t_0}^t (f^x(s),z)
 -F(s,x,z)\,ds,\\
  v(t,y)-v(t_0,y_0)&\le \int_{t_0}^t (f^y(s),-\zeta)
 -F(s,y,-\zeta)\,ds.
\end{split}
\end{equation}
Thus, by the functional chain-rule applied to  \eqref{E:DMinimaxIsVisc1}
and by \eqref{E:DMinimaxIsVisc2},
\begin{align*}
&\int_{t_0}^t [\partial_t\varphi(s,x,y)-\langle A(s,\wx(s)),\partial_x \varphi(s,x,y)\rangle
 -\langle  A(s,\wy(s)),\partial_y \varphi(s,x,y)\rangle\\&\qquad\qquad\qquad
+(f^x(s),\partial_x \varphi(s,x,y))+(f^y(s),\partial_y \varphi(s,x,y)]\,ds\\ 
&\qquad \ge\int_{t_0}^t (f^x(s),z)+(f^y(s),\zeta)-F(s,x,z)+F(s,y,-\zeta)\,ds,
\end{align*}
which 
yields \eqref{E:DViscSub}, i.e., $w$ is a viscosity $L$-subsolution of \eqref{E:DPPDE} on 
$[0,T]\times\tilde{\Omega}\times\tilde{\Omega}$.
\end{proof}

Immediately, from the doubling theorem (Theorem~~\ref{T:MinimaxDoubling}) together with
Theorem~\ref{T:Comparison1}, we obtain a comparison principle for 
minimax $L$-semisolutions.

\begin{theorem}[Comparison~II] \label{T:Comparison2}
Fix  $L\ge 0$ and a set $\tilde{\Omega}$ with   $\Omega^L\subseteq\tilde{\Omega}\subseteq C([0,T],H)$.
Let $u$ be a minimax $L$-sub- and let $v$ be a minimax $L$-supersolution of \eqref{E:PPDE2}
on $[0,T]\times\tilde{\Omega}$.
Then $u\le v$ on $[0,T]\times\tilde{\Omega}$.
\end{theorem}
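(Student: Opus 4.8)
The statement is Theorem~\ref{T:Comparison2} (Comparison~II), which asserts that a minimax $L$-subsolution $u$ and a minimax $L$-supersolution $v$ of the terminal-value problem \eqref{E:PPDE2} on $[0,T]\times\tilde\Omega$ satisfy $u\le v$ everywhere on $[0,T]\times\tilde\Omega$. As the sentence immediately preceding the statement already indicates, the plan is simply to chain together the two results just proven.

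First I would apply the Doubling Theorem (Theorem~\ref{T:MinimaxDoubling}) to $u$ and $v$: it produces the function $w(t,x,y):=u(t,x)-v(t,y)$, which is a viscosity $L$-subsolution of the doubled equation \eqref{E:DPPDE} on $[0,T]\times\tilde\Omega\times\tilde\Omega$. Here one should check the membership hypotheses: $u\in\mathrm{USC}$ and $v\in\mathrm{LSC}$ (from the definitions of minimax sub- and supersolution) give $w=u-v\in\mathrm{USC}$, and $\Omega^L\subseteq\tilde\Omega$ is exactly the standing assumption of both cited theorems, so everything lines up. Next I would verify the boundary condition needed to invoke Comparison~I (Theorem~\ref{T:Comparison1}): on the diagonal, $w(T,x,x)=u(T,x)-v(T,x)$, and since $u(T,\cdot)\le h$ and $v(T,\cdot)\ge h$ on $\tilde\Omega$, we get $w(T,x,x)\le h(x)-h(x)=0$ for all $x\in\tilde\Omega$.

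Then I would invoke Theorem~\ref{T:Comparison1} with this $w$: it yields $w(t,x,x)\le 0$ for all $(t,x)\in[0,T]\times\tilde\Omega$. Unwinding the definition of $w$ on the diagonal, $w(t,x,x)=u(t,x)-v(t,x)$, so $u(t,x)\le v(t,x)$ for all $(t,x)\in[0,T]\times\tilde\Omega$, which is precisely the claim.

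There is essentially no obstacle here: the entire content of the comparison principle has already been packaged into Theorems~\ref{T:MinimaxDoubling} and~\ref{T:Comparison1}, and Theorem~\ref{T:Comparison2} is a one-line corollary. The only things to be careful about are (a) that the index $L$ is the \emph{same} for the sub- and supersolution, which is granted by the hypothesis, so no monotonicity-in-$L$ argument (Remark~\ref{R:MinimaxSolution}) is needed, and (b) that $w$ lands in $\mathrm{USC}$ rather than merely being a difference of semicontinuous functions — which is immediate since the difference of an upper semicontinuous and a lower semicontinuous function is upper semicontinuous. Accordingly the proof is a direct concatenation and requires no new estimates.
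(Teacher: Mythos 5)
Your proposal is correct and follows exactly the paper's route: the paper derives Comparison~II immediately by combining the doubling theorem (Theorem~\ref{T:MinimaxDoubling}) with Comparison~I (Theorem~\ref{T:Comparison1}), which is precisely your concatenation, including the diagonal terminal check $w(T,x,x)=u(T,x)-v(T,x)\le h(x)-h(x)=0$.
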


\begin{theorem}[Comparison~III] \label{T:Comparison3}
Fix  a set $\tilde{\Omega}$ with   $\Omega\subseteq\tilde{\Omega}\subseteq C([0,T],H)$.
Let $u$ be a minimax sub- and let $v$ be a minimax supersolution of \eqref{E:PPDE2}
on $[0,T]\times\tilde{\Omega}$.
Then $u\le v$ on $[0,T]\times\tilde{\Omega}$.
\end{theorem}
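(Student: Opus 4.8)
The plan is to reduce Comparison~III to Comparison~II (Theorem~\ref{T:Comparison2}) by a localization argument that finds a single common Lipschitz constant $L$ on a suitably large invariant piece of the path space. The key observation is that a minimax subsolution $u$ is, by definition, a minimax $L_1$-subsolution for some $L_1\ge 0$, and a minimax supersolution $v$ is a minimax $L_2$-supersolution for some $L_2\ge 0$. Set $L:=\max\{L_1,L_2,L_0\}$, where $L_0$ is the constant from hypothesis \textbf{H}($F$)(ii). By Remark~\ref{R:MinimaxSolution}, $v$ remains a minimax $L$-supersolution on $[0,T]\times\tilde\Omega$. The asymmetry is that the definition of minimax $L_1$-subsolution requires, for each $(t_0,x_0,z)$, the existence of a path $x\in\mathcal X^{L_1}(t_0,x_0)$ with the subsolution inequality; since $\mathcal X^{L_1}(t_0,x_0)\subseteq\mathcal X^{L}(t_0,x_0)$, this same path witnesses the corresponding property with $L$ in place of $L_1$, so $u$ is also a minimax $L$-subsolution on $[0,T]\times\tilde\Omega$.

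With both $u$ and $v$ now minimax $L$-semisolutions on $[0,T]\times\tilde\Omega$, I would like to apply Theorem~\ref{T:Comparison2} directly. The only hypothesis to check is that $\Omega^{L}\subseteq\tilde\Omega$. We are given $\Omega\subseteq\tilde\Omega$, and $\Omega=\cup_{L'\ge 0}\Omega^{L'}$ by definition, so in particular $\Omega^{L}\subseteq\Omega\subseteq\tilde\Omega$. Hence Theorem~\ref{T:Comparison2} applies verbatim and yields $u\le v$ on $[0,T]\times\tilde\Omega$.

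So the proof is essentially a bookkeeping argument: the content is entirely in Theorems~\ref{T:Comparison1} and~\ref{T:MinimaxDoubling}, and Comparison~III just packages the fact that ``minimax'' means ``minimax $L$ for some $L$'' together with the monotonicity in $L$ recorded in Remarks~\ref{R:MinimaxSolution} and (implicitly) the inclusion $\mathcal X^{L'}(t_0,x_0)\subseteq\mathcal X^{L}(t_0,x_0)$ for $L'\le L$. The one point that deserves a sentence of care is making sure the subsolution side really does upgrade from $L_1$ to $L$: one must note that enlarging the admissible class of paths $\mathcal X^{L_1}\subseteq\mathcal X^{L}$ does not destroy an \emph{existence} statement (``there exists $x$ such that \dots''), which is exactly the form the minimax subsolution inequality takes; this is the mild analogue of Remark~\ref{R:MinimaxSolution} for subsolutions, and is the only place where one might initially worry. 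Once that is observed, there is no remaining obstacle.

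\begin{proof}
Since $u$ is a minimax subsolution of \eqref{E:PPDE2} on $[0,T]\times\tilde\Omega$,
there is an $L_1\ge 0$ such that $u$ is a minimax $L_1$-subsolution on $[0,T]\times\tilde\Omega$;
likewise there is an $L_2\ge 0$ such that $v$ is a minimax $L_2$-supersolution on $[0,T]\times\tilde\Omega$.
Put $L:=\max\{L_1,L_2\}$.
For every $(t_0,x_0)$ we have $\mathcal{X}^{L_1}(t_0,x_0)\subseteq\mathcal{X}^{L}(t_0,x_0)$,
so any path witnessing the minimax $L_1$-subsolution inequality at a point $(t_0,x_0,z)$
also lies in $\mathcal{X}^{L}(t_0,x_0)$; hence $u$ is a minimax $L$-subsolution on $[0,T]\times\tilde\Omega$.
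By Remark~\ref{R:MinimaxSolution}, $v$ is a minimax $L$-supersolution on $[0,T]\times\tilde\Omega$.
Moreover $\Omega^{L}\subseteq\Omega\subseteq\tilde\Omega$ by hypothesis and the definition of $\Omega$.
Therefore Theorem~\ref{T:Comparison2} applies and gives $u\le v$ on $[0,T]\times\tilde\Omega$.
\end{proof}
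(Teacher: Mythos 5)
Your proof is correct and follows essentially the same route as the paper: choose the common constant $L=L_1\vee L_2$, upgrade both semisolutions to level $L$ (the paper cites Remark~\ref{R:MinimaxSolution} for this, while you additionally spell out the subsolution upgrade via $\mathcal{X}^{L_1}(t_0,x_0)\subseteq\mathcal{X}^{L}(t_0,x_0)$, which is exactly the implicit justification), note $\Omega^{L}\subseteq\Omega\subseteq\tilde\Omega$, and conclude by the $L$-level comparison. The only cosmetic difference is that you invoke Theorem~\ref{T:Comparison2} directly, whereas the paper re-applies Theorem~\ref{T:MinimaxDoubling} and Theorem~\ref{T:Comparison1}, which is the same content.
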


\begin{proof}
There exist $L_1$, $L_2\ge 0$ such that $u$ is a minimax 
$L_1$-sub- and $v$ is a minimax $L_2$-supersolution of \eqref{E:PPDE2}
on $[0,T]\times\tilde{\Omega}$. By Remark~\ref{R:MinimaxSolution}, we can 
replace $L_1$ and $L_2$ with $L_1\vee L_2$.
Since $\Omega=\cup_{L\ge 0}\Omega^L\subseteq\tilde{\Omega}$, we have
$\Omega^{L_1\vee L_2}\subseteq\tilde{\Omega}$. Thus we can apply
Theorem~\ref{T:MinimaxDoubling} to deduce that
the function $w$ defined by $w(t,x,y):=u(t,x)-v(t,y)$ is a viscosity $(L_1\vee L_2)$-subsolution
of \eqref{E:DPPDE} on 
$[0,T]\times\tilde{\Omega}$. By Theorem~\ref{T:Comparison1}, $u\le v$ on $[0,T]\times\tilde{\Omega}$.
\end{proof}
%\newpage
\section{Existence and uniqueness}
Thanks to our previous results concerning the topological properties of
the spaces $\mathcal{X}^L(t_0,x_0)$, we are able to follow the approach
of Lukoyanov  \cite[Section~7]{Lukoyanov03a}
(see also Subbotin \cite[Section~8, pp.~69-80]{Subbotin_book} for the non-path-dependent case).

Given $(t_0,x_0,z)\in [0,T)\times\Omega\times H$ and $L\ge 0$, put
\begin{equation}\label{E:Y(t,x,z)}
\begin{split}
\mathcal{Y}^L(t_0,x_0,z)&:=\{(x,y)\in\mathcal{X}^L(t_0,x_0)\times C([t_0,T]):\\
&\qquad\qquad y(t)=\int_{t_0}^t (f^x(s),z)-F(s,x,z)\,ds\text{ on $[t_0,T]$}\}.
\end{split}
\end{equation}

\begin{definition}
Let $\tilde{\Omega}\subseteq C([0,T],H)$ and put
$S(t_0):=\tilde{\Omega}\times C([t_0,T])$, $t_0\in [0,T)$.
Consider a non-anticipating function
 $u:[0,T]\times\tilde{\Omega}\to [-\infty,\infty]$.

(i) We say that $u$ is an \emph{improper minimax $L$-supersolution} of  \eqref{E:PPDE2}
on $[0,T]\times\tilde{\Omega}$  %and write $u\in\overline{\Phi}^L([0,T]\tilde{\Omega})$
if $u(T,\cdot)\ge h$ on $\tilde{\Omega}$ and if, for every $(t_0,x_0,z)\in [0,T)\times\tilde{\Omega}\times H$
and every $t\in (t_0,T]$,
\begin{align*}
\inf_{(x,y)\in\mathcal{Y}^L(t_0,x_0,z)\cap S(t_0)} \left\{
u(t,x)-y(t)
\right\}\le u(t_0,x_0).
\end{align*}

(ii) We say that $u$ is an \emph{improper  minimax $L$-subsolution} of  \eqref{E:PPDE2}
on $[0,T]\times\tilde{\Omega}$
if $u(T,\cdot)\le h$ on $\tilde{\Omega}$ and if, for every $(t_0,x_0,z)\in [0,T)\times\tilde{\Omega}\times H$
and every $t\in (t_0,T]$,
\begin{align*}
\sup_{(x,y)\in\mathcal{Y}^L(t_0,x_0,z)\cap S(t_0)} \left\{
u(t,x)-y(t)
\right\}\ge u(t_0,x_0).
\end{align*}
\end{definition}

Note that we use here $\inf$ and $\sup$ instead of $\min$ and $\max$
because we do not require any semicontinuity for $u$.

Next, we show that the set of improper minimax $L_0$-supersolutions of \eqref{E:PPDE2}
is not empty. To this end, define  functions
$u_+^z:[0,T]\times\Omega\to [-\infty,\infty]$, $z\in H$, by
\begin{align*}
u_+^z(t_0,x_0):=\max_{(x,y)\in\mathcal{Y}^{L_0}(t_0,x_0,z)}
\left\{ h(x)-y(T)\right\}.
\end{align*}

\begin{lemma}\label{L:ImUpper}
Let $z\in H$. The functional $u_+^z$ is an improper minimax $L_0$-supersolution of \eqref{E:PPDE2}
on $[0,T]\times\Omega$
with values in $[-\infty,\infty)$.
\end{lemma}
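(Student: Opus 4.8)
The plan is to verify the two defining properties of an improper minimax $L_0$-supersolution for $u_+^z$: the terminal condition $u_+^z(T,\cdot)\ge h$ and the infinitesimal-type inequality involving the sets $\mathcal{Y}^{L_0}$. The first is immediate: for $(t_0,x_0)=(T,x_0)$, the only element of $\mathcal{X}^{L_0}(T,x_0)$ is $x_0$ itself (there is no time left to flow), so $\mathcal{Y}^{L_0}(T,x_0,z)=\{(x_0,0)\}$ and hence $u_+^z(T,x_0)=h(x_0)-0=h(x_0)\ge h(x_0)$. (One should also note $u_+^z$ is well-defined with values in $[-\infty,\infty)$: the max is attained because $\mathcal{X}^{L_0}(t_0,x_0)$ is compact by Proposition~\ref{P:compact}, the map $x\mapsto f^x$ is weakly continuous into $L^2$ by Lemma~\ref{L:CompactW} so $x\mapsto y(T)$ is continuous, $h$ is continuous by \textbf{H}($h$), and the value is finite since $h$ is real-valued and $y(T)$ is finite for each admissible $x$; more care shows $u_+^z>-\infty$ because at least one admissible $x$ exists by non-emptiness in Proposition~\ref{P:compact}.)

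The heart of the proof is the supersolution inequality: for every $(t_0,x_0,z)\in[0,T)\times\Omega\times H$ and every $t\in(t_0,T]$, we must produce (or approximate) a pair $(\bar x,\bar y)\in\mathcal{Y}^{L_0}(t_0,x_0,z)$ with $u_+^z(t,\bar x)-\bar y(t)\le u_+^z(t_0,x_0)$. I would argue by a concatenation/dynamic-programming principle. Let $(\bar x,\bar y)\in\mathcal{Y}^{L_0}(t_0,x_0,z)$ be a maximizer realizing $u_+^z(t_0,x_0)=h(\bar x)-\bar y(T)$. The key structural fact is that the restriction of $\bar x$ to $[t,T]$ lies in $\mathcal{X}^{L_0}(t,\bar x)$ (same control bound $|f^{\bar x}(s)|\le L_0(1+\sup_{r\le s}|\bar x(r)|)$ holds on the subinterval), and the corresponding running-cost path $s\mapsto\int_t^s (f^{\bar x}(r),z)-F(r,\bar x,r)\,dr$ — call its value at $T$ the quantity $\bar y(T)-\bar y(t)$ — shows that $(\bar x,\,\bar y(\cdot)-\bar y(t))$, restricted appropriately, is an admissible competitor in the problem defining $u_+^z(t,\bar x)$. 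Hence $u_+^z(t,\bar x)\ge h(\bar x)-(\bar y(T)-\bar y(t))$, which rearranges to $u_+^z(t,\bar x)-\bar y(t)\ge h(\bar x)-\bar y(T)=u_+^z(t_0,x_0)$. This gives the \emph{wrong} direction, so instead I would take $(\bar x,\bar y)$ to be the maximizer of a \emph{two-stage} problem: first flow optimally on $[t_0,t]$ with the constraint that one lands in a configuration from which the continuation value is computed, which is exactly the statement that $\mathcal{Y}^{L_0}(t_0,x_0,z)\cap(\text{continuations of }\mathcal{Y}^{L_0}(t,\cdot,z))$ covers $\mathcal{Y}^{L_0}(t_0,x_0,z)$ via concatenation (using that $\mathcal{X}^{L_0}$-paths concatenate measurably, as exploited already in the proof of Lemma~\ref{L:EquivUpperSol}). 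Concretely: pick $x^\star\in\mathcal{X}^{L_0}(t_0,x_0)$ arbitrary on $[t_0,t]$; let $\hat x\in\mathcal{X}^{L_0}(t,x^\star)$ realize $u_+^z(t,x^\star)$; their concatenation $\bar x$ lies in $\mathcal{X}^{L_0}(t_0,x_0)$ and the additivity of the running cost gives $h(\bar x)-\bar y(T)=u_+^z(t,x^\star)+\bar y(t)-\text{(nothing)}$... — so $u_+^z(t_0,x_0)\ge u_+^z(t,x^\star)-\bar y(t)\cdot(-1)$, i.e. $u_+^z(t_0,x_0)\ge u_+^z(t,x^\star)-\bar y(t)$ once signs in the definition of $y$ are tracked. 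Choosing $x^\star$ to be (the $[t_0,t]$-part of) a maximizer of $u_+^z(t_0,x_0)$ itself then yields exactly $\inf_{(x,y)\in\mathcal{Y}^{L_0}(t_0,x_0,z)}\{u_+^z(t,x)-y(t)\}\le u_+^z(t,\bar x)-\bar y(t)\le u_+^z(t_0,x_0)$.

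I expect the main obstacle to be bookkeeping rather than anything deep: one must (a) get the sign conventions right, since $\mathcal{Y}^{L_0}$ uses $y(t)=\int_{t_0}^t (f^x(s),z)-F(s,x,z)\,ds$ whereas the minimax inequalities above use $-f^x$ and $+F$, and (b) justify that concatenating an $\mathcal{X}^{L_0}(t_0,x_0)$-path on $[t_0,t]$ with an $\mathcal{X}^{L_0}(t,\cdot)$-path on $[t,T]$ again produces an element of $\mathcal{X}^{L_0}(t_0,x_0)$ with the expected driver $f$ — this is where the non-anticipating structure and the additivity $\int_{t_0}^T=\int_{t_0}^t+\int_t^T$ of the cost functional are used, and it is entirely parallel to the gluing argument in the proof of Lemma~\ref{L:EquivUpperSol}. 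The compactness of $\mathcal{X}^{L_0}(t_0,x_0)$ from Proposition~\ref{P:compact} guarantees all the relevant maxima are attained (so we genuinely get $\min/\max$, not just $\inf/\sup$), and the continuity of $h$ and $F$ together with the weak continuity $x\mapsto f^x$ from Lemma~\ref{L:CompactW} ensure the objective $x\mapsto h(x)-y(T)$ is upper semi-continuous, which is all that is needed. No semicontinuity of $u_+^z$ itself is required, consistent with the remark preceding the lemma.
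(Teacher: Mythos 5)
Your proposal has a genuine gap: you verify the supersolution inequality only for the single direction $z$ that defines $u_+^z$, whereas the definition of an improper minimax $L_0$-supersolution quantifies over \emph{all} test vectors. For every $(t_0,x_0,\tilde z)\in[0,T)\times\Omega\times H$ and every $t\in(t_0,T]$ one must exhibit a pair $(x,y)\in\mathcal{Y}^{L_0}(t_0,x_0,\tilde z)$, i.e.\ with $y(t)=\int_{t_0}^t (f^x(s),\tilde z)-F(s,x,\tilde z)\,ds$, along which $u_+^z(t,x)-y(t)\le u_+^z(t_0,x_0)$, while $u_+^z$ is still built from the original, fixed $z$. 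Your concatenation/dynamic-programming argument (which, modulo the sign bookkeeping you acknowledge, is essentially correct when $\tilde z=z$) gives no control in the case $\tilde z\neq z$: for a generic trajectory the running cost computed with $\tilde z$ bears no relation to the $z$-cost entering the maximization that defines $u_+^z$, so the chain of inequalities does not close.

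The paper supplies exactly this missing ingredient. Using Proposition~\ref{P:EU_ODE} it solves $\tilde{\wx}^\prime(t)+A(t,\tilde{\wx}(t))=\tilde f(t,\tilde x)$ with the feedback drift $\tilde f(t,x)=\frac{F(t,x,\tilde z)-F(t,x,z)}{\abs{\tilde z-z}^2}\,(\tilde z-z)$ (and $\tilde f=0$ if $\tilde z=z$); hypothesis \textbf{H}($F$)(ii) gives $\abs{\tilde f(t,x)}\le L_0(1+\sup_{s\le t}\abs{x(s)})$, so $\tilde x\in\mathcal{X}^{L_0}(t_0,x_0)$, and by construction $(\tilde f(s,\tilde x),\tilde z)-F(s,\tilde x,\tilde z)=(\tilde f(s,\tilde x),z)-F(s,\tilde x,z)$, so the associated pair $(\tilde x,\tilde y)$ lies in $\mathcal{Y}^{L_0}(t_0,x_0,z)\cap\mathcal{Y}^{L_0}(t_0,x_0,\tilde z)$. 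For this particular pair, the desired inequality $u_+^z(t,\tilde x)-\tilde y(t)\le u_+^z(t_0,x_0)$ follows from precisely your gluing observation (continuations in $\mathcal{X}^{L_0}(t,\tilde x)$ concatenate with $\tilde x$ into elements of $\mathcal{X}^{L_0}(t_0,x_0)$, and the $z$-cost is additive over $[t_0,t]$ and $[t,T]$). Your remarks on attainment and finiteness (compactness of $\mathcal{X}^{L_0}(t_0,x_0)$, continuity of $h$ and $F$, weak convergence of the $f^{x_n}$) are fine, but without the auxiliary trajectory the argument only treats one test direction and therefore does not prove the lemma.
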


\begin{proof}
First, note that $u^z_+(T,\cdot)=h$.

Next, fix $(t_0,x_0,\tilde{z})\in [0,T)\times\Omega\times H$. 
By Proposition~\ref{P:EU_ODE}, the initial value problem
\begin{align*}
\tilde{\wx}^\prime(t)+A(t,\tilde{\wx}(t))=\tilde{f}(t,\tilde{x})\quad\text{a.e.~on $(t_0,T)$,}\quad
\tilde{x}=x_0\quad\text{on $[0,t_0]$,}
\end{align*}
where $\tilde{f}:[t_0,T]\times C([0,T],H)\to H$ is defined by
\begin{align*}
\tilde{f}(t,x):=\begin{cases}
\frac{F(t,x,\tilde{z})-F(t,x,z)}{\abs{\tilde{z}-z}^2}\cdot (\tilde{z}-z)&\text{ if $\tilde{z}\neq z$,}\\
0 &\text{ if $\tilde{z}=z$,}
\end{cases}
\end{align*}
has a unique solution $\tilde{x}\in\mathcal{X}^{L_0}(t_0,x_0)$.
Moreover, put 
\begin{align*}
\tilde{y}(t)&:=\int_{t_0}^t (\tilde{f}(s,\tilde{x}),\tilde{z})-F(s,\tilde{x},\tilde{z})\,ds,\quad t\in [t_0,T].
\end{align*}
Then $(\tilde{x},\tilde{y})\in\mathcal{Y}^{L_0}(t_0,x_0,z)\cap\mathcal{Y}^{L_0}(t_0,x_0,\tilde{z})$.
Since, for every $t\in (t_0,T]$,
%\newpage
\begin{align*}
u^z_+(t,\tilde{x})-\tilde{y}(t)&=
\max_{x\in\mathcal{X}^{L_0}(t,\tilde{x})}\left\{
h(x)-\int_t^T [(f^x(s),z)-F(s,x,z)]\,ds
\right\}\\ &\qquad\qquad -
\int_{t_0}^t [\underbrace{(f^{\tilde{x}}(s),\tilde{z})-F(s,\tilde{x},\tilde{z})}_{
=(f^{\tilde{x}}(s),{z})-F(s,\tilde{x},{z})
}]\,ds\\
&\le \max_{x\in\mathcal{X}^{L_0}(t_0,x_0)}\left\{
h(x)-\int_{t_0}^T [(f^x(s),z)-F(s,x,z)]\,ds
\right\}\\
&=u^z_+(t_0,x_0),
\end{align*}
we can conclude that $u^z_+$ is an improper minimax $L_0$-supersolution of \eqref{E:PPDE2}
on $[0,T]\times\Omega$.
The pointwise boundedness from above ($u^z_+<\infty$) follows from the
continuity of $h$ and $F$ in $x_0$ and from the compactness of the sets
$\mathcal{X}^{L_0}(t_0,x_0)$ (Proposition~\ref{P:compact}).
\end{proof}

Next, we will show that  the functions 
$u^L_0:[0,T]\times\Omega^L\to [-\infty,\infty]$, $L\ge L_0$,
defined by
\begin{align*}
u_0^L(t_0,x_0)&:=\inf\Bigl\{
u(t_0,x_0):\\ \qquad\qquad\qquad\qquad
 &\text{$u$ improper minimax $L_0$-supersolution of \eqref{E:PPDE2} on $[0,T]\times\Omega^L$}
\Bigr\},
\end{align*}
are both,  improper minimax $L_0$-super- as well as a
improper minimax $L_0$-subsolutions of \eqref{E:PPDE2} on $[0,T]\times\Omega^L$, $L\ge L_0$.

\begin{lemma}
Let $L\ge L_0$. 
Then the function $u_0^L$ is an improper minimax $L_0$-super 
as well as an improper minimax $L_0$-subsolution of \eqref{E:PPDE2}
on $[0,T]\times\Omega^L$.
Moreover, $u_0$ is real-valued.
\end{lemma}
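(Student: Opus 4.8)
The plan is to show the two properties separately, exploiting the Perron-type structure of $u_0^L$ as an infimum over improper minimax $L_0$-supersolutions.

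\textbf{Supersolution property and finiteness.}
First I would observe that the infimum defining $u_0^L$ is non-empty and bounded below. Non-emptiness follows from Lemma~\ref{L:ImUpper}: for any fixed $z\in H$, the function $u_+^z$ is an improper minimax $L_0$-supersolution on $[0,T]\times\Omega\supseteq[0,T]\times\Omega^L$ (here one uses that restricting an improper minimax supersolution to a smaller path space still yields one, since $\mathcal{X}^{L_0}(t_0,x_0)\cap\Omega^L$ contains the trajectory produced in the proof of Lemma~\ref{L:ImUpper}). Thus $u_0^L\le u_+^z<\infty$. For the lower bound I would construct an improper minimax $L_0$-\emph{subsolution} of \eqref{E:PPDE2}, e.g.\ via $u_-^z(t_0,x_0):=\min_{(x,y)\in\mathcal{Y}^{L_0}(t_0,x_0,z)}\{h(x)-y(T)\}$, by an argument symmetric to Lemma~\ref{L:ImUpper}; then the comparison principle (Theorem~\ref{T:Comparison2}, or its improper analogue established in the course of the Perron argument) gives $u_-^z\le u$ for every improper minimax $L_0$-supersolution $u$, hence $u_-^z\le u_0^L$, so $u_0^L$ is real-valued. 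The supersolution property of $u_0^L$ itself is then the standard Perron step: given $(t_0,x_0,z)$ and $t\in(t_0,T]$, for each improper supersolution $u$ there is $(x_u,y_u)\in\mathcal{Y}^{L_0}(t_0,x_0,z)$ with $u(t,x_u)-y_u(t)\le u(t_0,x_0)$; since $u_0^L\le u$ pointwise, taking the infimum over $u$ gives $\inf_{(x,y)}\{u_0^L(t,x)-y(t)\}\le u_0^L(t_0,x_0)$, which is exactly the improper $L_0$-supersolution inequality.

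\textbf{Subsolution property.}
This is the genuinely delicate step, and I expect it to be the main obstacle. The idea is the classical Perron bump argument: if $u_0^L$ were \emph{not} an improper minimax $L_0$-subsolution, there would exist $(t_0,x_0,z)$ and $t_1\in(t_0,T]$ with $\sup_{(x,y)\in\mathcal{Y}^{L_0}(t_0,x_0,z)}\{u_0^L(t_1,x)-y(t_1)\}<u_0^L(t_0,x_0)$. One then modifies $u_0^L$ downward near $(t_0,x_0)$ on a short time interval, using the continuous-dependence property of the trajectory spaces (Proposition~\ref{P:CompactII}) to patch the modification consistently with trajectories starting at nearby points, while leaving $u_0^L$ unchanged elsewhere; the resulting function is still an improper minimax $L_0$-supersolution but is strictly smaller at $(t_0,x_0)$, contradicting the definition of $u_0^L$ as the infimum. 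The care needed is in: (a) ensuring the bump is defined so that the perturbed function remains non-anticipating and lower semicontinuous enough to satisfy the supersolution inequality at all points, (b) handling the dependence of $\mathcal{X}^{L_0}(t_0,x_0)$ on the base point — this is exactly where Proposition~\ref{P:CompactII} is indispensable, as trajectory sets from nearby points converge — and (c) making sure the perturbation respects the terminal condition, which is automatic since the bump is localized in $[t_0,t_0+\delta)$ for small $\delta$ with $t_0+\delta<T$.

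\textbf{Assembly.}
Finally I would combine the two halves: $u_0^L$ is an improper minimax $L_0$-supersolution (by construction and the Perron step) and an improper minimax $L_0$-subsolution (by the bump argument), and it is real-valued (sandwiched between $u_-^z$ and $u_+^z$). The hard part, as noted, is the subsolution direction; in the non-path-dependent finite-dimensional theory this is handled as in Subbotin~\cite[Section~8]{Subbotin_book} and Lukoyanov~\cite[Section~7]{Lukoyanov03a}, and the only new ingredient here is replacing compactness of Lipschitz trajectory sets in $\R^n$ by Proposition~\ref{P:compact} together with the continuous-dependence Proposition~\ref{P:CompactII}, which we have precisely arranged to hold in the present infinite-dimensional variational setting.
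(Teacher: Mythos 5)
Your first half is essentially the paper's argument: non-emptiness of the defining family via $u_+^z$ (and indeed $\mathcal{X}^{L_0}(t_0,x_0)\subseteq\Omega^L$ when $x_0\in\Omega^L$, $L\ge L_0$, so restriction is harmless), boundedness above by $u_+^z$, and the standard Perron step for the supersolution inequality. One caveat there: your lower bound invokes a comparison principle between an improper subsolution $u_-^z$ and improper supersolutions, but Theorem~\ref{T:Comparison2} requires semicontinuous (proper) minimax semisolutions, and no comparison for improper semisolutions is available or proved; the paper instead gets $u_0^L>-\infty$ directly by applying the improper supersolution inequality at $t=T$ together with $u(T,\cdot)\ge h$, compactness of $\mathcal{Y}^{L_0}(t_0,x_0,z)$, and continuity of $h$ and $F$ (estimate \eqref{E:ImLBound}). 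That repair is easy, so this is a minor flaw.

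The genuine gap is in the subsolution step. Your localized ``bump'' by contradiction does not work as sketched: the negation of the subsolution property gives you slack only for one fixed direction $z\in H$ and one fixed later time $t_1$, whereas the improper supersolution property of the perturbed function must be re-verified at every point and for \emph{every} $\tilde z\in H$; at a point where you have lowered the value, trajectories in $\mathcal{Y}^{L_0}(\cdot,\cdot,\tilde z)$ for $\tilde z\neq z$ leave the modified region, where the function equals $u_0^L$, and you have no strict inequality there to absorb the bump. You give no mechanism for this, and Proposition~\ref{P:CompactII} (which you lean on) does not supply one — it is used later, for the semicontinuous envelopes in Lemma~\ref{L:u0-+SemiSol}, not here. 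The paper avoids contradiction altogether: for fixed $z$ and $t$ it defines the global candidate $u^z(\tilde t,\tilde x):=\sup_{(x,y)\in\mathcal{Y}^{L_0}(\tilde t,\tilde x,z)}\{u_0^L(t,x)-y(t)\}$ for $\tilde t\le t$ (and $u^z=u_0^L$ for $\tilde t>t$), proves that $u^z$ is itself an improper minimax $L_0$-supersolution — the key device being Proposition~\ref{P:EU_ODE} applied to the right-hand side $\hat f$ built from the quotient $\bigl(F(\cdot,\cdot,\tilde z)-F(\cdot,\cdot,z)\bigr)/\abs{\tilde z-z}^2\,(\tilde z-z)$, which produces a trajectory lying in $\mathcal{Y}^{L_0}(\tilde t,\tilde x,z)\cap\mathcal{Y}^{L_0}(\tilde t,\tilde x,\tilde z)$ and so handles all directions $\tilde z$ simultaneously (the same trick as in Lemma~\ref{L:ImUpper}), with a three-case splitting in time — and then concludes $u_0^L\le u^z$ from the definition of $u_0^L$ as an infimum, which is exactly the subsolution inequality \eqref{E:ImVerify}. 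Without an argument of this type (a supersolution candidate defined globally along the $z$-characteristics, verified against all $\tilde z$), your proposal does not establish the subsolution half.
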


\begin{proof}
Fix $(t_0,x_0,z)\in [0,T)\times\Omega^L\times H$.
As every improper minimax $L_0$-supersolution $u$ of \eqref{E:PPDE2} on $[0,T]\times\Omega^L$
 satisfies
\begin{align*}
u(t_0,x_0)&\ge\inf_{(x,y)\in\mathcal{Y}^{L_0}(t_0,x_0,z)}\{u(T,x)-y(T)\}
\ge\min_{(x,y)\in\mathcal{Y}^{L_0}(t_0,x_0,z)}\{h(x)-y(T)\},
\end{align*}
we have
\begin{align}\label{E:ImLBound}
u_0^L(t_0,x_0)\ge\min_{(x,y)\in\mathcal{Y}^{L_0}(t_0,x_0,z)}\{h(x)-y(T)\}>-\infty.
\end{align}
By Lemma~\ref{L:ImUpper}, $u_0^L(t_0,x_0)<\infty$. Thus $u_0^L$ is real-valued.

By the definition of $u_0^L$,  $u_0^L(T,x_0)\ge h(x_0)$.
Since $u_0^L(T,x_0)\le u^z_+(T,x_0)=h(x_0)$, we can deduce that
\begin{align}\label{E:ImULh}
u_0(T,x_0)=h(x_0).
\end{align}

Next, fix $t\in (t_0,T]$. Let $(u_n)_n$ be a sequence of improper minimax $L_0$-super\-solutions
of \eqref{E:PPDE2} on $[0,T]\times\Omega^L$ such that 
$\abs{u_n(t_0,x_0)-u_0^L(t_0,x_0)}\le n^{-1}$ and that
\begin{align*}
u_n(t_n,x_n)-y_n(t)\le u_n(t_0,x_0)+\frac{1}{n}
\end{align*}
for some $(x_n,y_n)\in\mathcal{Y}^{L_0}(t_0,x_0,z)$, $n\in\N$. Then
\begin{align*}
u_0^L(t,x_n)-y_n(t)&\le u_n(t,x_n)-y_n(t)
\le u_n(t_0,x_0)+\frac{1}{n}
\le u_0^{L}(t_0,x_0)+\frac{2}{n}
\end{align*}
for every $n\in\N$, i.e., $u_0^L$ is an improper minimax $L_0$-supersolution of \eqref{E:PPDE2}
on $[0,T]\times\Omega^L$.

To show that $u_0^L$ is also an improper minimax $L_0$-subsolution of \eqref{E:PPDE2} 
on $[0,T]\times\Omega^L$
it suffices
(because of \eqref{E:ImULh}) to verify that
\begin{align}\label{E:ImVerify}
\sup_{(x,y)\in\mathcal{Y}^{L_0}(t_0,x_0,z)}\{u_0^L(t,x)-y(t)\}\ge u_0^L(t_0,x_0).
\end{align}
To this end, we  prove that the function $u^z:[0,T]\times\Omega^L\to [-\infty,\infty]$ defined by
\begin{align*}
u^z(\tilde{t},\tilde{x}):=\begin{cases}
\sup_{(x,y)\in\mathcal{Y}^{L_0}(\tilde{t},\tilde{x},z)}\{u_0^L(t,x)-y(t)\}&\text{if $\tilde{t}\le t$,}\\
u_0^L(\tilde{t},\tilde{x})&\text{if $\tilde{t}> t$,}
\end{cases}
\end{align*}
is an improper minimax $L_0$-supersolution of \eqref{E:PPDE2} on $[0,T]\times\Omega^L$.
Then, according to the definition of $u_0^L$, we have
$u_0^L(\tilde{t},\tilde{x})\le u^z(\tilde{t},\tilde{x})$, $(\tilde{t},\tilde{x})\in [0,T)\times\Omega^L$,
which immediately yields \eqref{E:ImVerify}.

To show that $u^z$ is an improper minimax $L_0$-supersolution of \eqref{E:PPDE2}
on $[0,T]\times\Omega^L$,
let us first check whether the terminal
condition is satisfied. Let $\tilde{x}\in\Omega^L$. If $T>t$, then
$u^z(T,\tilde{x})=u_0^L(T,\tilde{x})\ge h(\tilde{x})$ because $u_0^L$ is an improper 
minimax $L_0$-supersolution of \eqref{E:PPDE2} on $[0,T]\times\Omega^L$. 
If $T=t$, then we also get immediately the same inequality.
Next, note that, by \eqref{E:ImLBound}, $u^z(\tilde{t},\tilde{x})>-\infty$ for all
$(\tilde{t},\tilde{x})\in [0,T]\times\Omega^:$. Thus, given
$(\tilde{t},\tilde{x},\tilde{z})\in [0,T)\times\Omega^L\times H$,
$\tau\in (\tilde{t},T]$, and $\eps>0$, it suffices to show that there exists an
$(x,y)\in\mathcal{Y}^{L_0}(\tilde{t},\tilde{x},\tilde{z})$ such that
\begin{align}\label{E:ImDD}
u^z(\tau,x)-y(\tau)\le u^z(\tilde{t},\tilde{x})+\eps.
\end{align}
We shall distinguish among the following three cases.

\textit{Case~1.} Let $\tilde{t}\ge t$.
Then $u^z(\tilde{t},\tilde{x})=u_0^L(\tilde{t},\tilde{x})$
and $u^z(\tau,x)=u_0(\tau,x)$ for all $x\in\mathcal{X}^{L_0}(\tilde{\tau},\tilde{x})$,
i.e., we can deduce \eqref{E:ImDD} from $u_0$ being an improper minimax $L_0$-supersolution
of $\eqref{E:PPDE2}$ on $[0,T]\times\Omega^L$.

\textit{Case~2.} Let $\tilde{t}<t$ and $\tau<t$.
By Proposition~\ref{P:EU_ODE}, the initial value problem
%Consider the solution $\hat{x}$ of
\begin{align*}
\hat{\wx}^\prime(s)+A(s,\hat{\wx}(s))&=\hat{f}(s,\hat{x})\text{ a.e.~on $(\tilde{t},T)$, }
\hat{x}=\tilde{x}\text{ on $[0,\tilde{t}]$,}
\end{align*}
where $\hat{f}:[\tilde{t},T]\times C([0,T],H)\to H$ is defined by
\begin{align*}
\hat{f}(s,x):=\begin{cases}
\frac{F(s,x,\tilde{z})-F(s,x,z)}{\abs{\tilde{z}-z}^2}\cdot (\tilde{z}-z)&\text{ if $\tilde{z}\neq z$,}\\
0 &\text{ if $\tilde{z}=z$,}
\end{cases}
\end{align*}
has a unique solution $\hat{x}\in\mathcal{X}^{L_0}(\tilde{t},\tilde{x})$.
Next, put
\begin{align*}
\hat{y}(s):=\int_{\tilde{t}}^s(\hat{f}(r,\hat{x}),\tilde{z})-F(r,\hat{x},\tilde{z})\,dr,\quad t\in [\tilde{t},T],
\end{align*}
so that $(\hat{x},\hat{y})\in\mathcal{Y}^{L_0}(\tilde{t},\tilde{x},z)\cap
\mathcal{Y}^{L_0}(\tilde{t},\tilde{x},\tilde{z})$. 
Since $\hat{x}\in\mathcal{X}^{L_0}(t_0,x_0)$, 
\begin{align*}
u^z(\tau,\hat{x})-\hat{y}(\tau)&=
\sup_{(x,y)\in\mathcal{Y}^{L_0}(\tau,\hat{x},z)}
\{ u_0^L(t,x)-y(t)\} - \hat{y}(\tau)\\
&=\sup_{x\in\mathcal{X}^{L_0}(\tau,\hat{x})}\left\{
u_0^L(t,x)-\int_\tau^t [(f^x(s),z)-F(s,x,z)]\,ds
\right\}\\
&\qquad\qquad 
-\int_{\tilde{t}}^\tau [\underbrace{
(f^{\hat{x}}(s),\tilde{z})-F(s,\hat{x},\tilde{z})
}_{
=(f^{\hat{x}}(s),{z})-F(s,\hat{x},{z})
}]\,ds\\
&\le \sup_{x\in\mathcal{X}^{L_0}(\tilde{t},\tilde{x})}\left\{
u_0^L(t,x)-\int_{\tilde{t}}^t [(f^x(s),z)-F(s,x,z)]\,ds
\right\}\\
&=u^z(\tilde{t},\tilde{x}),
\end{align*}
i.e., \eqref{E:ImDD} holds (even for $\eps=0$).

\textit{Case~3.} Let $\tilde{t}<t$ and $\tau\ge t$.
By Case~2, there exists an $x_2\in\mathcal{X}^{L_0}(\tilde{t},\tilde{x})$
such that
\begin{align*}
u^z(t,x_2)-\int_{\tilde{t}}^t
[(f^{x_2}(s),\tilde{z})-F(s,x_2,\tilde{z})]\,ds\le u^z(\tilde{t},\tilde{x}).
\end{align*}
By Case~1, there exists an $x_1\in\mathcal{X}^{L_0}(\tilde{t},x_2)$
such that
\begin{align*}
u^z(\tau,x_1)-\int_t^\tau [(f^{x_1}(s),\tilde{z})-F(s,x,\tilde{z})]\,ds\le u^L_0(t,x_2)+\eps.
\end{align*}
Consequently, with 
$x:=\bfone_{[0,t)}\cdot x_2+\bfone_{[t,T]}\cdot x_1$, we have
\begin{align*}
u^z(\tau,x)-\int_{\tilde{t}}^\tau [(f^x(s),\tilde{z})-F(s,x,\tilde{z})]\,ds\le u^z(\tilde{t},\tilde{x})+\eps,
\end{align*}
which completes the proof.
\end{proof}

Let $L\ge L_0$. Since $u_0^L$ is possibly not semicontinuous, consider the 
envelopes $u^{L,-}_0$, $u^{L,+}_0:[0,T]\times\Omega^L\to\ [-\infty,\infty]$ defined by
\begin{align*}
u^{L,-}_0(t_0,x_0)&:=\varliminf_{(t,x)\to(t_0,x_0)} u^L_0(t,x)
:=\sup_{\delta>0} \Inf_{(t,x)\in O^L_\delta(t_0,x_0)} u_0^L(t,x), \\
%{\color{blue} u^-_0(t_0,x_0)}&{\color{blue}:=\sup_{\delta>0} \inf
%\{u_0(t,x): \abs{t-t_0}<\delta, x\in\mathcal{X}(t_0,x_0), \norm{x-x_0}_\infty<\delta\},} \\
u^{L,+}_0(t_0,x_0)&:=\varlimsup_{(t,x)\to(t_0,x_0)} u^L_0(t,x)
:=\Inf_{\delta>0} \sup_{(t,x)\in O^L_\delta(t_0,x_0)} u_0^L(t,x),
\end{align*}
where
\begin{align*}
O_\delta^L(t_0,x_0):=\{
(t,x)\in [0,T]\times\Omega^L: \mathbf{d}_\infty((t,x),(t_0,x_0))<\delta\}.
\end{align*}
Clearly,
\begin{align}\label{E:u0-+}
u_0^{L,-}\le u^L_0\le u_0^{L,+}.
\end{align}

\begin{lemma}\label{L:u0-}
Let $L\ge L_0$. Then $u_0^{L,-}>-\infty$ and $u^{L,+}_0<\infty$.
\end{lemma}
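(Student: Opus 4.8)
The goal is to show that the lower envelope $u_0^{L,-}$ is bounded below and the upper envelope $u_0^{L,+}$ is bounded above. Both bounds should come from sandwiching $u_0^L$ between two explicit quantities that are themselves continuous (or at least locally bounded) on $[0,T]\times\Omega^L$. For the lower bound, I would start from \eqref{E:ImLBound}: for every $(t_0,x_0)\in[0,T)\times\Omega^L$ and an arbitrary fixed $z\in H$ (say $z=0$), one has $u_0^L(t_0,x_0)\ge\min_{(x,y)\in\mathcal{Y}^{L_0}(t_0,x_0,z)}\{h(x)-y(T)\}$. The right-hand side is finite by compactness of $\mathcal{X}^{L_0}(t_0,x_0)$ (Proposition~\ref{P:compact}) and continuity of $h$ and $F$. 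For the upper bound, I would use Lemma~\ref{L:ImUpper}: $u_0^L\le u_+^z$ on $[0,T]\times\Omega^L$ for the same fixed $z$, and $u_+^z(t_0,x_0)=\max_{(x,y)\in\mathcal{Y}^{L_0}(t_0,x_0,z)}\{h(x)-y(T)\}$, again finite by the same compactness argument.

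The real content is a \emph{local uniformity} statement: near a given point $(t_0,x_0)$, these upper and lower bounds stay bounded uniformly over a $\mathbf{d}_\infty$-neighborhood. This is where Proposition~\ref{P:CompactII} enters. Fix $(t_0,x_0)\in[0,T]\times\Omega^L$ and take any sequence $(t_n,x_n)\to(t_0,x_0)$ in $[0,T]\times\Omega^L$. For each $n$ choose $(x_n^\ast,y_n^\ast)\in\mathcal{Y}^{L_0}(t_n,x_n,z)$ attaining (or nearly attaining) the relevant extremum; since $\tilde L=L_0\le L$, the trajectories $x_n^\ast$ lie in $\Omega^L$ and $x_n^\ast\in\mathcal{X}^{L_0}(t_n,x_n)$, so Proposition~\ref{P:CompactII} yields a subsequence converging to some $x_0^\ast\in\mathcal{X}^{L_0}(t_0,x_0)$. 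The associated $y_n^\ast$ converge (uniformly on $[0,T]$, using the weak convergence $f^{x_n^\ast}\xrightarrow{w}f^{x_0^\ast}$ in $L^2$ and continuity of $F$ along the convergent trajectories), so the extremal values $h(x_n^\ast)-y_n^\ast(T)$ converge to $h(x_0^\ast)-y_0^\ast(T)$, which is finite. This shows the map $(t,x)\mapsto\min_{(x^\ast,y^\ast)\in\mathcal{Y}^{L_0}(t,x,z)}\{h(x^\ast)-y^\ast(T)\}$ is lower semi-continuous and the analogous max is upper semi-continuous; in particular both are locally bounded. Combining with $\min_{\mathcal{Y}^{L_0}}\{h-y(T)\}\le u_0^L\le\max_{\mathcal{Y}^{L_0}}\{h-y(T)\}$ gives: on any sufficiently small $O_\delta^L(t_0,x_0)$, $u_0^L$ is bounded below and above, hence $u_0^{L,-}(t_0,x_0)>-\infty$ and $u_0^{L,+}(t_0,x_0)<\infty$.

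I would then just spell out the two envelope computations: by definition $u_0^{L,-}(t_0,x_0)=\sup_{\delta>0}\inf_{O_\delta^L(t_0,x_0)}u_0^L$, and once we know $\inf_{O_{\delta_0}^L(t_0,x_0)}u_0^L>-\infty$ for one small $\delta_0$ (from the local lower bound just established), we get $u_0^{L,-}(t_0,x_0)>-\infty$; symmetrically for $u_0^{L,+}$ using the local upper bound. The main obstacle is the continuous-dependence argument for the extremal values under the pseudo-metric $\mathbf{d}_\infty$ — i.e. making Proposition~\ref{P:CompactII} do the work of showing the $\mathcal{X}^{L_0}(t,x)$ vary nicely and that the integral functionals $y^\ast(T)=\int_t^T(f^{x^\ast}(s),z)-F(s,x^\ast,z)\,ds$ pass to the limit; the endpoint $t_n$ moving (both $t_n\uparrow t_0$ and $t_n\downarrow t_0$) is the delicate case, exactly as handled in the two cases of the proof of Proposition~\ref{P:CompactII}. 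Everything else is bookkeeping with the definitions of the envelopes.
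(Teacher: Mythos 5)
Your proposal is correct, but it reaches the conclusion by a heavier route than the paper. Both arguments start from the same sandwich: the lower bound \eqref{E:ImLBound} and the upper bound $u_0^L\le u_+^z$ from Lemma~\ref{L:ImUpper} (with a fixed $z$, most conveniently $z=0$, so that $h(x)-y(T)=h(x)+\int_{t_0}^T F(s,x,0)\,ds$ and the $f^x$-term disappears). The difference is how one gets uniformity over a neighborhood of $(t_0,x_0)$. The paper simply observes that every competitor $x\in\mathcal{X}^{L_0}(t_1,x_1)$ with $x_1\in\Omega^L$ lies again in $\Omega^L$, so the extremal values are squeezed between $\min$ and $\max$ of the single continuous functional $(t,x)\mapsto h(x)+\int_t^T F(s,x,0)\,ds$ over the compact set $[0,T]\times\Omega^L$; this is one \emph{global} bound, no passage to limits at all. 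You instead prove \emph{local} boundedness by extracting convergent subsequences of near-extremizers via Proposition~\ref{P:CompactII} and passing the values to the limit, which forces you to control the convergence of $y_n^\ast(T)$ (weak convergence of the forcings plus dominated convergence for the $F$-term) and, strictly speaking, to treat $t_0=T$ separately since Proposition~\ref{P:CompactII} is stated for $t_0\in[0,T)$. That machinery is sound but unnecessary here: for mere boundedness, the a-priori estimates of Lemma~\ref{L:Apriori} together with compactness of $\Omega^L$ and continuity of $h$ and $F$ already give a uniform constant. What your route buys in exchange is slightly more information, namely semicontinuity of the bounding value maps, which is in the spirit of the continuous-dependence argument the paper defers to Lemma~\ref{L:u0-+SemiSol}; what the paper's route buys is a three-line proof.
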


\begin{proof}
Fix $\delta>0$. Then
\begin{align*}
u^{L,-}_0(t_0,x_0)&\ge \inf_{(t_1,x_1)\in O_\delta(t_0,x_0)} u_0(t_1,x_1)\\
&\ge \inf_{(t_1,x_1)\in O_\delta(t_0,x_0)}
\min_{x\in\mathcal{X}^{L_0}(t_1,x_1)} \left\{
h(x)+\int_t^T F(s,x,0)\,ds
\right\}
&& \text{by \eqref{E:ImLBound}}\\
&\ge 
\min_{(t,x)\in [0,T]\times\Omega^L} \ \left\{
h(x)+\int_t^T F(s,x,0)\,ds
\right\}\\
&>-\infty.
\end{align*}

To show that  $u^{L,+}_0<\infty$, one can 
proceed similarly as in the proof of Lemma~\ref{L:u0-}
but one should use Lemma~\ref{L:ImUpper} instead of \eqref{E:ImLBound}.
\end{proof}

\begin{remark}\label{R:LSC_USC} By \eqref{E:u0-+} and Lemma~\ref{L:u0-},
$u_0^{L,-}$ and $u_0^{L,+}$ are real-valued. Moreover,
\begin{align*}
u^{L,-}_0\in\mathrm{LSC}([0,T]\times\Omega^L)\text{ and }
u^{L,+}_0\in\mathrm{USC}([0,T]\times\Omega^L).
\end{align*}
The proof of this statement is standard.
One can, e.g., follow the argument in
 \cite[Proof of Proposition 8.7, p.~77]{Subbotin_book}.
\end{remark}

\begin{lemma}\label{L:u0-+SemiSol}
Let $L\ge L_0$. The function
$u^{L,-}_0$ is a minimax $L_0$-supersolution of \eqref{E:PPDE2} on $[0,T]\times\Omega^L$
and the function $u^{L,+}_0$ is a 
minimax $L_0$-subsolution of \eqref{E:PPDE2}  on $[0,T]\times\Omega^L$.
\end{lemma}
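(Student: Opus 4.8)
The plan is to verify the two halves of Lemma~\ref{L:u0-+SemiSol} separately, since the roles of sub- and supersolution are dual. I will concentrate on showing that $u_0^{L,-}$ is a minimax $L_0$-supersolution of \eqref{E:PPDE2} on $[0,T]\times\Omega^L$; the argument for $u_0^{L,+}$ being a minimax $L_0$-subsolution is completely analogous, exchanging infima with suprema, $\varliminf$ with $\varlimsup$, and $\le$ with $\ge$. By Remark~\ref{R:LSC_USC}, $u_0^{L,-}\in\mathrm{LSC}([0,T]\times\Omega^L)$ and is real-valued, and $u_0^{L,-}(T,\cdot)\ge u_0^L(T,\cdot)=h$ by \eqref{E:u0-+} and \eqref{E:ImULh}. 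So the only thing left is the defining minimax inequality. By Proposition~\ref{P:Dini}~(i), it suffices to establish the infinitesimal (Dini-type) condition \eqref{E:Dini}: for every $(t_0,x_0,z)\in[0,T)\times\Omega^L\times H$,
\begin{align*}
\inf_{x\in\mathcal{X}^{L_0}(t_0,x_0)}\varliminf_{\delta\downarrow 0}\Bigl[u_0^{L,-}(t_0+\delta,x)-u_0^{L,-}(t_0,x_0)+\int_{t_0}^{t_0+\delta}(-f^x(s),z)+F(s,x,z)\,ds\Bigr]\delta^{-1}\le 0.
\end{align*}

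First I would unwind the lower envelope: pick a sequence $(t_n,\xi_n)\to(t_0,x_0)$ in $[0,T)\times\Omega^L$ with $u_0^L(t_n,\xi_n)\to u_0^{L,-}(t_0,x_0)$. Since $u_0^L$ is itself an improper minimax $L_0$-supersolution (the preceding unnamed lemma), for each $n$ and each $\delta$ small there is a pair $(x_n,y_n)\in\mathcal{Y}^{L_0}(t_n,\xi_n,z)$ with $u_0^L(t_n+\delta,x_n)-y_n(t_n+\delta)\le u_0^L(t_n,\xi_n)+\varepsilon_n$, where $\varepsilon_n\downarrow 0$ may be chosen freely. Here $y_n(t)=\int_{t_n}^t(f^{x_n}(s),z)-F(s,x_n,z)\,ds$. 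The curves $x_n$ lie in $\mathcal{X}^{L_0}(t_n,\xi_n)\subseteq\Omega^L$, so by Proposition~\ref{P:CompactII} (applied with $\tilde L=L_0\le L$) a subsequence converges in $C([0,T],H)$ to some $x^\ast\in\mathcal{X}^{L_0}(t_0,x_0)$, and the corresponding $f^{x_n}\xrightarrow{w}f^{x^\ast}$ in $L^2(0,T;H)$ by Lemma~\ref{L:CompactW}. This continuous-dependence step is exactly what makes Perron's method work here, and is the reason Proposition~\ref{P:CompactII} was isolated.

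The core estimate then goes as follows. Fix $\delta>0$ small. For each large $n$, the improper supersolution inequality for $u_0^L$ at $(t_n,\xi_n)$ with terminal time $t_n+\delta$ yields, after rearranging,
\begin{align*}
u_0^L(t_n+\delta,x_n)-u_0^L(t_n,\xi_n)+\int_{t_n}^{t_n+\delta}(-f^{x_n}(s),z)+F(s,x_n,z)\,ds\le\varepsilon_n.
\end{align*}
Now I pass to the limit $n\to\infty$ with $\delta$ fixed: on the left, $u_0^L(t_n,\xi_n)\to u_0^{L,-}(t_0,x_0)$ by choice of the sequence, $\varliminf_n u_0^L(t_n+\delta,x_n)\ge u_0^{L,-}(t_0+\delta,x^\ast)$ by definition of the lower envelope and since $(t_n+\delta,x_n)\to(t_0+\delta,x^\ast)$ in $\mathbf{d}_\infty$ (using $x_n=\xi_n=x_0$ on $[0,t_0]$ in the limit, which needs the $\mathbf d_\infty$-convergence — here Remark~\ref{R:equivPseuoMetrics} and the uniform convergence suffice), the integral of $F(s,x_n,z)$ converges to the integral of $F(s,x^\ast,z)$ by continuity of $F$ and dominated convergence, and $\int_{t_n}^{t_n+\delta}(f^{x_n}(s),z)\,ds\to\int_{t_0}^{t_0+\delta}(f^{x^\ast}(s),z)\,ds$ by weak convergence of $f^{x_n}$ tested against $z\mathbf 1_{[t_0,t_0+\delta]}$ (with a harmless correction for the shifting endpoints, controlled by the uniform $L^2$-bound on $f^{x_n}$). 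Choosing $\varepsilon_n\to 0$, this gives
\begin{align*}
u_0^{L,-}(t_0+\delta,x^\ast)-u_0^{L,-}(t_0,x_0)+\int_{t_0}^{t_0+\delta}(-f^{x^\ast}(s),z)+F(s,x^\ast,z)\,ds\le 0.
\end{align*}
Since $x^\ast$ may depend on $\delta$, I apply this for a sequence $\delta_m\downarrow 0$, extract by compactness of $\mathcal{X}^{L_0}(t_0,x_0)$ (Proposition~\ref{P:compact}) a single limiting curve — or, more cleanly, note that the displayed inequality already shows that the infimum over $x\in\mathcal{X}^{L_0}(t_0,x_0)$ of the bracketed quantity divided by $\delta$ is $\le 0$ for each $\delta$, hence its $\varliminf_{\delta\downarrow0}$ after taking inf is $\le 0$, which is precisely \eqref{E:Dini} (the order of $\inf_x$ and $\varliminf_\delta$ is favorable here).

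The main obstacle I anticipate is the interchange of limits: $u_0^L$ is only an \emph{improper} supersolution with no semicontinuity, so one cannot naively evaluate $u_0^L$ along the limiting curve $x^\ast$. The resolution — and the whole point of introducing the envelopes $u_0^{L,\pm}$ — is to push all limits inside the lower-semicontinuous envelope $u_0^{L,-}$, using $\varliminf_n u_0^L(t_n+\delta,x_n)\ge u_0^{L,-}(t_0+\delta,x^\ast)$, and to absorb the non-uniformity in $\delta$ by the fixed-$\delta$ passage to the limit before sending $\delta\downarrow 0$. A secondary technical point is the endpoint mismatch in the integrals $\int_{t_n}^{t_n+\delta}$ versus $\int_{t_0}^{t_0+\delta}$; this is handled by the uniform bound $|f^{x_n}(s)|\le L_0(1+\sup_{r\le s}|x_n(r)|)$ together with the a priori bounds of Lemma~\ref{L:Apriori}, exactly as in the proof of Proposition~\ref{P:CompactII}. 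Once $u_0^{L,-}$ is shown to be a minimax $L_0$-supersolution, the symmetric argument (replacing Lemma~\ref{L:ImUpper}'s role, suprema for infima, and $u_0^{L,+}$ for $u_0^{L,-}$) shows $u_0^{L,+}$ is a minimax $L_0$-subsolution, completing the proof.
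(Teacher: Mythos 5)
Your setup (sequence $(t_n,\xi_n)$ realizing the envelope, the improper supersolution property of $u_0^L$, compactness/continuous dependence via Proposition~\ref{P:CompactII}, and pushing the limit into the lower envelope $u_0^{L,-}$) is exactly the paper's mechanism, and the fixed-$\delta$ inequality you derive,
\begin{align*}
u_0^{L,-}(t_0+\delta,x^\ast)-u_0^{L,-}(t_0,x_0)+\int_{t_0}^{t_0+\delta}(-f^{x^\ast}(s),z)+F(s,x^\ast,z)\,ds\le 0,
\end{align*}
is correct. The gap is in the very last step, where you pass from this to the Dini condition \eqref{E:Dini}. What you have proven is that for every $\delta$ there is a curve $x^\ast=x^\ast_\delta$ (depending on $\delta$) making the bracket nonpositive, i.e.\ $\varliminf_{\delta\downarrow 0}\inf_x[\cdots]\delta^{-1}\le 0$. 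But \eqref{E:Dini} requires $\inf_x\varliminf_{\delta\downarrow 0}[\cdots]\delta^{-1}\le 0$, and the general inequality goes the wrong way: $\inf_x\varliminf_\delta\ge\varliminf_\delta\inf_x$, so your conclusion does not follow; the order of $\inf_x$ and $\varliminf_\delta$ is \emph{unfavorable}, not favorable as you assert. The alternative you sketch (extracting a single limit curve along $\delta_m\downarrow 0$ by compactness) also does not close the gap as stated, because the inequality at scale $\delta_m$ holds for the curve $x^\ast_{\delta_m}$, and transferring a difference-quotient-type bound over a shrinking interval to the limit curve requires a uniformity you have not established.

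The fix is already in the paper and requires no new estimate: do not divide by $\delta$ and invoke Proposition~\ref{P:Dini}; instead observe that your fixed-$\delta$ inequality is precisely statement (ii) of Lemma~\ref{L:EquivUpperSol} (with $t=t_0+\delta$ ranging over $(t_0,T]$), and since $u_0^{L,-}\in\mathrm{LSC}([0,T]\times\Omega^L)$, that lemma upgrades ``for every $t$ there exists a curve'' to ``there exists one curve valid for all $t$,'' which is the minimax $L_0$-supersolution property. With this substitution (and the terminal condition, which you treat correctly), your argument coincides with the paper's proof; the subsolution half for $u_0^{L,+}$ is then symmetric as you say.
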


\begin{proof}
We only show that $u^{L,-}_0$ is a minimax $L_0$-supersolution 
of \eqref{E:PPDE2} on $[0,T]\times\Omega^L$.
To this end, fix $(t_0,x_0)\in [0,T)\times\Omega^L$ and let
$(t,z)\in (t_0,T]\times H$. By the definition of $u_0^{L,-}$, there exists a sequence
$(t_n,x_n)_n$ in $[0,T]\times\Omega^L$ such that
\begin{align*}
\mathbf{d}_\infty((t_n,x_n),(t_{\color{black} 0},x_{\color{black} 0}))<\frac{1}{n}\qquad\text{and}\qquad
u_0^{L,-}(t_0,x_0)\ge u_0^L(t_n,x_n)-\frac{1}{n}.
\end{align*}
Since $t>t_0$, we can assume that $t_n<t$ for all $n\in\N$.
Recall that $u_0^L$ is an improper minimax $L_0$-supersolution 
of \eqref{E:PPDE2} on $[0,T]\times\Omega^L$. Thus
\begin{align*}
u_0^L(t,\tilde{x}_n)-\tilde{y}_n(t)\le u_0^L(t_n,x_n)+\frac{1}{n}
\end{align*}
for some $(\tilde{x}_n,\tilde{y}_n)\in\mathcal{Y}^{L_0}(t_n,x_n,z)$.
By Proposition~\ref{P:CompactII}, we can assume that $(\tilde{x}_n,\tilde{y}_n)_n$
converges to some $(\tilde{x}_0,\tilde{y}_0)\in\mathcal{Y}^{L_0}(t_0,x_0,z)$. Hence
\begin{align*}
u_0^{L,-}(t_0,x_0)\ge\varliminf_n u_0^L(t_n,x_n)&\ge
\varliminf_n \left[u_0^L(t,\tilde{x}_n)-\tilde{y}_n(t)-\frac{1}{n}\right]\\
&\ge \varliminf_{(s,x)\to (t,\tilde{x}_0)} u_0^L(s,x)-\tilde{y}_0(t)\\
&\ge u_0^{L,-}(t,\tilde{x}_0)-\tilde{y}_0(t)
\end{align*}
and thus, by Lemma~\ref{L:EquivUpperSol}, 
$u_0^{L,-}$ satisfies the minimax $L_0$-supersolution property on $[0,T)\times\Omega^L$.
The minimax $L_0$-supersolution property for the terminal condition can be verified similarly.
Together with Remark~\ref{R:LSC_USC}, this concludes the proof.
\end{proof}

\begin{theorem}[Existence and uniqueness~I]\label{T:ExistenceMinimax1}
Let $L\ge L_0$.
Then $u_0^L$ is the unique minimax $L_0$-solution of 
\eqref{E:PPDE2} on $[0,T]\times\Omega^L$.
\end{theorem}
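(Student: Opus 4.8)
The plan is to obtain existence by squeezing $u_0^L$ between its semicontinuous envelopes and invoking the comparison principle already proven, and then to deduce uniqueness from that same comparison principle. No genuinely new estimate is needed; the work has all been done upstream (compactness/connectedness of $\mathcal{X}^L(t_0,x_0)$, the doubling theorem, Perron's method), so the only point requiring care is the bookkeeping of constants and ambient path spaces.

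First I would recall from Lemma~\ref{L:u0-+SemiSol} that $u_0^{L,-}$ is a minimax $L_0$-supersolution and $u_0^{L,+}$ is a minimax $L_0$-subsolution of \eqref{E:PPDE2} on $[0,T]\times\Omega^L$. Since $L\ge L_0$ we have $\Omega^{L_0}\subseteq\Omega^L$, so Theorem~\ref{T:Comparison2} (Comparison~II) applies with ambient set $\tilde{\Omega}=\Omega^L$ and with the constant $L_0$ playing the role of ``$L$'' there: taking $u_0^{L,+}$ as the subsolution and $u_0^{L,-}$ as the supersolution yields $u_0^{L,+}\le u_0^{L,-}$ on $[0,T]\times\Omega^L$. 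Combined with the trivial chain $u_0^{L,-}\le u_0^L\le u_0^{L,+}$ from \eqref{E:u0-+}, this forces $u_0^{L,-}=u_0^L=u_0^{L,+}$.

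Next I would read off continuity of $u_0^L$: by Remark~\ref{R:LSC_USC}, $u_0^{L,-}\in\mathrm{LSC}([0,T]\times\Omega^L)$ and $u_0^{L,+}\in\mathrm{USC}([0,T]\times\Omega^L)$, and since both coincide with $u_0^L$, the function $u_0^L$ is simultaneously lower and upper semicontinuous, hence in $C([0,T]\times\Omega^L)$. The terminal condition $u_0^L(T,\cdot)=h$ is already known (cf.~\eqref{E:ImULh}). Because $u_0^L$ equals $u_0^{L,-}$ it inherits the minimax $L_0$-supersolution property, and because it equals $u_0^{L,+}$ it inherits the minimax $L_0$-subsolution property; hence by Proposition~\ref{P:MinimaxUpperLower} it is a minimax $L_0$-solution of \eqref{E:PPDE2} on $[0,T]\times\Omega^L$, which settles existence.

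For uniqueness, let $u$ be any minimax $L_0$-solution of \eqref{E:PPDE2} on $[0,T]\times\Omega^L$. Then $u$ is both a minimax $L_0$-sub- and a minimax $L_0$-supersolution, and so is $u_0^L$; applying Theorem~\ref{T:Comparison2} once with $(u,u_0^L)$ in the roles of (sub, super) gives $u\le u_0^L$, and once with $(u_0^L,u)$ gives $u_0^L\le u$, whence $u=u_0^L$. The main (essentially only) subtlety throughout is ensuring the hypotheses $\Omega^{L_0}\subseteq\Omega^L$ of Comparison~II are met, which is exactly why one requires $L\ge L_0$.
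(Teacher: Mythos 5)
Your proposal is correct and follows essentially the same route as the paper's proof: squeeze $u_0^L$ between its envelopes via Lemma~\ref{L:u0-+SemiSol}, \eqref{E:u0-+}, and Theorem~\ref{T:Comparison2}, then invoke Proposition~\ref{P:MinimaxUpperLower} for existence and Comparison~II again for uniqueness. You merely spell out the continuity bookkeeping and the role of $\Omega^{L_0}\subseteq\Omega^L$ more explicitly than the paper does, which is fine.
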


\begin{proof}
By Theorem~\ref{T:Comparison2} and
Lemma~\ref{L:u0-+SemiSol}, $u^{L,+}_0\le u^{L,-}_0$ on $[0,T]\times\Omega^L$.
Together with \eqref{E:u0-+}, we can conclude that
$u_0^L=u_0^{L,+}=u_0^{L,-}$. Therefore, by Proposition~\ref{P:MinimaxUpperLower},
the function $u_0^L$ is a minimax $L_0$-solution of \eqref{E:PPDE2} on $[0,T]\times\Omega^L$.
%By Remark~\ref{R:MinimaxSolution}, $u_0^L$ is also a 
 %minimax $\tilde{L}$-solution of \eqref{E:PPDE2} on $[0,T]\times\Omega^L$ for every
% $\tilde{L}\ge 0$.
Uniqueness follows  from Theorem~\ref{T:Comparison2}.
\end{proof}

From Theorem~\ref{T:Comparison3} and Theorem~\ref{T:ExistenceMinimax1} together with
 Remark~\ref{R:MinimaxSolution} and Remark~\ref{R:MinimaxRestriction},
we obtain the following result.

\begin{theorem}[Existence and uniqueness~II] \label{T:ExistenceMinimax2}
There exists a function $u_0:[0,T]\times\Omega\to\R$ that is the unique
minimax solution %on $[0,T]\times\Omega^L$ for every $L\ge L_0$. 
on  $[0,T]\times\Omega$.
Moreover, 
 $u_0=u_0^L$ on $[0,T]\times\Omega^L$ for every $L\ge L_0$.
\end{theorem}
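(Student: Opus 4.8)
The plan is to assemble the statement from the pieces already established for the fixed-Lipschitz-constant case, passing to the union $\Omega=\cup_{L\ge 0}\Omega^L$ via monotone exhaustion. First I would fix an arbitrary $L\ge L_0$ and invoke Theorem~\ref{T:ExistenceMinimax1} to obtain the minimax $L_0$-solution $u_0^L$ of \eqref{E:PPDE2} on $[0,T]\times\Omega^L$; by Remark~\ref{R:MinimaxRestriction}, restricting a minimax $L_0$-solution on a larger $\Omega^{L'}$ ($L'\ge L\ge L_0$) to $[0,T]\times\Omega^L$ yields again a minimax $L_0$-solution there, and uniqueness in Theorem~\ref{T:ExistenceMinimax1} forces these restrictions to agree on overlaps. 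Hence the family $(u_0^L)_{L\ge L_0}$ is consistent and defines a single function $u_0:[0,T]\times\Omega\to\R$ with $u_0=u_0^L$ on $[0,T]\times\Omega^L$ for every $L\ge L_0$ (and, since $\Omega^L\subseteq\Omega^{L_0}$ for $L\le L_0$, the definition is unambiguous on all of $\Omega$).

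Next I would verify that $u_0$ is a minimax solution of \eqref{E:PPDE2} on $[0,T]\times\Omega$, i.e.\ a minimax $L_0$-solution there in the sense of the global definition. Continuity of $u_0$ on $[0,T]\times\Omega$ with respect to $\mathbf{d}_\infty$: each point $(t_0,x_0)\in[0,T]\times\Omega$ lies in some $[0,T]\times\Omega^L$ with $L\ge L_0$, and near it $u_0$ coincides with the continuous function $u_0^L$; since $\mathbf{d}_\infty$-convergence to $(t_0,x_0)$ within $\Omega$ eventually stays in $\Omega^L$ (the limiting path is in $\Omega^L$ and, by Proposition~\ref{P:CompactII} applied with $\tilde L=L$, approximating trajectories from higher classes already sit inside the compact $\mathcal{X}^L$-type sets)—actually more simply, local continuity is inherited from $u_0^L$—we get $u_0\in C([0,T]\times\Omega)$. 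The terminal condition $u_0(T,\cdot)=h$ on $\Omega$ is immediate from $u_0^L(T,\cdot)=h$. For the minimax solution property, fix $(t_0,x_0,z)\in[0,T)\times\Omega\times H$, choose $L\ge L_0$ with $x_0\in\Omega^L$, and apply the minimax $L_0$-solution property of $u_0^L$ on $[0,T]\times\Omega^L$: there is $x\in\mathcal{X}^{L_0}(t_0,x_0)\cap\Omega^L$ with the required integral identity, and since $u_0$ agrees with $u_0^L$ along $x$ (as $x\in\Omega^L$), the identity holds for $u_0$. This shows $u_0$ is a minimax $L_0$-solution on $[0,T]\times\Omega$.

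Finally, uniqueness: if $\tilde u$ is any minimax solution of \eqref{E:PPDE2} on $[0,T]\times\Omega$, then it is a minimax sub- and supersolution on $[0,T]\times\Omega$ (Proposition~\ref{P:MinimaxUpperLower} in its super/subsolution reading, or directly from the definition), so $u_0$ being a minimax supersolution and $\tilde u$ a minimax subsolution on $[0,T]\times\Omega$ gives $\tilde u\le u_0$ by Theorem~\ref{T:Comparison3}, while reversing roles gives $u_0\le\tilde u$; hence $\tilde u=u_0$. I expect the only mildly delicate point to be the local-in-$\Omega$ regularity bookkeeping—making sure that convergence within $\Omega$ toward a point of $\Omega^L$ is eventually captured by $\Omega^L$ so that continuity and the semisolution inequalities transfer cleanly from $u_0^L$ to $u_0$—but this is exactly what Remark~\ref{R:MinimaxSolution}, Remark~\ref{R:MinimaxRestriction}, and Proposition~\ref{P:CompactII} are designed to handle, and everything else is a direct citation of Theorems~\ref{T:Comparison3} and \ref{T:ExistenceMinimax1}.
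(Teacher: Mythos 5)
Your overall route is exactly the paper's: Theorem~\ref{T:ExistenceMinimax2} is obtained there as a direct corollary of Theorem~\ref{T:ExistenceMinimax1}, Remark~\ref{R:MinimaxSolution}, Remark~\ref{R:MinimaxRestriction} and Theorem~\ref{T:Comparison3}, i.e.\ precisely your gluing of the functions $u_0^L$ followed by comparison on $[0,T]\times\Omega$. The consistency step (restrict $u_0^{L'}$ to $[0,T]\times\Omega^L$ for $L'\ge L\ge L_0$, use uniqueness on $\Omega^L$), the terminal condition, the transfer of the minimax $L_0$-property (any $x\in\mathcal{X}^{L_0}(t_0,x_0)$ with $x_0\in\Omega^L$ stays in $\Omega^L$, so the integral identity for $u_0^L$ is an identity for $u_0$), and the uniqueness argument via Theorem~\ref{T:Comparison3} are all correct.

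The genuine gap is your verification that $u_0\in C([0,T]\times\Omega)$, which the definition of a minimax ($L_0$-)solution on $[0,T]\times\Omega$ requires with respect to $\mathbf{d}_\infty$ on the whole set. Your justification rests on the claim that $\mathbf{d}_\infty$-convergence within $\Omega$ to a point of $[0,T]\times\Omega^L$ eventually takes place inside $\Omega^L$, equivalently that near $x_0\in\Omega^L$ the function $u_0$ ``coincides with $u_0^L$''. This is false: $\Omega^L$ is not a neighborhood of its points in $\Omega$. For instance, with $A=-\Delta$ as in the paper's heat-equation example and $x_\ast=0$, the solutions $x_n$ of $\wx_n^\prime+A\wx_n=f_n$, $x_n(0)=0$, with $f_n(t)=n\sin(n^2t)\,e$ for a fixed eigenfunction $e$, satisfy $\norm{x_n}_\infty=O(1/n)\to 0$, so $(t_0,x_n)\to(t_0,0)$ in $\mathbf{d}_\infty$ with $0\in\Omega^{L_0}$, yet $x_n\notin\Omega^{L'}$ for any fixed $L'$ once $n$ is large (the forcing $f^{x_n}=f_n$ is uniquely determined, and its bound blows up). Proposition~\ref{P:CompactII} does not repair this: it concerns sequences $\tilde{x}_n\in\mathcal{X}^{\tilde L}(t_n,x_n)$ with one \emph{fixed} constant $\tilde L$, whereas here the approximating paths come from classes $\Omega^{L_n}$ with $L_n\to\infty$. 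Consequently continuity of $u_0$ at such points does not follow from continuity of each $u_0^L$ on the compact set $[0,T]\times\Omega^L$ alone; one needs some uniformity in $L$ (e.g.\ a modulus of continuity in the initial path that does not depend on $L$, as is available in the control application via Theorem~\ref{T:Reg}(i), or an argument based on the comparison machinery), and your write-up does not supply it. Without this, $u_0$ has not been shown to be a minimax solution on $[0,T]\times\Omega$, so the existence half of the statement is not yet established (uniqueness among minimax solutions is unaffected). To be fair, the paper's one-line derivation leaves this same point implicit, but since your proof makes the false ``eventually stays in $\Omega^L$'' claim explicit, it needs to be replaced by an actual argument.
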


\section{Stability}

\renewcommand{\theequation}
%{\thesection.\arabic{equation}n}
{\textrm{TVP}~$n$}

Given $n\in\N$, consider the terminal value problem
\begin{equation}\label{E:PPDEn}
\begin{split}
&\partial_t u-\langle A(t,x(t)),\partial_x u\rangle+F_n(t,x,\partial_x u)=0,\, (t,x)\in [0,T)\times C([0,T],H),\\
&u(T,x)=h_n(x),\quad x\in C([0,T],H),
\end{split}
\end{equation}
where $h_n: C([0,T],H)\to\R$ and $F_n:[0,T]\times C([0,T],H)\times H\to\R$, respectively, are
supposed to satisfy \textbf{H}$(h)$ and \textbf{H}$(F)$, respectively, with the same
Lipschitz constants and moduli of continuity.

\setcounter{equation}{0}

\renewcommand{\theequation}{\thesection.\arabic{equation}}

Our approach for the following basic stability result is similar as in the finite-dimensional case 
(regarding PDEs, see \cite[pp.~36~ff.]{Subbotin_minimaxSol}; 
regarding PPDEs, see \cite[Theorem~6.1]{Lukoyanov01minimax}).

\begin{theorem}\label{T:StabSuper}
Let $L\ge 0$. 

(i) For every $n\in\N$, let $u^L_n$ be a minimax supersolution
of \eqref{E:PPDEn} on $[0,T]\times\Omega^L$. Then the function
$u^L:[0,T]\times\Omega^L\to\R$ defined by
\begin{align*}%\label{E:StabSuper}
u^{L,-}(t_0,x_0)&:=
\varliminf_{\substack{n\to\infty,\\ (t,x)\to (t_0,x_0)
%\\\text{in $[0,T]\times\Omega^L$} 
}}  u_n^L(t,x):=
\sup\limits_{\substack{\delta> 0,\\ n\in\N}}\quad
 \inf\limits_{\substack{(t,x)\in O^L_\delta(t_0,x_0),\\m\ge n}}\quad u_m^L(t,x)
\end{align*}
is a minimax supersolution of \eqref{E:PPDE2} on $[0,T]\times\Omega^L$.

(ii) For every $n\in\N$, let $u^L_n$ be a minimax subsolution
of \eqref{E:PPDEn} on $[0,T]\times\Omega^L$. Then the function
$u^L:[0,T]\times\Omega^L\to\R$ defined by
\begin{align*}\label{E:StabSub}
u^{L,+}(t_0,x_0)&:=
\varlimsup_{\substack{n\to\infty,\\ (t,x)\to (t_0,x_0)
%\\\text{in $[0,T]\times\Omega^L$} 
}}  u_n^L(t,x)
:=\inf\limits_{\substack{\delta> 0,\\ n\in\N}}\quad
 \sup\limits_{\substack{(t,x)\in O^L_\delta(t_0,x_0),\\m\ge n}}\quad u_m^L(t,x)
\end{align*}
is a minimax subsolution of \eqref{E:PPDE2} on $[0,T]\times\Omega^L$.

(iii)  For every $n\in\N$,  let $u^L_n$ be a minimax solution
of \eqref{E:PPDEn} on $[0,T]\times\Omega^L$. Then $(u_n^L)_n$ converges
to  the function $u^{L,+}$, %, which is defined in \eqref{E:StabSuper},
which is a minimax solution of \eqref{E:PPDE2}
on $[0,T]\times\Omega^L$.
\end{theorem}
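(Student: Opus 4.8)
The plan is to read (iii) off from parts~(i) and~(ii) together with the comparison principle of Section~4; for completeness I first indicate how I would prove (i), part~(ii) being symmetric. The relaxed lower limit $u^{L,-}$ is lower semicontinuous on $[0,T]\times\Omega^L$ by the standard envelope argument (cf.~Remark~\ref{R:LSC_USC}), and $u^{L,-}(T,\cdot)\ge h$ follows from $u_n^L(T,\cdot)\ge h_n$ by passing to the relaxed limit and using the assumed convergence $h_n\to h$ of the terminal data. For the interior supersolution property I would mimic the proof of Lemma~\ref{L:u0-+SemiSol}: fix $(t_0,x_0)\in[0,T)\times\Omega^L$ and $(t,z)\in(t_0,T]\times H$, choose along a sequence $(t_n,x_n)\to(t_0,x_0)$ realizing the relaxed liminf pairs $(\tilde{x}_n,\tilde{y}_n)\in\mathcal{Y}^L(t_n,x_n,z)$ with $u_n^L(t,\tilde{x}_n)-\tilde{y}_n(t)\le u_n^L(t_n,x_n)+\tfrac{1}{n}$, extract via Proposition~\ref{P:CompactII} a subsequence converging to some $(\tilde{x}_0,\tilde{y}_0)\in\mathcal{Y}^L(t_0,x_0,z)$, and pass to the limit using lower semicontinuity of $u^{L,-}$ and the convergence of $F_n$ and $h_n$; Lemma~\ref{L:EquivUpperSol} then upgrades the resulting ``single later time'' inequality to the minimax $L$-supersolution property. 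Part~(ii) is identical with $\mathrm{LSC}$, $\varliminf$, and ``supersolution'' replaced by $\mathrm{USC}$, $\varlimsup$, and ``subsolution''. Since all selected paths lie in $\mathcal{X}^L(\cdot,\cdot)$, these arguments in fact produce minimax $L$-super- resp.\ $L$-subsolutions, which is what will be needed below.

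For (iii): by Remark~\ref{R:Lminimax} and Proposition~\ref{P:MinimaxUpperLower}, each $u_n^L$ is simultaneously a minimax $L$-subsolution and a minimax $L$-supersolution of \eqref{E:PPDEn} on $[0,T]\times\Omega^L$. Applying (ii) to $(u_n^L)_n$ shows that $u^{L,+}$ is a minimax $L$-subsolution of \eqref{E:PPDE2} on $[0,T]\times\Omega^L$, and (i) shows that $u^{L,-}$ is a minimax $L$-supersolution of \eqref{E:PPDE2} on $[0,T]\times\Omega^L$. Theorem~\ref{T:Comparison2}, applied with this $L$ and $\tilde{\Omega}=\Omega^L$, then gives $u^{L,+}\le u^{L,-}$ on $[0,T]\times\Omega^L$, while the reverse inequality $u^{L,-}\le u^{L,+}$ is automatic (a relaxed lower limit never exceeds the corresponding relaxed upper limit). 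Hence $u^{L,-}=u^{L,+}=:u^L$. This function is upper semicontinuous (being $u^{L,+}$) and lower semicontinuous (being $u^{L,-}$), hence continuous, and being simultaneously a minimax $L$-sub- and a minimax $L$-supersolution of \eqref{E:PPDE2} on $[0,T]\times\Omega^L$ it is a minimax $L$-solution there by Proposition~\ref{P:MinimaxUpperLower}; in particular $u^{L,+}=u^L$ is a minimax solution of \eqref{E:PPDE2} on $[0,T]\times\Omega^L$.

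It remains to upgrade the coincidence of the relaxed limits to genuine convergence $u_n^L\to u^L$. This is a routine Dini-type argument on the compact space $[0,T]\times\Omega^L$: given $\eps>0$, the definitions of $u^{L,\pm}$ furnish, around every point, a $\mathbf{d}_\infty$-ball and an index $N$ such that $\abs{u_m^L-u^L}\le\eps$ on that ball for all $m\ge N$; a finite subcover of $[0,T]\times\Omega^L$ together with the largest of the finitely many indices $N$ yields $\sup_{[0,T]\times\Omega^L}\abs{u_m^L-u^L}\le\eps$ for all large $m$. Here $[0,T]\times\Omega^L$ is $\mathbf{d}_\infty$-compact because $\Omega^L$ is compact in $C([0,T],H)$ by Proposition~\ref{P:compact}, hence equicontinuous, so that the identity from $[0,T]\times\Omega^L$ with the product metric onto $[0,T]\times\Omega^L$ with $\mathbf{d}_\infty$ is continuous. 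Therefore $u_n^L\to u^{L,+}=u^L$ uniformly, and (iii) is proved.

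The step I expect to be the main obstacle lies not in (iii) itself but already in parts~(i) and~(ii): extracting a limit of the selected trajectories that still belongs to the correct trajectory space $\mathcal{X}^L(t_0,x_0)$, i.e., the use of Proposition~\ref{P:CompactII}, which is the infinite-dimensional substitute for the compactness of families of equi-Lipschitz functions in the finite-dimensional stability theory; one must also keep every selection inside $\mathcal{X}^L(\cdot,\cdot)$ so that Theorem~\ref{T:Comparison2} can be invoked with a single matching parameter $L$ in the proof of (iii). Everything else is a mechanical assembly of (i), (ii), Theorem~\ref{T:Comparison2}, Proposition~\ref{P:MinimaxUpperLower}, and the Dini argument above.
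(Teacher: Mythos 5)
Your proposal follows essentially the same route as the paper: parts (i)--(ii) via the relaxed-limit argument of Lemma~\ref{L:u0-+SemiSol} (sequence realizing the relaxed limit, Proposition~\ref{P:CompactII} to pass to a limiting trajectory in $\mathcal{X}^L(t_0,x_0)$, then Lemma~\ref{L:EquivUpperSol}), and part (iii) by sandwiching $u^{L,-}\le\varliminf_n u_n^L\le\varlimsup_n u_n^L\le u^{L,+}$ and invoking Theorem~\ref{T:Comparison2} together with Proposition~\ref{P:MinimaxUpperLower}. The only deviations are cosmetic: the paper settles for the pointwise convergence the sandwich already yields (your Dini-type upgrade to uniform convergence is an optional extra, needing only the continuity of $u^L$ on the chosen balls), and the terminal inequality should be obtained by the same dynamic argument near $t=T$ rather than by passing $u_n^L(T,\cdot)\ge h_n$ directly to the relaxed limit, since $u^{L,-}(T,\cdot)$ involves interior times.
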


%\begin{remark}\label{R:StabSub}
%A corresponding statement holds for minimax subsolutions.% on $[0,T]\times\Omega^L$.
%\end{remark}

\begin{proof}[Proof of Theorem~\ref{T:StabSuper}]
We prove only (i) and (iii).

(i) First, note that $u^{L,-}\in\mathrm{LSC}([0,T]\times\Omega^L)$ (cf.~Remark~\ref{R:LSC_USC}).
It suffices to verify that $u^{L,-}$ satisfies the minimax $L$-supersolution property
for the terminal condition on $\Omega^L$ and for the PPDE in $[0,T)\times\Omega^L$.
We shall do only the latter as the former can be done similarly.

Fix $(t_0,x_0)\in [0,T)\times\Omega^L$. Let $t\in (t_0,T)$ and $z\in H$.
Consider a sequence $(t_n,x_n)_n$ in $[0,T)\times\Omega^L$ such that
$\mathbf{d}_\infty((t_n,x_n),(t_0,x_0))<n^{-1}$ and 
\begin{align*}
u^{L,-}(t_0,x_0)\ge u_n^L(t_n,x_n)-\frac{1}{n}.
\end{align*}
Without loss of generality, assume that $t_n<t$ for every $n\in\N$.
Since $u_n^L$ is a minimax $L$-supersolution of \eqref{E:PPDEn} on $[0,T)\times\Omega^L$
(cf.~Remark~\ref{R:Lminimax}),
there exists an $\tilde{x}_n\in\mathcal{X}^L(t_n,x_n)$ such that
\begin{align*}
u_n^L(t,\tilde{x}_n)+\int_{t_n}^t F_n(s,\tilde{x}_n,z)-(f^{\tilde{x}_n}(s),z)\,ds\le u_n^L(t_n,x_n).
\end{align*}
We can also assume that $(\tilde{x}_n)_n$ converges in $\Omega^L$
to some $\tilde{x}_0\in\mathcal{X}^L(t_0,x_0)$
(see Proposition~\ref{P:CompactII}). Therefore,
\begin{align*}
u^{L,-}(t_0,x_0)\ge\varliminf_n u_n^L(t_n,x_n)
&\ge\varliminf_n \left[u_n^L(t,\tilde{x}_n)+
\int_{t_n}^t F_n(r,\tilde{x}_n,z)-(f^{\tilde{x}_n}(r),z)\,dr\right]\\
&\ge\varliminf_{\substack{(s,x)\to (t,\tilde{x}_0)\\ n\to\infty}} u_n^L(s,x) +
\int_{t}^t F(r,\tilde{x}_0,z)-(f^{\tilde{x}_0}(r),z)\,dr\\
&\ge u^{L,-}(t,\tilde{x}_0)+\int_{t}^t F(r,\tilde{x}_0,z)-(f^{\tilde{x}_0}(r),z)\,dr.
\end{align*}

(iii) %According to (i) and (ii), it suffices to show that $u^L_n\to u^{L,+}$.
%To this end, 
Let $(t_0,x_0)\in [0,T)\times\Omega^L$. Then
\begin{align*}
u^{L,-}(t_0,x_0)\le\varliminf_{n\to\infty} u^L_n(t_0,x_0)\le \varlimsup_{n\to\infty} u^L_n(t_0,x_0)
\le u^{L,+}(t_0,x_0).
\end{align*}
Moreover, by Comparison (Theorem~\ref{T:Comparison2}),
 $u^{L,+}(t_0,x_0)\le u^{L,-}(t_0,x_0)$.
 Thus $u_n^L\to u^{L,+}$ and $u^{L,+}=u^{L,-}$.
 Consequently, noting (i), (ii), and Proposition~\ref{P:MinimaxUpperLower},
 we can infer that $u^{L,+}$ is a minimax $L$-solution of \eqref{E:PPDE2} on 
 $[0,T]\times\Omega^L$.
%This concludes the proof.
\end{proof}

%\begin{theorem}\label{T:Stab1}
%Let $L\ge 0$. For every $n\in\N$,  let $u^L_n$ be a minimax solution
%of \eqref{E:PPDEn} on $[0,T]\times\Omega^L$. Then $(u_n^L)_n$ converges
%to  the function $u^L$, which is defined in \eqref{E:StabSuper},
%and $u^L$ is a minimax solution of \eqref{E:PPDE2}
%on $[0,T]\times\Omega^L$.
%\end{theorem}

\section{Applications to optimal control}
Consider a controlled evolution equation of the form
\begin{equation}\label{E:ContrEq}
\begin{split}
\wx^\prime(t)+A(t,\wx(t))&=f(t,x,a(t))\quad\text{a.e.~on $(t_0,T)$,}\\
x&=x_0\quad\text{on $[0,t_0]$.}
\end{split}
\end{equation}
Here, $t_0\in [0,T]$, $x_0\in\Omega$, $a\in\mathcal{A}^{t_0}$ with
\begin{align*}
\mathcal{A}^{t_0}:=\{a:[t_0,T]\to P\text{ measurable}\}
\end{align*}
for some compact topological space $P$. A solution of \eqref{E:ContrEq} is to be understood as
an element of $\mathcal{X}^L(t_0,x_0)$ for some $L\ge 0$ and is denoted by
$x^{t_0,x_0,a}$.

Goal of the controller is to minimize a cost functional of the form
\begin{align*}
\int_{t_0}^T \ell(t,x^{t_0,x_0,a},a(t))\,dt+h(x^{t_0,x_0,a})
\end{align*}
over all $a\in\mathcal{A}^{t_0}$.

Next, we state the hypotheses for the data of our control problem.
To this end, let $L_f\ge 0$.

\textbf{H}$(f)_1$:  The function $f:[0,T]\times C([0,T],H)\times P\to H$ satisfies the following:

(i) $f$ is continuous.

(ii) For every
$(t,x,\bfp)\in [0,T]\times C([0,T],H)\times P$, 
\begin{align*}
\abs{f(t,x,\bfp)}\le L_f(1+\sup_{s\le t}\abs{x(s)}).
\end{align*}

(iii) For a.e.~$t\in (0,T)$,
for every $x$, $y\in\Omega$, and for every $\bfp\in P$,
\begin{align*}
\abs{f(t,x,\bfp)-f(t,y,\bfp)}\le L_f 
\sup_{s\le t}\abs{x(s)-y(s)}.
\end{align*}

\textbf{H}$(\ell)_1$: The function $\ell: [0,T]\times C([0,T],H)\times P\to\R$ satisfies the following:

(i) $\ell$ is continuous.

(ii) For every $(t,x,\bfp)\in [0,T]\times C([0,T],H)\times P$,
\begin{align*}
\abs{\ell(t,x,\bfp)}\le L_f(1+\sup_{s\le t}\abs{x(s)}).
\end{align*}

(iii) For a.e.~$t\in (0,T)$, for every $x$, $y\in\Omega$, and for
every $\bfp\in P$,
\begin{align*}
\abs{\ell(t,x,\bfp)-\ell(t,y,\bfp)}\le L_f 
\sup_{s\le t}\abs{x(s)-y(s)}.
\end{align*}

\textbf{H}$(h)_1$: The function $h: C([0,T],H)\to\R$ satisfies the following:

(i) $h$ is continuous.

(ii) For every $x$, $y\in\Omega$,
\begin{align*}
\abs{h(x)-h(y)}\le L_f\norm{x-y}_\infty.
\end{align*}

Immediately from Proposition~\ref{P:EU_ODE}, we obtain
the following statement.

\begin{proposition}
Suppose that \emph{\textbf{H}$(f)_1$} holds.
Let $(t_0,x_0)\in [0,T)\times\Omega$ and $a\in\mathcal{A}^{t_0}$.
Then the initial-value problem \eqref{E:ContrEq} has a unique
solution in $\mathcal{X}^{L_f}(t_0,x_0)$.
\end{proposition}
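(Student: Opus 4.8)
The plan is to apply Proposition~\ref{P:EU_ODE}(ii) to the non-anticipating right-hand side obtained by freezing the control, namely
\begin{align*}
\hat f:[t_0,T]\times C([0,T],H)\to H,\qquad \hat f(t,x):=f(t,x,a(t)).
\end{align*}
First I would check that $\hat f$ fits the framework of Proposition~\ref{P:EU_ODE}. Measurability follows because $f$ is jointly continuous (\textbf{H}$(f)_1$(i)) and $a$ is measurable: for fixed $x$ the map $t\mapsto f(t,x,a(t))$ is measurable as a composition, and for every $t$ the map $x\mapsto f(t,x,a(t))$ is continuous; thus $\hat f$ is of Carath\'eodory type. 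That $\hat f$ is non-anticipating is immediate from \textbf{H}$(f)_1$(iii): if $x=y$ on $[0,t]$ then $\sup_{s\le t}\abs{x(s)-y(s)}=0$, so $f(t,x,\bfp)=f(t,y,\bfp)$ for a.e.~$t$ and every $\bfp$, the remaining $t$ being covered by continuity of $f$. Finally, the growth bound required in Proposition~\ref{P:EU_ODE}(i), namely $\abs{\hat f(t,x)}\le L_f(1+\sup_{s\le t}\abs{x(s)})$ for every $x\in C([0,T],H)$, is precisely \textbf{H}$(f)_1$(ii), and the continuity of $x\mapsto\hat f(t,x)$ for a.e.~$t$ was just noted.

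It remains to verify the Lipschitz condition of Proposition~\ref{P:EU_ODE}(ii) for $x$, $y\in\mathcal{X}^{L_f}(t_0,x_0)$. Hypothesis \textbf{H}$(f)_1$(iii) gives $\abs{\hat f(t,x)-\hat f(t,y)}\le L_f\sup_{s\le t}\abs{x(s)-y(s)}$ for a.e.~$t$, but only under the assumption that $x,y\in\Omega$. The one point that therefore needs an argument is the inclusion
\begin{align*}
\mathcal{X}^{L_f}(t_0,x_0)\subseteq\Omega .
\end{align*}
This I would obtain by concatenation. Since $x_0\in\Omega=\cup_{L\ge 0}\mathcal{X}^L(0,x_\ast)$, there is $L_1\ge 0$ with $x_0\in\mathcal{X}^{L_1}(0,x_\ast)$, so $x_0\in W_{pq}(0,T)$ solves $\mathbf{x}_0^\prime+A(\cdot,\mathbf{x}_0)=f^{x_0}$ a.e.~on $(0,T)$ with $\abs{f^{x_0}(t)}\le L_1(1+\sup_{s\le t}\abs{x_0(s)})$ and $x_0(0)=x_\ast$. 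If $x\in\mathcal{X}^{L_f}(t_0,x_0)$, then $x=x_0$ on $[0,t_0]$ and $x\in W_{pq}(t_0,T)$ solves the evolution equation on $(t_0,T)$ with right-hand side $f^x$ satisfying $\abs{f^x(t)}\le L_f(1+\sup_{s\le t}\abs{x(s)})$; gluing at $t_0$, where $x$ is continuous into $H$, shows $x\in W_{pq}(0,T)$ solves $\mathbf{x}^\prime+A(\cdot,\mathbf{x})=f^{x_0}\mathbf{1}_{(0,t_0)}+f^x\mathbf{1}_{(t_0,T)}$ on $(0,T)$ with $x(0)=x_\ast$, the right-hand side being bounded by $(L_1\vee L_f)(1+\sup_{s\le t}\abs{x(s)})$. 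Hence $x\in\mathcal{X}^{L_1\vee L_f}(0,x_\ast)\subseteq\Omega$, and \textbf{H}$(f)_1$(iii) applies.

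With all hypotheses of Proposition~\ref{P:EU_ODE}(ii) in place (taking $L=L_f$ and $l=L_f$), I conclude that \eqref{E:ContrEq} has a unique solution in $\mathcal{X}^{L_f}(t_0,x_0)$, which we denote $x^{t_0,x_0,a}$. I do not anticipate any genuine difficulty here; the only item that is not wholly routine is the inclusion $\mathcal{X}^{L_f}(t_0,x_0)\subseteq\Omega$, needed because \textbf{H}$(f)_1$(iii) is postulated only on $\Omega$ rather than on the full path space $C([0,T],H)$.
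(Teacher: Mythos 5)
Your proposal is correct and follows exactly the route the paper intends: the paper derives this proposition "immediately" from Proposition~\ref{P:EU_ODE} applied to the frozen-control right-hand side $f(t,x,a(t))$, with no further argument given. Your additional verification that $\mathcal{X}^{L_f}(t_0,x_0)\subseteq\Omega$ (so that \textbf{H}$(f)_1$(iii) is applicable) is a sound and worthwhile detail that the paper leaves implicit, but it does not change the approach.
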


We also need to establish continuous dependence properties of our
controlled evolution equation with respect
to the initial data.

\begin{proposition}\label{P:ContrContDep}
Suppose that \emph{\textbf{H}$(f)_1$} holds. 
Let $t_0$, $t_1\in [0,T]$ and $x_0$, $y_0\in\Omega$.
Fix $a\in\mathcal{A}^{0}$ and denote its restrictions to subintervals also by $a$.

(i) For every $t\in [t_0,T]$,
\begin{align}\label{E1:ContrContDep}
\abs{x^{t_0,x_0,a}(t)-x^{t_0,y_0,a}(t)}&\le e^{L_f(t-t_0)} 
\sup_{s\le t_0} \abs{x_0(s)-y_0(s)}.
\end{align}

(ii) Let $L\ge 0$. If $x_0\in\Omega^L$, then 
\begin{align}\label{E3:ContrContDep}
\norm{x^{t_0,x_0,a}-x^{t_1,x_0,a}}_\infty\le 4\max\{L,L_f\}(1+C)e^{L_f T}\abs{t_1-t_0} 
\end{align}
for some constant $C$ that  depends only on $L$, $L_f$, and $x_\ast$.
\end{proposition}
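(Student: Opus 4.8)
The plan is to run the standard energy/Gronwall argument adapted to the Gelfand-triple setting, using the monotonicity of $A$ at every step. The common template: given two trajectories $x$, $y$ that solve \eqref{E:ContrEq} on some interval $(s,T)$ with the \emph{same} control $a$ (or, in one case below, a genuine controlled solution versus a fixed realizable path), set $z:=x-y$, $\hat z:=\wx-\wy\in W_{pq}(s,T)$; the integration-by-parts formula (Proposition~\ref{P:parts}) gives $\abs{z(t)}^2=\abs{z(s)}^2+2\int_s^t\langle\hat z^\prime(r),z(r)\rangle\,dr$, and \textbf{H}($A$)(ii) kills the $A$-difference term inside $\langle\hat z^\prime(r),z(r)\rangle$, leaving only the $f$-difference term to be estimated by the relevant part of \textbf{H}($f$)$_1$. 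Together with the elementary fact that $\frac{d}{dt}\abs{z(t)}^2\le c\,\abs{z(t)}$ a.e. implies $\abs{z(t)}\le\abs{z(s)}+\tfrac{c}{2}(t-s)$, this reduces everything to bookkeeping. Without loss of generality I take $t_0\le t_1$.

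For (i), both solutions start at $t_0$ from $x_0$ and $y_0$. For $t\le t_0$ the left side of \eqref{E1:ContrContDep} is $\abs{x_0(t)-y_0(t)}$, already dominated by the right side; for $t\ge t_0$, \textbf{H}($f$)$_1$(iii) bounds the $f$-difference by $L_f\sup_{s\le t}\abs{z(s)}$, so $\frac{d}{dt}\abs{z(t)}^2\le 2L_f\,n(t)$ a.e., where $n(t):=\sup_{s\le t}\abs{z(s)}^2$. Integrating and noting that $n(t_0)=\sup_{s\le t_0}\abs{x_0(s)-y_0(s)}^2$ and that $n(t)$ is the maximum of this frozen quantity and $\sup_{t_0\le s\le t}\abs{z(s)}^2$ (the latter already controlled by the running integral), I get the closed inequality $n(t)\le n(t_0)+2L_f\int_{t_0}^t n(r)\,dr$, and Gronwall gives $n(t)\le n(t_0)e^{2L_f(t-t_0)}$, which is exactly \eqref{E1:ContrContDep}.

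For (ii), I split $[t_0,T]=[t_0,t_1]\cup[t_1,T]$. On $[t_0,t_1]$ the trajectory $y=x^{t_1,x_0,a}$ still equals $x_0$, which is \emph{not} a controlled solution but, because $x_0\in\Omega^L$, solves the evolution equation with a forcing $f^{x_0}$ satisfying $\abs{f^{x_0}(t)}\le L(1+\sup_{s\le t}\abs{x_0(s)})$ a.e.; meanwhile $\abs{f(t,x,a(t))}\le L_f(1+\sup_{s\le t}\abs{x(s)})$ by \textbf{H}($f$)$_1$(ii), and both $\norm{x}_\infty$ and $\norm{x_0}_\infty$ are bounded by a constant $C=C(L,L_f,x_\ast)$ via the a-priori estimates (Lemma~\ref{L:Apriori}). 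The monotonicity-plus-Proposition~\ref{P:parts} argument with $z:=x-x_0$ and $z(t_0)=0$ then yields $\abs{z(t)}\le 2\max\{L,L_f\}(1+C)(t-t_0)$ on $[t_0,t_1]$; in particular both $\sup_{s\le t_1}\abs{x(s)-y(s)}$ and $\abs{x(t_1)-y(t_1)}$ are $\le\rho:=2\max\{L,L_f\}(1+C)(t_1-t_0)$. On $[t_1,T]$, $x=x^{t_0,x_0,a}$ and $y=x^{t_1,x_0,a}$ both solve \eqref{E:ContrEq} with the same control, so \textbf{H}($f$)$_1$(iii) applies exactly as in (i): with $M(t):=\sup_{s\le t}\abs{x(s)-y(s)}$, which on $[t_1,T]$ is the maximum of $M(t_1)\le\rho$ and $\sup_{t_1\le s\le t}\abs{z(s)}$, I obtain $\frac{d}{dt}\abs{z(t)}^2\le 2L_f\,M(t)^2$ with $\abs{z(t_1)}^2\le\rho^2$, hence $M(t)^2\le\rho^2+2L_f\int_{t_1}^t M(r)^2\,dr$ and, by Gronwall, $M(t)\le\rho\,e^{L_f(t-t_1)}\le\rho\,e^{L_f T}$. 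Taking $t=T$ gives $\norm{x^{t_0,x_0,a}-x^{t_1,x_0,a}}_\infty\le\rho\,e^{L_f T}\le 4\max\{L,L_f\}(1+C)e^{L_f T}\abs{t_1-t_0}$, i.e. \eqref{E3:ContrContDep}.

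The energy estimates themselves are routine; the one place that genuinely needs care is the bookkeeping of the history suprema $\sup_{s\le t}(\cdot)$. Because $f$ is path-dependent, the Gronwall loop closes only once one observes that $\sup_{s\le t}\abs{z(s)}^2$ decomposes into a frozen past contribution plus a running part dominated by the integral — this is the topological-equivalence phenomenon recorded in Remark~\ref{R:equivPseuoMetrics}. The second delicate point, specific to (ii), is that the estimate on $[t_0,t_1]$ compares a true controlled solution against the \emph{fixed} path $x_0$; this is legitimate only because $x_0\in\Omega^L$ forces its ``velocity'' $f^{x_0}$ to be bounded, and because the a-priori bounds make the resulting constant $C$ independent of the control $a$ and of $t_0,t_1$. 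I expect these two bookkeeping issues, rather than any single hard estimate, to be the main obstacle.
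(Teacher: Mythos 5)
Your proposal is correct and follows essentially the same route as the paper: integration by parts (Proposition~\ref{P:parts}) plus monotonicity of $A$, the bounds \textbf{H}$(f)_1$(ii)--(iii) together with the a-priori estimates of Lemma~\ref{L:Apriori}, and a Gronwall argument with the same splitting of $[t_0,T]$ at $t_1$, including the careful handling of the frozen-history part of $\sup_{s\le t}\abs{z(s)}$. The only (cosmetic) difference is that on $[t_1,T]$ you rerun the Gronwall estimate directly for the two trajectories, whereas the paper invokes part (i) through the flow property $x^{t_0,x_0,a}=x^{t_1,x^{t_0,x_0,a},a}$ on $[t_1,T]$; your sharper intermediate constant ($2$ versus the paper's $4$) still lands within the stated bound \eqref{E3:ContrContDep}.
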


\begin{proof}
(i) Put $x:=x^{t_0,x_0,a}$, $y:=x^{t_0,y_0,a}$, $z:=x-y$, $\wz:=\wx-\wy$, and
\begin{align*}%\label{E:ContrDep_m}
m(t):=\sup_{s\le t} \abs{z(s)},\quad t\in [t_0,T].
\end{align*}
By integration-by-parts (Proposition~\ref{P:parts}), for every $t\in [t_0,T]$,  
\begin{align*}
\frac{1}{2}\abs{z(t)}^2&=\frac{1}{2}\abs{z(t_0)^2}+
\int_{t_0}^t \langle \wz^\prime(s),\wz(s)\rangle\,ds \\
&\le \frac{1}{2}\abs{z(t_0)^2}+ \int_{t_0}^t 
\underbrace{\abs{f(s,x,a(s))-f(s,y,a(s))}}_{\le L_f m(s)}\,\abs{z(s)}\,ds.
\end{align*}
Here, we used the monotonicity of $A$ and  \textbf{H}$(f)_1$(iii).
Finally, Gronwall's inequality  yields, 
$m(t)^2\le e^{2L_f(t-t_0)} m(t_0)^2$, i.e., we have \eqref{E1:ContrContDep}.

(ii) Without loss of generality, let $L\ge L_f$. 
Put $x:=x^{t_0,x_0,a}$, $z:=x-x_0$, and $\wz:=\wx-\wx_0$, and 
\begin{align*}\
m(t):=\sup_{s\le t} \abs{z(s)},\quad t\in [t_0,T].
\end{align*}
Recall that $x$, $x_0\in\Omega^L=\mathcal{X}^L(0,x_\ast)$.
Thus, by the a-priori estimates of Lemma~\ref{L:Apriori}, there exists a
constant $C$ depending only on $L$ and 
$x_\ast$ such that $$\max\{\norm{x}_\infty,\norm{x_0}_\infty\}\le C.$$
Therefore, we can apply integration-by-parts (Proposition~\ref{P:parts}) and obtain
thanks to the monotonicity of $A$ and to \textbf{H}$(f)_1$(ii) 
\begin{align*}
\frac{1}{2}\abs{z(t)}^2=\int_{t_0}^t \langle \wz^\prime(s),\wz(s)\rangle\,ds 
&\le \int_{t_0}^t (f(s,x,a(s))-f^{x_0}(s),z(s))\,ds\\
&\le \int_{t_0}^t  L\left\{2+\sup_{s\le t} \left[\abs{x(s)}+\abs{x_0(s)}\right]\right\}\,\abs{z(s)}\,ds\\
&\le 2L(1+C) (t-t_0) m(t)
\end{align*}
for every $t\in [t_0,T]$.
This yields 
\begin{align}\label{E2:ContrContDep}
m(t)\le 4L(1+C)(t-t_0).
\end{align}

Next, without loss of generality, let $t_1\ge t_0$. Fix $t\in [t_1,T]$. 
%Put $\tilde{a}:=a\vert_{[t_1,T]}$. 
By \eqref{E1:ContrContDep} and \eqref{E2:ContrContDep},
\begin{align*}
\abs{x^{t_0,x_0,a}(t)-x^{t_1,x_0,\tilde{a}}(t)}&=
\abs{x^{t_1,x^{t_0,x_0,a},\tilde{a}}(t)-x^{t_1,x_0,\tilde{a}}(t)}\\
&\le e^{L_f(t-t_1)} \sup_{s\le t_1} \abs{
x^{t_0,x_0,a}(s)-x_0(s)
}\\
&\le 4L(1+C) e^{L_f(t-t_1)}\,(t_1-t_0),
\end{align*}
which together with \eqref{E2:ContrContDep} yields \eqref{E3:ContrContDep}.
\end{proof}

The \emph{value function} $v:[0,T]\times\Omega\to\R$ for our control problem is
defined by
\begin{align*}
v(t_0,x_0):=\inf_{a\in\mathcal{A}^{t_0}} J(t_0,x_0;a),
\end{align*}
where
\begin{align*}
J(t_0,x_0;a):=\int_{t_0}^T \ell(t,x^{t_0,x_0,a},a(t))\,dt+h(x^{t_0,x_0,a}).
\end{align*}

Similarly as in the finite-dimensional case, one can prove a dynamic programming
principle.

\begin{theorem}\label{T:DPP}
Suppose that \emph{\textbf{H}$(f)_1$},  \emph{\textbf{H}$(\ell)_1$}, and  \emph{\textbf{H}$(h)$}
hold. Let $(t_0,x_0)\in [0,T]\times\Omega$. Then, for every $t\in [t_0,T]$,
\begin{align*}
v(t_0,x_0)=\inf_{a\in\mathcal{A}^{t_0}}\left[
\int_{t_0}^t \ell(s,x^{t_0,x_0,a},a(s))\,ds+v(t,x^{t_0,x_0,a})
\right].
\end{align*}
\end{theorem}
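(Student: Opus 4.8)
The plan is to prove the dynamic programming principle (Theorem~\ref{T:DPP}) by establishing the two inequalities ``$\le$'' and ``$\ge$'' separately, using the flow property of the controlled evolution equation~\eqref{E:ContrEq} together with the continuous-dependence estimates of Proposition~\ref{P:ContrContDep}. Fix $(t_0,x_0)\in[0,T]\times\Omega$ and $t\in[t_0,T]$, and write $w(t_0,x_0)$ for the right-hand side of the claimed identity. The key structural fact is the \emph{concatenation property}: if $a_1\in\mathcal{A}^{t_0}$ (restricted to $[t_0,t]$) and $a_2\in\mathcal{A}^{t}$, then the control $a:=a_1\bfone_{[t_0,t)}+a_2\bfone_{[t,T]}$ lies in $\mathcal{A}^{t_0}$ and, by uniqueness in $\mathcal{X}^{L_f}$ (Proposition~\ref{P:EU_ODE}(ii) applied to the non-anticipating right-hand side $f(\cdot,\cdot,a(\cdot))$), the corresponding trajectory satisfies $x^{t_0,x_0,a}=x^{t_0,x_0,a_1}$ on $[0,t]$ and $x^{t_0,x_0,a}=x^{t,\,x^{t_0,x_0,a_1},\,a_2}$ on $[t,T]$. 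Since $\ell$ and $h$ are non-anticipating, the running cost splits accordingly:
\begin{align*}
J(t_0,x_0;a)=\int_{t_0}^t \ell(s,x^{t_0,x_0,a_1},a_1(s))\,ds + J\bigl(t,x^{t_0,x_0,a_1};a_2\bigr).
\end{align*}

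For the inequality $v(t_0,x_0)\le w(t_0,x_0)$, I would fix an arbitrary $a_1\in\mathcal{A}^{t_0}$, set $x_1:=x^{t_0,x_0,a_1}$, and for any $\eps>0$ choose $a_2\in\mathcal{A}^{t}$ with $J(t,x_1;a_2)\le v(t,x_1)+\eps$; concatenating as above gives a control $a\in\mathcal{A}^{t_0}$ with $J(t_0,x_0;a)\le \int_{t_0}^t\ell(s,x_1,a_1(s))\,ds+v(t,x_1)+\eps$, hence $v(t_0,x_0)\le\int_{t_0}^t\ell(s,x_1,a_1(s))\,ds+v(t,x_1)+\eps$; now let $\eps\downarrow0$ and take the infimum over $a_1\in\mathcal{A}^{t_0}$. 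For the reverse inequality $v(t_0,x_0)\ge w(t_0,x_0)$, fix $\eps>0$ and pick $a\in\mathcal{A}^{t_0}$ with $J(t_0,x_0;a)\le v(t_0,x_0)+\eps$; restrict $a$ to $[t_0,t]$ (call this $a_1$) and to $[t,T]$ (call this $a_2$), set $x_1:=x^{t_0,x_0,a}=x^{t_0,x_0,a_1}$ on $[0,t]$. By uniqueness the tail of $x^{t_0,x_0,a}$ coincides with $x^{t,x_1,a_2}$, so using the cost splitting and $J(t,x_1;a_2)\ge v(t,x_1)$ we obtain $v(t_0,x_0)+\eps\ge\int_{t_0}^t\ell(s,x_1,a_1(s))\,ds+v(t,x_1)\ge w(t_0,x_0)$; again let $\eps\downarrow0$.

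The routine measurability/well-posedness points to address are: that the concatenated control is measurable (immediate, as it is a pasting of two measurable maps) and that $v$ is finite on $[0,T]\times\Omega$, which follows from \textbf{H}$(f)_1$, \textbf{H}$(\ell)_1$, \textbf{H}$(h)_1$ together with the a-priori bounds behind Proposition~\ref{P:ContrContDep} (each admissible trajectory stays in some $\mathcal{X}^{L_f}(t_0,x_0)$, which is bounded in $C([0,T],H)$, so $J$ is uniformly bounded on $\mathcal{A}^{t_0}$). The main subtlety — and the step I expect to require the most care — is the verification of the concatenation/flow identity for trajectories: one must invoke uniqueness of solutions in $\mathcal{X}^{L_f}$ on the subinterval $[t,T]$ with initial datum the path $x_1$ (which is an element of $\Omega$, since $\Omega$ is closed under this operation by the membership $x_1\in\mathcal{X}^{L_f}(t_0,x_0)\subseteq\Omega$ after enlarging $L$ if needed), and check that the right-hand side $s\mapsto f(s,\cdot,a(s))$ retains the non-anticipating and Lipschitz-in-path properties needed for Proposition~\ref{P:EU_ODE}(ii); once this is in place, the two inequalities are essentially bookkeeping with the non-anticipating property of $\ell$ and $h$.
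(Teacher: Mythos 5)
Your proposal is correct, and there is nothing in the paper to compare it against in detail: the authors state Theorem~\ref{T:DPP} without proof, remarking only that it can be established ``similarly as in the finite-dimensional case,'' and your concatenation/flow-property argument is precisely that standard argument, carried out with the tools the paper provides (uniqueness from Proposition~\ref{P:EU_ODE}~(ii) under \textbf{H}$(f)_1$, the a-priori bounds behind Lemma~\ref{L:Apriori}, and non-anticipativity of $f$, $\ell$, and $v(t,\cdot)$). You also correctly flag the one point that genuinely needs checking, namely that $x^{t_0,x_0,a}$ lies in $\Omega^{\max\{L,L_f\}}\subseteq\Omega$ (so that $v(t,x^{t_0,x_0,a})$ is well defined) and that the concatenated trajectory solves the equation on the full interval, which follows by gluing the $W_{pq}$ pieces at time $t$ and invoking uniqueness on $[t,T]$.
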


In order to show that the value function $v$ is a minimax solution of the corresponding 
Bellman equation, we need to establish that $v$ is continuous.

\begin{theorem}[Regularity of the value function]\label{T:Reg}
Suppose that \emph{\textbf{H}$(f)_1$},  \emph{\textbf{H}$(\ell)_1$}, and  \emph{\textbf{H}$(h)_1$}
hold.  Then $v$ is continuous. In particular, the following holds:

(i)  For every $t_0\in [0,T]$, $x_0$, $y_0\in\Omega$,
\begin{align}\label{E1:ContrReg}
\abs{v(t_0,x_0)-v(t_0,y_0)}&\le L_f (T-t_0+1)e^{L_f(T-t_0)}\sup_{s\le t_0} \abs{x_0(s)-y_0(s)}.
\end{align}

(ii) Let $L\ge 0$. If $x_0\in\Omega^L$, then, for every $t_0$, $t_1\in [0,T]$,
\begin{align}\label{E2:ContrReg}
\abs{v(t_0,x_0)-v(t_1,x_0)}&\le C \abs{t_1-t_0}
\end{align}
for some constant $C$ that depends only on $L$, $L_f$, $x_\ast$, and $T$.
\end{theorem}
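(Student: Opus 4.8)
The plan is to derive both estimates directly from the definition of the value function together with the continuous-dependence properties of the controlled equation (Proposition~\ref{P:ContrContDep}) and the Lipschitz bounds in \textbf{H}$(f)_1$, \textbf{H}$(\ell)_1$, \textbf{H}$(h)_1$; continuity of $v$ on $[0,T]\times\Omega$ then follows since every point of $\Omega$ lies in some $\Omega^L$, on which both (i) and (ii) apply.

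\emph{Step 1 (Lipschitz continuity in the spatial variable).} Fix $t_0\in[0,T]$ and $x_0$, $y_0\in\Omega$. The standard argument is that for any $\eps>0$ one picks a near-optimal control $a\in\mathcal{A}^{t_0}$ for $v(t_0,y_0)$ (within $\eps$) and uses it as a competitor for $v(t_0,x_0)$. Writing $x:=x^{t_0,x_0,a}$, $y:=x^{t_0,y_0,a}$, one estimates
\begin{align*}
v(t_0,x_0)-v(t_0,y_0)&\le J(t_0,x_0;a)-J(t_0,y_0;a)+\eps\\
&\le \int_{t_0}^T\abs{\ell(t,x,a(t))-\ell(t,y,a(t))}\,dt+\abs{h(x)-h(y)}+\eps.
\end{align*}
By \textbf{H}$(\ell)_1$(iii) and \textbf{H}$(h)_1$(ii) the right-hand side is bounded by $L_f\int_{t_0}^T\sup_{s\le t}\abs{x(s)-y(s)}\,dt+L_f\norm{x-y}_\infty+\eps$, and by \eqref{E1:ContrContDep} we have $\sup_{s\le t}\abs{x(s)-y(s)}\le e^{L_f(t-t_0)}\sup_{s\le t_0}\abs{x_0(s)-y_0(s)}$ for every $t$. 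Integrating and using $e^{L_f(t-t_0)}\le e^{L_f(T-t_0)}$ gives the bound $L_f\bigl((T-t_0)+1\bigr)e^{L_f(T-t_0)}\sup_{s\le t_0}\abs{x_0(s)-y_0(s)}+\eps$; letting $\eps\downarrow 0$ and swapping the roles of $x_0$, $y_0$ yields \eqref{E1:ContrReg}.

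\emph{Step 2 (Lipschitz continuity in time).} Fix $L\ge0$, $x_0\in\Omega^L$, and $t_0$, $t_1\in[0,T]$, say $t_0\le t_1$. Using the dynamic programming principle (Theorem~\ref{T:DPP}) we write
\begin{align*}
v(t_0,x_0)=\inf_{a\in\mathcal{A}^{t_0}}\left[\int_{t_0}^{t_1}\ell(s,x^{t_0,x_0,a},a(s))\,ds+v(t_1,x^{t_0,x_0,a})\right].
\end{align*}
Comparing $v(t_1,x^{t_0,x_0,a})$ with $v(t_1,x_0)$ via Step~1 (note both paths lie in $\Omega^{\max\{L,L_f\}}$), using $\sup_{s\le t_1}\abs{x^{t_0,x_0,a}(s)-x_0(s)}\le 4\max\{L,L_f\}(1+C)(t_1-t_0)$ from \eqref{E2:ContrContDep}/\eqref{E3:ContrContDep}, and bounding the running-cost term over $[t_0,t_1]$ by $L_f(1+C')\abs{t_1-t_0}$ via \textbf{H}$(\ell)_1$(ii) and the a-priori estimate $\norm{x^{t_0,x_0,a}}_\infty\le C'$ (Lemma~\ref{L:Apriori}, the constant depending only on $\max\{L,L_f\}$ and $x_\ast$), one obtains $\abs{v(t_0,x_0)-v(t_1,x_0)}\le C\abs{t_1-t_0}$ with $C$ depending only on $L$, $L_f$, $x_\ast$, and $T$. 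The constants from the a-priori estimates must be taken with respect to the space $\Omega^{\max\{L,L_f\}}$ so that they do not depend on the control $a$.

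\emph{Main obstacle.} The routine parts are Steps~1 and~2; the one point requiring care is Step~2, namely ensuring that all intermediate trajectories $x^{t_0,x_0,a}$ (for arbitrary admissible $a$) remain in a \emph{single} compact trajectory space $\Omega^{\max\{L,L_f\}}$ with bounds uniform in $a$, so that the spatial Lipschitz constant from Step~1 and the a-priori sup-bound can be applied uniformly. This is exactly what \eqref{E3:ContrContDep} and the a-priori estimates of Lemma~\ref{L:Apriori} provide; once this uniformity is in hand the estimates combine straightforwardly. Finally, continuity of $v$ with respect to $\mathbf{d}_\infty$ follows: given $(t,x)$, $(s,y)$ close in $\mathbf{d}_\infty$ with $x,y\in\Omega^L$, interpolate through $(s,x(\cdot\wedge t))$ or a similar intermediate point and apply (i) and (ii) — since non-anticipation is built into $v$ and into $\mathbf{d}_\infty$, this gives $v\in C([0,T]\times\Omega)$.
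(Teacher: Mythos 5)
Your proof of part (i) is essentially the paper's: fix a common control, estimate $J(t_0,x_0;a)-J(t_0,y_0;a)$ via \textbf{H}$(\ell)_1$(iii), \textbf{H}$(h)_1$(ii) and Proposition~\ref{P:ContrContDep}~(i), then pass to the infimum (the paper does this via $\sup_a$ over differences rather than $\eps$-optimal controls, which is cosmetic). For part (ii) you take a genuinely different route: you invoke the dynamic programming principle (Theorem~\ref{T:DPP}) at time $t_1$, bound the running cost on $[t_0,t_1]$ by the a-priori estimate, and then feed the bound $\sup_{s\le t_1}\abs{x^{t_0,x_0,a}(s)-x_0(s)}\le\norm{x^{t_0,x_0,a}-x^{t_1,x_0,a}}_\infty$ from \eqref{E3:ContrContDep} into the spatial estimate (i). The paper instead never uses the DPP here: it compares $J(t_0,x_0;a)$ and $J(t_1,x_0;a)$ directly for the same control $a\in\mathcal{A}^0$ (and its restrictions), splitting the cost difference into the piece of $\ell$ over $[t_0,t_1]$, the difference of $\ell$ over $[t_1,T]$, and the difference of $h$, all controlled by Lemma~\ref{L:Apriori} and Proposition~\ref{P:ContrContDep}~(ii). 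Your argument is correct (the DPP is stated before Theorem~\ref{T:Reg} and does not presuppose continuity of $v$, so there is no circularity, and your observation that $x^{t_0,x_0,a}\in\Omega^{\max\{L,L_f\}}$ is exactly the uniformity needed to apply (i) and the a-priori bound uniformly in $a$); the paper's version buys independence from the DPP, whose proof is only sketched, while yours recycles the spatial Lipschitz constant and is slightly shorter once the DPP is granted.

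One caveat on your closing continuity argument: the intermediate point $(s,x(\cdot\wedge t))$ is not admissible in general, since by Remark~2.10 the trajectory spaces are not stable under stopping, so $x(\cdot\wedge t)$ need not belong to $\Omega$ and $v$ may be undefined there. The correct interpolation, which is what the paper uses, is through $(s,x)$ (change the time only, keep the path): then (ii) controls $\abs{v(t,x)-v(s,x)}$, (i) controls $\abs{v(s,x)-v(s,y)}$ via $\sup_{r\le s}\abs{x(r)-y(r)}$, and the latter is bounded by $\mathbf{d}_\infty((t,x),(s,y))$ plus a term handled by the uniform continuity of the single path $x$ on $[0,T]$ — this is precisely the role of the remark "$t\mapsto x_0(t)$ being continuous" in the paper. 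With that replacement your argument goes through; also note that the spatial estimate (i) holds for arbitrary $x,y\in\Omega$ (no common $L$ is needed), so continuity at a point of $\Omega^L$ holds against all competitors in $\Omega$, not merely those in the same $\Omega^L$.
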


\begin{proof}
Fix $a\in\mathcal{A}^{0}$.  

(i) By \textbf{H}$(\ell)_1$(iii), \textbf{H}$(h)_1$(ii),
and \textbf{H}$(f)_1$ together with Proposition~\ref{P:ContrContDep}~(i),
\begin{align*}
J(t_0,x_0;a)-J(t_0,y_0;a)&\le 
L_f(T-t_0+1) \norm{x^{t_0,x_0,a}-x^{t_0,y_0,a}}_\infty\\
&\le L_f(T-t_0+1) e^{L_f(T-t_0)} \sup_{s\le t_0} \abs{x_0(s)-y_0(s)},
\end{align*}
from which we get \eqref{E1:ContrReg}. 

(ii) Without loss of generality, let $t_1\ge t_0$. By \textbf{H}$(\ell)_1$, \textbf{H}$(h)_1$, 
Lemma~\ref{L:Apriori}, and \textbf{H}$(f)_1$ together with Proposition~\ref{P:ContrContDep}~(ii),
\begin{align*}
J(t_0,x_0;a)-J(t_1,x_0;a)&=
\left[
\int_{t_0}^T \ell(t,x^{t_0,x_0,a},a(t))\,dt-\int_{t_1}^T \ell(t,x^{t_1,x_0,a},a(t))\,dt
\right]\\
&\qquad +\left[
h(x^{t_0,x_0,a})-h(x^{t_1,x_0,a})
\right]\\
&\le (t_1-t_0) L_f\left(1+\sup_{t\le t_1} \abs{x^{t_0,x_0,a}(t)}\right) \\ &\qquad
+\int_{t_1}^T L_f \sup_{s\le t} \abs{
x^{t_0,x_0,a}(s)-x^{t_1,x_0,a}(s)
}\,dt\\ &\qquad
 + L_f \norm{x^{t_0,x_0,a}-x^{t_1,x_0,a}}_\infty\\
 &\le  C\abs{t_1-t_0}
\end{align*}
for some constant $C$ that depends only on $L$, $L_f$, $x_\ast$, and $T$.
This yields \eqref{E2:ContrReg}.

By parts (i) and (ii) of this proof together with $t\mapsto x_0(t)$ being continuous
for each $x_0\in\Omega$, we have continuity of $v$ under $\mathbf{d}_\infty$.
\end{proof}

\begin{remark}
Note that we have not established that $v$ is (locally) Lipschitz continuous under
$\mathbf{d}_\infty$ on the sets $\Omega^L$.
\end{remark}

Now, we are in the position to prove the main result of this section.

The  Bellman equation associated to our control problem is given by \eqref{E:PPDE2} with
\begin{align}\label{E:BellmanF}
F(t,x,z):=\min_{\bfp\in P} \left[\ell(t,x,\bfp)+(f(t,x,\bfp),z)\right].
\end{align}

\begin{theorem} 
Suppose that \emph{\textbf{H}$(f)_1$}, \emph{\textbf{H}$(\ell)_1$}, and
\emph{\textbf{H}$(h)_1$} hold.  
Let  $F$ be given by \eqref{E:BellmanF}.
Then
the value function $v$ is a minimax $L_f$-solution of \eqref{E:PPDE2} on $[0,T]\times\Omega$.
\end{theorem}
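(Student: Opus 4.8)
The plan is to identify $v$ with the unique minimax $L_f$-solution $u_0$ of \eqref{E:PPDE2} on $[0,T]\times\Omega$ furnished by Theorem~\ref{T:ExistenceMinimax2} (note that the Hamiltonian \eqref{E:BellmanF} satisfies \textbf{H}($F$) with $L_0=L_f$, and that $v\in C([0,T]\times\Omega)$ with $v(T,\cdot)=h$ by Theorem~\ref{T:Reg} and the definition of $v$). \emph{Step 1: $v$ is a minimax $L_f$-supersolution of \eqref{E:PPDE2} on $[0,T]\times\Omega$.} Fix $(t_0,x_0,z)\in[0,T)\times\Omega\times H$ and $t\in(t_0,T]$. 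By the dynamic programming principle (Theorem~\ref{T:DPP}) there are controls $a_n$ with $x_n:=x^{t_0,x_0,a_n}\in\mathcal{X}^{L_f}(t_0,x_0)$ and $\int_{t_0}^t\ell(s,x_n,a_n(s))\,ds+v(t,x_n)\le v(t_0,x_0)+\tfrac1n$; since $f^{x_n}(s)=f(s,x_n,a_n(s))$, the definition \eqref{E:BellmanF} gives $\ell(s,x_n,a_n(s))\ge F(s,x_n,z)-(f^{x_n}(s),z)$ a.e., hence
\[
v(t_0,x_0)+\tfrac1n\ge\int_{t_0}^t\big[F(s,x_n,z)-(f^{x_n}(s),z)\big]\,ds+v(t,x_n).
\]
By compactness of $\mathcal{X}^{L_f}(t_0,x_0)$ (Proposition~\ref{P:compact}) a subsequence of $(x_n)$ converges to some $x^\ast$, and $f^{x_n}\rightharpoonup f^{x^\ast}$ in $L^2(t_0,T;H)$ by Lemma~\ref{L:CompactW}; letting $n\to\infty$ (using continuity of $v$ and $F$ and the uniform bounds on $\mathcal{X}^{L_f}(t_0,x_0)$) gives $v(t_0,x_0)\ge\int_{t_0}^t[-(f^{x^\ast}(s),z)+F(s,x^\ast,z)]\,ds+v(t,x^\ast)$. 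As this holds for every such $t$ and $v$ is continuous, Lemma~\ref{L:EquivUpperSol} produces a single $x^\ast$ valid for all $t\in[t_0,T]$, so $v$ is a minimax $L_f$-supersolution. Comparison (Theorem~\ref{T:Comparison3}, with $u_0$ as subsolution) then yields $u_0\le v$ on $[0,T]\times\Omega$.

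\emph{Step 2: $w:=v-u_0$ is a viscosity $L_f$-subsolution of the doubled equation \eqref{E:DPPDE} on $[0,T]\times\Omega\times\Omega$.} Fix $(t_0,x_0,y_0)$ and $\varphi\in\underline{\mathcal{A}}^{L_f}w(t_0,x_0,y_0)$ with associated $T_0$, and set $\zeta:=\partial_y\varphi(t_0,x_0,y_0)$. Since $u_0$ is a minimax $L_f$-supersolution there is $y\in\mathcal{X}^{L_f}(t_0,y_0)$ with $u_0(t,y)-u_0(t_0,y_0)\le\int_{t_0}^t[(f^y(s),-\zeta)-F(s,y,-\zeta)]\,ds$ for $t\in[t_0,T_0]$. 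Let $x\in\mathcal{X}^{L_f}(t_0,x_0)$ be generated by \emph{any} control $a$ (e.g.\ a constant one), so $f^x(s)=f(s,x,a(s))$. The map $(t,\tilde x)\mapsto\varphi(t,\tilde x,y)+u_0(t,y)$ dominates $v$ on $[t_0,T_0]\times\mathcal{X}^{L_f}(t_0,x_0)$ and equals $v(t_0,x_0)$ at $(t_0,x_0)$; feeding this into the DPP estimate $v(t_0,x_0)\le\int_{t_0}^{t_0+\delta}\ell(s,x,a(s))\,ds+v(t_0+\delta,x)$, applying the functional chain rule \eqref{E:ChainRule} to $\varphi(\cdot,x,y)$ together with the supersolution inequality for $u_0$, and using $\ell(s,x,a(s))+(f(s,x,a(s)),\partial_x\varphi(s,x,y))\ge F(s,x,\partial_x\varphi(s,x,y))$ (again from \eqref{E:BellmanF}), one obtains, for all small $\delta>0$,
\[
0\le\int_{t_0}^{t_0+\delta}\Big[\partial_t\varphi-\langle A(s,\wx(s)),\partial_x\varphi\rangle-\langle A(s,\wy(s)),\partial_y\varphi\rangle+F(s,x,\partial_x\varphi)-F(s,y,-\zeta)+(f^y(s),\partial_y\varphi-\zeta)\Big]\,ds,
\]
where the arguments of $\varphi$ and of its derivatives are $(s,x,y)$. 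Dividing by $\delta$ and sending $\delta\downarrow0$ — keeping the $A$-terms under $\varlimsup_{\delta\downarrow0}\frac{1}{\delta}\int_{t_0}^{t_0+\delta}$, using that all remaining terms converge to their values at $(t_0,x_0,y_0)$, and that $\frac{1}{\delta}\int_{t_0}^{t_0+\delta}(f^y(s),\partial_y\varphi-\zeta)\,ds\to0$ because $|f^y|$ is bounded on $\mathcal{X}^{L_f}(t_0,y_0)$ and $\partial_y\varphi(s,x,y)\to\zeta$ — reproduces exactly \eqref{E:DViscSub}. Hence $w$ is a viscosity $L_f$-subsolution of \eqref{E:DPPDE}; since $w(T,x,x)=h(x)-h(x)=0$, Theorem~\ref{T:Comparison1} gives $w(t,x,x)\le0$, i.e.\ $v\le u_0$. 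Combining with Step~1, $v=u_0$, which is a minimax $L_f$-solution of \eqref{E:PPDE2} on $[0,T]\times\Omega$.

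The crux is Step~2. One cannot verify the minimax $L_f$-\emph{sub}solution property of $v$ head-on, because that would require a measurable selection of an optimal control for the pointwise minimization in \eqref{E:BellmanF} \emph{along the very trajectory being constructed} — a closed-loop fixed-point problem whose selection may fail to be continuous in the path. The detour through the doubled equation avoids this: the ``$\le$'' half of the dynamic programming principle, valid for \emph{every} control, combines with the pointwise inequality $F\le\ell+(f,\cdot)$ in the favorable direction when one builds a \emph{sub}solution, and the residual ``closed-loop'' requirement is transferred to $u_0$, where it is already met by the minimax supersolution property. The supersolution direction (Step~1) escapes the same difficulty differently: there the desired inequality is needed only for a fixed terminal time $t$, after which compactness of $\mathcal{X}^{L_f}(t_0,x_0)$ and Lemma~\ref{L:EquivUpperSol} glue the fixed-$t$ trajectories into one valid for all $t$.
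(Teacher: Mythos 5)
Your overall architecture is the same as the paper's: Step~1 (value function is a minimax $L_f$-supersolution via the DPP, compactness of $\mathcal{X}^{L_f}(t_0,x_0)$, and Lemma~\ref{L:EquivUpperSol}, followed by comparison to get ``$u_0\le v$'') reproduces the paper's argument, and Step~2 (show $w=v-u_0$ is a viscosity $L_f$-subsolution of the doubled equation \eqref{E:DPPDE} and invoke Theorem~\ref{T:Comparison1}) is exactly the paper's route as well. However, Step~2 contains a genuine gap: the choice of the control. You take $x$ generated by \emph{any} control $a$ and then invoke $\ell(s,x,a(s))+(f(s,x,a(s)),\partial_x\varphi(s,x,y))\ge F(s,x,\partial_x\varphi(s,x,y))$ to pass from the DPP/chain-rule estimate to your displayed inequality containing $F(s,x,\partial_x\varphi)$. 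This replacement goes the wrong way. What the DPP, the test-function inequality, the chain rule, and the supersolution property of $u_0$ actually give is a lower bound of the form
\begin{align*}
0\le\int_{t_0}^{t_0+\delta}\Bigl[\partial_t\varphi-\langle A(s,\wx(s)),\partial_x\varphi\rangle-\langle A(s,\wy(s)),\partial_y\varphi\rangle
+\ell(s,x,a(s))+(f(s,x,a(s)),\partial_x\varphi)\\
-F(s,y,-\zeta)+(f^y(s),\partial_y\varphi-\zeta)\Bigr]\,ds,
\end{align*}
and since $F\le\ell+(f,\cdot)$ (it is a minimum over $\bfp$), a lower bound with the larger quantity $\ell+(f,\partial_x\varphi)$ does \emph{not} imply the same lower bound with $F$: from $0\le X+c$ and $c\ge F$ one cannot conclude $0\le X+F$. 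So with an arbitrary control you only obtain \eqref{E:DViscSub} with $F(t_0,x_0,\partial_x\varphi(t_0,x_0,y_0))$ replaced by the possibly larger number $\ell(t_0,x_0,\bfp)+(f(t_0,x_0,\bfp),\partial_x\varphi(t_0,x_0,y_0))$, which is weaker than what the definition of a viscosity subsolution of \eqref{E:DPPDE} requires. Your closing remark that the inequality $F\le\ell+(f,\cdot)$ combines ``in the favorable direction'' is therefore incorrect for this half of the argument.

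The repair is exactly the paper's device: use compactness of $P$ and continuity to pick $\bfp\in P$ achieving the minimum in \eqref{E:BellmanF} at the test point, i.e.\ $\ell(t_0,x_0,\bfp)+(f(t_0,x_0,\bfp),\partial_x\varphi(t_0,x_0,y_0))=F(t_0,x_0,\partial_x\varphi(t_0,x_0,y_0))$ (this is \eqref{E:ContrP}), take $x:=x^{t_0,x_0,a}$ with the constant control $a\equiv\bfp$, keep $\ell(s,x,\bfp)+(f(s,x,\bfp),\partial_x\varphi(s,x,y))$ inside the integral (do not convert it to $F$ along the trajectory), divide by $\delta$, pass to the limit using continuity and $\partial_y\varphi(s,x,y)\to\zeta$, and only then use the equality at $(t_0,x_0)$ to obtain \eqref{E:DViscSub}. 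With that modification your Step~2, and hence the whole proof, coincides with the paper's; the only other (harmless) difference is that you run the comparison arguments directly on $[0,T]\times\Omega$ via Theorem~\ref{T:Comparison3}, whereas the paper restricts to the sets $\Omega^L$, $L\ge L_f$, and uses Theorems~\ref{T:ExistenceMinimax1} and \ref{T:Comparison2} there.
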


\begin{proof}
Note that  \textbf{H}$(f)_1$ and \textbf{H}$(\ell)_1$ imply \textbf{H}$(F)$.

(i) First, we show that $v$ is a minimax $L_f$-supersolution of \eqref{E:PPDE2}
on $[0,T]\times\Omega$. To this end, fix
$(t_0,x_0,z)\in [0,T]\times\Omega\times H$ and $t\in [t_0,T]$.
For every $\eps>0$, there exists,  
by Theorem~\ref{T:DPP},
a control $a^\eps\in\mathcal{A}^{t_0}$ such that, with $x^\eps:=x^{t_0,x_0,a^\eps}$,
\begin{align*}
\int_{t_0}^t \ell(s,x^\eps,a^\eps(s))\,ds+ v(t,x^\eps)-v(t_0,x_0)\le \eps.
\end{align*}
Since by \eqref{E:BellmanF} also
\begin{align*}
\int_{t_0}^t (-f^{x^\eps}(s),z)+F(s,x^\eps,z)\,ds &\le  \int_{t_0}^t   \ell(s,x^\eps,a^\eps(s))\,ds
\end{align*}
holds, we have
\begin{align*}
v(t,x^\eps)-v(t_0,x_0)\le \eps+\int_{t_0}^t \left[ -F(s,x^\eps,z)+(f^{x^\eps}(s),z)\right]\,ds.
\end{align*}
Since $\mathcal{X}^{L_f}(t_0,x_0)$ is compact
and $F$ is continuous, there exists an $x\in\mathcal{X}^{L_f}(t_0,x_0)$ depending on $t$
such that
\begin{align}\label{E1:ContrMain}
v(t,x)-v(t_0,x_0)\le \int_{t_0}^t \left[-F(s,x,z)+(f^x(s),z)\right]\,ds.
\end{align}
Recall that $v$ is continuous (Theorem~\ref{T:Reg}).
Thus we can apply Lemma~\ref{L:EquivUpperSol} to deduce the existence of
an $x\in\mathcal{X}^{L_f}(t_0,x_0)$ such that, for all $t\in [t_0,T]$, \eqref{E1:ContrMain} holds.
Since the definition of $v$ yields $v(T,\cdot)=h$, 
we can conclude that $v$ is a minimax $L_f$-supersolution of
\eqref{E:PPDE2} on $[0,T]\times\Omega$.

(ii) Next, we show that $v$ is a minimax $L_f$-solution of \eqref{E:PPDE2}
on $[0,T]\times\Omega$.

Note that, by Theorem~\ref{T:ExistenceMinimax2}, there exists 
a unique minimax solution $u$ of \eqref{E:PPDE2} on $[0,T]\times\Omega$.
Moreover, for every $L\ge L_f$, 
the restriction of $u$ to $[0,T]\times\Omega^L$ is, according to Theorem~\ref{T:ExistenceMinimax1},
the unique minimax $L_f$-solution of \eqref{E:PPDE2}  on $[0,T]\times\Omega^L$.
Thus it suffices to show that $u=v$ on $\Omega^L$ for each $L\ge L_f$. To this end,
fix $L\ge L_f$.

(ii-a) First, note that  $u\le v$ on $\Omega^L$. This follows from part~(i) of this proof
together with Theorem~\ref{T:Comparison2}.

(ii-b) Now, we show that $v\le u$ on $\Omega^L$. 
To this end, we verify that the function $w:[0,T]\times\Omega^L\times\Omega^L\to\R$
defined by $w(t,x,y):=v(t,x)-u(t,y)$ is a viscosity $L_f$-subsolution of \eqref{E:DPPDE}
on $[0,T]\times\Omega^L\times\Omega^L$. We proceed similarly as in the proof
of the doubling theorem (Theorem~\ref{T:MinimaxDoubling}). 
Let $(t_0,x_0,y_0)\in [0,T]\times\Omega^L\times\Omega^L$ and
$\varphi\in\underline{\mathcal{A}}^{L_f}w(t_0,x_0,y_0)$.
Then there exists a $T_0\in (t_0,T]$ such that,
for every $(t,x,y)\in [t_0,T]\times\mathcal{X}^{L_f}(t_0,x_0)\times\mathcal{X}^{L_f}(t_0,x_0)$,
\begin{align}\label{E1:uvContr}
\varphi(t,x,y)-\varphi(t_0,x_0,y_0)\ge w(t,x,y)-w(t_0,x_0,y_0).
\end{align}
Put $z:=-\partial_y\varphi(t_0,x_0,y_0)$. Since $u$ is a minimax $L_f$-solution
of \eqref{E:PPDE2} on $[0,T]\times\Omega^L$, there exists a $y\in\mathcal{X}^{L_f}(t_0,y_0)$
such that, for every $t\in [t_0,T]$,
\begin{align}\label{E2:uvContr}
u(t,y)-u(t_0,y_0)=\int_{t_0}^t \left[
(f^y(s),z)-F(s,y,z)
\right]\,ds.
\end{align}
Next, let $\bfp\in P$  such that
\begin{align}\label{E:ContrP}
\ell(t_0,x_0,\bfp)+(f(t_0,x_0,\bfp),\partial_x\varphi(t_0,x_0,y_0)=F(t_0,x_0,\partial_x\varphi(t_0,x_0,y_0)),
\end{align}
which is possible because $P$ is compact and $F$ is continuous.
Also put $x:=x^{t_0,x_0,a}$, where $a(t):=\bfp$, $t\in [t_0,T]$.
Then, by Theorem~\ref{T:DPP},
\begin{align}\label{E3:uvContr}
v(t,x)-v(t_0,x_0)\ge \int_{t_0}^t \left[-\ell(s,x,\bfp)\right]\,ds.
\end{align}
Applying the functional chain-rule to the left-hand side of \eqref{E1:uvContr} yields
together with \eqref{E2:uvContr} and \eqref{E3:uvContr}
\begin{align*}
&\int_{t_0}^t \Biggl[
\partial_t\varphi(s,x,y)-\langle A(s,\wx(s)),\partial_x\varphi(s,x,y)\rangle
-\langle A(s,\wy(s)),\partial_y\varphi(s,x,y)\rangle\\ &\qquad
+(f(s,x,\bfp),\partial_x \varphi(s,x,y))+(f^y(s),\partial_y\varphi(s,x,y))
\Biggr]\,ds\\
&\ge \int_{t_0}^t  \left[
-\ell(s,x,\bfp)-(f^y(s),z)+F(s,y,z)
\right]\,ds,
\end{align*}
which in turn implies
\begin{align*}
&\partial_t\varphi(t_0,x_0,y_0)\\ &\qquad
+\varlimsup_{\delta\downarrow 0} \frac{1}{\delta} \Biggl[\int_{t_0}^{t_0+\delta}
-\langle A(t,\wx(t)),\partial_x  \varphi(t,x,y)\rangle
-\langle A(t,\wy(t)),\partial_y  \varphi(t,x,y)\rangle\,dt \Biggr]\\
&\qquad
+\left[\ell(t_0,x_0,\bfp)+(f(t_0,x_0,\bfp),\partial_x\varphi(t_0,x_0,y_0) \right]
-F(t_0,x_0,-\partial_y\varphi(t_0,x_0,y_0))\ge 0.
\end{align*}
Noting \eqref{E:ContrP}, we get \eqref{E:DViscSub}, i.e.,
$w$ is $L_f$-subsolution of \eqref{E:DPPDE}
on $[0,T]\times\Omega^L\times\Omega^L$. 
Thus, by Theorem~\ref{T:Comparison1}, $v\le u$ on $\Omega^L$.
This concludes the proof.
\end{proof}

\section{Applications to differential games}\label{S:DiffGames}
It will be demonstrated that the Krasovski\u\i-Subbotin approach for differential games
(see \cite{KrasovksiiSubbotin}) can also be applied to  infinite-dimensional 
settings. We are able to adapt the methods by Lukoyanov~\cite{Lukoyanov03b},
where 
the finite-dimensional PPDE case is treated. Note that in this section,
we require stronger conditions for our data in contrast to the previous section.

Consider an evolution equation controlled by two players, which we call
the \emph{controller} $a$ and
the \emph{disturbance} $b$, of the form
\begin{equation}\label{E:GameEq}
\begin{split}
\mathbf{x}^\prime(t)+A(t,\mathbf{x}(t))&=f(t,x,a(t),b(t))\quad\text{a.e.~on $(t_0,T)$,}\\
x&=x_0\quad\text{on $[0,t_0]$,}
\end{split}
\end{equation}
where $t_0\in [0,T]$, $x_0\in\Omega$, 
%$f\in C([0,T]\times\Omega\times P\times Q,H)$, 
$a\in\mathcal{A}^{t_0}$,  and $b\in\mathcal{B}^{t_0}$ with
\begin{align*}
\mathcal{A}^{t_0}:=\{a:[t_0,T]\to P\text{ measurable}\},\,
\mathcal{B}^{t_0}:=\{b:[t_0,T]\to Q\text{ measurable}\}
\end{align*}
for some compact topological spaces $P$ and $Q$. 
A solution of \eqref{E:GameEq} is denoted by $x^{t_0,x_0,a,b}$
and is to be understood as an element of $\mathcal{X}^L(t_0,x_0)$ for some $L\ge 0$.

Goal of the controller is to minimize via $a\in\mathcal{A}^{t_0}$ a cost functional of the form
\begin{align*}
%\sup_{b\in\mathcal{B}^{t_0}} 
\int_{t_0}^T \ell(t,x^{t_0,x_0,a,b},a(t),b(t))\,dt +h(x^{t_0,x_0,a,b})
\end{align*}
%over all $a\in\mathcal{A}^{t_0}$
%, where $\ell\in C([0,T]\times\Omega\times P\times Q)$
%and $h\in C(\Omega)$.
while assuming this functional is to be maximized by the disturbance via $b\in\mathcal{B}^{t_0}$,
thus representing uncertainty.
The precise description of the admissible actions of the controller
and the corresponding notions for the disturbance 
will be given shortly.

We proceed by stating the additional hypotheses for the data of our game.
To this end, fix a (global) constant $L_f>1$.

\textbf{H}$(f)_2$:  The function $f:[0,T]\times C([0,T],H)\times P\times Q\to H$ satisfies the following:

(i) $f$ is continuous.

(ii) For every
$(t,x,\bfp,\bfq)\in [0,T]\times C([0,T],H)\times P\times Q$, 
\begin{align*}
\abs{f(t,x,\bfp,\bfq)}\le L_f(1+\sup_{s\le t}\abs{x(s)}).
\end{align*}

(iii) For a.e.~$t\in (0,T)$
and for every $x$, $y\in\Omega$, every $\bfp\in P$, and every $\bfq\in Q$,
\begin{align*}
\abs{f(t,x,\bfp,\bfq)-f(t,y,\bfp,\bfq)}\le L_f 
\sqrt{
\abs{x(t)-y(t)}^2+\int_0^t \abs{x(s)-y(s)}^2\,ds
}.
\end{align*}
%\newpage
\textbf{H}$(\ell)_2$: The function $\ell: [0,T]\times C([0,T],H)\times P\times Q\to\R$ satisfies the following:

(i) $\ell$ is continuous.

(ii) For every $(t,x,\bfp,\bfq)\in [0,T]\times C([0,T],H)\times P\times Q$,
\begin{align*}
\abs{\ell(t,x,\bfp,\bfq)}\le L_f(1+\sup_{s\le t}\abs{x(s)}).
\end{align*}

(iii) For a.e.~$t\in (0,T)$, and for every $x$, $y\in\Omega$,
every $\bfp\in P$, and every $\bfq\in Q$,
\begin{align*}
\abs{\ell(t,x,\bfp,\bfq)-\ell(t,y,\bfp,\bfq)}\le L_f 
\sqrt{
\abs{x(t)-y(t)}^2+\int_0^t \abs{x(s)-y(s)}^2\,ds
}.
\end{align*}

By Proposition~\ref{P:EU_ODE}, we have immediately
the following result.

\begin{proposition}
Suppose that \emph{\textbf{H}$(f)_2$} holds.
Let $(t_0,x_0)\in [0,T)\times\Omega$, $a\in\mathcal{A}^{t_0}$, and
$b\in\mathcal{B}^{t_0}$.
Then the initial-value problem \eqref{E:GameEq} has a unique
solution in $\mathcal{X}^{L_f}(t_0,x_0)$.
\end{proposition}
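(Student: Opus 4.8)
The statement to prove is: under \textbf{H}$(f)_2$, given $(t_0,x_0)\in[0,T)\times\Omega$, $a\in\mathcal{A}^{t_0}$, $b\in\mathcal{B}^{t_0}$, the initial-value problem \eqref{E:GameEq} has a unique solution in $\mathcal{X}^{L_f}(t_0,x_0)$. The whole point is that this should follow immediately from Proposition~\ref{P:EU_ODE}; the only real work is to repackage the two-player right-hand side $f(t,x,a(t),b(t))$ as an admissible right-hand side of the form $\tilde f(t,x)$ satisfying the hypotheses of that proposition. So first I would \emph{fix} the controls: since $a\in\mathcal{A}^{t_0}$ and $b\in\mathcal{B}^{t_0}$ are fixed measurable maps into the compact spaces $P$ and $Q$, I define $\tilde f:[t_0,T]\times C([0,T],H)\to H$ by $\tilde f(t,x):=f(t,x,a(t),b(t))$. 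This $\tilde f$ is non-anticipating because $f(t,\cdot,\bfp,\bfq)$ depends on $x$ only through $\{x(s)\}_{s\le t}$ (this is implicit in \textbf{H}$(f)_2$(ii)--(iii), where all the $x$-dependence is controlled by $\sup_{s\le t}$ or $\int_0^t$), and it is measurable in $t$ as a composition of the jointly continuous $f$ with the measurable map $t\mapsto(t,x,a(t),b(t))$.

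Next I would verify the three conditions of Proposition~\ref{P:EU_ODE} in turn. For part~(i): continuity of $x\mapsto\tilde f(t,x)$ for a.e.\ $t$ is inherited from continuity of $f(t,\cdot,a(t),b(t))$, which holds because $f$ is continuous (\textbf{H}$(f)_2$(i)); and the growth bound $|\tilde f(t,x)|\le L_f(1+\sup_{s\le t}|x(s)|)$ is exactly \textbf{H}$(f)_2$(ii) with $\bfp=a(t)$, $\bfq=b(t)$. Hence Proposition~\ref{P:EU_ODE}(i) gives a solution in $\mathcal{X}^{L_f}(t_0,x_0)$. For uniqueness I would invoke part~(ii): I need a Lipschitz estimate $|\tilde f(t,x)-\tilde f(t,y)|\le l\sup_{s\le t}|x(s)-y(s)|$ for $x,y\in\mathcal{X}^{L_f}(t_0,x_0)$. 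From \textbf{H}$(f)_2$(iii),
\begin{align*}
|\tilde f(t,x)-\tilde f(t,y)|\le L_f\sqrt{|x(t)-y(t)|^2+\int_0^t|x(s)-y(s)|^2\,ds}.
\end{align*}
Here I would point to Remark~\ref{R:equivPseuoMetrics}: on $\mathcal{X}^{L_f}(t_0,x_0)$ the pseudo-metric $(x,y)\mapsto\bigl(\int_0^t|x(s)-y(s)|^2\,ds\bigr)^{1/2}$ and $(x,y)\mapsto\sup_{s\le t}|x(s)-y(s)|$ are topologically equivalent, and the quantitative bound behind it (the estimate \eqref{E:CompactEndProof} referenced there) yields a constant $c_t$ with $\int_0^t|x(s)-y(s)|^2\,ds\le c_t\sup_{s\le t}|x(s)-y(s)|$ (or one can simply bound $\int_0^t|x(s)-y(s)|^2\,ds\le T\sup_{s\le t}|x(s)-y(s)|^2$ crudely, which already suffices). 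Either way, $|\tilde f(t,x)-\tilde f(t,y)|\le l\sup_{s\le t}|x(s)-y(s)|$ for a suitable $l$ depending on $L_f$ and $T$, so Proposition~\ref{P:EU_ODE}(ii) applies and gives uniqueness in $\mathcal{X}^{L_f}(t_0,x_0)$.

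\textbf{Main obstacle.} There is no substantial obstacle — this is a bookkeeping corollary. The only mildly delicate point is the Lipschitz estimate: \textbf{H}$(f)_2$(iii) is stated with the ``square-root of $|x(t)-y(t)|^2+\int_0^t|x(s)-y(s)|^2\,ds$'' modulus rather than directly with $\sup_{s\le t}|x(s)-y(s)|$, so one must observe that $|x(t)-y(t)|\le\sup_{s\le t}|x(s)-y(s)|$ trivially and that the integral term is dominated (up to a constant depending on $T$) by $\sup_{s\le t}|x(s)-y(s)|^2$, hence the whole right-hand side is $\le(\text{const})\sup_{s\le t}|x(s)-y(s)|$. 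This matches the hypothesis of Proposition~\ref{P:EU_ODE}(ii) with $l=(\text{const})$, so uniqueness follows. Measurability of $t\mapsto\tilde f(t,x)$ also deserves a one-line remark but is routine. I would therefore write the proof in two short sentences: ``Set $\tilde f(t,x):=f(t,x,a(t),b(t))$; then \textbf{H}$(f)_2$ ensures $\tilde f$ satisfies the hypotheses of Proposition~\ref{P:EU_ODE}(ii) (using Remark~\ref{R:equivPseuoMetrics} for the Lipschitz condition), and the claim follows.''
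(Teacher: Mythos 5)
Your proposal is correct and matches the paper's own (one-line) argument: the paper derives this proposition ``immediately from Proposition~\ref{P:EU_ODE}'' by freezing the measurable controls $a,b$ and treating $\tilde f(t,x):=f(t,x,a(t),b(t))$ as the right-hand side, exactly as you do, with \textbf{H}$(f)_2$(ii) giving the growth bound and \textbf{H}$(f)_2$(iii) the Lipschitz bound via $\abs{x(t)-y(t)}^2+\int_0^t\abs{x(s)-y(s)}^2\,ds\le (1+T)\sup_{s\le t}\abs{x(s)-y(s)}^2$. Only a minor remark: the crude bound you state is the one to use; the appeal to Remark~\ref{R:equivPseuoMetrics} is unnecessary (and the quantitative estimate behind it, \eqref{E:CompactEndProof}, controls the sup by the $L^2$ distance, i.e.\ the opposite direction of what you wrote), but since you supply the correct elementary bound this does not affect the proof.
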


Next, we present the precise setup for our differential game.
To this end, fix $t_0\in [0,T]$.
The spaces
of \emph{feedback strategies} for the controller and the disturbance are defined as follows:
\begin{align*}
\mathbb{A}^{t_0}&:=\{\mathbf{a}:[t_0,T]\times\Omega\to P\text{ non-anticipating}\},\\
\mathbb{B}^{t_0}&:=\{\mathbf{b}:[t_0,T]\times\Omega\to Q\text{ non-anticipating}\}.
\end{align*}
\begin{remark}
Note that elements of $\mathbb{A}^{t_0}$ and $\mathbb{B}^{t_0}$ 
%are called
%\emph{feedback strategies} and
 do not need to be measurable functions.
 \end{remark}

Let us first consider the controller's point of view. For every $\delta>0$, every
partition $\pi: t_0<t_1<\cdots<t_n=T$ with mesh size $\abs{\pi}\le\delta$,  
every $x_0\in\Omega$,
every feedback strategy $\mathbf{a}\in\mathbb{A}^{t_0}$, and every control $b\in\mathcal{B}^{t_0}$, define a 
\emph{step-by-step feedback control} $a=a(\cdot;\pi,x_0,\mathbf{a},b)\in\mathcal{A}^{t_0}$ 
recursively by\begin{align*}
a(t):=\mathbf{a}(t_i,x^{t_0,x_0,a,b}),\quad t\in [t_i,t_{i+1}),\quad i=0,\ldots, n-1.
\end{align*}
\begin{remark}
Note that $a(t)$, $t\in [t_i,t_{i+1})$, depends only on $b$ via 
the state trajectory $\{x^{t_0,x_0,a,b}_s\}_{s\in [0,t_i]}$.
\end{remark}
\noindent 
The  \emph{guaranteed result of the controller} 
given a strategy $\mathbf{a}\in\mathbb{A}^{t_0}$ and a state $x_0\in\Omega$
is defined by
\begin{align*}
J_a(t_0,x_0;\mathbf{a}):=\inf_{\delta>0} \sup_{\abs{\pi}\le\delta,\,b\in\mathcal{B}^{t_0}}
&\Biggl[
\int_{t_0}^T \ell(t,x^{t_0,x_0,a(\cdot;\pi,x_0,\mathbf{a},b),b},a(t;\pi,x_0,\mathbf{a},b),b(t))
\\ &\qquad+
 h\left(x^{t_0,x_0,a(\cdot;\pi,x_0,\mathbf{a},b),b}\right)
 \Biggr].
 \end{align*}
The \emph{optimal guaranteed result of the controller} $v_a:[0,T]\times\Omega\to\R$
is defined by
\begin{align*}
v_a(t_0,x_0):=\inf_{\mathbf{a}\in\mathbb{A}^{t_0}} J_a(t_0,x_0;\mathbf{a}).
\end{align*}

Let us next consider the disturbance's point of view. For every $\delta>0$, every
partition $\pi: t_0<t_1<\cdots<t_n=T$ with mesh size $\abs{\pi}\le\delta$, 
every $x_0\in\Omega$,
every feedback strategy $\mathbf{b}\in\mathbb{B}^{t_0}$, and every control $a\in\mathcal{A}^{t_0}$, define a 
\emph{step-by-step feedback control} $b=b(\cdot;\pi,x_0,a,\mathbf{b})\in\mathcal{B}^{t_0}$ 
recursively by
\begin{align*}
b(t):=\mathbf{b}(t_i,x^{t_0,x_0,a,b}),\quad t\in [t_i,t_{i+1}),\quad i=0,\ldots, n-1.
\end{align*}
The  \emph{guaranteed result of the disturbance} given  a strategy $\mathbf{b}\in\mathbb{B}^{t_0}$
and a state $x_0\in\Omega$
is defined by
\begin{align*}
J_b(t_0,x_0;\mathbf{b}):=\sup_{\delta>0} \inf_{\abs{\pi}\le\delta,\,a\in\mathcal{A}^{t_0}}
&\Biggl[
\int_{t_0}^T \ell(t,x^{t_0,x_0,a,b(\cdot;\pi,x_0,a,\mathbf{b})},a(t),b(t;\pi,x_0,a,\mathbf{b}))\\ &\qquad+
 h\left(x^{t_0,x_0,a,b(\cdot;\pi,x_0,a,\mathbf{b})}\right)
 \Biggr].
 \end{align*}
The \emph{optimal guaranteed result of the disturbance} $v_b:[0,T]\times\Omega\to\R$ is defined by
\begin{align*}
v_b(t_0,x_0):=\sup_{\mathbf{b}\in\mathbb{B}^{t_0}} J_b(t_0,x_0;\mathbf{b}).
\end{align*}

\begin{definition}
If $v_a=v_b$, then we call $v:=v_a=v_b$ the \emph{value} of our game.
\end{definition}

Let us also mention a perhaps more intuitive definition for the optimal guaranteed results.
Let $(t_0,x_0)\in [0,T]\times\Omega$.
Denoting by $\Pi^{t_0}$ the set of all (finite) partitions of $[t_0,T]$, put
\begin{align*}
v_a^\ast(t_0,x_0)&:=\inf_{(\mathbf{a},\pi)\in\mathbb{A}^{t_0}\times\Pi^{t_0}} 
 \sup_{b\in\mathcal{B}^{t_0}}
 \Biggl[
\int_{t_0}^T \ell(t,x^{t_0,x_0,a(\cdot;\pi,x_0,\mathbf{a},b),b},a(t;\pi,x_0,\mathbf{a},b),b(t))
\\ &\qquad\qquad\qquad\qquad\qquad\qquad+
 h\left(x^{t_0,x_0,a(\cdot;\pi,x_0,\mathbf{a},b),b}\right)\Biggr],\\
 v_b^\ast(t_0,x_0)&:=\sup_{(\mathbf{b},\pi)\in\mathbb{B}^{t_0}\times\Pi^{t_0}}
  \inf_{a\in\mathcal{A}^{t_0}}
  \Biggl[
\int_{t_0}^T \ell(t,x^{t_0,x_0,a,b(\cdot;\pi,x_0,a,\mathbf{b})},a(t),b(t;\pi,x_0,a,\mathbf{b})
)\\ &\qquad\qquad\qquad\qquad\qquad\qquad+
 h\left(x^{t_0,x_0,a,b(\cdot;\pi,x_0,a,\mathbf{b})}\right)\Biggr].
\end{align*}
\begin{proposition} \label{P:gameValue}
We have
$v_b\le v^\ast_b\le v^\ast_a\le v_a$, i.e., if our game has value, both definitions
for the optimal guaranteed results coincide.
\end{proposition}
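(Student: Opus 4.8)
The plan is to treat the three inequalities separately: the two outer ones, $v_a^\ast\le v_a$ and $v_b\le v_b^\ast$, are pure $\inf$/$\sup$ bookkeeping, while the middle one, $v_b^\ast\le v_a^\ast$, is the substantive step and is handled by the Krasovski\u\i--Subbotin device of pitting the two feedback strategies against one another along a common refinement of their partitions.

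For the outer inequalities, fix $\mathbf{a}\in\mathbb{A}^{t_0}$. Since for every $\delta>0$ there is at least one partition $\pi\in\Pi^{t_0}$ with $\abs{\pi}\le\delta$, the inner expression $\sup_{\abs{\pi}\le\delta,\,b\in\mathcal{B}^{t_0}}[\cdots]$ occurring in $J_a(t_0,x_0;\mathbf{a})$ is $\ge\inf_{\pi\in\Pi^{t_0}}\sup_{b\in\mathcal{B}^{t_0}}[\cdots]\ge v_a^\ast(t_0,x_0)$; taking the infimum over $\delta>0$ and then over $\mathbf{a}$ gives $v_a\ge v_a^\ast$. The bound $v_b\le v_b^\ast$ follows by the same argument with $\inf$ and $\sup$ (and the two players) interchanged. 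I expect this to be entirely routine.

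For $v_b^\ast\le v_a^\ast$, fix $(\mathbf{a},\pi_a)\in\mathbb{A}^{t_0}\times\Pi^{t_0}$ and $(\mathbf{b},\pi_b)\in\mathbb{B}^{t_0}\times\Pi^{t_0}$ and let $\pi:\ t_0=s_0<s_1<\cdots<s_m=T$ be a common refinement of $\pi_a$ and $\pi_b$; for $s\in[t_0,T]$ write $\tau(s)$, $\sigma(s)$ for the largest nodes of $\pi_a$, $\pi_b$ not exceeding $s$. I would build a ``joint motion'' $x^\ast\in\mathcal{X}^{L_f}(t_0,x_0)$ recursively: given $x^\ast$ on $[0,s_k]$, set $\bfp_k:=\mathbf{a}(\tau(s_k),x^\ast)$ and $\bfq_k:=\mathbf{b}(\sigma(s_k),x^\ast)$ — well-defined since $\tau(s_k)\le s_k$, $\sigma(s_k)\le s_k$ and the strategies are non-anticipating — and extend $x^\ast$ to $[0,s_{k+1}]$ by solving $(\mathbf{x}^\ast)^\prime(t)+A(t,\mathbf{x}^\ast(t))=f(t,x^\ast,\bfp_k,\bfq_k)$ on $(s_k,s_{k+1})$, which is legitimate by Proposition~\ref{P:EU_ODE}, since $f(\cdot,\cdot,\bfp_k,\bfq_k)$ inherits from \textbf{H}$(f)_2$ the non-anticipation, continuity, growth and Lipschitz properties required there. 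Then the piecewise-constant (hence measurable) controls $a^\ast:=\sum_k\bfp_k\bfone_{[s_k,s_{k+1})}\in\mathcal{A}^{t_0}$ and $b^\ast:=\sum_k\bfq_k\bfone_{[s_k,s_{k+1})}\in\mathcal{B}^{t_0}$ satisfy $x^\ast=x^{t_0,x_0,a^\ast,b^\ast}$.

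The key point — and the step I expect to cause the most trouble — is to verify the consistency relations $a^\ast=a(\cdot;\pi_a,x_0,\mathbf{a},b^\ast)$ and $b^\ast=b(\cdot;\pi_b,x_0,a^\ast,\mathbf{b})$. I would prove the first by induction over the nodes $\theta_0<\cdots<\theta_l$ of $\pi_a$: on the refinement-subintervals filling $[\theta_j,\theta_{j+1})$ one has $\tau(s_k)=\theta_j$, so $a^\ast\equiv\mathbf{a}(\theta_j,x^\ast)$ there, whereas the step-by-step recursion gives $a(\cdot;\pi_a,x_0,\mathbf{a},b^\ast)\equiv\mathbf{a}(\theta_j,x^{t_0,x_0,a(\cdot;\pi_a,x_0,\mathbf{a},b^\ast),b^\ast})$ there; the induction hypothesis together with uniqueness in Proposition~\ref{P:EU_ODE} (for the fixed right-hand side $t\mapsto f(t,\cdot,a^\ast(t),b^\ast(t))$) forces the two trajectories to agree on $[0,\theta_j]$, so by non-anticipation both controls equal $\mathbf{a}(\theta_j,x^\ast)$ on $[\theta_j,\theta_{j+1})$; the second relation is symmetric. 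Granting this, write $\mathcal{J}(a,b):=\int_{t_0}^T\ell(t,x^{t_0,x_0,a,b},a(t),b(t))\,dt+h(x^{t_0,x_0,a,b})$ and $\mathcal{J}^\ast:=\mathcal{J}(a^\ast,b^\ast)$; since playing $(\mathbf{a},\pi_a)$ against $b^\ast$, and playing $(\mathbf{b},\pi_b)$ against $a^\ast$, both reproduce the triple $(x^\ast,a^\ast,b^\ast)$, we obtain
\[
  \inf_{a\in\mathcal{A}^{t_0}}\mathcal{J}(a,b(\cdot;\pi_b,x_0,a,\mathbf{b}))\ \le\ \mathcal{J}^\ast\ \le\ \sup_{b\in\mathcal{B}^{t_0}}\mathcal{J}(a(\cdot;\pi_a,x_0,\mathbf{a},b),b).
\]
Taking the supremum over $(\mathbf{b},\pi_b)$ on the left and the infimum over $(\mathbf{a},\pi_a)$ on the right gives $v_b^\ast\le v_a^\ast$, and chaining with the outer inequalities yields $v_b\le v_b^\ast\le v_a^\ast\le v_a$.
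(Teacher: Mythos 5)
Your proposal is correct and follows essentially the same route as the paper: the joint motion obtained by pitting $\mathbf{a}$ against $\mathbf{b}$ step by step is exactly the paper's claim that the two sets of step-by-step trajectories have nonempty intersection, and the outer inequalities $v_b\le v_b^\ast$, $v_a^\ast\le v_a$ are the same definitional bookkeeping. The only organizational difference is that you write out the middle inequality $v_b^\ast\le v_a^\ast$ in detail (using a common refinement of the two partitions and the consistency/uniqueness check via Proposition~\ref{P:EU_ODE}), whereas the paper proves $v_b\le v_a$ explicitly with a single shared partition and dismisses $v_b^\ast\le v_a^\ast$ as ``similar.''
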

\begin{proof}
We show only $v_b\le v_a$. The inequality $v_b^\ast\le v_a^\ast$ can be shown similarly
and the remaining inequalities $v_b\le v_b^\ast$ and $v_a^\ast\le v_a$ follow immediately
from the definitions.
Fix $(t_0,x_0)\in [0,T]\times\Omega$. Let $\mathbf{a}\in\mathbb{A}^{t_0}$,
$\mathbf{b}\in\mathbb{B}^{t_0}$. Since
for every partition $\pi$ of $[t_0,T]$,
\begin{align*}
\{x^{t_0,x_0,a,b(\cdot;\pi,x_0,a,\mathbf{b})}: a\in\mathcal{A}^{t_0}\}\cap
\{x^{t_0,x_0,a(\cdot;\pi,x_0,\mathbf{a},b),b}: b\in\mathcal{B}^{t_0}\}
\neq\emptyset,
\end{align*}
we have
\begin{align*}
&\inf_{a\in\mathcal{A}^{t_0}} 
\Biggl[
\int_{t_0}^T \ell(t,x^{t_0,x_0,a,b(\cdot;\pi,x_0,a,\mathbf{b})},a(t),b(t;\pi,x_0,a,\mathbf{b}))+
 h\left(x^{t_0,x_0,a,b(\cdot;\pi,x_0,a,\mathbf{b})}\right)\Biggr]\\
 &\le \sup_{b\in\mathcal{B}^{t_0}} 
 \Biggl[
\int_{t_0}^T \ell(t,x^{t_0,x_0,a(\cdot;\pi,x_0,\mathbf{a},b),b},a(t;\pi,x_0,\mathbf{a},b),b(t))
+ h\left(x^{t_0,x_0,a(\cdot;\pi,x_0,\mathbf{a},b),b}\right)\Biggr].
%h\left(x^{t_0,x_0,a,b(\cdot;\pi,x_0,a,\mathbf{b})}\right)\le
%\sup_{b\in\mathcal{B}^{t_0}} h\left(x^{t_0,x_0,a(\cdot;\pi,x_0,\mathbf{a},b),b}\right).
\end{align*}
Thus $J_b(t_0,x_0;\mathbf{b})\le J_a(t_0,x_0;\mathbf{a})$, and since
$\mathbf{a}$ and $\mathbf{b}$ were arbitrary feedback strategies, 
we obtain $v_b(t_0,x_0)\le v_a(t_0,x_0)$.
\end{proof}
%\newpage
The  Isaacs equation for our differential game is given by \eqref{E:PPDE2} with
\begin{align}\label{E:IsaacsF}
F(t,x,z):=\min_{\bfp\in P}\max_{\bfq\in Q} \left[\ell(t,x,\bfp,\bfq)+(f(t,x,\bfp,\bfq),z)\right].
\end{align}

\begin{assumption} [Isaacs condition]
For every $(t,x,z)\in [0,T]\times C([0,T],H)\times H$,
\begin{align*}
\min_{\bfp\in P}\max_{\bfq\in Q} \left[\ell(t,x,\bfp,\bfq)+(f(t,x,\bfp,\bfq),z)\right]
=\max_{\bfq\in Q}\min_{\bfp\in P} \left[\ell(t,x,\bfp,\bfq)+(f(t,x,\bfp,\bfq),z)\right].
\end{align*}
\end{assumption}

It can easily be shown that  then \textbf{H}$(\ell)_2$ and
\textbf{H}$(f)_2$ imply \textbf{H}($F$)~(i) and (ii) with $L_0=L_f$.
Moreover, similarly as in \cite[Section~11.1, pp.~116ff.]{Subbotin_book},
one can show that in this case, 
for a.e.~$t\in (0,T)$, and for every $x$, $y\in\Omega$,
and every $z\in H$,
\begin{equation}\label{E:HF4stronger}
\begin{split}
&\abs{F(t,x,z)-F(t,y,z)}%\\
%&\qquad
\le 
L_f(1+\abs{z})
\sqrt{\abs{x(t)-y(t)}^2+\int_0^t \abs{x(s)-y(s)}^2\,ds},
 %\\
%&\qquad\qquad \text{uniformly in $(t,x,z)$,}
\end{split}
\end{equation}
i.e., we also have \textbf{H}($F$)~(iii).

\begin{theorem}
Suppose that \emph{\textbf{H}$(f)_2$}, \emph{\textbf{H}$(\ell)_2$}, and
\emph{\textbf{H}$(h)_1$} hold.
Let $u$ be the minimax solution of \eqref{E:PPDE2} on $[0,T]\times\Omega$
with $F$ given by \eqref{E:IsaacsF}.
Then our game has value and $u=v$.
\end{theorem}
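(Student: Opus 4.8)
The plan is to show that the unique minimax solution $u$ of \eqref{E:PPDE2} with $F$ given by \eqref{E:IsaacsF} --- which is furnished (and is continuous) by Theorem~\ref{T:ExistenceMinimax2}, and which coincides with the minimax $L_f$-solution on each $[0,T]\times\Omega^L$, $L\ge L_f$, by Theorem~\ref{T:ExistenceMinimax1} --- satisfies both $v_a\le u$ and $u\le v_b$ on $[0,T]\times\Omega$. Since Proposition~\ref{P:gameValue} already gives $v_b\le v_a$, these two inequalities force $u\le v_b\le v_a\le u$, hence $v_a=v_b=u$: the game has value and $v=u$.

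To prove $v_a\le u$ I would transplant the Krasovski\u\i--Subbotin extremal-aiming argument of Lukoyanov~\cite{Lukoyanov03b} (cf.\ also \cite{KrasovksiiSubbotin,Subbotin_book}) to the present setting, with the compactness of the trajectory sets $\mathcal{X}^L(t_0,x_0)$ (Proposition~\ref{P:compact}), their equicontinuity, and their continuous dependence on the data (Proposition~\ref{P:CompactII}) taking over the role played by finite-dimensional compactness. Fix $(t_0,x_0)\in[0,T)\times\Omega$, say $x_0\in\Omega^L$ with $L\ge L_f$, and $\eps>0$. Choose a measurable selection $\bfp_\ast=\bfp_\ast(t,x,z)$ of a minimizer in $\min_{\bfp\in P}\max_{\bfq\in Q}[\ell(t,x,\bfp,\bfq)+(f(t,x,\bfp,\bfq),z)]$, so that $\ell(t,x,\bfp_\ast(t,x,z),\bfq)+(f(t,x,\bfp_\ast(t,x,z),\bfq),z)\le F(t,x,z)$ for all $\bfq\in Q$. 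The controller's positional strategy $\mathbf a\in\mathbb{A}^{t_0}$ is built around an auxiliary \emph{guide}: along the realized path $x:=x^{t_0,x_0,a,b}$ one runs (non-anticipatingly, as a limit over refining discretizations, as in \cite[Section~11]{Subbotin_book}) a trajectory $y\in\mathcal{X}^{L_f}(t_0,x_0)$ obtained piecewise from the minimax $L_f$-supersolution property of $u$ (Proposition~\ref{P:Dini}(i), Lemma~\ref{L:EquivUpperSol}), so that $t\mapsto u(t,y)+\int_{t_0}^t[F(s,y,z)-(f^y(s),z)]\,ds$ is non-increasing for the relevant aiming direction $z$; at a partition point $t_i$ the controller plays $\mathbf a(t_i,\cdot):=\bfp_\ast(t_i,\cdot,z_i)$ with $z_i:=x(t_i)-y(t_i)$. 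By the a-priori estimates (Lemma~\ref{L:Apriori}) $x$ and $y$ stay in one fixed compact subset of $C([0,T],H)$, so the aiming directions $z_i$ stay bounded.

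The core of the proof is then to show, uniformly in the disturbance $b\in\mathcal{B}^{t_0}$ and in partitions $\pi$ of small enough mesh, that (a) the realized trajectory $x$ stays uniformly close to the guide $y$, and (b) $h(x)+\int_{t_0}^T\ell(s,x,a(s),b(s))\,ds\le u(t_0,x_0)+o(1)$ as $\abs\pi\to 0$; then $J_a(t_0,x_0;\mathbf a)\le u(t_0,x_0)$, hence $v_a\le u$. The proof of (a)--(b) is a careful Gronwall/Lyapunov estimate combining: the pairing of the two evolution equations against $x(t)-y(t)$, where the unbounded term is discarded by monotonicity of $A$ (exactly as in the proof of Proposition~\ref{P:EU_ODE}, using Proposition~\ref{P:parts}); the defining inequality for $\bfp_\ast$; the weak Lipschitz-in-path bounds \textbf{H}$(f)_2$(iii), \textbf{H}$(\ell)_2$(iii) and \eqref{E:HF4stronger}, which dictate the use of the $\bigl(\abs{x(t)-y(t)}^2+\int_0^t\abs{x(s)-y(s)}^2\,ds\bigr)^{1/2}$ distance and hence a delayed Gronwall inequality (here Remark~\ref{R:equivPseuoMetrics} is handy); and the supersolution (``$u$-stability'') inequality for the guide, with the errors coming from freezing the controls on each interval summing to $o(1)$. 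The inequality $u\le v_b$ follows by the symmetric construction for the disturbance: the Isaacs condition lets one write $F(t,x,z)=\max_{\bfq}\min_{\bfp}[\ell+(f,z)]$, and replacing the supersolution property of $u$ by its minimax $L_f$-\emph{sub}solution property makes $t\mapsto u(t,x)+\int_{t_0}^t\ell(s,x,a(s),b(s))\,ds$ almost non-decreasing along the step-by-step motion, giving $J_b(t_0,x_0;\mathbf b)\ge u(t_0,x_0)$ for a suitable $\mathbf b\in\mathbb{B}^{t_0}$ and hence $v_b\ge u$. Together with Proposition~\ref{P:gameValue} this finishes the proof.

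I expect the main obstacle to be exactly this tracking/cost estimate in the infinite-dimensional, path-dependent, monotone-operator framework: one must absorb the unbounded operator $A$ through its monotonicity, live with the merely ``weak'' ($L^2$-type) Lipschitz-in-path moduli of \textbf{H}$(f)_2$(iii), \textbf{H}$(\ell)_2$(iii) and \eqref{E:HF4stronger} --- which forces a delayed Gronwall and some care in the Lyapunov bookkeeping (including an auxiliary small parameter in the extremal shift) --- and make sure that the errors introduced by freezing the controls on each partition interval accumulate to $o(1)$ as $\abs\pi\to0$; uniform boundedness and equicontinuity of the trajectory sets (Lemma~\ref{L:Apriori}, Proposition~\ref{P:compact}) together with the continuous-dependence statement (Proposition~\ref{P:CompactII}) are precisely what let these estimates go through, just as in the finite-dimensional treatment of Lukoyanov. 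A secondary, bookkeeping-type point is turning the partition-wise guide construction into a genuine partition-independent element of $\mathbb{A}^{t_0}$ (resp.\ $\mathbb{B}^{t_0}$), done by taking the guide to be a non-anticipating functional of the realized path obtained as a limit over vanishing-mesh discretizations; passing at the end from step-by-step feedback controls to honest elements of $\mathcal{X}^{L_f}(t_0,x_0)$ is then routine, in the spirit of Lemma~\ref{L:EquivUpperSol}.
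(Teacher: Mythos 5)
Your plan is essentially the paper's own proof: reduce to $v_a\le u\le v_b$ via Proposition~\ref{P:gameValue}, and prove each inequality by the Krasovski\u\i--Subbotin/Lukoyanov extremal-shift-with-guide construction, driving the guide by the minimax $L_f$-solution property of $u$, absorbing the unbounded operator by monotonicity of $A$, exploiting the $L^2$-type path moduli as in \eqref{E:HF4stronger} through a suitably weighted Lyapunov functional, and using compactness and equicontinuity of the trajectory bundles for the uniform error estimates. The paper realizes your ``non-anticipating guide'' exactly in the way you anticipated as bookkeeping: at each $(t,x)$ the guide is a minimizer of $u(t,\tilde x)+\nu^\eps(t,x-\tilde x)$ over $\mathcal{X}^{L_f}(t_0,x_0)$, with $\nu^\eps(t,y)=\alpha^\eps(t)\sqrt{\eps^4+\abs{y(t)}^2+2L_f\int_0^t\abs{y(s)}^2\,ds}$ playing the role of your smoothed, exponentially weighted distance, so the strategy is a genuine partition-independent element of $\mathbb{A}^{t_0}$.
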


\begin{proof} 
First, recall that, by Theorem~\ref{T:ExistenceMinimax2}, the function $u$ is actually well-defined.
Next, note that  $v_b\le v_a$ (see Proposition~\ref{P:gameValue}),
Thus, we only need to show that $v_a\le u\le v_b$. 
Fix $(t_0,x_0)\in [0,T)\times\Omega$.
We prove only $v_a(t_0,x_0)\le u(t_0,x_0)$.
To this end, it suffices to   specify a family $(\mathbf{a}^\eps)_{\eps>0}$ of strategies in $\mathbb{A}^{t_0}$
to show that $J_a(t_0,x_0;\mathbf{a}^\eps)\le u(t_0,x_0)+m_a(\eps)$, $\eps>0$
for some modulus of continuity $m_a$.
In order to construct such a family, we shall use auxiliary functions $\alpha^\eps:[t_0,T]\to\R$,
$\beta^\eps:[t_0,T]\times C([0,T],H)\to\R$, and $\nu^\eps:[t_0,T]\times C([0,T],H)\to\R$, $\eps>0$,
defined by
\begin{align*}
\alpha^\eps(t)&:=\frac{e^{-2L_f(t-t_0)}-\eps}{\eps},\\
\beta^\eps(t,x)&:=\sqrt{\eps^4+\abs{x(t)}^2+2L_f\int_0^t \abs{x(s)}^2\,ds},\\
\nu^\eps(t,x)&:=\alpha^\eps(t)\beta^\eps(t,x).
\end{align*}
Moreover, put 
\begin{align*}
\eps_0:=e^{-2L_f(T-t_0)}.
\end{align*}
Then, for every $\eps\in (0,\eps_0)$, we claim that  $\nu^\eps\in\mathcal{C}_V^{1,1}([t_0,T]\times C([0,T],H))$ with
\begin{align*}
\partial_t\nu^\eps(t,x)&=-2L_0\frac{e^{-2L_f(t-t_0)}}{\eps}\beta^\eps(t,x)
+L_f\frac{\alpha^\eps(t)}{\beta^\eps(t,x)} \abs{x(t)}^2,\\
\partial_x\nu^\eps(t,x)&=\frac{\alpha^\eps(t)}{\beta^\eps(t,x)}\,x(t).
\end{align*}
To show this claim, write $\nu^\eps(t,x)$ in the form
$\psi(t,\xi(t),x(t))$, where
\begin{align*}
\psi(t,r,v)&=\alpha^\eps(t)\sqrt{\eps^4+\abs{v}^2+2L_f r},\quad (t,r,v)\in [t_0,T]\times\R\times H,\\
\xi(t)&=\int_0^t \abs{x(s)}^2\,ds,\quad t\in [0,T].
\end{align*}
Then, one can readily verify that $\psi\in\mathcal{C}^{1,1,1}_V([t_0,T]\times\R\times H)$
and  an application of Proposition~\ref{P:ChainRule} yields our claim.

 %first applying Proposition~\ref{P:ChainRule} to
%the function $\psi:[t_0,T]\times\R\times H\to\R$ defined by
%\begin{align*}
%\psi(t,\xi,x):=\alpha^\eps(t)\sqrt{\eps^4+\abs{x}^2+2L_f\xi},
%\end{align*}
%a generic path $\wx\in W_{pq}(t_0,T)$, and the function $\xi:[t_0,T\to\R$
%defined by $\xi(t):=\int_0^t \abs{x(s)}^2$, where $x\in C([t_0,T],H)$ and $x=\wx$ a.e.,
%and then by noting that $\nu^\eps(t,x)=\psi(t,\xi(t),x)$.

Now we are in the position to specify our strategies. 
For every $\eps\in (0,\eps_0)$, define
$u^\eps_a:[t_0,T]\times\Omega\to\R$ by
\begin{align}\label{E:uepsGame}
u^\eps_a(t,x):=\min_{\tilde{x}\in\mathcal{X}^{L_f}(t_0,x_0)} \left[
u(t,\tilde{x})+\nu^\eps(t,x-\tilde{x})
\right].
\end{align}
For every $(t,x)\in [t_0,T]\times\Omega$, choose a 
path $x^{\eps,t,x}_a\in\mathcal{X}^{L_f}(t_0,x_0)$ for which the minimum in 
\eqref{E:uepsGame} is attained.
Then we can define strategies $\mathbf{a}^\eps\in\mathbb{A}^{t_0}$  as follows.
For every $(t,x)\in [t_0,T]\times\Omega$, choose an $\mathbf{a}^\eps(t,x)\in P$ such that
\begin{align*}
&\max_{\bfq\in Q}\left[\ell(t,x,\mathbf{a}^\eps(t,x),\bfq)+
(f(t,x,\mathbf{a}^\eps(t,x),\bfq),\partial_x \nu^\eps(t,x-x^{\eps,t,x}_a))\right]\\
&\qquad\qquad=
\min_{\bfp\in P}\max_{\bfq\in Q}\left[
\ell(t,x,\bfp,\bfq)+
(f(t,x,\bfp,\bfq),\partial_x \nu^\eps(t,x-x^{\eps,t,x}_a))\right].
\end{align*}

Given $\delta>0$, a partition $\pi$ of $[t_0,T]$ with $\abs{\pi}\le\delta$, and 
a control $b\in\mathcal{B}^{t_0}$,
put $x^{\pi,b}:=x^{t_0,x_0,a(\cdot;\pi,x_0,\mathbf{a}^\eps,b),b}$.

We shall show the following statement:
\begin{quotation}
\textbf{S.} For every $i\in\{0,\ldots,n-1\}$,
\begin{align*}
\int_{t_i}^{t_{i+1}} \ell(t,x^{\pi,b},\mathbf{a}^{\eps}(t_i,x^{\pi,b}),b(t))\,dt+
u_a^\eps(t_{i+1},x^{\pi,b})-u_a^\eps(t_i,x^{\pi,b})\le m(\delta)\cdot (t_{i+1}-t_i)
\end{align*}
for some modulus of continuity $m$.
\end{quotation}

According to \eqref{E:uepsGame}, for every $\tilde{x}\in\mathcal{X}^{L_f}(t_0,x_0)$,
\begin{equation}\label{E:ueps2Game}
\begin{split}
u_a^\eps(t_{i+1},x^{\pi,b})-u_a^\eps(t_i,x^{\pi,b})&\le 
[u(t_{i+1},\tilde{x})+\nu^\eps(t_{i+1},x^{\pi,b}-\tilde{x})]\\ &\qquad -
[u(t_i,{x}_{a,i}^\eps)+\nu^\eps(t_i,x^{\pi,b}-{x}_{a,i}^\eps)],
\end{split}
\end{equation}
where
$x^\eps_{a,i}:=x^{\eps,t_i,x^{\pi,b}}_a\in\mathcal{X}^{L_f}(t_0,x_0)$
is a path for which  the minimum in 
\eqref{E:uepsGame}  with $(t,x)$ replaced by 
$(t_i, x^{\pi,b})$ is attained.

Note that
$u$ is also a minimax $L_f$-solution of \eqref{E:PPDE2} on $[0,T]\times\Omega$.
Thus we can assume that
$\tilde{x}\in\mathcal{X}^{L_f}(t_i,{x}^\eps_{a,i})$ and that
\begin{align*}
u(t_{i+1},\tilde{x})-u(t_i,{x}^\eps_{a,i})&=\int_{t_i}^{t_{i+1}}
[(f^{\tilde{x}}(t),\partial_x\nu^\eps (t_i,x^{\pi,b}-{x}_{a,i}^\eps))\\ &\qquad\qquad-
F(t,\tilde{x},\partial_x\nu^\eps (t_i,x^{\pi,b}-{x}_{a,i}^\eps))]\,dt.
\end{align*}
Then \eqref{E:ueps2Game} becomes
\begin{align*}
&u_a^\eps(t_{i+1},x^{\pi,b})-u_a^\eps(t_i,x^{\pi,b})\\&\qquad\le
[u(t_{i+1},\tilde{x})-u(t_i,x^\eps_{a,i})]+
[\nu^\eps(t_{i+1},x^{\pi,b}-\tilde{x})-\nu^\eps(t_i,x^{\pi,b}-x^\eps_{a,i})]\\
&\qquad\le \int_{t_i}^{t_{i+1}} [(f^{\tilde{x}}(t),\partial_x\nu^\eps(t_i,x^{\pi,b}-x^{\eps}_{a,i}))
-F(t,\tilde{x},\partial_x\nu^\eps(t_i,x^{\pi,b}-x^{\eps}_{a,i}))]\,dt\\
&\qquad\qquad +
\int_{t_i}^{t_{i+1}} [\partial_t\nu^\eps(t,x^{\pi,b}-\tilde{x})+
\langle A(t,\tilde{\wx}(t))-A(t,\wx^{\pi,b}(t)),
\partial_x \nu^\eps(t,x^{\pi,b}-\tilde{x})
\rangle\\
&\qquad\qquad\qquad +
(f(t,x^{\pi,b},\mathbf{a}^\eps(t_i,x^{\pi,b}),b(t))
-f^{\tilde{x}}(t)),\partial_x\nu^\eps(t,x^{\pi,b}-\tilde{x}))]\,dt,
\end{align*}
i.e.,
\begin{align*}
&u_a^\eps(t_{i+1},x^{\pi,b})-u_a^\eps(t_i,x^{\pi,b})\le
\int_{t_i}^{t_{i+1}} [\partial_t\nu^\eps(t,x^{\pi,b}-\tilde{x})\\
&\qquad +(f(t,x^{\pi,b},\mathbf{a}^\eps(t_i,x^{\pi,b}),b(t)),\partial_x\nu^\eps(t,x^{\pi,b}-\tilde{x}))\\
&\qquad - F(t,\tilde{x},\partial_x\nu^\eps(t_i,x^{\pi,b}-{x}^\eps_{a,i}))\\
&\qquad + (f^{\tilde{x}}(s),\partial_x\nu^\eps(t_i,x^{\pi,b}-{x}^\eps_{a,i})-
\partial_x\nu^\eps(t,x^{\pi,b}-\tilde{x}))\\
&\qquad + \langle A(t,\tilde{\wx}(t))-A(t,\wx^{\pi,b}(t)),\partial_x\nu^\eps(t,x^{\pi,b}-\tilde{x})\rangle]\,dt.
\end{align*}
\allowdisplaybreaks
Since $f$ and $\ell$ as well as suitable compositions of $\nu^\eps$,
$\partial_t \nu^\eps$, and $\partial_x \nu^\eps$ are uniformly continuous
on the compact set $\Omega^{L_f}$ (and also on finite Cartesian products of said set),
since $\mathcal{X}^{L_f}(t_0,x_0)$ is equicontinuous
(see \cite[Theorem~7.16, p.~233]{Kelley}),
since $A$ is monotone, and since
\begin{align*}
&\ell(t_i,x^{\pi,b},\mathbf{a}^\eps(t_i,x^{\pi,b}),b(t))
+(f(t_i,x^{\pi,b},\mathbf{a}^\eps(t_i,x^{\pi,b}),b(t)),\partial_x\nu^\eps(t_i,x^{\pi,b}-\tilde{x}))\\
&\quad \le
\max_{q\in Q} 
\left[\ell(
t_i,x^{\pi,b},\mathbf{a}^\eps(t_i,x^{\pi,b}),q
)+
(f(t_i,x^{\pi,b},\mathbf{a}^\eps(t_i,x^{\pi,b}),q),
\partial_x\nu^\eps(t_i,x^{\pi,b}-\tilde{x}))
\right]\\
&\quad =
\min_{p\in P}
\max_{q\in Q} \left[
\ell(t_i,x^{\pi,b},p,q)+
(f(t_i,x^{\pi,b},p,q),
\partial_x\nu^\eps(t_i,x^{\pi,b}-\tilde{x})
\right]\\
&\quad=F(t_i,x^{\pi,b}, \partial_x\nu^\eps(t_i,x^{\pi,b}-\tilde{x})),
\end{align*}
there exists a modulus $m$ such that
\begin{align*}
&\int_{t_i}^{t_{i+1}} \ell(t,x^{\pi,b},\mathbf{a}^{\eps}(t_i,x^{\pi,b}),b(t)\,dt+
u_a^\eps(t_{i+1},x^{\pi,b})-u_a^\eps(t_i,x^{\pi,b}) \\&\le
(t_{i+1}-t_i)\cdot\{m(\delta)+
[\partial_t\nu^\eps(t_i,x^{\pi,b}-x^\eps_{a,i})+
F(t_i,x^{\pi,b}, \partial_x\nu^\eps(t_i,x^{\pi,b}-{x^\eps_{a,i}}))\\ &\qquad\qquad-
F(t_i,x^\eps_{a,i},\partial_x\nu^\eps(t_i,x^{\pi,b}-{x}^\eps_{a,i}))]\}\\
&\le  (t_{i+1}-t_i)\cdot\Biggl\{m(\delta)-2L_f\frac{
e^{-2L_f(t-t_0)}
}{
\eps
}
\beta^\eps(t,y)+L_f\frac{\alpha^\eps(t)}{\beta^\eps(t,y)}\abs{y(t)}^2\\
&\qquad +L_f\left(
1+\frac{\alpha^\eps(t)}{\beta^\eps(t,y)}\abs{y(t)}
\right)\sqrt{\abs{y(t)}^2+\int_0^t \abs{ y(s)}^2\,ds} \Biggr\}\qquad\qquad\qquad
\text{by \eqref{E:HF4stronger}}\\
&\le  (t_{i+1}-t_i)\cdot\Biggl\{m(\delta) -2L_f\frac{
e^{-2L_f(t-t_0)}
}{
\eps
}
\beta^\eps(t,y)\\
&\qquad +L_f\left(
1+2 \frac{\alpha^\eps(t)}{\beta^\eps(t,y)}\abs{y(t)}
\right)\sqrt{\abs{y(t)}^2+\int_0^t \abs{y(s)}^2\,ds}\Biggr\}\\
&\le  (t_{i+1}-t_i)\cdot\Biggl\{m(\delta)-2L_f\frac{
e^{-2L_f(t-t_0)}
}{
\eps
}
\beta^\eps(t,\delta x) 
\\&\qquad +L_f\left(1+2\alpha^\eps(t)\right)
\sqrt{\abs{y(t)}^2+\int_0^t \abs{y(s)}^2\,ds}\Biggr\}\\
&\le (t_{i+1}-t_i)\cdot\Biggl\{m(\delta) + L_f \beta^\eps(t,x)\\
&\qquad\cdot \left[-2\frac{
e^{-2L_f(t-t_0)}
}{
\eps
}
+\frac{
e^{-2L_f(t-t_0)}
}{
\eps
}
+\frac{
e^{-2L_f(t-t_0)}
}{
\eps
}
-1
\right]\Biggr\}\\
&\le (t_{i+1}-t_i)\,m(\delta),
\end{align*}
where $y:=x^{\pi,b}-x_{a,i}^\eps$,
and thus Statement \textbf{S} has been proven. 

Next, note that
\begin{align*}
J_a(t_0,x_0;\mathbf{a}^\eps)&\le
\sup_{\abs{\pi}\le\delta,\,b\in\mathcal{B}^{t_0}} \left[
\int_{t_0}^T \ell(t,x^{\pi,b},\mathbf{a}^\eps(t,x^{\pi,b}),b(t))\,dt+
h(x^{\pi,b})
\right]\\ &=
\sup_{\abs{\pi}\le\delta,\,b\in\mathcal{B}^{t_0}}\left[
\int_{t_0}^T \ell(t,x^{\pi,b},\mathbf{a}^\eps(t,x^{\pi,b}),b(t))\,dt+
 u(T,x^{\pi,b})\right].
\end{align*}
Assuming for a moment that there exists a modulus of continuity
$m_a$ independent from $\pi$ and $b$ such that
\begin{align}\label{E:ueps3Game}
u(T,x^{\pi,b})\le u_a^\eps(T,x^{\pi,b}) +m_a(\eps)
\end{align}
is valid, we can from the definition of $\nu^\eps$ and from \textbf{S} infer that, for every $\delta>0$,
\begin{align*}
&\int_{t_0}^T \ell(t,x^{\pi,b},\mathbf{a}^\eps(t,x^{\pi,b}),b(t))\,dt
+u(T,x^{\pi,b})\\
&\qquad\le u_a^\eps(t_0,x_0)+m_a(\eps)\\
&\qquad\qquad +\sum_{i=0}^{n-1}
\left[
\int_{t_i}^{t_{i+1}} \ell(t,x^{\pi,b},\mathbf{a}^{\eps}(t_i,x^{\pi,b}),b(t)\,dt+
u_a^\eps(t_{i+1},x^{\pi,b})-u_a^\eps(t_i,x^{\pi,b})\right]\\
&\qquad \le [u(t_0,x_0)+(1-\eps)\eps]+m(\delta)\cdot (T-t_0)+m_a(\eps),
\end{align*}
which would imply that 
\begin{align*}
J_a(t_0,x_0;\mathbf{a}^\eps)&\le u(t_0,x_0)+(1-\eps)\eps+m_a(\eps)
\end{align*}
and this would conclude the proof.

Therefore it remains to find a modulus $m_a$ such that \eqref{E:ueps3Game} holds.
We have
\begin{align*}
u_a^\eps(T,x)&\le u(T,x)+\nu^\eps (T,x-x)\le
\max_{\tilde{x}\in\mathcal{X}(t_0,x_0)} u(T,\tilde{x})+(1-\eps)\eps,\\
u_a^\eps(T,x)&= u(T,x^\eps)+\nu^\eps(T,x-x^\eps)\ge
-\max_{\tilde{x}\in\mathcal{X}(t_0,x_0)} u(T,\tilde{x})+\nu^\eps(T,x-x^\eps),
\end{align*}
where $x^\eps$ is an appropriate minimizer.
Thus
\begin{align*}
\nu^\eps(T,x-x^\eps)&\le u_a^\eps(T,x)+\max_{\tilde{x}\in\mathcal{X}(t_0,x_0)} u(T,\tilde{x})\\
&\le 2\max_{\tilde{x}\in\mathcal{X}^{L_f}(t_0,x_0)} u(T,\tilde{x})+(1-\eps)\eps\\
&=: C_1+(1-\eps)(1-\eps),
\end{align*}
which implies, by the definition of $\nu^\eps$,
\begin{align*}
[\beta^\eps(t,x)]^2&=\eps^4+\abs{x(T)-x^\eps(T)}^2+2L_f \int_0^T\abs{x(t)-x^\eps(t)}^2\,dt\\ &\le
\left[\frac{C_1+(1-\eps)(\eps)}{\alpha^\eps(T)}\right]^2
\end{align*}
and therefore, proceeding along the lines of 
\eqref{E:CompactEndProof}, we obtain
\begin{align*}
\norm{x-x^\eps}_\infty &\le 2C \sqrt{\norm{x-x^\eps}_{L^2(t_0,T;H)}}\\
&\le 2C\sqrt{
(2L_f)^{-1/2} \left[
\frac{
\eps(C_1+(1-\eps)(\eps))
}{
e^{-2L_f(T-t_0)}-\eps
}
\right]
},
\end{align*}
where $C=C(t_0,x_0,L_f)$ is the constant from the a-priori estimates of Lemma~\ref{L:Apriori}.
%Hence, since $\nu^\eps=\alpha^\eps\beta^\eps$ and $\alpha^\eps\to\infty$ as $\eps\to\infty$, we necessarily
%have $\beta^\eps\to 0$ as $\eps\to\infty$, from which we similarly as in the proof of the 
%comparison principle for minimax solutions
%can deduce that $\norm{x-x^\eps}_\infty\to 0$ as $\eps\to 0$.
Hence, together with $h$ being uniformly continuous on the compact set $\Omega^{L_f}$,
we can infer the existence of a modulus $m_a$ such that
\begin{align*}
u_a^\eps(T,x)&=u(T,x^\eps)+\nu^\eps(T,x-x^\eps)\ge u(T,x^\eps)=h(x^\eps)\ge h(x)-m_a(\eps),
\end{align*}
%for a modulus $m_a$,%
%which exists because $h$ is uniformly continuous on the compact set $\Omega^{L_f}$,
i.e., we have \eqref{E:ueps3Game} and thus our proof is complete.
%(revise!)
\end{proof}

\appendix

\section{Properties of solution sets of evolution equations} 

We shall use the Nemytsky operator $\hat{A}:L^p(t_0,T;V)\to L^q(t_0,T;V^\ast)$
corresponding to $A$, which is defined
by 
\begin{align*}
(\hat{A}\mathbf{x})(t):=A(t,\mathbf{x}(t)), \quad\mathbf{x}\in
L^p(t_0,T;V), \quad t\in (t_0,T).
\end{align*}

The  derivation of a-priori estimates like the following is standard in the literature.
Nevertheless, for the convenience of the reader, we provide a complete proof.
\begin{lemma}\label{L:Apriori}
Fix $(t_0,x_0)\in [0,T)\times C([0,T],H)$ and $L\ge 0$.
There exists a  constant $C=C(t_0,x_0,L)>0$  such that,
for all $x\in\mathcal{X}^L(t_0,x_0)$,
\begin{align*}
\norm{\mathbf{x}}_{W_{pq}(t_0,T)}+
\norm{x}_{\infty}+
\norm{\hat{A}\mathbf{x}}_{L^q(t_0,T;V^\ast)}+
\norm{f^x}_{L^2(t_0,T;H)}\le C.
\end{align*}
\end{lemma}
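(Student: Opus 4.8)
The plan is to derive the estimate by a standard energy computation using the integration-by-parts formula (Proposition~\ref{P:parts}), the coercivity and boundedness of $A$ (hypothesis \textbf{H}($A$)(iv)-(v)), and the structural bound $\abs{f^x(t)}\le L(1+\sup_{s\le t}\abs{x(s)})$, followed by a Gronwall argument. Fix $x\in\mathcal{X}^L(t_0,x_0)$ with associated $\wx\in W_{pq}(t_0,T)$ and $f^x\in L^2(t_0,T;H)$. First I would test the equation $\wx'(t)+A(t,\wx(t))=f^x(t)$ against $\wx(t)$ and integrate: by Proposition~\ref{P:parts},
\begin{align*}
\tfrac12\abs{\wx(t)}^2+\int_{t_0}^t\langle A(s,\wx(s)),\wx(s)\rangle\,ds
=\tfrac12\abs{x_0(t_0)}^2+\int_{t_0}^t(f^x(s),\wx(s))\,ds.
\end{align*}
Using coercivity $\langle A(s,\wx(s)),\wx(s)\rangle\ge c_2\norm{\wx(s)}^p$ on the left and the bound $\abs{f^x(s)}\le L(1+m(s))$ with $m(s):=\sup_{r\le s}\abs{x(r)}$ together with $\abs{(f^x(s),\wx(s))}\le L(1+m(s))\,m(s)$ on the right, one gets
\begin{align*}
\tfrac12 m(t)^2+c_2\int_{t_0}^t\norm{\wx(s)}^p\,ds\le \tfrac12\abs{x_0(t_0)}^2+\int_{t_0}^t L(1+m(s))m(s)\,ds,
\end{align*}
where I first take the supremum over the endpoint on the left-hand side (valid since the right-hand side is nondecreasing). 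An application of Gronwall's inequality then yields $\norm{x}_\infty=\sup_{t\in[0,T]}\abs{x(t)}\le\max\{\abs{x_0(t_0)},\sup_{s\le t_0}\abs{x_0(s)}\}$-type bound, in any case a constant $C_0=C_0(t_0,x_0,L)$ with $\norm{x}_\infty\le C_0$. (On $[0,t_0]$ we have $x=x_0$, so $\sup_{[0,t_0]}\abs{x}$ is already controlled by $\norm{x_0}_\infty$.)

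Once $\norm{x}_\infty\le C_0$ is established, the remaining quantities follow mechanically. Feeding $\norm{x}_\infty\le C_0$ back into the energy identity bounds $\int_{t_0}^T\norm{\wx(s)}^p\,ds$, hence $\norm{\wx}_{L^p(t_0,T;V)}\le C_1$. Then \textbf{H}($A$)(iv) gives $\norm{A(s,\wx(s))}_\ast\le a_1(s)+c_1\norm{\wx(s)}^{p-1}$, so $\norm{\hat A\wx}_{L^q(t_0,T;V^\ast)}^q\le C\int(a_1(s)^q+\norm{\wx(s)}^{(p-1)q})\,ds=C\int(a_1(s)^q+\norm{\wx(s)}^p)\,ds\le C_2$ using $(p-1)q=p$. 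The structural bound $\abs{f^x(s)}\le L(1+m(s))\le L(1+C_0)$ immediately gives $\norm{f^x}_{L^2(t_0,T;H)}\le L(1+C_0)\sqrt{T}=:C_3$. Finally $\norm{\wx'}_{L^q(t_0,T;V^\ast)}=\norm{f^x-\hat A\wx}_{L^q(t_0,T;V^\ast)}\le C_3\abs{T}^{1/q-1/2}\!\!\cdot(\text{embedding const}) + C_2$ (using $L^2\hookrightarrow L^q$ on a finite interval since $q\le 2$, together with $H\hookrightarrow V^\ast$ continuously), so $\norm{\wx}_{W_{pq}(t_0,T)}\le C_1+C_2+C_3\le C_4$. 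Taking $C$ to be the sum of all these constants completes the estimate.

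The only mildly delicate point is the very first step: one must bound $m(t)=\sup_{r\le t}\abs{x(r)}$, not just $\abs{\wx(t)}$, and these differ a priori because $x$ equals $\wx$ only a.e.\ on $(t_0,T)$ while $x=x_0$ on $[0,t_0]$. However, $\wx\in W_{pq}(t_0,T)$ has a continuous $H$-valued representative (Remark~\ref{R:Wpq:continuous}) and by the definition of $\mathcal{X}^L(t_0,x_0)$ the path $x$ agrees with that representative on $[t_0,T]$ and with $x_0$ on $[0,t_0]$, so $x\in C([0,T],H)$ and $\sup_{r\le t}\abs{x(r)}=\max\{\sup_{r\le t_0}\abs{x_0(r)},\,\sup_{t_0\le r\le t}\abs{\wx(r)}\}$; the first term is a fixed constant depending on $x_0$, and the energy identity above controls the second via the pointwise value $\abs{\wx(r)}$. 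Thus no real obstacle arises — the argument is entirely routine, and the point of including a complete proof in the appendix is merely self-containedness, as the lemma statement already indicates.
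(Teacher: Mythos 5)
Your proof is correct and follows essentially the same route as the paper's: an energy estimate via integration by parts, coercivity of $A$, and the growth bound on $f^x$, then Gronwall for $\norm{x}_\infty$, followed by the $L^p(t_0,T;V)$ bound from coercivity, the $\hat A\wx$ bound from \textbf{H}($A$)(iv) with $(p-1)q=p$, and the $\wx'$ and $f^x$ bounds via the equation and the embeddings. The only cosmetic difference is that the paper absorbs the coercive term with Young's inequality (choosing $\eps$ so the $\norm{\wx}^p$ terms cancel) before applying Gronwall, whereas you bound $(f^x(s),\wx(s))\le L(1+m(s))m(s)$ directly and keep the coercive term; both work, and your closing remark correctly handles the distinction between $m(t)$ and $\sup_{[t_0,t]}\abs{\wx}$ on the initial segment.
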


\begin{proof}
%\textit{Step~1:} A-priori estimates.
Let $x\in\mathcal{X}^L(t_0,x_0)$. Then
\begin{align}\label{E:LCompact1}
\langle\mathbf{x}^\prime(t),\mathbf{x}(t)\rangle+
\langle A(t,\mathbf{x}(t)),\mathbf{x}(t)\rangle=(f^x(t),x(t))\quad\text{a.e.}
\end{align}
By  integration-of-parts, the coercivity of $A$, and Young's inequality, %we get,
for every $\eps>0$,
\begin{equation}\label{E:LCompact2}
\begin{split}
\frac{1}{2}\frac{d}{dt}\abs{x(t)}^2+c_2\norm{\mathbf{x}(t)}^p&\le \abs{f^x(t)}\,\abs{x(t)} %\\
%&
\le \frac{1}{q\eps^q} \abs{f^x(t)}^q+\frac{\eps^p \,C_1^p}{p} \norm{\mathbf{x}(t)}^p\quad\text{a.e.}
\end{split}
\end{equation}
Here, the constant $C_1>0$ comes from the continuous embedding of $V$ into $H$,
i.e., $\abs{v}\le C_1\norm{v}$ for all $v\in V$.
Now, choose $\eps>0$ such that $c_2=p^{-1}\,\eps^p\, C_1^p$. Then
\begin{align}\label{E:LCompact3}
\frac{1}{2}\frac{d}{dt}\abs{x(t)}^2\le
\frac{L^q}{q\eps^q}\left(1+\sup_{s\le t}\abs{x(s)}\right)^q.
\end{align}
Put
%\begin{align*}
$m(t):=\sup_{s\le t} \abs{x(s)}$, %\quad 
$t\in [t_0,T]$.
%\end{align*}
Then integrating \eqref{E:LCompact3} %from $t_0$ to any $t\in [t_0,T]$ 
yields
\begin{align*}
x(t)^2&\le m(t_0)^2+\int_{t_0}^t \frac{2 L^q}{q\eps^q} 
%\underbrace{
[1+m(s)]^q
%}_{\le [1+m(s)]^2\le 2+2m(s)^2}
\,ds.
\end{align*}
Noting that $p\ge 2$ implies $q\le 2$ and thus
$[1+m(s)]^q\le [1+m(s)]^2$, we get
\begin{align*}
m(t)^2&\le m(t_0)^2+\int_{t_0}^t \frac{2 L^q}{q\eps^q} 
[ 2+2m(s)^2]\,ds\\
&\le \left[
m(t_0)+\frac{4L^q (T-t_0)}{q\eps^q}
\right]+\int_{t_0}^t \frac{4L^q}{q\eps^q} m(s)^2\,ds.
\end{align*}
By Gronwall's inequality,
\begin{align}\label{E:LCompact4}
m(t)^2\le\left[
m(t_0)+\frac{4L^q (T-t_0)}{q\eps^q}
\right]\,\exp\left(
\frac{4L^q(T-t_0)}{q\eps^q}
\right)=:C_2^2,
\end{align}
i.e., $\norm{x}_{\infty}\le C_2=C_2(t_0,x_0,L)$.
Furthermore,
\begin{align*}
&\int_{t_0}^T\norm{\mathbf{x}(t)}^p\,dt
\le \frac{1}{c_2}
\int_{t_0}^t \langle A(t,\mathbf{x}(t)),x(t)\rangle\,dt
&&\text{since $A$ is coercive}\\
&\qquad=\frac{1}{c_2}\left[
\int_{t_0}^T (f^x(t),x(t))\,dt+\frac{1}{2}( \abs{x_0(t_0)}^2-\abs{x(T)}^2)
\right] 
&& \text{by \eqref{E:LCompact1}}\\
&\qquad\le \frac{1}{c_2} \left[
\int_{t_0}^T  L(1+m(t))\,m(t)\,dt+\frac{1}{2}m(t_0)^2
\right]&& \\
&\qquad \le \frac{(T-t_0)L(1+C_2)C_2+\frac{1}{2} C_2^2}{c_2}=:C_3^p
&&\text{by \eqref{E:LCompact4},}
\end{align*}
i.e.,
\begin{align}\label{E:LCompact5}
\norm{\mathbf{x}}_{L^p(t_0,T;V)}\le C_3=
C_3(t_0,x_0,L).
\end{align}
Moreover, by the boundedness of $A$ and since $(p-1)q=p$,
\begin{align*}
\int_{t_0}^T\norm{A(t,\mathbf{x}(t)}^q_\ast\,dt&\le
\int_{t_0}^T\abs{a_1(t)}^q+c_1\norm{\mathbf{x}(t)}^{(p-1)q}\,dt &&\\
&\le \norm{a_1}^q_{L^q}+c_1 C_3^p=:C_4^q &&\text{by \eqref{E:LCompact5}.}
\end{align*}
Finally,
\begin{align*}
\left[
\int_{t_0}^T\norm{\mathbf{x}^\prime(t)}_\ast^q\,dt
\right]^{1/q}&\le
\left[\int_{t_0}^T \norm{A(t,\mathbf{x}(t))}_\ast^q
\right]^{1/q}+\left[
\int_{t_0}^T \norm{f^x(t)}_\ast^q
\right]^{1/q}\\
&\le C_4+\left[\int_{t_0}^T C_5 \abs{f^x(t)}^q\,dt\right]^{1/q}\\
&\le C_4+C_5^{1/q} (T-t_0)^{1/q}\left[(1+m(T))L\right]\\
&\le C_4+C_5^{1/q} (T-t_0)^{1/q}\left[(1+C_2)L\right]=:C_6
&&\text{by \eqref{E:LCompact4}}
\end{align*}
and
\begin{align*}
\left(\int_{t_0}^T \abs{f^x(t)}^2\,dt\right)^{1/2}\le (T-t_0)^{1/2} L(1+C_2)=:C_7\text{ by \eqref{E:LCompact4}.}
\end{align*}
Here, the constant $C_5>0$ comes from the continuous embedding of $H$ into $V^\ast$,
i.e., $\norm{h}_\ast\le C_5\abs{h}$ for all $h\in H$.
\end{proof}

\begin{lemma}\label{L:CompactW}
Fix $(t_0,x_0)\in [0,T)\times C([0,T],H)$ and $L\ge 0$.
Consider a sequence $(x_n)_n$ in $\mathcal{X}^L(t_0,x_0)$.
Then there exist a pair $(\wx,f)\in W_{pq}(t_0,T)\times L^2(t_0,T;H)$
with $\wx^\prime+\hat{A}\wx=f$ and a subsequence $(x_{n_k})_k$ of $(x_n)_n$
such that $\wx_{n_k}\xrightarrow{w} \wx$  in $W_{pq}(t_0,T)$,
$x_{n_k}\to x$ in $C([0,T],H)$, and $f^{x_n}\xrightarrow{w} f$ in $L^2(t_0,T;H)$. 
Here, $x$ is the unique element 
of $C([0,T],H)$ such that $x=\wx$ a.e.~on $(t_0,T)$ and $x=x_0$ on $[0,t_0]$.
\end{lemma}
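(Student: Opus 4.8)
The statement is a standard compactness result for solution sets of monotone evolution equations, and the plan is to exploit the uniform a-priori bounds from Lemma~\ref{L:Apriori} together with the compact embedding $W_{pq}(t_0,T)\hookrightarrow L^p(t_0,T;H)$ (Remark~\ref{R:compact}) and reflexivity. First I would invoke Lemma~\ref{L:Apriori} to obtain a constant $C$ such that $\norm{\mathbf{x}_n}_{W_{pq}(t_0,T)}+\norm{x_n}_\infty+\norm{\hat A\mathbf{x}_n}_{L^q(t_0,T;V^\ast)}+\norm{f^{x_n}}_{L^2(t_0,T;H)}\le C$ for all $n$. Since $W_{pq}(t_0,T)$ is reflexive (as $V$ and $V^\ast$ are reflexive), $L^2(t_0,T;H)$ is reflexive, and the closed balls are weakly sequentially compact, I can pass to a subsequence (not relabeled) so that $\mathbf{x}_n\xrightarrow{w}\mathbf{x}$ in $W_{pq}(t_0,T)$, $f^{x_n}\xrightarrow{w} f$ in $L^2(t_0,T;H)$, and $\hat A\mathbf{x}_n\xrightarrow{w}\chi$ in $L^q(t_0,T;V^\ast)$ for some limits $\mathbf{x}$, $f$, $\chi$. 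By the compact embedding $W_{pq}(t_0,T)\hookrightarrow L^p(t_0,T;H)$ we may also assume $\mathbf{x}_n\to\mathbf{x}$ strongly in $L^p(t_0,T;H)$, hence $\mathbf{x}_n(t)\to\mathbf{x}(t)$ in $H$ for a.e.~$t$ along a further subsequence.

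Next I would identify the limit equation. Passing to the weak limit in $\mathbf{x}_n^\prime+\hat A\mathbf{x}_n=f^{x_n}$ (the derivative operator is weak-weak continuous) gives $\mathbf{x}^\prime+\chi=f$ in $L^q(t_0,T;V^\ast)$. The crucial point is to show $\chi=\hat A\mathbf{x}$, i.e.~$\chi(t)=A(t,\mathbf{x}(t))$ a.e. This is where Minty's trick for monotone operators enters and is \emph{the main obstacle}. Using the integration-by-parts formula (Proposition~\ref{P:parts}) I would compute $\int_{t_0}^T\langle\mathbf{x}_n^\prime,\mathbf{x}_n\rangle\,dt=\tfrac12(\abs{\mathbf{x}_n(T)}^2-\abs{x_0(t_0)}^2)$; the strong $L^p(t_0,T;H)$ convergence together with the a-priori bound on $\mathbf{x}_n(T)$ lets me control $\limsup_n\int_{t_0}^T\langle\hat A\mathbf{x}_n,\mathbf{x}_n\rangle\,dt$ from above by $\int_{t_0}^T\langle f-\mathbf{x}^\prime,\mathbf{x}\rangle\,dt=\int_{t_0}^T\langle\chi,\mathbf{x}\rangle\,dt$ (here one must be careful that $\abs{\mathbf{x}_n(T)}$ need only be bounded, so I pass to a further subsequence along which $\mathbf{x}_n(T)$ converges weakly in $H$ and use weak lower semicontinuity of the norm to get the inequality in the right direction). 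Then for arbitrary $\mathbf{y}\in L^p(t_0,T;V)$, monotonicity gives $\int_{t_0}^T\langle\hat A\mathbf{x}_n-\hat A\mathbf{y},\mathbf{x}_n-\mathbf{y}\rangle\,dt\ge 0$; taking $\limsup$ and using the above yields $\int_{t_0}^T\langle\chi-\hat A\mathbf{y},\mathbf{x}-\mathbf{y}\rangle\,dt\ge 0$, and the Minty device (replace $\mathbf{y}$ by $\mathbf{x}-s\mathbf{v}$, divide by $s$, let $s\downarrow 0$, using hemicontinuity and boundedness of $A$, hypotheses \textbf{H}($A$)(iii)--(iv)) gives $\chi=\hat A\mathbf{x}$, hence $\mathbf{x}^\prime+\hat A\mathbf{x}=f$.

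Finally I would upgrade the convergence $\mathbf{x}_n\to x$ to $C([0,T],H)$. On $[0,t_0]$ all $x_n$ equal $x_0$, so there is nothing to prove there; on $[t_0,T]$ I would use the a-priori estimate argument underlying Remark~\ref{R:equivPseuoMetrics} (referenced as \eqref{E:CompactEndProof}): applying integration-by-parts to $\mathbf{x}_n-\mathbf{x}_m$ and the monotonicity of $A$ yields, for $t\in[t_0,T]$,
\begin{align*}
\abs{\mathbf{x}_n(t)-\mathbf{x}_m(t)}^2\le\int_{t_0}^t 2\abs{f^{x_n}(s)-f^{x_m}(s)}\,\abs{\mathbf{x}_n(s)-\mathbf{x}_m(s)}\,ds\le 4C\norm{\mathbf{x}_n-\mathbf{x}_m}_{L^2(t_0,T;H)},
\end{align*}
which shows that $(\mathbf{x}_n)_n$ is Cauchy in $C([t_0,T],H)$ once it is Cauchy (along the subsequence) in $L^2(t_0,T;H)$, the latter being a consequence of the strong $L^p(t_0,T;H)\subseteq L^2(t_0,T;H)$ convergence already obtained. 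Hence $x_n\to x$ in $C([0,T],H)$ where $x$ is the continuous representative of $\mathbf{x}$ extended by $x_0$ on $[0,t_0]$, and the triple $(\mathbf{x},f)$ has all the required properties. Uniqueness of $x$ given $\mathbf{x}$ is the usual fact that the $C([t_0,T],H)$ representative in $W_{pq}(t_0,T)$ is unique (Remark~\ref{R:Wpq:continuous}).
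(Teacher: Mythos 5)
Your proposal is correct and follows essentially the same route as the paper: uniform a-priori bounds from Lemma~\ref{L:Apriori}, weak compactness plus the compact embedding of $W_{pq}(t_0,T)$ into $L^p(t_0,T;H)$, the Minty/monotonicity trick combined with the integration-by-parts identity to identify $\hat A\mathbf{x}$, and a monotonicity estimate to upgrade to uniform convergence (the paper compares $x_n$ directly with the limit $x$, you run a Cauchy argument between $\mathbf{x}_n$ and $\mathbf{x}_m$ --- an immaterial difference). The one step to make explicit is that the weak lower semicontinuity argument for the boundary term only works after identifying the weak limit of $\mathbf{x}_n(T)$ with $x(T)$; the paper proves this via a test-function integration-by-parts argument (its equation for $x(T)=z$), and in your setup it also follows at once because evaluation at $T$ is a bounded linear map from $W_{pq}(t_0,T)$ to $H$ and hence weak-weak continuous.
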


\begin{proof}
Major parts of this proof are slight modifications of the proofs of 
\cite[Lemma~30.5 and Lemma~30.6, p.~776f.]{ZeidlerIIB},
\cite[Assertion~2.1]{Tolstonogov99Sbornik}, and
\cite[Proposition~3.1]{TT1999NoDEA} (cf.~also the proofs of \cite[Theorem~4.1]{ZCYC13}
and \cite[Lemma~4.3]{LiuLiu14}).
%Let $(x_n)_n$ be a sequence in $\mathcal{X}(t_0,x_0)$
First, recall that each $x_n$ satisfies  $\mathbf{x}^\prime_n+\hat{A}\mathbf{x}_n=f_n$,
where $f_n:=f^{x_n}$.
Also note that $W_{pq}(t_0,T)$ is reflexive (see, e.g.~\cite[Theorem~1.12 (a), p.~4]{HPHandbookII}).
Thus, using the the a-priori estimates from Lemma~\ref{L:Apriori}, we can assume,
by  the Banach-Alaoglu theorem 
and  the Eberlein-\v{S}mulian theorem,
%(see, e.g., \cite[Theorem~21.D]{ZeidlerIIA}), 
that
$(\mathbf{x}_n)_n$ converges weakly to some $\mathbf{x}\in W_{pq}(t_0,T)$.
Then $(\mathbf{x}_n)_n$ converges also strongly to $\mathbf{x}$ in 
$L^p(t_0,T;H)$ because $W_{pq}(t_0,T)$ is compactly embedded into $L^p(t_0,T;H)$ 
(see \cite[Th\'eor\`eme~5.1, p.~58]{JLLions1969}).
Similarly, we can also assume that $(\hat{A}\mathbf{x}_n)_n$ converges weakly
to some $w\in L^q(t_0,T;V^\ast)$, that $(f_n)_n$ converges weakly to some
$f\in L^2(t_0,T;H)$, and that $(x_n(T))_n$ converges weakly to some $z$ in $H$.
%Moreover, denote by $x$ the unique element of $C([0,T],H)$ that coincides a.e.~with
%$\mathbf{x}$ on $(t_0,T)$ and everywhere with $x_0$ on $[0,t_0]$.

Next, we show that
\begin{align}\label{E:WC1}
\mathbf{x}^\prime+w&=f \qquad\text{in $L^q(t_0,T;V^\ast)$,}\\ \label{E:WC2}
x(T)&=z.
\end{align}
To this end, let $\psi\in L^p(t_0,T;V)$. Then
\begin{align*}
0=\int_{t_0}^T \langle\mathbf{x}^\prime_n(t)+A(t,\mathbf{x}_n(t))-f_n(t),\psi(t)\rangle\,dt
\to \int_{t_0}^T \langle\mathbf{x}^\prime(t)+w(t)-f(t),\psi(t)\rangle\,dt
\end{align*}
as $n\to\infty$, i.e.,  \eqref{E:WC1} holds. In particular, we could use
$f_n\xrightarrow{w} f$ in $L^q(t_0,T;V^\ast)$ because
$L^2(t_0,T;H)\subset L^q(t_0,T;V^\ast)$.
To show \eqref{E:WC2}, let $v\in V$ and consider $\psi\in C^1([t_0,T],V)$
defined by $\psi(t):=\varphi(t) v$, $t\in [t_0,T]$, for some $\varphi\in C^\infty([t_0,T])$
with $\varphi(T)=1$ and $\varphi(t_0)=0$.
Then, by the integration-by-parts formula, 
\begin{align*}
(x(T),\psi(T))&=\int_{t_0}^T\langle\mathbf{x}^\prime(t),\psi(t)\rangle+
\langle\psi^\prime(t),\mathbf{x}(t)\rangle\,dt\\
&=\lim_n\int_{t_0}^T\langle\mathbf{x}_n^\prime(t),\psi(t)\rangle+
\langle\psi^\prime(t),\mathbf{x}_n(t)\rangle\,dt\\
&=\lim_n (x_n(T),\psi(T))\\
&=(z,\psi(T)).
\end{align*}
Since $\psi(T)=v$,  the vector $v$ was  arbitrarily chosen in $V$, and 
$V$ is dense in $H$, we have \eqref{E:WC2}.

Now, we show that $w=\hat{A}\mathbf{x}$. The proof of this statement is
nearly identical to the proof of \cite[Lemma~30.6, p.~777f.]{ZeidlerIIB} up
to very minor modifications. First, note that, by
\cite[Theorem~30.A~(c), p.~771]{ZeidlerIIB},
the operator $\hat{A}$ is monotone and
hemicontinuous. According to the monotonicity trick
(see, e.g., \cite[p.~474]{ZeidlerIIB}), it suffices to show that
\begin{align}\label{E:WC3}
\varlimsup_n \langle \hat{A}\mathbf{x}_n,\mathbf{x}_n\rangle_{L^p(t_0,T;V)}\le
\langle w,\mathbf{x}\rangle_{L^p(t_0,T;V)}.
\end{align}
To this end, we apply the integration-by-parts formula to obtain
\begin{align*}
\langle \hat{A}\mathbf{x}_n,\mathbf{x}_n\rangle_{L^p(t_0,T;V)}&=
\langle f_n,\mathbf{x}_n\rangle_{L^p(t_0,T;V)}+
\frac{1}{2}\left[\abs{x_0(t_0)}^2-\abs{x_n(T)}^2\right].
\end{align*}
By \eqref{E:WC2}, $x_n(T)\xrightarrow{w} x(T)$ in $H$ and thus together with
 \cite[Proposition~21.23~(c)]{ZeidlerIIA}, %(Banach-Steinhaus),
$\abs{x(T)}\le\varliminf_n\abs{x_n(T)}$. Moreover, since
%\begin{align*}
$\langle f_n,\mathbf{x}_n\rangle_{L^p(t_0,T;V)}
%&=\int_{t_0}^T (f_n(t),x_n(t))\,dt
=\langle f_n,x_n\rangle_{L^2(t_0,T;H)}$,
%\end{align*}
$f_n\xrightarrow{w} f$ in $L^2(t_0,T;H)$, and $x_n\to x$ in
$L^2(t_0,T;H)$,  we have %we get 
\begin{align*}
\langle f_n,\mathbf{x}_n\rangle_{L^p(t_0,T;V)}\to 
\langle f,x\rangle_{L^2(t_0,T;H)}.
\end{align*}
%as $n\to\infty$. 
Thus, by \eqref{E:WC1} and again by the  integration-by-parts formula,
%we obtain
\begin{align*}
\varlimsup_n\langle \hat{A}\mathbf{x}_n,\mathbf{x}_n\rangle_{L^p(t_0,T;V)}&\le
\langle f,x\rangle_{L^2(t_0,T;H)}+\frac{1}{2}[\abs{x_0(t_0)}^2-\abs{x(T)}^2]
=\langle w,\mathbf{x}\rangle_{L^p(t_0,T;V)},
\end{align*}
i.e., we have shown \eqref{E:WC3}. 

It remains to show that $x_n\to x$ in $C([0,T],H)$.
Since $\mathbf{x}^\prime+\hat{A}\mathbf{x}=f$ and $\hat{A}$ is monotone,
 the integration-by-parts formula yields, for every $t\in (t_0,T]$,
\begin{equation}\label{E:CompactEndProof}
\begin{split}
\frac{1}{2}\abs{x_n(t)-x(t)}^2&=
\int_{t_0}^t \langle\mathbf{x}_n^\prime(s)-\mathbf{x}^\prime(s),
\mathbf{x}_n(s)-\mathbf{x}(s)\rangle\,ds\\
&=\int_{t_0}^t -\langle A(s,\mathbf{x}_n(s))-A(s,\mathbf{x}(s)),
\mathbf{x}_n(s)-\mathbf{x}(s)\rangle\\ &\qquad\qquad+
(f_n(s)-f(s),x_n(s)-x(s))\,ds\\
&\le \norm{f_n-f}_{L^2(t_0,t;H)}\,\norm{x_n-x}_{L^2(t_0,t;H)}\\
&\le 2 C\norm{x_n-x}_{L^2(t_0,T;H)}.
\end{split}
\end{equation}
As $\norm{x_n-x}_{L^2(t_0,T;H)}\to 0$ (by \cite[Th\'eor\`eme~5.1, p.~58]{JLLions1969}), we obtain
%\begin{align}\label{E:WC4.1}
$\norm{x_n-x}_{\infty}\to 0$,
%\end{align}
which concludes the proof.
\end{proof}

\begin{lemma}\label{L:Compact}
Fix $(t_0,x_0)\in [0,T)\times C([0,T],H)$ and $L\ge 0$. Then
$\mathcal{X}^L(t_0,x_0)$ is closed in $C([0,T],H)$.
\end{lemma}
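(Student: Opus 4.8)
\textbf{Proof proposal for Lemma~\ref{L:Compact}.}
The plan is to show that $\mathcal{X}^L(t_0,x_0)$ is \emph{sequentially} closed in $C([0,T],H)$, which suffices since $C([0,T],H)$ is a metric space. So I would start with a sequence $(x_n)_n$ in $\mathcal{X}^L(t_0,x_0)$ converging to some $x$ in $C([0,T],H)$, and I must produce the data $(\wx,f^x)$ witnessing $x\in\mathcal{X}^L(t_0,x_0)$. The first step is to invoke Lemma~\ref{L:CompactW}: passing to a subsequence (which does not affect the limit $x$, already fixed), we obtain $(\wx,f)\in W_{pq}(t_0,T)\times L^2(t_0,T;H)$ with $\wx^\prime+\hat A\wx=f$, $\wx_{n_k}\xrightarrow{w}\wx$ in $W_{pq}(t_0,T)$, $x_{n_k}\to x$ in $C([0,T],H)$, and $f^{x_{n_k}}\xrightarrow{w} f$ in $L^2(t_0,T;H)$, where $x$ coincides a.e.\ on $(t_0,T)$ with $\wx$ and equals $x_0$ on $[0,t_0]$. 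This already yields the evolution-equation part of the definition and the condition $x=x_0$ on $[0,t_0]$ and $x=\wx$ a.e.\ on $(t_0,T)$; it only remains to verify the pointwise bound on $f:=f^x$.

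The second step is the bound $\abs{f^x(t)}\le L(1+\sup_{0\le s\le t}\abs{x(s)})$ a.e.\ on $(t_0,T)$. Here I would argue exactly as in the proof of Proposition~\ref{P:CompactII} (Case~1 there, where $t_n\uparrow t_0$ is vacuous because all the initial times coincide), using a Banach--Steinhaus / Mazur-type argument. Concretely: since $f^{x_{n_k}}\xrightarrow{w} f$ in $L^2(t_0,T;H)$, for a.e.\ $t$ one has $f(t)\in\overline{\mathrm{conv}}\,(w\text{-}\varlimsup\{f^{x_{n_k}}(t)\}_k)$; for any $h$ in the weak $\varlimsup$ there is a further subsequence $f^{x_{n_{k_j}}}(t)\xrightarrow{w} h$ in $H$, so by weak lower semicontinuity of the norm, $\abs{h}\le\varliminf_j\abs{f^{x_{n_{k_j}}}(t)}\le\varliminf_j L(1+\sup_{s\le t}\abs{x_{n_{k_j}}(s)})=L(1+\sup_{s\le t}\abs{x(s)})$, the last equality because $x_{n_k}\to x$ uniformly. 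Taking closed convex hulls preserves this norm bound since the right-hand side is a fixed number, so $\abs{f(t)}\le L(1+\sup_{s\le t}\abs{x(s)})$ for a.e.\ $t\in(t_0,T)$. This completes the verification that $x\in\mathcal{X}^L(t_0,x_0)$.

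The main obstacle is essentially bookkeeping: one has only weak convergence of the forcing terms $f^{x_n}$ in $L^2(t_0,T;H)$, so the pointwise (in $t$) bound does not pass to the limit directly and one must route through the convexity/weak-limit argument above, taking care that the exceptional null sets (where the a.e.\ bounds on $f^{x_n}$ fail, and where the representation of $f$ via closed convex hulls fails) combine to a single null set. All of this is already carried out in the proofs of Lemma~\ref{L:CompactW} and Proposition~\ref{P:CompactII}, so the present lemma is really a corollary; I would simply cite those arguments rather than repeat them. Note also that nothing here requires the compactness of the embedding $V\hookrightarrow H$ beyond what is already packaged into Lemma~\ref{L:CompactW} (it is used there to get $x_{n_k}\to x$ in $C([0,T],H)$ and hence to identify the limit).
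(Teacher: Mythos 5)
Your argument is correct and follows essentially the same route as the paper's proof: reduce everything to the pointwise bound on $f^x$ via Lemma~\ref{L:CompactW}, then use the a.e.\ representation $f(t)\in\overline{\mathrm{conv}}\text{ $w$-$\varlimsup$}\{f^{x_{n_k}}(t)\}_k$ (which the paper justifies by combining the uniform bound from Lemma~\ref{L:Apriori} with a cited convex-hull theorem) together with weak lower semicontinuity of the norm and uniform convergence $x_{n_k}\to x$. The only cosmetic difference is your reference to Proposition~\ref{P:CompactII}: in the paper that proposition's proof in fact refers back to the present lemma, but the underlying Banach--Steinhaus/convex-hull argument is identical.
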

\begin{proof}
Let $(x_n)_n$ be a sequence in $\mathcal{X}^L(t_0,x_0)$ that converges
to some $x\in C([0,T],H)$. By  Lemma~\ref{L:CompactW}, there exists
a pair $(\wx,f)\in W_{pq}(t_0,T)\times L^2(t_0,T;H)$
with $\wx^\prime+\hat{A}\wx=f$ such that $x=\wx$ a.e.~on $(t_0,T)$ and  
$f^{x_{n_k}}\xrightarrow{w} f$ in $L^2(t_0,T;H)$ 
for some subsequence $(x_{n_k})_k$ of $(x_n)_n$. Put $f_k:=f^{x_{n_k}}$, $k\in\N$.
It suffices to show that
\begin{align}\label{E:WC4}
\abs{f(t)}\le L(1+\sup_{s\le t}\abs{x(s)})\qquad\text{a.e.~on $(t_0,T)$.}
\end{align}
By Lemma~\ref{L:Apriori}, there exists a $C\ge 0$ such that, for every $k\in\N$,
\begin{align}\label{E:WC5}
\abs{f_k(t)}\le L(1+\sup_{s\le t}\abs{x_{n_k}(s)})\le L(1+C)\qquad\text{a.e.~on $(t_0,T)$.}
\end{align}
Since $\{h\in H: \abs{h}\le L(1+C)\}$ is weakly compact, non-empty, and convex,
\cite[Theorem~3.1]{P90JMAAOnAPaper}  yields 
\begin{align}\label{E:WC6}
f(t)\in\overline{\mathrm{conv}}\text{ $w$-$\varlimsup$} \{f_k(t)\}_{k\in\N}\qquad\text{a.e.~on $(t_0,T)$.}
\end{align}
(This result can also be found in 
\cite[Proposition~3.9, p.~694]{HPHandbookI} 
and in \cite[Theorem~3.1]{P87IJMMS}.)
In~\eqref{E:WC6}, 
$\text{$w$-$\varlimsup$} \{f_k(t)\}_{k\in\N}$ is the set of all $h\in H$
for which some subsequence $(f_{k_l}(t))_l$ of $(f_k(t))_k$ converges weakly to $h$ in $H$
and
$\overline{\mathrm{conv}} \text{ $w$-$\varlimsup$} \{f_k(t)\}_{k\in\N}$ is the closure of the 
convex hull of $\text{$w$-$\varlimsup$} \{f_k(t)\}_{k\in\N}$.
Now, fix an $h\in\text{ $w$-$\varlimsup$} \{f_k(t)\}_{k\in\N}$ with a corresponding subsequence
 $(f_{k_l}(t))_l$. Then, by \cite[Proposition~21.23 (c), p.~258]{ZeidlerIIA} (Banach-Steinhaus),
  by \eqref{E:WC5}, and since $x_n\to x$ in $C([0,T],H)$, %and by \eqref{E:WC4.1},
 \begin{align*}
 \abs{h}\le \varliminf\limits_l\abs{f_{k_l}(t)} \le \varliminf\limits_l L\left(1+\sup_{s\le t} \abs{x_{n_{k_l}}(s)}\right)
 =L(1+\sup_{s\le t} \abs{x(s)}).
 \end{align*}
 Consequently,  \eqref{E:WC6} yields \eqref{E:WC4}.
 \end{proof}

\begin{remark}\label{R:Compact}
If, instead of $\mathcal{X}^L(t_0,x_0)$, we consider 
\begin{equation*}
\begin{split}
\tilde{\mathcal{X}}^{L}(t_0,x_0):=\{x\in C([0,T],H):\,  
\text{$\exists \wx\in W_{pq}(t_0,T):\exists f^x\in L^2(t_0,T;H)$}:\\
 \mathbf{x}^\prime(t)+A(t,\mathbf{x}(t))=f^x(t) \text{ a.e.~on $(t_0,T)$, }
x=x_0\text{ on $[0,t_0]$, }\\
 \qquad
 x=\wx\text{ a.e.~on $(t_0,T)$, and }
 \abs{f^x(t)}\le L
\text{ a.e.~on $(t_0,T)$}\}.
\end{split}
\end{equation*}
 then the 
inequality corresponding to
\eqref{E:WC4} can be obtained from Mazur's lemma immediately (i.e., without invoking
\cite[Theorem~3.1]{P90JMAAOnAPaper})  as follows.
Let $f_n\xrightarrow{w} f$ in $L^2(t_0,T;H)$. By
Mazur's lemma (see, e.g., \cite[Korollar~III.3.9, p.~108]{WernerFA}),
there exists, for each $n\in\N$, a convex linear combination $\tilde{f}_n$ of
members of $(f_m)_m$, say
%\begin{align*}
$\tilde{f}_n=\sum\nolimits_{i=1}^{N(n)} \lambda_i^{(n)} f_i$,
%\end{align*}
where $\lambda_i^{(n)}\in (0,1)$, $i=1$, $\ldots$, $N(n)$, and 
$\sum_{i=1}^{N(n)}\lambda_i^{(n)}=1$, such that
$\tilde{f}_n\to f$ in $L^2(t_0,T;H)$. Then
there exists a subsequence of $(\tilde{f}_n)_n$, which we still denote by
 $(\tilde{f}_n)_n$, that converges to $f$ a.e.~on $(t_0,T)$. Hence, for 
 a.e.~$t\in (t_0,T)$ and for every $\eps>0$, there exists an $n_0\in\N$
 such that, for all $n\ge n_0$,
 \begin{align*}
 \abs{f(t)}\le \sum_{i=1}^{N(n)} \lambda_i^{(n)} \abs{f_i(t)}  +\eps\le
 L+\eps.
 \end{align*}
\end{remark}

For the next statement, recall that  $\mathcal{Y}^L(t_0,x_0,\tilde{z})$ is
defined by \eqref{E:Y(t,x,z)}.

\begin{lemma}\label{L:Connected}
Fix $(t_0,x_0,\tilde{z})\in [0,T)\times C([0,T],H)\times H$ and let $L\ge 0$. Then
$\mathcal{X}^L(t_0,x_0)$ is path-connected in $C([0,T],H)$ and 
$\mathcal{Y}^L(t_0,x_0,\tilde{z})$ 
 is path-connected in $C([0,T],H)\times C([t_0,T])$.
\end{lemma}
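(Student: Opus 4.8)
The plan is to prove path-connectedness of $\mathcal{X}^L(t_0,x_0)$ first, and then deduce the corresponding statement for $\mathcal{Y}^L(t_0,x_0,\tilde z)$ by observing that the latter is the continuous image of the former under a suitable map. For the first part, fix two trajectories $x^0, x^1 \in \mathcal{X}^L(t_0,x_0)$ with associated forcing terms $f^0 := f^{x^0}$ and $f^1 := f^{x^1}$ in $L^2(t_0,T;H)$, satisfying $\abs{f^i(t)} \le L(1+\sup_{s\le t}\abs{x^i(s)})$ a.e. Since the pointwise-in-$t$ constraint $\abs{f(t)} \le L(1 + \sup_{s\le t}\abs{x(s)})$ is \emph{not} convex in $(x,f)$ jointly, one cannot simply take the linear interpolation $f^\lambda := (1-\lambda)f^0 + \lambda f^1$ and expect the solution of $\wx^\prime + A(t,\wx) = f^\lambda$ to lie in $\mathcal{X}^L$. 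Instead, the key trick (cf. the proof of Proposition~\ref{P:EU_ODE}) is to interpolate at the level of the \emph{driving data} in a way that keeps the bound: for $\lambda\in[0,1]$, define $f^\lambda(t,x)$ to be, say, $f^0(t)$ for $t\le t_0 + \lambda(T-t_0)$ and the solution-dependent term that reproduces $f^1$ along the new trajectory for later times — or, more cleanly, use the \emph{path-switching} construction already used in Lemma~\ref{L:EquivUpperSol} and in Case~3 of the existence proof: concatenate $x^0$ up to a time $\tau(\lambda) = t_0 + \lambda(T-t_0)$ and then run a trajectory in $\mathcal{X}^L(\tau(\lambda), x^0(\cdot\wedge\tau(\lambda)))$ from that point.

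Here is the cleaner route I would actually carry out. Define $\Gamma : [0,1] \to C([0,T],H)$ as follows. For $\lambda = 0$ set $\Gamma(0) = x^0$. For $\lambda \in (0,1]$, let $\tau_\lambda := t_0 + (1-\lambda)(T-t_0)$, let $y^\lambda := x^0(\cdot \wedge \tau_\lambda)$ be $x^0$ stopped at $\tau_\lambda$ (its restriction to $[0,\tau_\lambda]$ is a valid initial segment), and let $z^\lambda$ be the solution on $[\tau_\lambda, T]$ of $(z^\lambda)^\prime + A(t,z^\lambda) = f^1(t)$ with initial datum $y^\lambda$ on $[0,\tau_\lambda]$; this exists and lies in $\mathcal{X}^L(\tau_\lambda, y^\lambda)$ by the non-emptiness part of Proposition~\ref{P:compact} together with the a-priori estimate of Lemma~\ref{L:Apriori} (which bounds $\sup_{s}\abs{z^\lambda(s)}$ uniformly, hence keeps $\abs{f^1(t)} \le L(1+\sup_{s\le t}\abs{z^\lambda(s)})$; one needs here that the bound holds with the \emph{new} trajectory, which is exactly guaranteed because $f^1$ already satisfied it along $x^1$ and the sup is taken along whatever trajectory appears — wait, this needs care). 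In fact the honest statement is: $f^1$ was produced \emph{along} $x^1$, so $\abs{f^1(t)} \le L(1+\sup_{s\le t}\abs{x^1(s)})$, and there is no reason $\sup_{s\le t}\abs{z^\lambda(s)} \ge \sup_{s\le t}\abs{x^1(s)}$. So the genuinely safe construction is to interpolate \emph{affinely in the velocity field written as a non-anticipating functional satisfying the linear-growth bound}: set $g^\lambda(t,x) := (1-\lambda)\,\hat f^0(t,x) + \lambda\,\hat f^1(t,x)$ where $\hat f^i$ is any non-anticipating measurable extension of $f^i$ with $\abs{\hat f^i(t,x)} \le L(1+\sup_{s\le t}\abs{x(s)})$ for \emph{all} $x$ (e.g. $\hat f^i(t,x) := f^i(t)$ if $\sup_{s\le t}\abs{x(s)} \le \sup_{s\le t}\abs{x^i(s)}$, else scaled down); then $g^\lambda$ inherits the same bound, Proposition~\ref{P:EU_ODE}(i) gives a solution $x^\lambda \in \mathcal{X}^L(t_0,x_0)$, and one checks continuity of $\lambda \mapsto x^\lambda$ in $C([0,T],H)$ via the energy estimate \eqref{E:CompactEndProof} applied to $x^\lambda - x^{\lambda'}$ together with monotonicity of $A$ and dominated convergence, exactly as in the continuity argument for $\Gamma^M$ in the proof of Proposition~\ref{P:EU_ODE}. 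At the endpoints $g^0(\cdot, x^0) = f^0(\cdot)$ and $g^1(\cdot, x^1) = f^1(\cdot)$, so $x^0, x^1$ are recovered, giving the desired path.

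For the second part, note that the map $\Lambda : \mathcal{X}^L(t_0,x_0) \to C([0,T],H) \times C([t_0,T])$ defined by $\Lambda(x) := \bigl(x,\ y_x\bigr)$ with $y_x(t) := \int_{t_0}^t (f^x(s),\tilde z) - F(s,x,\tilde z)\,ds$ has image exactly $\mathcal{Y}^L(t_0,x_0,\tilde z)$ by \eqref{E:Y(t,x,z)}. It suffices to show $\Lambda$ is continuous, since the continuous image of a path-connected space is path-connected. Continuity in the first coordinate is trivial; for the second, if $x_n \to x$ in $C([0,T],H)$ within $\mathcal{X}^L(t_0,x_0)$, then by Lemma~\ref{L:CompactW} (applied along subsequences, using that the limit $f$ must equal $f^x$ by uniqueness of the forcing term in the Remark after Definition~\ref{D:trajectorySpace}) one gets $f^{x_n} \xrightarrow{w} f^x$ in $L^2(t_0,T;H)$, hence $\int_{t_0}^t (f^{x_n}(s),\tilde z)\,ds \to \int_{t_0}^t (f^x(s),\tilde z)\,ds$ uniformly in $t$ (the integrands are weakly convergent and $\mathbf 1_{[t_0,t]}\tilde z$ is a fixed $L^2$ function, and equicontinuity in $t$ is uniform by the $L^2$-bound of Lemma~\ref{L:Apriori}), while $\int_{t_0}^t F(s,x_n,\tilde z)\,ds \to \int_{t_0}^t F(s,x,\tilde z)\,ds$ uniformly by continuity of $F$ and compactness. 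Thus $y_{x_n} \to y_x$ in $C([t_0,T])$, so $\Lambda$ is continuous and $\mathcal{Y}^L(t_0,x_0,\tilde z)$ is path-connected. The main obstacle is the non-convexity of the constraint defining $\mathcal{X}^L$: getting the interpolating trajectory to genuinely stay inside $\mathcal{X}^L(t_0,x_0)$ is where the argument must be set up carefully, and the extension-then-interpolate device for the forcing functionals is the crucial point that makes it work.
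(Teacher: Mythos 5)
Your proposal is correct in substance but takes a genuinely different route from the paper's. The paper (leaning on the general result cited as \cite[Theorem~2]{PS97AMO}) builds the path by \emph{switching the forcing terms in time}: for $s\in[t_0,T]$ it considers $z_s$ solving $\mathbf{z}_s^\prime+A(t,\mathbf{z}_s)=f^x\bfone_{(t_0,s)}+f^y\bfone_{(s,T)}$ and shows that $s\mapsto z_s$ is continuous using monotonicity of $A$ and an energy estimate; for the second assertion it checks continuity of $s\mapsto(z_s,u_s)$ directly, where $u_s$ is the associated integral term. You instead extend each forcing to a non-anticipating functional satisfying the linear-growth bound for \emph{all} trajectories, interpolate affinely in $\lambda$, solve the resulting path-dependent equation via Proposition~\ref{P:EU_ODE}, and then obtain path-connectedness of $\mathcal{Y}^L(t_0,x_0,\tilde z)$ as the continuous image of $\mathcal{X}^L(t_0,x_0)$ under the graph map $x\mapsto(x,y_x)$, whose continuity you derive from Lemma~\ref{L:CompactW}, uniqueness of $f^x$, and the a-priori bound of Lemma~\ref{L:Apriori}. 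What your route buys is that membership of every interpolant in $\mathcal{X}^L(t_0,x_0)$ is automatic by construction --- exactly the point that the time-switching construction leaves implicit, since $\abs{f^y(t)}\le L(1+\sup_{r\le t}\abs{y(r)})$ holds along $y$ but need not hold along the switched trajectory $z_s$; the paper's route is shorter and avoids the extension device by invoking the cited theorem, and its direct treatment of $(z_s,u_s)$ is replaced in your argument by the cleaner and reusable continuous-image observation.

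Two details in your write-up need fixing, both within your own framework. First, the inequality in your sample extension is reversed: scaling is needed precisely when $\sup_{s\le t}\abs{x(s)}<\sup_{s\le t}\abs{x^i(s)}$; the clean choice is $\hat f^i(t,x):=f^i(t)\,\min\bigl\{1,\ (1+\sup_{s\le t}\abs{x(s)})/(1+\sup_{s\le t}\abs{x^i(s)})\bigr\}$, which satisfies the bound in both regimes. Second, for $\lambda\mapsto x^\lambda$ to be well defined (and for the endpoints $\lambda=0,1$ to reproduce $x^0,x^1$) you need \emph{uniqueness} of the solution, so the extension must be Lipschitz in $x$ rather than merely continuous; the choice above is Lipschitz with constant $L$, so you should invoke Proposition~\ref{P:EU_ODE}(ii) and then run a Gronwall estimate for $x^\lambda-x^{\lambda'}$, absorbing the term $g^\lambda(t,x^\lambda)-g^\lambda(t,x^{\lambda'})$ by the Lipschitz bound and controlling $g^\lambda-g^{\lambda'}$ by $\abs{\lambda-\lambda'}$; the continuity argument for $\Gamma^M$ in the proof of Proposition~\ref{P:EU_ODE} does not transfer verbatim, since there the forcing is frozen along the input rather than the output. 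With these adjustments the argument is complete.
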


\begin{proof}%[Proof of Lemma~\ref{L:Connected}]
Path-connectedness of $\mathcal{X}^L(t_0,x_0)$ follows immediately from 
\cite[Theorem~2]{PS97AMO}
(see also \cite[Theorem~3.5, p.~50f.]{HPHandbookII}). For the sake of completeness,
we provide a shortened proof in our less general setting.

Let $x$, $y\in\mathcal{X}^L(t_0,x_0)$.
Given $s\in [t_0,T]$, denote by $z_s$ the unique element of $\mathcal{X}(t_0,x_0)$
that satisfies
\begin{align*}
\mathbf{z}_s^\prime(t)+A(t,\mathbf{z}_s(t))=f^x(t).\bfone_{(t_0,s)}(t)+f^y(t).\bfone_{(s,T)}(t)
\text{ a.e.~on $(t_0,T)$.}
\end{align*}
It suffices to show that the mapping $s\mapsto z_s$, $[t_0,T]\to C([0,T],H)$, is continuous.
We show only right-continuity. Left-continuity can be shown similarly. 
To this end, let $s_n\downarrow s$ in $[t_0,T)$. % as $n\to\infty$.
Since $\mathcal{X}^L(t_0,x_0)$ is compact (Lemmas~\ref{L:CompactW} and \ref{L:Compact}), 
we can assume without loss of generality that
$z_{s_n}\to z$ in $C([0,T],H)$ %as $n\to\infty$
 for some $z\in\mathcal{X}^L(t_0,x_0)$.
Note that $z=z_s$ on $[0,s]$. Since $A$ is monotone,
we have, for every $N\in\N$, a.e.~$t\in[s_N,T]$, and every $n\ge N$,
\begin{align*}
\langle\mathbf{z}_{s_n}^\prime(t)-\mathbf{z}_s^\prime(t),
\mathbf{z}_{s_n}(t)-\mathbf{z}_s(t)\rangle&=
-\langle A(t,\mathbf{z}_{s_n}(t))-A(t,\mathbf{z}_s(t)),\mathbf{z}_{s_n}(t)-\mathbf{z}_s(t)\rangle
\\ &\qquad+(f^y(t)-f^y(t),{z}_{s_n}(t)-{z}_s(t))\\&\le 0,
\end{align*}
hence $(1/2) (d/dt) \abs{z_{s_n}(t)-z_s(t)}^2\le 0$ and thus, for every $t\in [s_N,T]$,
\begin{align*}
\abs{z_{s_n}(t)-z_s(t)}^2\le\abs{z_{s_n}(s_N)-z_s(s_N)}^2.
\end{align*}
Consequently, for every $t\in [s,T]$ and sufficiently large $N\in\N$,
\begin{align*}
\abs{z_{s_n}(t)-z_s(t)}\le\underbrace{\abs{z_{s_n}(s_N)-z(s_N)}}_{
\le \norm{z_{s_n}-z}_\infty \to 0\quad (n\to\infty)
}+\underbrace{\abs{z(s_N)-z_s(s_N)}}_{\to 0\quad (N\to\infty)}.
\end{align*}
Thus we have shown that $\mathcal{X}^L(t_0,x_0)$ is path-connected in
$C([0,T],H)$.
To finish the proof, it suffices to verify that the mapping
%\begin{align*}
$s\mapsto (z_s,u_s)$, %\quad 
$[t_0,T]\to C([0,T],H)\times C([t_0,T])$,
%\end{align*}
with $u_s$ being defined by
\begin{align*}
u_s(t):=\int_{t_0}^t (\bfone_{(t_0,s)}(r)\cdot f^x(r)+\bfone_{(s,T)}(r)\cdot f^y(r),\tilde{z})-F(r,z_s,\tilde{z})\,dr,
\quad t\in [t_0,T],
\end{align*}
is continuous. This can be done in the same manner as the proof of continuity for the 
mapping $s\mapsto z_s$, $[t_0,T]\to C([0,T],H)$.
\end{proof}

\section{Other notions of solutions} \label{App:Discussion}

\subsection{Classical solutions}
%\subsection{Compatibility of minimax solutions with classical solutions}
We introduce here a new notion of classical solution for our setting.
This notion is compatible with our notion of minimax solution solutions.
Moreover, it is a faithful extension of the usual classical solution in the case when
$H=V$ and $A$ is continuous. We refer to
Proposition~\ref{P:ClassicalEquivClassical} %in Appendix~\ref{App:Discussion} 
for more details regarding the last point.

\begin{definition}\label{D:Classical}
Let $u\in\mathcal{C}_V^{1,1}([0,T]\times C([0,T],H))$.
Let $L\ge 0$ and let $\tilde{\Omega}$ be some set with 
$\Omega^L\subseteq\tilde{\Omega}
\subseteq C([0,T],H)$.

(i) We say that the function $u$ is a \emph{classical $L$-supersolution} of \eqref{E:PPDE2} 
on $[0,T]\times\tilde{\Omega}$
 if $u(T,\cdot)\ge h$,
and if, for every $(t_0,x_0,z)\in [0,T)\times\Omega\times H$,
there exists a trajectory $x\in\mathcal{X}^L(t_0,x_0)$ such that 
\begin{equation}\label{E:classUpper}
\begin{split}
&\partial_t u(t_0,x_0)+F(t_0,x_0,z)\\ &+
\varlimsup_{\delta\downarrow 0} \frac{1}{\delta}\left[\int_{t_0}^{t_0+\delta} (f^x(t),\partial_x u(t,x)-z)- \langle A(t,\wx(t)),\partial_x  u(t,x)\rangle\,dt\right]
\le 0.
\end{split}
\end{equation}

(ii) We say that the function $u$ is a \emph{classical $L$-subsolution} of \eqref{E:PPDE2}
on $[0,T]\times\tilde{\Omega}$ if 
 $u(T,\cdot)\le h$,
and if, for every $(t_0,x_0,z)\in [0,T)\times\Omega\times H$,
there exists a trajectory $x\in\mathcal{X}^L(t_0,x_0)$ such that 
\begin{equation*}
\begin{split}
&\partial_t u(t_0,x_0)+F(t_0,x_0,z)\\ &+
\varliminf_{\delta\downarrow 0} \frac{1}{\delta}\left[\int_{t_0}^{t_0+\delta} (f^x(t),\partial_x u(t,x)-z)- \langle A(t,\wx(t)),\partial_x  u(t,x)\rangle\,dt\right]
\ge 0.
\end{split}
\end{equation*}

(iii) We say that $u$ is a \emph{classical $L$-solution} of \eqref{E:PPDE2} if 
 $u(T,\cdot) = h$,
and if, for every $(t_0,x_0,z)\in [0,T)\times\Omega\times H$,
there exists an $x\in\mathcal{X}^L(t_0,x_0)$ such that 
\begin{equation*}
\begin{split}
&\partial_t u(t_0,x_0)+F(t_0,x_0,z)\\ &+
\lim_{\delta\downarrow 0} \frac{1}{\delta}\left[\int_{t_0}^{t_0+\delta} (f^x(t),\partial_x u(t,x)-z)- \langle A(t,\wx(t)),\partial_x  u(t,x)\rangle\,dt\right]
= 0.
\end{split}
\end{equation*}
\end{definition}

The following statement justifies our slightly involved notion of classical solutions,
in particular, the reason that, in the case of classical $L$-supersolutions,
we want \eqref{E:classUpper} to hold for all
$z\in H$ and not only for $\partial_x u(t_0,x_0)$.

\begin{theorem}[Compatibility] \label{T:ConsMinimaxClass}
Let $u\in\mathcal{C}_V^{1,1}([0,T]\times C([0,T],H))$. 
Let $L\ge 0$ and let $\tilde{\Omega}$ be some set with 
$\Omega^L\subseteq\tilde{\Omega}
\subseteq C([0,T],H)$.
Then
$u$ is a minimax  $L$-solution (resp.~minimax $L$-subsolution, 
resp.~minimax $L$-supersolution)  of \eqref{E:PPDE2}
on $[0,T]\times\tilde{\Omega}$
if and only if $u$ is a classical $L$-solution (resp.~classical $L$-subsolution, 
resp.~classical $L$-supersolution) 
of \eqref{E:PPDE2} on $[0,T]\times\tilde{\Omega}$..
\end{theorem}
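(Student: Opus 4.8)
The plan is to prove the equivalence by unwinding both definitions and showing they say the same thing, with the functional chain rule (Definition~\ref{PDC11}, the identity \eqref{E:ChainRule}) serving as the bridge between the integral formulation of minimax (semi)solutions and the infinitesimal formulation of classical (semi)solutions. I will treat the supersolution case in detail; the subsolution case is symmetric and the solution case follows by combining the two (using Proposition~\ref{P:MinimaxUpperLower} on the minimax side).

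First I would fix $(t_0,x_0,z)\in [0,T)\times\tilde\Omega\times H$ and, given any $x\in\mathcal{X}^L(t_0,x_0)$, apply the functional chain rule to $u$ along $x$ on $[t_0,t_0+\delta]$. Since $\wx'(t)+A(t,\wx(t))=f^x(t)$ a.e., this gives
\begin{align*}
u(t_0+\delta,x)-u(t_0,x_0)&=\int_{t_0}^{t_0+\delta}\partial_t u(s,x)+\langle\wx'(s),\partial_x u(s,x)\rangle\,ds\\
&=\int_{t_0}^{t_0+\delta}\partial_t u(s,x)-\langle A(s,\wx(s)),\partial_x u(s,x)\rangle+(f^x(s),\partial_x u(s,x))\,ds.
\end{align*}
Adding $\int_{t_0}^{t_0+\delta}(-f^x(s),z)+F(s,x,z)\,ds$ and dividing by $\delta$, the quantity appearing inside the infimum in the Dini-type characterization \eqref{E:Dini} of Proposition~\ref{P:Dini}(i) becomes exactly
\begin{align*}
\frac1\delta\int_{t_0}^{t_0+\delta}\Bigl[\partial_t u(s,x)-\langle A(s,\wx(s)),\partial_x u(s,x)\rangle+(f^x(s),\partial_x u(s,x)-z)+F(s,x,z)\Bigr]\,ds.
\end{align*}
Now $s\mapsto\partial_t u(s,x)$ and $s\mapsto F(s,x,z)$ are continuous (by Definition~\ref{PDC11} and \textbf{H}($F$)(i)), so as $\delta\downarrow0$ their averages converge to $\partial_t u(t_0,x_0)$ and $F(t_0,x_0,z)$ respectively; the remaining term $\frac1\delta\int_{t_0}^{t_0+\delta}[(f^x(s),\partial_x u(s,x)-z)-\langle A(s,\wx(s)),\partial_x u(s,x)\rangle]\,ds$ is precisely the $\varlimsup$ (resp.\ $\varliminf$, $\lim$) expression occurring in Definition~\ref{D:Classical}. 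Hence, taking $\varliminf_{\delta\downarrow0}$ and then $\inf$ over $x\in\mathcal{X}^L(t_0,x_0)$, condition \eqref{E:Dini} holds for all $(t_0,x_0,z)$ if and only if the classical $L$-supersolution inequality \eqref{E:classUpper} holds for all $(t_0,x_0,z)$ — with the one caveat that in \eqref{E:Dini} the infimum over $x$ and the $\varliminf$ over $\delta$ appear in the opposite order from "for some $x$, the $\varlimsup$ is $\le0$"; I need to check that for a classical supersolution $u$ (which is continuous, hence in $\mathrm{LSC}$) these orders can be interchanged, which is where I would lean on the compactness of $\mathcal{X}^L(t_0,x_0)$ and an argument in the spirit of Proposition~\ref{P:Dini}'s proof. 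The terminal conditions match trivially ($u(T,\cdot)\ge h$ in both), and $u\in\mathcal{C}_V^{1,1}\subseteq C\subseteq\mathrm{LSC}$ so the semicontinuity requirement of the minimax notion is automatic.

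The main obstacle, as flagged above, is the quantifier/order mismatch between the "there exists $x$ such that the one-sided limit inequality holds" form of Definition~\ref{D:Classical} and the "$\inf_x\varliminf_\delta\le0$" form of the Dini characterization (and, underneath that, the original integral-over-$[t_0,t]$ form of the minimax definition, which Lemma~\ref{L:EquivUpperSol} and Proposition~\ref{P:Dini} already reconcile). Concretely: from a classical supersolution I get, for each $(t_0,x_0,z)$, a single trajectory $x$ realizing \eqref{E:classUpper}, which immediately yields $\inf_x\varliminf_\delta(\cdots)\le0$, hence \eqref{E:Dini}, hence (Proposition~\ref{P:Dini}) the minimax supersolution property — this direction is clean. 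Conversely, from the minimax supersolution property I get \eqref{E:Dini}, i.e.\ $\inf_x\varliminf_\delta(\cdots)\le0$; to produce a \emph{single} $x$ with $\varlimsup_\delta(\cdots)\le0$ as \eqref{E:classUpper} demands, I would argue that since $u$ is smooth the integrand $s\mapsto\partial_t u(s,x)-\langle A(s,\wx(s)),\partial_x u(s,x)\rangle+(f^x(s),\partial_x u(s,x)-z)+F(s,x,z)$ is an $L^1$ function whose average as $\delta\downarrow0$ has the same $\varliminf$ and $\varlimsup$ behaviour only at Lebesgue points; the cleanest route is instead to go back to the \emph{global} integral characterization from Lemma~\ref{L:EquivUpperSol} — i.e.\ there is an $x\in\mathcal{X}^L(t_0,x_0)$ with $u(t_0,x_0)\ge\int_{t_0}^t[(-f^x(s),z)+F(s,x,z)]\,ds+u(t,x)$ for \emph{all} $t\in[t_0,T]$ — apply the chain rule to rewrite the right side, divide by $t-t_0$, and let $t\downarrow t_0$; continuity of $\partial_t u(\cdot,x)$, $F(\cdot,x,z)$ and the fact that the resulting difference quotient then has a genuine limit (not just $\varlimsup$) at $t_0$ gives \eqref{E:classUpper} for that same $x$. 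So the real content is: re-derive the equivalence at the level of the chain-rule identity and invoke Lemma~\ref{L:EquivUpperSol}/Proposition~\ref{P:Dini} to handle the two formulations. I would write it up by first stating the chain-rule rewriting as a short lemma, then doing supersolution $\Rightarrow$ and $\Leftarrow$ in a few lines each, then noting subsolution is symmetric and solution follows from Proposition~\ref{P:MinimaxUpperLower} together with the observation that a classical $L$-solution is simultaneously a classical $L$-sub- and $L$-supersolution (via the existence, for each $(t_0,x_0,z)$, of one $x$ where the two-sided limit vanishes, which splits into the two one-sided statements with the \emph{same} $x$).

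One more point I would be careful about: in Definition~\ref{D:Classical} the triple ranges over $[0,T)\times\Omega\times H$ whereas in the minimax notion it ranges over $[0,T)\times\tilde\Omega\times H$; since $\Omega=\cup_{L\ge0}\Omega^L\subseteq\tilde\Omega$ this looks like a genuine discrepancy, but I expect it is a typo in the definition (it should read $\tilde\Omega$), and in any case for the statement as phrased one works with whichever set the hypothesis supplies and the argument is unchanged — I would simply note that the quantifier ranges agree (reading $\Omega$ as $\tilde\Omega$) so the pointwise equivalence "$(t_0,x_0,z)$-wise classical ineq.\ $\Longleftrightarrow$ $(t_0,x_0,z)$-wise minimax/Dini ineq." upgrades to the global equivalence of the two solution notions. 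No further estimates are needed; everything rests on the chain rule, continuity of the data, and compactness of $\mathcal{X}^L(t_0,x_0)$, all of which are available from earlier in the paper.
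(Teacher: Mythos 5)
Your treatment of the sub- and supersolution cases is essentially the paper's own proof: the forward direction (classical $\Rightarrow$ minimax) goes through the Dini-type characterization of Proposition~\ref{P:Dini} after rewriting the difference quotient with the functional chain rule, and the converse uses the single trajectory supplied by the minimax definition (equivalently Lemma~\ref{L:EquivUpperSol}), the chain rule, and the convergence of the averages of $\partial_t u(\cdot,x)+F(\cdot,x,z)$ to $\partial_t u(t_0,x_0)+F(t_0,x_0,z)$. One small imprecision in your converse step: the full difference quotient need not have a genuine limit. What you actually need (and what suffices) is that the inequality $\frac1\delta\int_{t_0}^{t_0+\delta}\bigl[\partial_t u+F+(f^x,\partial_x u-z)-\langle A(s,\wx(s)),\partial_x u\rangle\bigr]ds\le 0$, valid for every $\delta$, combined with the genuine limit of the $\partial_t u+F$ part, lets you isolate the $\varlimsup$ of the remaining averages and obtain \eqref{E:classUpper} for that same $x$ --- which is exactly the computation in the paper. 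Your observation about the $\Omega$ versus $\tilde\Omega$ range in Definition~\ref{D:Classical} is also fair; it is an inconsistency in the definition that does not affect the argument.

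The genuine gap is in the solution case. Your plan --- deduce it from Proposition~\ref{P:MinimaxUpperLower} plus the observation that a classical $L$-solution is simultaneously a classical $L$-sub- and $L$-supersolution --- only delivers one direction. For classical $\Rightarrow$ minimax it works: the single $x$ with a two-sided limit splits into the two one-sided statements, the semisolution equivalences apply, and Proposition~\ref{P:MinimaxUpperLower} reassembles them. But for minimax $\Rightarrow$ classical you would obtain that $u$ is a classical $L$-sub- and a classical $L$-supersolution, and these furnish two possibly different trajectories carrying one-sided $\varlimsup$/$\varliminf$ inequalities, whereas Definition~\ref{D:Classical}(iii) demands a \emph{single} $x$ along which the genuine limit exists and the expression vanishes; that does not follow formally from the two one-sided statements. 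The repair is to run the supersolution computation with equality: the minimax $L$-solution property gives one $x\in\mathcal{X}^L(t_0,x_0)$ with $u(t,x)-u(t_0,x_0)=\int_{t_0}^t[(f^x(s),z)-F(s,x,z)]\,ds$ for all $t\in[t_0,T]$; the chain rule turns this into $\int_{t_0}^{t_0+\delta}\bigl[\partial_t u+F+(f^x,\partial_x u-z)-\langle A(s,\wx(s)),\partial_x u\rangle\bigr]ds=0$ for every $\delta$, and since the average of $\partial_t u+F$ converges as $\delta\downarrow 0$, the average of the remaining integrand converges as well, yielding precisely the classical $L$-solution identity along that same $x$. With that direct argument inserted, your write-up closes the remaining direction and the whole proof is in line with the paper's.
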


\begin{proof}
We prove only the case of equivalence between minimax $L$-supersolutions
 and classical $L$-supersolutions. To this end, fix $(t_0,x_0,z)\in [0,T)\times\tilde{\Omega}\times H$.

(i) Let $u$ be a classical $L$-supersolution of \eqref{E:PPDE2} on $[0,T]\times\tilde{\Omega}$.
Then there exists an $x\in\mathcal{X}^L(t_0,x_0)$ such that
\eqref{E:classUpper} holds. To show that
$u$ is an upper solution, it suffices, by Proposition~\ref{P:Dini},
to check that \eqref{E:Dini} holds. Indeed, 
applying the functional chain rule~\eqref{E:ChainRule}
as well as \eqref{E:classUpper} yields
%\newpage
\begin{align*}
&\varliminf_{\delta\downarrow 0} \frac{1}{\delta}\left[
u(t_0+\delta,x)-u(t_0,x_0)+\int_{t_0}^{t_0+\delta} 
(-f^x(t),z)+F(t,x,z)\,dt
\right]\\
& \le
\varlimsup_{\delta\downarrow 0} \frac{1}{\delta}\left[
\int_{t_0}^{t_0+\delta} 
\partial_t u(t,x)+\langle \wx^\prime(t),\partial_x u(t,x)\rangle 
-(f^x(t),z)+F(t,x,z)\,dt
\right]\\
&\le \partial_t u(t_0x_0)+F(t_0,x_0,z)\\
&\qquad+\varlimsup_{\delta\downarrow 0} \frac{1}{\delta}\left[
\int_{t_0}^{t_0+\delta} 
(f^x(t),\partial_x u(t,x)-z)-
\langle A(t,\wx(t)),\partial_x u(t,x)\rangle 
\,dt
\right]\\
&\le 0.
\end{align*}

(ii) Let $u$ be a minimax $L$-supersolution of \eqref{E:PPDE2}.
Then there exists an $x\in\mathcal{X}^L(t_0,x_0)$ such that, for every
$\delta\in (0,T-t_0]$,
\begin{align*}
u(t_0+\delta,x)-u(t_0,x_0)+\int_{t_0}^{t_0+\delta} F(t,x,z)-(f^x(t),z)\,dt\le 0.
\end{align*}
Hence,
\begin{align*}
&\partial_t u(t_0,x_0)+F(t_0,x_0,z)=
\lim_{\delta\downarrow 0}\frac{1}{\delta} \left[\int_{t_0}^{t_0+\delta}
\partial_t u(t,x)+F(t,x,z)\,dt
\right]\\
&=\varliminf_{\delta\downarrow 0}\frac{1}{\delta} \left[
u(t_0+\delta,x)-u(t_0,x_0)+\int_{t_0}^{t_0+\delta}
\langle -\wx^\prime(t),\partial_x u(t,x)\rangle+
F(t,x,z)
\,dt
\right]\\
&\le \varliminf_{\delta\downarrow 0}\frac{1}{\delta} \left[
\int_{t_0}^{t_0+\delta}
(f^x(t),z)+\langle  A(t,\wx(t)-f^x(t),\partial_x u(t,x)\rangle
\,dt
\right],
\end{align*}
i.e, \eqref{E:classUpper} is satisfied.
\end{proof}

\begin{proposition}\label{P:ClassicalEquivClassical}
Suppose that $V=H$ and that the map $(t,v)\mapsto A(t,v)$, $[0,T]\times H\to H$, is continuous.
Let $u\in\mathcal{C}^{1,1}_ V([0,T]\times\ C([0,T],H))$ with $u(T,\cdot)\ge h$.
Let $\Omega^{L_0}\subseteq \tilde{\Omega}\subseteq C([0,T],H)$.
Then $u$ is a classical  $L_0$-supersolution of \eqref{E:PPDE2} on $[0,T]\times\tilde{\Omega}$
 in the sense of
 Definition~\ref{D:Classical} if and only if $u$ is a classical supersolution 
 of \eqref{E:PPDE2} on $[0,T]\times\tilde{\Omega}$
  in the usual sense, i.e., for every $(t_0,x_0)\in [0,T)\times\tilde{\Omega}$,
 \begin{align*}
 \partial_t u(t_0,x_0)-(A(t_0,x_0(t_0)),\partial_x u(t_0,x_0))+F(t_0,x_0,\partial_x u(t_0,x_0))\le 0.
 \end{align*}
\end{proposition}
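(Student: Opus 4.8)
The plan is to prove the equivalence as a \emph{pointwise} statement: both notions are characterized by one infinitesimal inequality per point $(t_0,x_0)$ (and, for Definition~\ref{D:Classical}, per $z\in H$), so it suffices to match these up at a fixed $(t_0,x_0)$. Throughout, $V=H$ gives $\langle\cdot,\cdot\rangle=(\cdot,\cdot)$. The starting observation — and the source of every limit computation below — is that for $x\in\mathcal{X}^{L_0}(t_0,x_0)$ the bracket in \eqref{E:classUpper} becomes manageable as $\delta\downarrow 0$: since $\wx\in W_{pq}(t_0,T)\hookrightarrow C([t_0,T],H)$ (Remark~\ref{R:Wpq:continuous}), since $(t,v)\mapsto A(t,v)$ is continuous, and since $\partial_x u$ is $\mathbf{d}_\infty$-continuous, the maps $t\mapsto A(t,\wx(t))$ and $t\mapsto\partial_x u(t,x)$ are continuous on $[t_0,T]$ with values $A(t_0,x_0(t_0))$ and $\partial_x u(t_0,x_0)$ at $t_0$ (using $\wx(t_0)=x(t_0)=x_0(t_0)$), while $|f^x|\le L_0(1+\norm{x}_\infty)$ is bounded near $t_0$. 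Writing $\partial_x u(t,x)=\partial_x u(t_0,x_0)+[\partial_x u(t,x)-\partial_x u(t_0,x_0)]$, the cross term $\frac1\delta\int_{t_0}^{t_0+\delta}(f^x(t),\partial_x u(t,x)-\partial_x u(t_0,x_0))\,dt\to 0$ and $\frac1\delta\int_{t_0}^{t_0+\delta}(A(t,\wx(t)),\partial_x u(t,x))\,dt\to (A(t_0,x_0(t_0)),\partial_x u(t_0,x_0))$, so for any fixed $w\in H$
\begin{align*}
&\varlimsup_{\delta\downarrow0}\frac1\delta\int_{t_0}^{t_0+\delta}\bigl[(f^x(t),\partial_x u(t,x)-w)-\langle A(t,\wx(t)),\partial_x u(t,x)\rangle\bigr]\,dt\\
&\qquad=-(A(t_0,x_0(t_0)),\partial_x u(t_0,x_0))+\varlimsup_{\delta\downarrow0}\frac1\delta\int_{t_0}^{t_0+\delta}(f^x(t),\partial_x u(t_0,x_0)-w)\,dt.
\end{align*}

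For the ``only if'' direction (Definition~\ref{D:Classical} $L_0$-supersolution $\Rightarrow$ usual supersolution) I would apply the defining property with the \emph{special} choice $z:=\partial_x u(t_0,x_0)$. This yields some $x\in\mathcal{X}^{L_0}(t_0,x_0)$ for which \eqref{E:classUpper} holds, but with $w=z=\partial_x u(t_0,x_0)$ the residual term $(f^x(t),\partial_x u(t_0,x_0)-w)\equiv 0$, so the displayed identity collapses the whole $\varlimsup$ to $-(A(t_0,x_0(t_0)),\partial_x u(t_0,x_0))$, and \eqref{E:classUpper} becomes exactly $\partial_t u(t_0,x_0)-(A(t_0,x_0(t_0)),\partial_x u(t_0,x_0))+F(t_0,x_0,\partial_x u(t_0,x_0))\le 0$.

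For the ``if'' direction (usual supersolution $\Rightarrow$ Definition~\ref{D:Classical} $L_0$-supersolution) I would, given $z\in H$, construct the path $x$ that neutralizes the discrepancy between $z$ and $\partial_x u(t_0,x_0)$, exactly as in the proof of Lemma~\ref{L:ImUpper}: set $\tilde f(t,x):=\frac{F(t,x,z)-F(t,x,\partial_x u(t_0,x_0))}{|z-\partial_x u(t_0,x_0)|^2}\,(z-\partial_x u(t_0,x_0))$ when $z\neq\partial_x u(t_0,x_0)$ and $\tilde f\equiv 0$ otherwise; by \textbf{H}($F$)(i),(ii) the map $\tilde f$ is non-anticipating, continuous in the space variable, and satisfies $|\tilde f(t,x)|\le L_0(1+\sup_{s\le t}|x(s)|)$, so Proposition~\ref{P:EU_ODE}(i) provides $x\in\mathcal{X}^{L_0}(t_0,x_0)$ with $f^x=\tilde f(\cdot,x)$. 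For this $x$ one has, in both cases, $(f^x(t),\partial_x u(t_0,x_0)-z)=-\bigl(F(t,x,z)-F(t,x,\partial_x u(t_0,x_0))\bigr)$, which is continuous in $t$; hence $\frac1\delta\int_{t_0}^{t_0+\delta}(f^x(t),\partial_x u(t_0,x_0)-z)\,dt\to-\bigl(F(t_0,x_0,z)-F(t_0,x_0,\partial_x u(t_0,x_0))\bigr)$. Plugging this into the identity above (with $w=z$), the left side of \eqref{E:classUpper} equals $\partial_t u(t_0,x_0)+F(t_0,x_0,z)-(A(t_0,x_0(t_0)),\partial_x u(t_0,x_0))-F(t_0,x_0,z)+F(t_0,x_0,\partial_x u(t_0,x_0))=\partial_t u(t_0,x_0)+F(t_0,x_0,\partial_x u(t_0,x_0))-(A(t_0,x_0(t_0)),\partial_x u(t_0,x_0))\le 0$ by the usual supersolution inequality, so \eqref{E:classUpper} holds. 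The terminal condition $u(T,\cdot)\ge h$ is part of the hypotheses on both sides and needs nothing.

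The step I expect to be the main obstacle is the rigorous justification of the limit passages in $\delta$, because $f^x$ is only $L^2(t_0,T;H)$, so $\frac1\delta\int_{t_0}^{t_0+\delta}f^x$ need not converge. The resolution is the decomposition already indicated: the only place $f^x$ meets a genuinely time-varying factor is the cross term $(f^x(t),\partial_x u(t,x)-\partial_x u(t_0,x_0))$, whose average vanishes because $f^x$ is $L^\infty$-bounded near $t_0$ by the growth estimate while the other factor tends to $0$; the remaining scalar $(f^x(t),\partial_x u(t_0,x_0)-w)$ is made either identically $0$ (direction one, via $w=\partial_x u(t_0,x_0)$) or a continuous function of $t$ (direction two, via the Lemma~\ref{L:ImUpper}-type construction), so no uncontrolled averaging of $f^x$ ever occurs. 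A secondary technical point is checking $\mathbf{d}_\infty((t,x),(t_0,x_0))\to0$ as $t\downarrow t_0$ along $x\in\mathcal{X}^{L_0}(t_0,x_0)$, which follows from $x=x_0$ on $[0,t_0]$ and continuity of $x$, and yields $\partial_x u(t,x)\to\partial_x u(t_0,x_0)$.
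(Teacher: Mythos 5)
Your proof is correct and follows essentially the same route as the paper: the nontrivial direction is handled, as in the paper, by solving the evolution equation of Proposition~\ref{P:EU_ODE} with a forcing term built from a difference quotient of $F$, so that $(f^x(t),\partial_x u-z)$ collapses to a difference of $F$-values, and then passing to the limit $\delta\downarrow 0$ using continuity of $A$, $F$, $\partial_x u$ and the $L^\infty$-bound on $f^x$. The only differences are minor and, if anything, in your favor: you freeze the gradient at $\partial_x u(t_0,x_0)$ in the construction (making the coefficient of the forcing term constant, so the continuity hypothesis of Proposition~\ref{P:EU_ODE}(i) is immediate, whereas the paper's $\tilde f$ uses the time-varying $\partial_x u(t,x)$) and absorb the discrepancy into the vanishing cross term $(f^x(t),\partial_x u(t,x)-\partial_x u(t_0,x_0))$, and you also spell out the easy direction via the choice $z=\partial_x u(t_0,x_0)$, which the paper treats as trivial and omits.
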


\begin{proof}
We show only the nontrivial direction. To this end, let $u$ be a classical supersolution of
\eqref{E:PPDE2} in the usual sense on $[0,T]\times\tilde{\Omega}$
and fix $(t_0,x_0,z)\in [0,T)\times\tilde{\Omega}\times H$.
Furthermore, let
$\tilde{\mathbf{x}}\in W_{pq}(t_0,T)$ be the solution of
\begin{align*}
 \tilde{\mathbf{x}}^\prime(t)+ A(t,\tilde{\mathbf{x}}(t))&=\tilde{f}(t,\tilde{x})
\text{ a.e.~on $(t_0,T)$,}\quad
\tilde{x}=x_0\text{ on $[0,t_0]$,}
\end{align*}
where  $\tilde{f}:[t_0,T]\times C([0,T],H)\to H$ is defined by
\begin{align*}
\tilde{f}(t,x):=
\begin{cases}\frac{F(t,x,\partial_x u(t,x))-F(t,x,z)}{\abs{\partial_x u(t,x)-z)}^2}
(\partial_x u(t,x)-z)\text{ if $\partial_x u(t,x)\neq z$,}\\
0\text{ if $\partial_x u(t,x)=z$.}
\end{cases}
\end{align*}
Then $\tilde{x}\in\mathcal{X}^{L_0}(t_0,x_0)$ with $f^{\tilde{x}}=\tilde{f}(\cdot,\tilde{x})$
and
\begin{align*}
&\partial_t u(t_0,x_0)+F(t_0,x_0,z)\\ &\qquad\quad+\varlimsup_{\delta\downarrow 0}
\frac{1}{\delta}\left[\int_{t_0}^{t_0+\delta} (f^{\tilde{x}}(t),\partial_x u(t,\tilde{x})-z)
-(A(t,\tilde{x}),\partial_x u(t,\tilde{x}))\,dt\right]\\
&=\partial_t u(t_0,x_0)+F(t_0,x_0,z)\\&\qquad\quad+\varlimsup_{\delta\downarrow 0}
\frac{1}{\delta}\left[\int_{t_0}^{t_0+\delta} F(t,\tilde{x},\partial_x(t,\tilde{x}))-F(t,\tilde{x},z)
-(A(t,\tilde{x}),\partial_x u(t,\tilde{x}))\,dt\right]\\
&=\partial_t u(t_0,x_0)-(A(t_0,x_0(t_0)),\partial_x u(t_0,x_0))+F(t_0,x_0,\partial_x u(t_0,x_0))\le 0.
\end{align*}
I.e., $u$ is a classical $L_0$-supersolution of \eqref{E:PPDE2} on 
$[0,T]\times\tilde{\Omega}$ in the sense of Definition~\ref{D:Classical} .
\end{proof}

\subsection{Viscosity solutions}
Given $L\ge 0$,  a set $\tilde{\Omega}$ with $\Omega^{L}
\subseteq\tilde{\Omega}\subseteq C([0,T],H)$, a non-anticipating function
$u:[0,T]\times\tilde{\Omega}\times\to\R$, and
 $(t_0,x_0)\in [0,T)\times\tilde{\Omega}$, put
 \begin{align*}
&\underline{\mathcal{A}}^L u(t_0,x_0):=\{\varphi\in\mathcal{C}_V^{1,1}
([t_0,T]\times%\tilde{\Omega}\times\tilde{\Omega}
C([0,T],H)):\exists T_0\in (t_0,T]:\\
&\qquad 0=(\varphi-u)(t_0,x_0)=\inf_{(t,x)\in [t_0,T_0]\times
\mathcal{X}^{L}(t_0,x_0)} [(\varphi-u)(t,x)]\}.
\end{align*}

\begin{definition}
Fix a set $\tilde{\Omega}$ with   
$\Omega^{L}\subseteq\tilde{\Omega}\subseteq C([0,T],H)$.

(i) Let $L\ge 0$. A function $u\in\mathrm{USC}([0,T]\times\tilde{\Omega})$ 
 is a \emph{viscosity  $L$-subsolution} of \eqref{E:PPDE2}
on $[0,T]\times\tilde{\Omega}$
 if, for every  $(t_0,x_0)\in [0,T)\times\tilde{\Omega}$ and
  for every test function
 $\varphi\in\underline{\mathcal{A}}^L u(t_0,x_0)$, 
there exists an $x\in \mathcal{X}^{L}(t_0,x_0)$  such that
\begin{align*}
&\partial_t\varphi(t_0,x_0)%\\ &\qquad
+\varlimsup_{\delta\downarrow 0} \frac{1}{\delta} \Biggl[\int_{t_0}^{t_0+\delta}
-\langle A(t,\wx(t)),\partial_x  \varphi(t,x)\rangle\,dt \Biggr]
+F(t_0,x_0,\partial_x\varphi(t_0,x_0))
\ge 0.
\end{align*}
(ii) Let $L\ge 0$. A function $u\in\mathrm{LSC}([0,T]\times\tilde{\Omega})$ 
 is a \emph{viscosity  $L$-supersolution} of \eqref{E:PPDE2}
on $[0,T]\times\tilde{\Omega}$
 if, for every  $(t_0,x_0)\in [0,T)\times\tilde{\Omega}$ and
  for every test function
 $\varphi\in\underline{\mathcal{A}}^L u(t_0,x_0)$, 
there exists an $x\in \mathcal{X}^{L}(t_0,x_0)$  such that
\begin{align*}
&\partial_t\varphi(t_0,x_0)%\\ &\qquad
+\varlimsup_{\delta\downarrow 0} \frac{1}{\delta} \Biggl[\int_{t_0}^{t_0+\delta}
-\langle A(t,\wx(t)),\partial_x  \varphi(t,x)\rangle\,dt \Biggr]
+F(t_0,x_0,\partial_x\varphi(t_0,x_0))
\le 0.
\end{align*}
(iii) A function $u\in\mathrm{USC}([0,T]\times\tilde{\Omega})$ 
(resp.~$\in\mathrm{LSC}([0,T]\times\tilde{\Omega})$ is a 
\emph{viscosity  subsolution} (resp.~\emph{viscosity supersolution}) of \eqref{E:PPDE2}
on $[0,T]\times\tilde{\Omega}$ if it is a viscosity $L$-subsolution 
(resp.~viscosity $L$-supersolution) of \eqref{E:PPDE2}
on $[0,T]\times\tilde{\Omega}$  for some $L\ge 0$.

(iv) A function $u\in C([0,T]\times\tilde{\Omega})$ is a \emph{viscosity solution} 
of \eqref{E:PPDE2}
on $[0,T]\times\tilde{\Omega}$ if it is both, a viscosity sub- as well as a viscosity supersolution
of \eqref{E:PPDE2}
on $[0,T]\times\tilde{\Omega}$.
\end{definition}

Similarly as in the proof of Theorem~\ref{T:MinimaxDoubling}, one can show that
a minimax solution is a viscosity solution. Thus, together with 
Theorem~\ref{T:ExistenceMinimax2}, we obtain the following result.

\begin{theorem}[Existence]
There exists a viscosity solution 
of \eqref{E:PPDE2}
on $[0,T]\times\Omega$.
\end{theorem}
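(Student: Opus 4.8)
The plan is to deduce the existence of a viscosity solution directly from the already-established existence of a minimax solution, by showing that every minimax solution is automatically a viscosity solution. First I would recall from Theorem~\ref{T:ExistenceMinimax2} that there is a unique minimax solution $u_0:[0,T]\times\Omega\to\R$, and that its restriction to $[0,T]\times\Omega^L$ is a minimax $L_0$-solution for every $L\ge L_0$. By Proposition~\ref{P:MinimaxUpperLower}, $u_0$ is simultaneously a minimax $L_0$-sub- and a minimax $L_0$-supersolution of \eqref{E:PPDE2} on $[0,T]\times\Omega^L$. In particular $u_0\in C([0,T]\times\Omega)$, so the semicontinuity requirements in the definition of viscosity solution are met.

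The core step is to prove the analogue of the doubling theorem (Theorem~\ref{T:MinimaxDoubling}) in the ``single variable'' case: if $u$ is a minimax $L$-subsolution of \eqref{E:PPDE2} on $[0,T]\times\tilde\Omega$, then $u$ is a viscosity $L$-subsolution of \eqref{E:PPDE2} on $[0,T]\times\tilde\Omega$; and symmetrically for supersolutions. I would carry this out exactly as in the proof of Theorem~\ref{T:MinimaxDoubling} but without the second path variable $y$: fix $(t_0,x_0)\in[0,T)\times\tilde\Omega$ and a test function $\varphi\in\underline{\mathcal A}^L u(t_0,x_0)$, so that there is $T_0\in(t_0,T]$ with $\varphi(t,x)-\varphi(t_0,x_0)\ge u(t,x)-u(t_0,x_0)$ for all $(t,x)\in[t_0,T_0]\times\mathcal X^L(t_0,x_0)$. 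Set $z:=\partial_x\varphi(t_0,x_0)$. Using the minimax $L$-subsolution property at $(t_0,x_0,z)$, pick $x\in\mathcal X^L(t_0,x_0)$ with $u(t,x)-u(t_0,x_0)\ge\int_{t_0}^t (f^x(s),z)-F(s,x,z)\,ds$ for $t\in[t_0,T_0]$. Plugging this lower bound into the test-function inequality and applying the functional chain rule \eqref{E:ChainRule} to the left-hand side of $\varphi(t,x)-\varphi(t_0,x_0)$, one obtains
\begin{align*}
\int_{t_0}^t\Bigl[\partial_t\varphi(s,x)-\langle A(s,\wx(s)),\partial_x\varphi(s,x)\rangle+(f^x(s),\partial_x\varphi(s,x))\Bigr]\,ds\ge\int_{t_0}^t\Bigl[(f^x(s),z)-F(s,x,z)\Bigr]\,ds.
\end{align*}
Dividing by $t-t_0$ and letting $t\downarrow t_0$, using continuity of $\partial_t\varphi$, $\partial_x\varphi$, $F$, $f^x\in L^2$ (so $\frac1{t-t_0}\int_{t_0}^t(f^x(s),\cdot)\,ds$ has the right Lebesgue-point behaviour), and $z=\partial_x\varphi(t_0,x_0)$, the terms involving $f^x$ cancel in the limit and one recovers exactly the viscosity $L$-subsolution inequality with the $\varlimsup$ on the $A$-term. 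The supersolution case is entirely symmetric.

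With this lemma in hand, the theorem follows: take $L\ge L_0$; then $u_0$ restricted to $[0,T]\times\Omega^L$ is a minimax $L_0$-sub- and supersolution, hence by the lemma a viscosity $L_0$-sub- and supersolution there, i.e. a viscosity solution of \eqref{E:PPDE2} on $[0,T]\times\Omega^L$; since $\Omega=\cup_{L\ge L_0}\Omega^L$ and the defining inequalities are local in $(t_0,x_0)$ with the test-function class $\underline{\mathcal A}^{L}u_0(t_0,x_0)$ only growing as $L$ does (and $\mathcal X^L(t_0,x_0)\subseteq\mathcal X^{L}(t_0,x_0)$ behaving monotonically), $u_0$ is a viscosity solution of \eqref{E:PPDE2} on $[0,T]\times\Omega$. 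The main obstacle I anticipate is the passage to the limit $t\downarrow t_0$ in the displayed integral inequality: one must argue that $\frac1{t-t_0}\int_{t_0}^t(f^x(s),\partial_x\varphi(s,x)-z)\,ds\to 0$, which uses that $t_0$ is a Lebesgue point of $s\mapsto (f^x(s),\partial_x\varphi(t_0,x_0)-z)=0$ together with continuity of $s\mapsto\partial_x\varphi(s,x)$ and the $L^2$-bound on $f^x$ from Lemma~\ref{L:Apriori}; this is precisely the mechanism already used implicitly in Theorem~\ref{T:MinimaxDoubling}, so it should go through verbatim, but it is the one place where care is needed. Everything else is bookkeeping with the definitions.
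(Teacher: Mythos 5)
Your proposal is correct and follows essentially the same route as the paper, whose entire proof is the remark that, arguing as in Theorem~\ref{T:MinimaxDoubling} (with the second path variable dropped), a minimax solution is a viscosity solution, so existence follows from Theorem~\ref{T:ExistenceMinimax2}; your detailed subsolution argument, including the Lebesgue-point limit $\frac{1}{\delta}\int_{t_0}^{t_0+\delta}(f^x(s),\partial_x\varphi(s,x)-z)\,ds\to 0$, is exactly the mechanism the paper relies on. The only point to watch is your claim that the supersolution half is ``entirely symmetric'': with the test class as printed (minimum of $\varphi-u$) the minimax supersolution inequality does not chain with the test-function inequality, so that half really uses the mirrored class of test functions touching $u$ from below, a detail the paper glosses over as well.
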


\bibliographystyle{amsplain}
\bibliography{PDE}
\end{document}